\title[]{Sasaki-Einstein metrics and K-stability}
\author[T.C. Collins]{Tristan C. Collins}
\thanks{T.C.C is supported in part by NSF grant DMS-1506652.
  G.Sz. is supported in part by NSF grants DMS-1306298 and DMS-1350696.}
\author[G. Sz\'ekelyhidi]{G\'abor Sz\'ekelyhidi}
\address{Department of Mathematics, Harvard University, 1 Oxford Street, Cambridge, MA 02138}
\email{tcollins@math.harvard.edu}
\address{Department of Mathematics, University of Notre Dame, Notre
  Dame, IN 46556}  
\email{gszekely@nd.edu}
\theoremstyle{plain}
\newtheorem{thm}{Theorem}[section]
\newtheorem{prop}[thm]{Proposition}
\newtheorem{defn}[thm]{Definition}
\newtheorem{lem}[thm]{Lemma}
\newtheorem{cor}[thm]{Corollary}
\newtheorem{conj}[thm]{Conjecture}
\newtheorem{que}[thm]{Question}
\theoremstyle{definition}
\newtheorem{rk}[thm]{Remark}
\numberwithin{equation}{section}
\newcommand{\del}{\partial}
\newcommand{\dbar}{\overline{\del}}
\newcommand{\ddb}{\sqrt{-1}\del\dbar}
\newcommand{\Spec}{\text{Spec }}
\newcommand{\Ric}{\mathrm{Ric}}
\renewcommand{\d}{\partial}
\newcommand{\wt}[1]{\widetilde{#1}}
\renewcommand{\leq}{\leqslant}
\renewcommand{\geq}{\geqslant}
\renewcommand{\epsilon}{\varepsilon}
\renewcommand{\phi}{\varphi}
\begin{document}
\begin{abstract}
We show that a polarized affine variety admits a Ricci flat K\"ahler
cone metric if and only if it is K-stable. 
This generalizes Chen-Donaldson-Sun's solution of the Yau-Tian-Donaldson
conjecture to K\"ahler cones, or equivalently, Sasakian
manifolds. As an application we show that the five-sphere admits
infinitely many families of Sasaki-Einstein metrics. 
\end{abstract}
\maketitle

\section{Introduction}
The existence of K\"ahler-Einstein metrics is a fundamental problem in
K\"ahler geometry. If $M$ is a compact complex manifold with
$c_1(M)=0$ or $c_1(M) < 0$, then the work of Yau~\cite{Y}
shows that $M$ admits K\"ahler-Einstein metrics with zero or negative
Ricci curvature. The case when $c_1(M) > 0$ is more subtle, and the
Yau-Tian-Donaldson conjecture~\cite{Yauprob,TianKEpos,DonTor}, proved by
Chen-Donaldson-Sun~\cite{CDS1,CDS2, CDS3}, relates the existence of
a K\"ahler-Einstein metric on $M$ to the K-stability of $M$, which is
a certain algebro-geometric condition. Our goal in the present paper
is to generalize this result to the setting of K\"ahler cones, 
giving a criterion for the existence of a Ricci flat K\"ahler cone metric,
or equivalently, a Sasaki-Einstein metric on the link. The question of
existence of such metrics has received increasing attention in the
physics community through their connection to the AdS/CFT
correspondence (see \cite{Mal,KlWi}), and we anticipate further
developments along these lines (see e.g. \cite{CXY}).  

Quite generally, if $X$ is an affine variety with an isolated singular
point, one can ask whether $X$ admits a Ricci flat K\"ahler cone
metric. We will address this question under the extra assumption that
we fix the vector field on $X$ that gives the homothetic scaling on
the cone. More precisely, suppose that $X \subset \mathbf{C}^N$ is an affine
variety, with an isolated singular point at the origin, invariant
under the action of a torus $\mathbf{T}\subset U(N)$, which for
simplicity we assume to be diagonal. We call $\xi\in
\mathfrak{t}$ a \emph{polarization} of $X$ if it acts with positive weights
on the coordinate functions, i.e. the corresponding holomorphic vector
field $\xi$ satisfies $L_\xi(z_i) = ia_i z_i$ with $a_i > 0$.
We then seek a K\"ahler Ricci flat metric
$\omega$ on $X$ such that $L_{-J\xi} \omega = 2\omega$. We say that such a
metric $\omega$ is a Ricci flat K\"ahler cone metric on the pair
$(X,\xi)$. Such a Ricci flat K\"ahler cone metric can only exist on
$X$ if the pair $(X,\xi)$ is a normalized Fano cone singularity, in
the terminology of Definition~\ref{defn:Fanocone} below.

Our main result is the following.
\begin{thm}\label{thm:mainthm} Let $(X, \xi)$ be a normalized Fano
  cone singularity. Then $(X, \xi)$ admits a Ricci flat K\"ahler cone
  metric if and only if it is K-stable. 
\end{thm}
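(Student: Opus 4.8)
The plan is to prove the two implications separately: the forward direction (existence of a Ricci flat K\"ahler cone metric implies K-stability) by a convexity/energy-functional argument in the spirit of Berman, and the reverse direction (K-stability implies existence) by transplanting the continuity-method strategy of Chen--Donaldson--Sun to the cone, equivalently Sasakian, setting.

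For the forward direction, suppose $\omega$ is a Ricci flat K\"ahler cone metric on $(X,\xi)$. Given a $\mathbf{T}$-equivariant test configuration for $(X,\xi)$, I would associate to it a subgeodesic ray in the space of Sasakian (transverse K\"ahler) potentials on the link with Reeb field $\xi$, and identify the asymptotic slope along this ray of a Ding-type functional with the Donaldson--Futaki invariant of the test configuration; this is the cone analogue of the Tian--Ding--Berman slope formula, and should follow from an asymptotic expansion of the relevant weighted index character. Since $\omega$ is a critical point, and the Ding functional is convex along geodesics, the slope is nonnegative, which gives K-semistability; strict positivity for non-product configurations then follows from a Bando--Mabuchi-type rigidity statement for Ricci flat cone metrics with prescribed Reeb field.

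For the reverse direction, I would run a continuity method joining a transverse Calabi--Yau metric in the given basic K\"ahler class at $t=0$ (which exists by the transverse version of Yau's theorem) to the sought Ricci flat cone metric at $t=1$, solving on the link a family of transverse complex Monge--Amp\`ere equations $(\omega^T + \ddb\phi_t)^n = e^{h - t\phi_t}(\omega^T)^n$, equivalently the corresponding equations for the $\xi$-homogeneous K\"ahler potential on $X$, with all metrics having Reeb vector field $\xi$. Openness at $t=0$ and for all $t<1$ is standard, since the linearized operator is negative definite on basic functions of vanishing average. Along the path the transverse Ricci curvature is controlled, so the Ricci curvature of the cone metrics $\omega_{t_i}$ is uniformly bounded below; hence for $t_i \uparrow t_0$ a subsequence converges in the pointed Gromov--Hausdorff sense, and because the $\omega_{t_i}$ are cone metrics with scaling generated by $-J\xi$, the limit is again a K\"ahler cone $C(L_\infty)$ with an effective Reeb field $\xi$, smooth and carrying a transverse K\"ahler--Einstein-type metric away from a singular set of small codimension, with a possibly enlarged torus of symmetries.

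The heart of the argument is to algebraize this limit and use K-stability to force $t_0 = 1$. Following Donaldson--Sun, an equivariant partial $C^0$-estimate together with H\"ormander $L^2$-methods should show that $C(L_\infty)$ is biholomorphic to a normal affine variety $W$ with an isolated singularity and a torus action containing $\xi$, and that the degeneration of $X$ to $W$ arises from a $\mathbf{T}$-equivariant flat family, i.e. (after the Li--Xu-type verification of finite generation and normality of the central fibre) a test configuration for $(X,\xi)$. One then computes that the Donaldson--Futaki invariant of this test configuration is nonpositive, and strictly negative unless $t_0 = 1$ and $W$ is a product, because the limit solves the equation at parameter $t_0$ and the energy drop along the degeneration is governed by $1-t_0 \ge 0$ together with the Futaki invariant of the generator of the induced $\mathbf{C}^\ast$-action on $W$; this contradicts K-stability of $(X,\xi)$. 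The main obstacle I anticipate is precisely this final step: proving the partial $C^0$-estimate and the compatibility of Gromov--Hausdorff convergence with the algebraic structure in the possibly irregular case, where there is no global leaf space and one must work directly on $X$ with weighted function spaces rather than on a quotient, and then extracting an honest destabilizing test configuration with the correct central fibre. A secondary difficulty is that the transverse complex structure and the complex structure "at infinity" of the cone may degenerate simultaneously, complicating the a priori estimates below the threshold.
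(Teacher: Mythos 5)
Your overall architecture coincides with the paper's: the forward direction is Berman's slope argument for a Ding functional on the cone (convexity along subgeodesics built from the test configuration, identification of the slope at infinity with the Futaki invariant via the index character, Berndtsson-type rigidity in the equality case), and the reverse direction is the Aubin-type continuity method plus a partial $C^0$-estimate and a Donaldson--Sun/Datar--Sz\'ekelyhidi algebraization. Two of your intermediate claims would fail as stated, however, and they sit exactly where the paper has to do its new work. First, along the path $\mathrm{Ric}^\tau(\omega_t^\tau)=2n[t\omega_t^\tau+(1-t)\alpha^\tau]$ the Ricci form of the cone metric is $\mathrm{Ric}(\omega_t)=2n(1-t)[\alpha^\tau-\omega_t^\tau]$, and since $\omega_t^\tau$ compares to $\omega_t$ like $r_t^{-2}$ in the contact directions, this is \emph{not} uniformly bounded below near the apex. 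The paper's fix is the D-homothetic transformation $\tilde r_t=r_t^{\,t}$, giving $\mathrm{Ric}(\tilde\omega_t)=2n\frac{1-t}{t}\alpha^\tau\geq 0$ at the cost of replacing the Reeb field by $t^{-1}\xi$; consequently all compactness and partial $C^0$ arguments must be run uniformly over a varying family of Reeb fields, not the fixed $\xi$ you assume.

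Second, you misdiagnose the obstruction in the partial $C^0$-estimate. It is not the irregularity of $\xi$ (the paper works on the cone throughout and disposes of irregular $\xi$ at the end by approximating with quasi-regular Reeb fields and showing $T_k\to 1$); it is that even after rescaling, the twisting form $\frac{1-t}{t}\alpha^\tau$ is only a degenerate non-negative $(1,1)$-form, vanishing on $\mathbf{C}\xi$ and radially, whereas the scheme of \cite{Sz}/\cite{CDS2} that you invoke uses the strict positivity of the twisting form as a metric (for instance to apply $\varepsilon$-regularity to it as an energy density). The paper's substitute is a ``bounded geometry'' statement for $\alpha^\tau$ in terms of transverse holomorphic immersions, the location of slices transverse to the rescaled Reeb field $\Xi$ inside the almost-Euclidean or almost-conical charts, $\varepsilon$-regularity applied to $\alpha^\tau$ restricted to those slices, and propagation of the resulting bound by the $\Xi$-invariance of $\alpha^\tau$. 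Finally, your assertion that the Futaki invariant of the limiting degeneration is strictly negative when $T<1$ requires a quantitative mechanism: the paper proves the identity $\mathrm{Fut}_{X,\xi,(1-T)\alpha^\tau}(w)=\mathrm{Fut}_{X,\xi}(w)+c(n)\frac{1-T}{V}\int_Y(\max_Y\theta_w-\theta_w)e^{-\frac{1}{2}\hat r^2}\omega^n$ by writing $\alpha^\tau$ as an average of currents of integration along hypersurfaces and tracking their limits under the degeneration; the vanishing of the twisted Futaki invariant for the weak limit, combined with reductivity and the Luna slice theorem to produce the test configuration in the first place, then forces $\mathrm{Fut}_{X,\xi}(w)\leq -c(n)\frac{1-T}{2}\max_Y\theta_w<0$. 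An unquantified ``energy drop governed by $1-t_0$'' does not by itself yield a destabilizing special degeneration.
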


We will give a precise definition of K-stability in this setting
below. For now let us say that if $(X,\xi)$ does not admit a Ricci
flat K\"ahler cone metric, then there exists an embedding $X \hookrightarrow
\mathbf{C}^{N'}$, a corresponding embedding of the torus $\mathbf{T}\subset
U(N')$, and a one-parameter subgroup $\lambda: \mathbf{C}^* \to
GL(N')^\mathbf{T}$ 
generated by a vector field $w$ with the following properties:
\begin{enumerate}
  \item The limit $Y = \lim\limits_{t\to 0} \lambda(t)\cdot X$ is
    normal.
  \item The Futaki invariant $\mathrm{Fut}(Y,\xi, w) \leq 0$, and
    $Y\not\cong X$ if equality holds.
  \end{enumerate}

  While in principle $\mathbf{T}$ just needs to be a torus of automorphisms of
$X$ for which $\xi\in \mathfrak{t}$, in practice it is useful to
choose a maximal such torus. In fact as in 
\cite{Datar-Sz} we can also obtain an equivariant version of the theorem for
the action of any compact group on $X$, but to simplify the exposition
we will mostly focus on the case of a torus. 

Usually there are infinitely many such degenerations that one needs to
check in order to determine whether a pair $(X,\xi)$ is K-stable, and
so there does not seem to be an effective way to test
K-stability. This become possible, however, in
certain situations with large symmetry group, where 
there are only a finite number of possible normal limits
$Y$ under equivariant degenerations of $X$. 
Just as in \cite{Datar-Sz}, a simple example is when $X$
is toric, in which case if we work equivariantly with respect to the
maximal torus $\mathbf{T}$, then we necessarily have $Y\cong X$, and we only need
to test the Futaki invariants $\mathrm{Fut}(X,\xi,\eta)$ for $\eta\in
\mathfrak{t}$. If this vanishes for all $\eta\in\mathfrak{t}$, then
$(X,\xi)$ admits a Ricci flat Kahler cone metric, recovering the
result of Futaki-Ono-Wang~\cite{FOW}.

A more general situation is when $X$ has a complexity-one action of a
torus $\mathbf{T}$, i.e. $\dim \mathbf{T} =
\dim X -1$. In this case, 
using the methods in Ilten-S\"u{\ss}~\cite{IS} we can still effectively
test K-stability, by checking a finite number of degenerations.
In Section~\ref{sec:examples} we will apply these techniques
to several explicit families of hypersurface singularities. For example,  we study

\[ Z_{BP}(p,q) = \{x^2 + y^2 + z^p + w^q = 0\} \subset \mathbf{C}^4,\]
where $p,q > 1$. We show the following. 

\begin{thm}\label{thm:mainexample}
  For a suitable choice of $\xi$ the pair $(Z_{BP}(p,q), \xi)$ admits
  a Ricci flat K\"ahler cone metric if and only if $2p > q$ and $2q >
  p$. As a consequence $S^5$ admits infinitely many families of
  Sasaki-Einstein metrics. 
\end{thm}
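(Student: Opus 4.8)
The plan is to derive both assertions from Theorem~\ref{thm:mainthm}: the existence of a Ricci flat K\"ahler cone metric on $(Z_{BP}(p,q),\xi)$ is equivalent to the K-stability of the pair, and because $Z_{BP}(p,q)$ carries a torus action of complexity one, K-stability can be tested against only finitely many degenerations, following the strategy of Ilten--S\"u{\ss}~\cite{IS}. In this way the whole problem reduces to an explicit, finite computation of Futaki invariants.

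First I would put the equation in the form $uv + z^p + w^q = 0$ by setting $u = x + \sqrt{-1}\,y$, $v = x - \sqrt{-1}\,y$. This displays a diagonal action of $\mathbf{T} = (\mathbf{C}^*)^2$, with the weights $(b_1,b_2,b_3,b_4)$ of $(u,v,z,w)$ constrained only by $b_1 + b_2 = p\,b_3 = q\,b_4$; since $\dim Z_{BP}(p,q) = 3$ this is a complexity-one action. Imposing positivity of the weights together with the normalization condition of Definition~\ref{defn:Fanocone} (which fixes the $\xi$-weight of the holomorphic volume form, equivalently of $df$) cuts the Reeb cone in $\mathfrak{t}$ down to a one-parameter family of candidate polarizations. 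Within this family, the requirement $\mathrm{Fut}(Z_{BP}(p,q),\xi,\eta) = 0$ for all $\eta\in\mathfrak{t}$ --- equivalently, by Martelli--Sparks--Yau, that $\xi$ be the critical point of the volume functional restricted to $\mathfrak{t}$ --- determines a unique $\xi = \xi(p,q)$, which is the ``suitable choice'' in the statement; I would record its value as an explicit rational function of $p$ and $q$.

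With $\xi(p,q)$ fixed, the Ilten--S\"u{\ss} description of $\mathbf{T}$-invariant special test configurations for complexity-one $\mathbf{T}$-varieties yields a finite, combinatorially explicit list of the possible normal central fibers $Y$. The crucial step is to check that, apart from the product degeneration $Y\cong Z_{BP}(p,q)$, the only normal limits are $Y_1 = \{uv + z^p = 0\}\subset\mathbf{C}^4$ --- the Du Val surface singularity $A_{p-1}$ times $\mathbf{C}_w$, obtained by sending the weight of $w$ to zero --- and, symmetrically, $Y_2 = \{uv + w^q = 0\}$, which is $A_{q-1}$ times $\mathbf{C}_z$. Both are normal, and in fact toric, so their volume functionals, and hence the relevant Futaki invariants $\mathrm{Fut}(Y_1,\xi,\cdot)$ and $\mathrm{Fut}(Y_2,\xi,\cdot)$, reduce to elementary toric integrals. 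Evaluating these at $\xi = \xi(p,q)$ I expect to find $\mathrm{Fut}(Y_1,\xi,\cdot) > 0$ precisely when $2p > q$ and $\mathrm{Fut}(Y_2,\xi,\cdot) > 0$ precisely when $2q > p$, with equality exactly on the boundary; since $Y_1,Y_2\not\cong Z_{BP}(p,q)$, the pair is K-stable if and only if $2p>q$ and $2q>p$, and Theorem~\ref{thm:mainthm} then gives the first assertion. I expect this to be the main obstacle: verifying that the Ilten--S\"u{\ss} list contains no further normal degenerations, handling the choice of generator on each $Y_i$, and tracking the normalizations carefully enough that the two Futaki inequalities reduce exactly to $2p>q$ and $2q>p$.

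For the consequence, restrict to pairs with $p$ and $q$ both odd and coprime and satisfying $q < 2p$, $p < 2q$ --- for instance $q = p+2$ with $p$ odd and $p\geq 3$, which gives infinitely many pairs, all in the stable range. For such $(p,q)$ the link $\Sigma(2,2,p,q)$ of $Z_{BP}(p,q)$ is, by Brieskorn's determination of the topology of Brieskorn--Pham links, a homotopy $5$-sphere (the associated graph has isolated vertices at $p$ and $q$), hence diffeomorphic to $S^5$ since $\Theta_5 = 0$; and by the first part it carries a Sasaki--Einstein metric. Distinct admissible $(p,q)$ give non-isometric metrics, distinguished for instance by the volume of the Sasaki--Einstein metric, a computable invariant of $\xi(p,q)$; and since K-stability is an open condition, each admissible $(p,q)$ additionally yields a positive-dimensional family through the graded deformations of $Z_{BP}(p,q)$ that remain Fano cones. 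This produces infinitely many families of Sasaki--Einstein metrics on $S^5$.
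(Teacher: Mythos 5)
Your proposal follows essentially the same route as the paper: fix $\xi$ by volume minimization among normalized Reeb fields, invoke the Ilten--S\"u{\ss} theory of complexity-one $\mathbf{T}$-varieties (via polyhedral divisors) to reduce to exactly the two degenerations $\{uv+z^p=0\}$ and $\{uv+w^q=0\}$, compute their Futaki invariants from the index character to obtain the conditions $2p>q$ and $2q>p$, and then use $\gcd(p,q)=1$ for the topology of the link together with the volume $a_0(\xi)=\tfrac{2(p+q)^3}{27p^2q^2}$ to distinguish infinitely many metrics on $S^5$. The step you flag as the main obstacle (that the Ilten--S\"u{\ss} list contains no further normal central fibers) is precisely what the paper verifies with the p-divisor computation, and your expected signs of the two Futaki invariants agree with the paper's.
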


The necessary conditions $2p > q$ and $2q > p$ follow from the Lichnerowicz
obstruction of Gauntlett-Martelli-Sparks-Yau~\cite{GMSY}, while the
existence result was only known previously for $(p,q) = (2,2)$ and
$(2,3)$, where the latter was shown by Li-Sun~\cite{LiSun}.  

To date, many Sasaki-Einstein manifolds
have been found by employing estimates for the $\alpha$-invariant ~\cite{TianSurf, DemKo}. For example, the affine varieties $Z_{BP}(p,q)$ are a special
case of the Brieskorn-Pham singularities, which have been thoroughly
studied in the literature.  Boyer-Galick-Koll\'ar \cite{BGK} used
estimates for the $\alpha$-invariant of Brieskorn-Pham singularities
to produce $68$ distinct Sasaki-Einstein metrics on $S^5$, as well as
SE metrics on all 28 oriented diffeomorphism types of $S^7$, and the
the standard and Kervaire spheres $S^{4m+1}$.  Note that previously
infinitely many Einstein (not Sasakian) metrics on spheres in
dimensions 5 to 9 were constructed by B\"ohm~\cite{Bohm}. 

Estimates for the $\alpha$-invariant were also used by Boyer-Galicki
\cite{BG1, BG2}, Boyer-Nakamaye \cite{BN}, Koll\'ar-Johnson
\cite{KoJo}, Ghigi-Koll\'ar \cite{GK}, Koll\'ar \cite{Ko1, Ko2} and
others to produce many infinite families of Sasaki-Einstein metrics in
dimensions 5 and 7, and higher. For example, $\#k(S^2\times S^3)$ is
known to admit infinite families of Sasaki-Einstein metrics for any
$k\geq 1$.  We refer the reader to \cite{BGbook} for a thorough
discussion of these results.  We note that  Koll\'ar has classified
the possible topologies of Sasaki-Einstein manifolds \cite{Ko2, Ko3, Ko4}.
For example it is known that for affine varieties of
complex dimension $3$ with a $2$-torus action, 
the only possible topologies of the links are $S^5$ and $k\#(S^2\times
S^3)$ for any $k \geq 1$ (see \cite[Proposition 10.2.27]{BGbook}).
Our techniques also produce new infinite families of distinct
Sasaki-Einstein metrics on $k\#(S^2\times S^3)$ for all $k \geq 1$,
and hence cover all possible topologies that can occur with a
$2$-torus action.

We expect that many more examples can be found along the same lines.  
A particularly interesting
problem is to find Sasaki-Einstein metrics with irregular Reeb vector
fields.  Remarkably, the first examples 
of irregular Sasaki-Einstein metrics were
discovered by Gauntlett-Martelli-Sparks-Waldram \cite{GMSW} by explicitly writing down the metric in
coordinates.  We expect K-stability to be particularly useful for
finding irregular 
Sasaki-Einstein manifolds in real dimension 5, since if the cone $X$ has 
$\dim_{\bf C} X=3$, and $\xi$ is an
irregular Reeb field, then $X$ admits a complexity-one action of a
2-torus.  In particular, using the methods of Ilten-S\"u{\ss} \cite{IS}  we can effectively test whether $(X,\xi)$ admits a Ricci
flat K\"ahler cone metric.   

The overall strategy of our proof is the same as that of
Chen-Donaldson-Sun~\cite{CDS1,CDS2,CDS3}, as adapted in
\cite{Sz,Datar-Sz} to the smooth continuity method. We will set up
this continuity method in Section~\ref{sec:background}, where we also
give the precise definition of K-stability based on our previous work
\cite{CoSz}, extending the definition of Ross-Thomas~\cite{RT} from
the quasi-regular case.
 The main technical results
are contained in Sections~\ref{sec:Ding} and \ref{sec:partialC0}. In
Section~\ref{sec:Ding} we discuss weak solutions of the 
equations along the continuity method, which is analogous to the theory
of singular K\"ahler-Einstein metrics, as was studied by
Eyssidieux-Guedj-Zeriahi~\cite{EGZ}. Much of this discussion, such
as the convexity of the Ding functional due to Berndtsson~\cite{Ber},
extends to the case of cones without substantial difficulties. In
Section~\ref{sec:partialC0} we generalize the partial $C^0$-estimate
along the smooth continuity method from \cite{Sz} to the setting of
cones. The main new technical difficulty is that in the method of \cite{Sz} the
strict positivity of the Ricci curvature was a crucial ingredient,
while in our setting the Ricci curvature on a cone is never strictly
positive. Instead we need to exploit the transverse K\"ahler
structure, which does have strictly positive Ricci curvature.  The
proof of Theorem~\ref{thm:mainthm} is given in
Section~\ref{sec:mainproof}, primarily along the lines of the argument
in \cite{Datar-Sz}. In Section~\ref{sec:algfutaki} we collect some
more algebraic results, with the goal of establishing the equality
between the differential geometric and the algebraic definitions of
the Futaki invariant. In Section~\ref{sec:Kstab} we prove the other
implication in Theorem~\ref{thm:mainthm} along the lines of the work
of Berman~\cite{RBer}. In Section~\ref{sec:examples} 
we give some example calculations of K-stability, including the proof
of Theorem~\ref{thm:mainexample} and we finish with some further
discussion and questions in Section~\ref{sec:discuss}.

\section{Basic definitions}\label{sec:background}
In this section we fix some basic definitions, and set up the
continuity method that we would like to use to find Ricci flat
K\"ahler cone metrics. The continuity method is equivalent to
the usual continuity path for finding K\"ahler-Einstein metrics, but
it involves a scaling to ensure that we have metrics of non-negative
Ricci curvature on our cones.

\begin{defn}\label{defn:Fanocone}
 A \emph{polarized affine variety} of dimension $n$ is a triple $(X,\mathbf{T},\xi)$, where
  $X$ is a normal affine variety, $\dim_{\mathbf{C}}X=n$, $\mathbf{T}$ is a torus of automorphisms of $X$,
  and $\xi\in \mathfrak{t}$ acts on the ring of functions of $X$ with
  postive weights in the following sense. We have a decomposition
  \[ R(X) = \bigoplus_{\chi\in\mathfrak{t}^*} R_\chi(X) \]
  under the torus action into weight spaces, and we require that
  $\chi(\xi) > 0$ for all non-zero $\chi$, for which $R_\chi$ is
  non-trivial. Often we simply speak of a pair $(X,\xi)$, where
  $\xi$ is a vector field on $X$ generating a compact torus of
  automorphisms, and then $\mathbf{T}$ is understood to be this torus. We call
  $\xi$ a Reeb field or polarization on $X$.  We denote by
  $\mathcal{C}_{R}
 \subset \mathfrak{t}$
  the cone of Reeb fields.

  We say that the pair $(X,\xi)$ is a \emph{Fano cone singularity}, if $X$ is
  $\mathbf{Q}$-Gorenstein, together with a
  trivializing section $\Omega$ of $mK_X$ for some $m > 0$,
  such that $L_\xi \Omega = i\lambda \Omega$ for some $\lambda > 0$.
  The last condition is equivalent to $X$ having log-terminal
  singularities (see Section \ref{sec:algfutaki}).
  The Fano cone singularity $(X,\xi)$ is \emph{normalized} if $\lambda = nm$. 
\end{defn}

The basic example is obtained by taking a  Fano manifold $M$,
and letting $X$ be the total space of $mK_M$, with the zero
section blown down, for some $m$ such that $-mK_M$ is very ample.
In other words $X$ is the cone over $M$ under a projective
embedding by $-mK_M$. 

In \cite{CoSz} we defined a notion of K-semistability for a pair
$(X,\xi)$, in terms of test-configurations for $X$ that
commute with a torus $\mathbf{T}$ whose Lie algebra contains $\xi$. Here we
give a very similar definition, which is adapted to our work here, but
is closer in spirit to the definition of K-stability by
Tian~\cite{TianKEpos}, which only allows test-configurations with normal
central fibers. In addition, in view of possible future applications we work
equivariantly for a compact group acting on $X$ in analogy with
\cite{Datar-Sz}. 

Suppose that $(X,\xi)$ is a normalized Fano cone singularity of dimension $n$,
with $X$ only having
an isolated singularity, and $G$ is a compact group of automorphisms
of $X$, such that $\xi$ is in the center of its Lie algebra. In
applications we will take $G$ to be a maximal torus of automorphisms,
containing the torus generated by $\xi$.

A $G$-equivariant
\emph{special degeneration} (or test-configuration) 
of $X$ consists of an embedding
$X\to\mathbf{C}^N$ such that $G$ acts linearly through an embedding
$G \subset U(N)$, together with a one-parameter subgroup
$\lambda : \mathbf{C}^* \to GL(N)^G$ commuting with $G$, such that $\lambda(S^1)\subset
U(N)$ and $Y = \lim_{t\to 0} \lambda(t)\cdot X$ is normal. In this
case $(Y,\xi)$ is also a normalized Fano cone singularity, together with a
$\mathbf{C}^*$-action given by $\lambda$ commuting with $\xi$. Let us
write $T$ for the torus generated by $\lambda$ and $\xi$. 
By a slight abuse of notation we will denote by $\lambda\in\mathfrak{t}$ the
generator of the corresponding $S^1$-action, and note that for small
$s\in\mathbf{R}$ the pairs $(Y, \xi + s\lambda)$ are also Fano cone singularities
(which may not be normalized). We showed in \cite{CoSz} that the
index character
\[ F(\xi, t) = \sum_{\chi\in \mathfrak{t}^*}
e^{-t\chi(\xi)}\mathrm{dim}\,R_\chi(Y) \]
can be extended meromorphically to a neighborhood of the origin, and
we can define functions $a_i(\xi)$ by
\[ F(\xi,t) = \frac{a_0(\xi) (n-1)!}{t^n} + \frac{a_1 (\xi)
  (n-2)!}{t^{n-1}} + O(t^{2-n}). \]
As a matter of notation we will write
\begin{equation}\label{eq:Dwnotation}
 D_\lambda a_i(\xi) = \left. \frac{d}{ds}\right|_{s=0} a_i(\xi +
s\lambda). 
\end{equation}

\begin{defn}\label{defn:Futaki} The Futaki invariant of a special degeneration as above
  is defined to be
  \[ \mathrm{Fut}(X,\xi, \lambda) = \frac{a_0(\xi)}{n-1}
  D_\lambda\left(\frac{a_1}{a_0}\right)(\xi) + \frac{a_1(\xi)
    D_\lambda a_0(\xi)}{n(n-1) a_0(\xi)}, \]
  where each $a_i$ is computed on the variety $Y$. 

A normalized Fano cone singularity $(X,\xi)$ is called $G$-equivariantly K-stable, if for all
special degenerations as above, we have $\mathrm{Fut}(X,\xi,\lambda)
\geq 0$, and equality holds only if $(Y, \xi)$ is isomorphic to
$(X,\xi)$.
\end{defn}

Our main result, Theorem~\ref{thm:mainthm} then says that $X$ admits a
Ricci flat K\"ahler cone 
metric with homotheties given by $-J\xi$, if and only if 
$(X,\xi)$ is equivariantly K-stable.
  
Let us digress briefly on the Futaki invariant in
Definition~\ref{defn:Futaki}.  As remarked above, given a special test
configuration generated by a $\mathbf{C}^{*}$ action $\lambda$, the
central fiber $Y$ is again a $\mathbf{Q}$-Gorenstein variety with log
terminal singularities, and hence $(Y, \xi+s\lambda)$ is a Fano cone singularity,
which is not necessarily normalized.  As we will see in
Proposition~\ref{prop:a1a0formula}, the normalized Reeb vector
fields form a linear subspace $\Sigma_{Y}$ of the Reeb cone of $Y$
defined by the {\em linear} equation
\[
\frac{a_{1}(w)}{a_{0}(w)} = \frac{n(n-1)}{2}.
\]
Let $N := \nabla(a_1/a_0)$ denote the normal vector to the normalized
hyperplane $\Sigma_{Y}$.  

First consider the case when $Y \cong X$,
and that $\lambda$ is generated by $w \in {\rm Lie}(T)$.  Assume that
$w$ is tangent to $\Sigma_{X}$, then the Futaki invariant is just
$(1/2)D_{w}a_0(\xi)$.  Since we can also consider $-w$, this implies
that if $(X,\xi)$ is $K$-stable, then $\xi$ must be an extremal value
of $a_0$, on $\Sigma_X$.  By \cite{MSY}, $a_0$ is a convex function on
$\Sigma_X$ which has a unique minimum.  Since $a_0$ can be interpreted
as the volume of the link, this is called ``volume minimization", and
was discovered in fundamental work of Martelli-Sparks-Yau \cite{MSY}.

Now suppose we have a nontrivial test configuration, so that $Y
\not\cong X$, and $\lambda$ is generated by $w$ and suppose that $\xi$
is the Reeb vector field minimizing the volume.  
We can compute the Futaki invariant by the formula
\[
\frac{1}{2}D_{w'} a_{0}(\xi) = {\rm Fut}(X,\xi, \lambda), \quad \text{ if } \quad w' =w- 2\frac{N\cdot w}{n(n-1)} \xi,
\]
where now $w'$ is normalized, but it may not generate a test
configuration if $\xi$ is irrational. This observation extends the
interpretation of stability as volume minimization
\cite{MSY,MSY2,CoSz} from trivial test configurations to all test
configurations, and will be useful in Section~\ref{sec:algfutaki}.
Note that when $Y \not\cong X$ we cannot replace $w$ with $-w$, since
this will change the central fiber of the test configuration.  These
observations have applications in conformal field theory, where the
AdS/CFT correspondence provides an interpretation of K-stability as a
maximization problem for the central charge of the dual conformal
field theory \cite{CXY}.

We next set up the relevant continuity method for polarized affine manifolds. 
Suppose that $(X,\xi)$ is normalized Fano. Fixing any metric $\alpha$
on $(X,\xi)$, our continuity method is to find metrics $\omega_t$
on $(X,\xi)$ satisfying
\begin{equation} \label{eq:contmethod}
  \mathrm{Ric}(\omega_t) = 2n[t\omega_t^\tau + (1-t) \alpha^\tau]- 2n\omega_t^{\tau},
\end{equation}
where $\omega_t^\tau, \alpha^\tau$ denote the transverse metrics
induced by $\omega_t, \alpha$.  In terms of the transverse metrics induced on the Reeb
foliation of the link $L = \{r_t=1\}$ the method of continuity is
\[
\Ric^{\tau}(\omega_{t}^{\tau}) = 2n[t\omega_t^\tau + (1-t) \alpha^\tau].
\]
In particular, ~\eqref{eq:contmethod} is the natural lift to the cone of the continuity
method for K\"ahler-Einstein metrics (see, for example, \cite{Aub}). We will
call Equation~\ref{eq:contmethod} the twisted equation, with twisting
form $\alpha^\tau$. 

\begin{prop} \label{prop:contmethod} 
Let $I = \{t \in [0,1] : \eqref{eq:contmethod} \text{ has a solution } \}$.  Then $I$ is non-empty, and open. 
\end{prop}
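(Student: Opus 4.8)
The plan is to analyze the continuity path \eqref{eq:contmethod} via the standard implicit-function-theorem strategy, but adapted to the cone setting by working with the transverse Kähler structure on the Reeb foliation of the link. For nonemptiness, I would first verify that $t=0$ lies in $I$: at $t=0$ the equation reads $\Ric^\tau(\omega_0^\tau) = 2n\,\alpha^\tau$, i.e. we seek a transverse Kähler metric $\omega_0^\tau$ in the basic class determined by $\xi$ whose transverse Ricci form equals the fixed form $2n\alpha^\tau$. Since $\alpha$ is a given Kähler cone metric on $(X,\xi)$ and $(X,\xi)$ is normalized Fano, the transverse class $[\alpha^\tau]$ is (up to the normalization constant $2n$) the basic first Chern class, so this is solvable by the transverse version of the Calabi–Yau theorem — the Calabi conjecture for transverse Kähler metrics on the foliation, which is known (El Kacimi-Alaoui, and in the Sasaki setting Futaki-Ono-Wang) since the transverse metric descends to an orbifold when $\xi$ is quasiregular and the general case follows by approximation, or directly. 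Concretely one solves a transverse complex Monge-Ampère equation $(\alpha^\tau + \ddb \phi)^n = e^{F}\,(\alpha^\tau)^n$ where $F$ is chosen so that the Ricci potential works out; lifting $\phi$ (a basic function) to the cone via $r_\alpha^2$ produces the desired $\omega_0$. So $0\in I$ and $I\neq\emptyset$.

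For openness, suppose $t_0\in I$ with solution $\omega_{t_0}$, and write nearby solutions as $\omega = \omega_{t_0} + \ddb\psi$ for $\psi$ a basic function with suitable cone-homogeneity (degree-two homogeneous under the scaling $-J\xi$, equivalently basic on the link). Rewriting \eqref{eq:contmethod} as a transverse Monge-Ampère-type equation of the form $\log\frac{(\omega_{t_0}^\tau+\ddb\psi)^n}{(\omega_{t_0}^\tau)^n} = -2n t\,\psi + (\text{lower-order in } t-t_0, \psi) + c$, the linearization at $(\psi,t)=(0,t_0)$ in the $\psi$ variable is, up to scaling, the operator $\Delta_{\omega_{t_0}^\tau} + 2n t_0$ acting on basic functions on the link $L=\{r_{t_0}=1\}$. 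The crucial point — and the main obstacle — is invertibility of this linearized operator. For $t_0>0$ the twisted equation \eqref{eq:contmethod} gives $\Ric^\tau(\omega_{t_0}^\tau) = 2n[t_0\omega_{t_0}^\tau + (1-t_0)\alpha^\tau] \geq 2nt_0\,\omega_{t_0}^\tau > 0$, so the transverse Ricci curvature is strictly positive, and the transverse Bochner/Lichnerowicz argument (as in Futaki-Ono-Wang, El Kacimi-Alaoui) shows the first nonzero eigenvalue of $\Delta_{\omega_{t_0}^\tau}$ on basic functions is $> 2nt_0$; hence $\Delta_{\omega_{t_0}^\tau} + 2nt_0$ is an isomorphism on the relevant Hölder space of basic functions transverse to the constants, and the implicit function theorem in these (weighted) Hölder spaces gives solutions for $t$ near $t_0$. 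At $t_0=0$ strict positivity fails, but the linearization is just $\Delta_{\omega_0^\tau}$, which is invertible on functions orthogonal to constants, and a short argument (projecting out the kernel, or using that moving $t$ slightly off $0$ only helps) again yields openness near $0$.

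The technical care needed — and where I expect the real work to lie — is in setting up the correct function spaces on the (possibly singular, in the irregular case non-projective) link: one must work with basic Hölder spaces adapted to the transverse structure, ensure the elliptic theory and the transverse $\partial\bar\partial$-lemma hold in this foliated setting so that the freedom in $\omega$ is genuinely parametrized by basic potentials, and check that the homogeneity/decay conditions matching a Kähler cone metric are preserved under the implicit function theorem. Once these foundational points are in place (much of which is standard transverse Kähler geometry, going back to El Kacimi-Alaoui and used in the Sasaki-Einstein literature), nonemptiness and openness follow from the transverse Calabi-Yau theorem and transverse Bochner estimate respectively, exactly as in the compact Fano continuity method.
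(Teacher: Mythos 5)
Your proposal is correct and follows exactly the route the paper takes: the paper disposes of this proposition in one sentence, citing El Kacimi-Alaoui's transverse Yau theorem for non-emptiness (your $t=0$ argument) and Aubin's openness argument for the Kähler-Einstein continuity path (your implicit-function-theorem step with the transverse Bochner/Lichnerowicz eigenvalue bound). Your write-up is simply a more detailed expansion of those two citations.
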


The non-emptiness follows from the transverse version of Yau's theorem \cite{Y}, due 
to El-Kacimi Alaoui \cite{ElKac}, while the openness is also analogous
to the K\"ahler case as in Aubin~\cite{Aub}. 
 
As in the compact K\"ahler case, we  must study the Gromov-Hausdorff limit of a
sequence $(X, \omega_{t_i})$ as $t_i \to T$. For this it is convenient
to do a scaling of the Reeb fields to ensure that we have metrics with
Ricci curvature bounded below. Let us denote the radial function of
$\omega_t$ by $r_t$, and define $\wt{r}_t = r_t^t$; in the Sasakian literature, this 
is often referred to as a $D$-homothetic transformation. It is straightforward to
verify that
$\wt{\omega}_t = \ddb \wt{r}_t$ satisfies
\[ \mathrm{Ric}(\wt{\omega}_t) = 2n\frac{1-t}{t} \alpha^\tau, \]
i.e. the Ricci curvature is non-negative. 

\section{Weak solutions, twisted Futaki invariants and
  the Ding functional}\label{sec:Ding}
The key result that we will ultimately need is that in the context of
the continuity method defined in the previous section, as $t\to\sup
I$, we can extract a limit that is a normalized Fano cone singularity $(Y, \xi)$ together with a
transverse positive current $\beta^\tau$, and a weak
solution $\omega_T$ on $(Y, \xi)$ of the equation
\[ \mathrm{Ric}(\omega_T) = 2n(1-T)[\beta^\tau-\omega_{T}^{\tau}]. \]
In this section we will give a precise definition of such weak
solutions, and describe how in analogy with the compact K\"ahler case,
the existence of such a metric implies the reductivity of a certain
automorphism group, and the vanishing of a twisted Futaki invariant.
 
We first define the weak solutions of the twisted equation. We assume
that $(Y,\xi)$ is a
normalized Fano cone singularity, so we have a non-vanishing global
holomorphic section $\Omega$ of $mK_{Y}$ for some $m \geq 1$,
with $L_\xi(\Omega) = imn\Omega$.  This gives rise to  the volume
form 
\[
dV = i^{n^2}\left(\Omega\wedge \overline{\Omega}\right)^{1/m},
\]
which satisfies $L_\xi dV = 2n dV$. This volume form is uniquely
defined up to a constant multiple, and we will call it the canonical
volume form on a normalized Fano cone singularity. 

Suppose that we have an embedding $Y\to \mathbf{C}^N$, such that the
Reeb field (or rather the torus it generates) acts diagonally, 
and $Y$ is not contained in
a linear subspace.  Then $\xi$ defines a Reeb field on $\mathbf{C}^N$ and so we can
then fix a smooth reference radial function $\hat{r}$ on $\mathbf{C}^N$ which
is compatible with this Reeb field \cite{CoSz}. 
In the presence of the action of a
torus $\mathbf{T}$, we can take our embedding to be
$\mathbf{T}$-equivariant as well, where $\mathbf{T}$ acts diagonally.  

The space of transverse
psh potentials is the space of basic functions $\phi$ (i.e. $L_\xi
\phi = L_{J\xi}\phi = 0$) such
that $r_\phi = \hat{r}e^\phi$ is psh.  Recall that a psh function on a normal variety can always be viewed as
the restriction of a psh function from an ambient space, after embedding \cite[Theorem 1.10]{Dem85}.
 In particular, $r_{\phi}$ is always the restriction
of a psh function defined in a neighborhood of the origin. For smooth such $\phi$ we write
\[ \omega_\phi = \frac{1}{2}\ddb r_\phi^2, \]
and we suppose that we have a twisting form $\beta^\tau$  given as
\[ \beta^\tau = \ddb \log r_\psi, \]
where $\psi$ is also a transverse psh potential. 
If $Y, \beta^\tau$ were smooth, then the twisted equation
\begin{equation}\label{eq:t2}
  \mathrm{Ric}(\omega_\phi) = 2n(1-t)[\beta^\tau - \omega_t^\tau]
\end{equation}
could be written on the level of volume forms as
\begin{equation}\label{eq:t3}
  \omega_\phi^n = C e^{2n(1-t)[\phi -\psi]} dV,
\end{equation}
and we can use this latter formulation to define solutions of the
twisted equation in a weak sense (see \cite{EGZ} for the
analogous definition of weak K\"ahler-Einstein metrics).
More precisely, given $\psi, t$, a weak solution of \eqref{eq:t2} is a
continuous transverse psh potential $\phi$, such that \eqref{eq:t3}
holds as an equality of measures. In particular this implies that
$e^{-2n(1-t)\psi}dV$ must be integrable in a neighborhood of the cone
vertex.

\begin{defn}\label{defn:weaksoln} Let  $(Y,\xi)$ be a
  normalized Fano cone singularity together with a reference radial function $\hat
  r$ as above. Suppose that $\psi$ is a transverse psh potential such
  that $e^{-2n(1-t)\psi}dV$ is integrable in a neighborhood of the
  vertex for some $t\in [0,1]$. We say that $(Y, \xi, (1-t)\psi)$
  admits a (weak) solution of the twisted equation if $(Y,\xi)$ has a continuous
  transverse psh potential $\phi$ satisfying
  \[\omega_\phi^n = C e^{2n(1-t)[\phi -\psi]} dV\]
  in the sense of measures. Often we will write $(Y, \xi,
  (1-t)\beta^\tau)$ when the twisting form $\beta^\tau$ is more
  natural than its potential $\psi$. 
\end{defn}

  As in \cite{Datar-Sz}, Remark 4 (see also \cite[Proposition 3.8]{BEGZ}), it is
  enough to check that outside of a closed set  $\Sigma$ with vanishing $(2n-2)$-dimensional
  Hausdorff measure, $\omega_\phi^n$ defines a singular metric
  $e^{-f}$ on $K_Y$ with $f \in L^1_{loc}$, and in addition,  on $Y\backslash \Sigma$
  \[ \ddb f = 2n(1-t)[ \beta^\tau - \omega_\phi^\tau]. \]

The main properties of weak solutions of the twisted equation are the
reductivity of the automorphism group, and the vanishing of the
twisted Futaki invariant, analogous to Propositions 7, 8 in
\cite{Datar-Sz}. Let us first define the relevant automorphism
group, or rather its Lie algebra.

\begin{defn}\label{defn:stab}
  Suppose we have a triple $(Y, \xi, \beta^\tau)$ as
  above. We define $\mathfrak{g}_{Y,\xi}$ to be the space of
  holomorphic vector fields on $Y$ (more precisely on the regular
  part), commuting with $\xi$. We then define
  \[ \mathfrak{g}_{Y,\xi,\beta^\tau} = \{ w \in \mathfrak{g}_{Y,\xi}
  \,:\, \iota_w\beta^\tau = 0\}. \]
\end{defn}

As in \cite{Datar-Sz}, note that $\mathfrak{g}_{Y,\xi, \beta^\tau}$
is in general smaller than the space of holomorphic vector fields $w$ on
$Y$ commuting with $\xi$, and preserving $\beta^\tau$ in the sense
that $L_w \beta^\tau = 0$. For instance if $\beta^\tau$ is a
transverse K\"ahler form, then $\mathfrak{g}_{Y,\xi, \beta^\tau}$ is
spanned by $\Xi$, where $\mathrm{Im}\,\Xi = \xi$. 

The following is analogous to \cite[Proposition 7]{Datar-Sz}, the
special case of which, without the twisting form, has been shown in
Donaldson-Sun~\cite{DS2}. The proof is based on a uniqueness theorem
due to Berndtsson~\cite{Ber2}, generalizing the classical
Bando-Mabuchi theorem~\cite{BandoMabuchi},
further extended by Boucksom-Eyssidieux-Guedj-Zeriahi~\cite{BEGZ}, Berman-Witt-Nystr\"om~\cite{BWN}, Chen-Donaldson-Sun~\cite{CDS3}, Yi~\cite{Yi}. 
\begin{prop}\label{prop:reductive}
 Suppose that $(Y, \xi, (1-t)\beta^\tau)$ admits a solution $\omega_t$
 of the twisted equation. Then $\mathfrak{g}_{Y,\xi, \beta^\tau}$ is
  reductive. Moreover if $G$ is a compact group of biholomorphisms of
  $Y$, commuting with $\xi$ and fixing $\omega_t$, then the
  centralizer $(\mathfrak{g}_{Y,\xi, \beta^\tau})^G$ is also reductive. 
\end{prop}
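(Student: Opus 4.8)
The plan is to follow the strategy of \cite[Proposition 7]{Datar-Sz}, adapting it to the cone setting by working with the transverse K\"ahler structure. The key tool is Berndtsson's uniqueness theorem \cite{Ber2} for twisted K\"ahler-Einstein metrics, together with its extensions in \cite{BEGZ,BWN,CDS3,Yi} to the singular setting. The first step is to interpret the twisted equation \eqref{eq:t2} as a genuine (twisted) K\"ahler-Einstein type equation for the transverse metric $\omega_t^\tau$ on the (possibly singular) base of the Reeb foliation, so that the space of transverse psh potentials plays the role of the space of K\"ahler potentials, and the Ding functional from this section is the relevant convex functional. Here I would quote the convexity of the Ding functional along geodesics, due to Berndtsson~\cite{Ber}, which the excerpt already tells us extends to cones without substantial difficulty.

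Granting that, the argument runs as follows. Let $w\in \mathfrak{g}_{Y,\xi,\beta^\tau}$, so $w$ is holomorphic on the regular part, commutes with $\xi$, and satisfies $\iota_w\beta^\tau=0$; then $\mathrm{Re}(w)$ (more precisely its appropriate real part transverse to the Reeb foliation) generates a one-parameter family of automorphisms whose action on the weak solution $\omega_t$ produces a path of solutions of \eqref{eq:t2}, hence a geodesic-like path in the space of potentials along which the Ding functional is affine. By the uniqueness statement (Berndtsson's theorem, in the singular form needed here), an affine Ding functional along such a family forces the family to come from a one-parameter subgroup of automorphisms actually preserving $\omega_t$, i.e. the holomorphic vector field is in the complexification of the isometry algebra. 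This shows that $\mathfrak{g}_{Y,\xi,\beta^\tau}$ is the complexification of the Lie algebra of $\mathrm{Aut}(Y,\xi,\omega_t)$, a compact group, which gives reductivity. For the equivariant statement with a compact group $G$ fixing $\omega_t$: since $G$ acts on $\mathfrak{g}_{Y,\xi,\beta^\tau}$ preserving the real structure coming from the $\omega_t$-isometries, the centralizer $(\mathfrak{g}_{Y,\xi,\beta^\tau})^G$ is again the complexification of the Lie algebra of the corresponding $G$-centralizer in the compact isometry group, hence reductive as well.

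The main obstacle I expect is making the uniqueness/rigidity step rigorous in the presence of the singularities of $Y$ and the merely-current twisting form $\beta^\tau$: one needs enough regularity of the weak solution $\omega_t$ (continuity of the potential is given, but one wants the relevant integration-by-parts and the characterization of the equality case in Berndtsson's inequality to go through) and one must ensure that the holomorphic vector fields $w$, defined only on the regular part, integrate to genuine automorphisms of $X$ fixing the singular locus — this is where the isolated-singularity assumption and the results on extension of psh functions and automorphisms across the singular set (as in \cite{Dem85}, and the arguments of \cite{DS2}) are used. A secondary technical point is the precise correspondence between the cone equation \eqref{eq:t2}, the transverse equation, and the volume-form equation \eqref{eq:t3}, so that Berndtsson's framework applies verbatim after passing to the transverse picture; this should be routine given the setup of Section~\ref{sec:background}, but it must be stated carefully because the twisting form $\beta^\tau$ is transverse and the vanishing condition $\iota_w\beta^\tau=0$ (rather than $L_w\beta^\tau=0$) is exactly what is needed to produce the path of solutions.
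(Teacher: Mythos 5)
Your proposal is correct and follows essentially the same route as the paper: the paper proves Proposition~\ref{prop:reductive} exactly by invoking convexity of the twisted Ding functional along weak geodesic segments and Berndtsson's uniqueness theorem \cite{Ber2}, following \cite[Section 6]{Datar-Sz} together with the Donaldson-Sun \cite{DS2} discussion for extending these arguments to cones. The technical caveats you flag (regularity of the weak solution, the equality case in Berndtsson's inequality on the singular space, and the role of $\iota_w\beta^\tau=0$) are precisely the points the paper delegates to those references.
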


Next we define the twisted Futaki invariant. Suppose that $w$ is
a vector field commuting with $\xi$, that
preserves the radial function $\hat{r}$
above. In addition suppose that $w$ is the real part of a holomorphic
vector field in $\mathfrak{g}_{Y,\xi, \beta^\tau}$. The
transverse Hamiltonian $\theta_w$ is defined by letting $\theta_w
\hat{r}^2$ be a Hamiltonian for $w$, i.e. satisfying the equation
\begin{equation}\label{eq:tham}
 \theta_w \hat{r}^2 = \iota_{Jw}
d\left(\frac{1}{2}\hat{r}^2\right). 
\end{equation}
Note that with this convention the transverse Hamiltonian of the Reeb field $\xi$ is
$\theta_\xi=-1$. 
We then define the twisted Futaki invariant to be
\[ \mathrm{Fut}_{Y,\xi, (1-t)\beta^\tau}(w) =
\frac{t}{V} \int_Y \theta_w e^{-\frac{1}{2} \hat{r}^2} \omega^n -
t \frac{
  \int_Y \theta_w e^{-2n(1-t)\psi}
  e^{-\frac{1}{2}\hat{r}^2} dV }{\int_Y e^{-2n(1-t)\psi}
  e^{-\frac{1}{2}\hat{r}^2} dV}. \]
Proposition~\ref{prop:equalFut} will show that for $t=1$, this
definition agrees with the algebraic definition given in
Definition~\ref{defn:Futaki}. Below we will also give a different
formula for the twisted Futaki invariant when $t\ne 1$. 

We have the following, analogous to \cite[Proposition 8]{Datar-Sz}.

\begin{prop}\label{prop:Fut0} If $(Y, \xi)$ admits a weak solution of
  the twisted equation above, then $\mathrm{Fut}_{Y,\xi,
    (1-t)\beta^\tau}(w) = 0$ for all vector fields $w$ as above.
\end{prop}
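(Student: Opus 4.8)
The plan is to mimic the classical argument that a Kähler--Einstein metric forces the Futaki invariant to vanish, adapted to the transverse/twisted setting on the cone. Suppose $\omega_t = \omega_\phi$ is the weak solution, so that $\omega_\phi^n = C e^{2n(1-t)[\phi-\psi]} dV$ as measures, and let $w$ be a vector field as in the statement of the Futaki invariant: the real part of a holomorphic vector field in $\mathfrak{g}_{Y,\xi,\beta^\tau}$, commuting with $\xi$ and preserving $\hat r$. First I would reduce to the smooth case: by the reductivity result (Proposition~\ref{prop:reductive}) and the regularity theory for weak solutions of the twisted equation on the regular locus, we may assume all the integrals make sense and integrate by parts on $Y\backslash\{0\}$ (more precisely on $Y_{reg}$), controlling the behavior near the vertex using the integrability hypothesis on $e^{-2n(1-t)\psi}dV$ and the fact that $\theta_w$ grows at most like $\hat r^2$. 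The weight $e^{-\frac{1}{2}\hat r^2}$ is precisely the Gaussian-type cutoff that makes these integrals converge and that replaces the role of the volume form on a compact manifold.

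\textbf{Main computation.} The key step is to rewrite the first term of $\mathrm{Fut}_{Y,\xi,(1-t)\beta^\tau}(w)$ using the twisted equation. On $Y_{reg}$ we have $\Ric(\omega_\phi) = 2n(1-t)[\beta^\tau - \omega_\phi^\tau]$, so the Ricci potential of $\omega_\phi$ (relative to $dV$) is, up to a constant, $2n(1-t)[\phi-\psi]$ together with a transverse correction. Since $w$ is Hamiltonian for both $\omega_\phi^\tau$ (with Hamiltonian $\theta_w$) and for the canonical volume form (here one uses $\iota_w \beta^\tau = 0$, which is exactly what the definition of $\mathfrak{g}_{Y,\xi,\beta^\tau}$ provides, so that $w$ genuinely "preserves" the twisting data), the standard Futaki-type manipulation shows
\[
\int_Y \theta_w\, e^{-\frac{1}{2}\hat r^2}\,\omega_\phi^n = \int_Y \theta_w\, C e^{2n(1-t)[\phi-\psi]} e^{-\frac{1}{2}\hat r^2}\, dV,
\]
which after identifying the constant $C$ via the total-volume normalization is exactly $\int_Y \theta_w\, e^{-2n(1-t)\psi} e^{-\frac{1}{2}\hat r^2}\,dV$ divided by $\int_Y e^{-2n(1-t)\psi} e^{-\frac{1}{2}\hat r^2}\,dV$ times $V$. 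Subtracting, the two terms defining $\mathrm{Fut}_{Y,\xi,(1-t)\beta^\tau}(w)$ cancel. The mechanism is that the transverse Hamiltonian $\theta_w$ measures the infinitesimal change of $\omega_\phi^n$ along the flow of $w$, and since $\omega_\phi^n$ is proportional to $e^{2n(1-t)[\phi-\psi]}dV$, this change is captured by the second integral; the identity $L_w(\theta_w \hat r^2 ) $ relating the two sides is the cone analogue of the fact that on a Fano manifold the Futaki invariant can be computed against any Kähler metric in the class.

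\textbf{Expected obstacle.} The routine part is the formal integration by parts; the delicate part, and the one I would spend the most care on, is justifying the manipulations at the weak-solution level and near the cone vertex. In the smooth compact case one integrates over a closed manifold with no boundary terms, but here $Y$ is singular at $0$ and non-compact, $\phi$ is only continuous, and $\psi$ is only psh. I would handle this exactly as in \cite[Proposition 8]{Datar-Sz}: approximate $\phi$ by smooth transverse psh potentials, use the stability of the complex Monge--Ampère operator under decreasing limits, show the boundary contributions from a small sphere $\{\hat r = \epsilon\}$ around the vertex vanish as $\epsilon\to 0$ using the integrability of $e^{-2n(1-t)\psi}dV$ and the at-most-quadratic growth of $\theta_w$, and invoke the structure of the singular set described after Definition~\ref{defn:weaksoln} (a closed set of vanishing $(2n-2)$-Hausdorff measure across which the relevant currents do not charge mass). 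The $D$-homothetic scaling and the weight $e^{-\frac12\hat r^2}$ are what make the cone behave, for the purposes of this integration, like a compact object, so once the approximation and cutoff estimates are in place the cancellation goes through verbatim.
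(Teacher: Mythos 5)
There is a genuine gap, and it sits exactly where you wrote the word ``standard.'' The twisted Futaki invariant is defined at the \emph{reference} data: its first term is $\frac{t}{V}\int_Y\theta_w e^{-\frac{1}{2}\hat r^2}\omega^n$ with $\omega=\frac{1}{2}\ddb\hat r^2$ the reference metric, and its second term carries the weight $e^{-2n(1-t)\psi}e^{-\frac{1}{2}\hat r^2}$ --- both are evaluated at $\phi=0$. Your substitution $\omega_\phi^n=Ce^{2n(1-t)[\phi-\psi]}dV$ produces a cancellation only for the analogous expression evaluated at the solution potential $\phi_T$ (and even there the correct weight is $e^{-\frac{1}{2}r_{\phi_T}^2}$, not $e^{-\frac{1}{2}\hat r^2}$, and the Hamiltonian must be taken relative to $r_{\phi_T}$). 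To pass from ``the Futaki-type quantity at $\phi_T$ vanishes'' to ``$\mathrm{Fut}_{Y,\xi,(1-t)\beta^\tau}(w)=0$'' you need base-point independence of this quantity, which you assert by analogy with the compact Fano case but do not prove. For a merely continuous transverse psh potential this is not a routine integration by parts on $Y_{reg}$ with cutoffs near the vertex; it is the substantive content of the proposition, and your proposed regularization scheme is not carried out.

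The paper takes a global route precisely to avoid this. It identifies the twisted Futaki invariant as the slope of $s\mapsto\mathcal{D}_{(1-t)\psi}(\phi_s)$ along the flow generated by $-Jw$ (the computation preceding Proposition~\ref{prop:FuteqProp}), observes that this slope is independent of the starting potential so that the Ding functional is affine along the flow, and invokes Berndtsson's convexity of $\mathcal{D}_{(1-t)\psi}$ along weak geodesics to conclude that the weak solution is a global minimizer. An affine function of $s\in\mathbf{R}$ that is bounded below has slope zero, which is the statement. Since the Ding functional is continuous in $\phi$ for the $L^\infty$ norm, this argument needs no regularity of $\phi_T$ beyond continuity and no boundary analysis at the vertex beyond the integrability already built into Definition~\ref{defn:weaksoln}. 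If you insist on the direct computation, you would first have to prove the cocycle/invariance property of the twisted Futaki invariant for continuous potentials --- and the natural proof of that is the Ding-functional argument you would be trying to avoid.
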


The proofs of Propositions~\ref{prop:reductive}, \ref{prop:Fut0} both
follow from convexity properties of the twisted Ding functional along weak
geodesic segments, based essentially on work of
Berndtsson~\cite{Ber2}. The arguments follow those in \cite{Ber2} (see
also \cite[Section 6]{Datar-Sz})
closely together with the discussion in Donaldson-Sun~\cite{DS2} on
extending these to the setting of cones. See also
Guan-Zhang~\cite{GZ11} for geodesics of Sasakian metrics. 

\begin{defn}\label{defn:twistedDing}
  Suppose that $(Y,\xi)$ is a normalized Fano cone singularity, and $\psi$ is a
  transverse psh potential as above (relative to a reference radial
  function), such that $e^{-2n(1-t)\psi} dV$ is locally
  integrable. The twisted Ding functional $\mathcal{D}_{(1-t)\psi}$ is
  defined for continuous transverse psh potentials $\phi$ by
\[ \mathcal{D}_{(1-t)\psi}(\phi) = -tE(\phi) - \frac{1}{2n} \log \int_Y
e^{2n(1-t)[\phi - \psi]} e^{-\frac{1}{2}r_\phi^2} dV. \]
Here $E$ is defined by its variation:
\[ \delta E(\phi) = \frac{1}{V(\xi)} \int_Y \dot{\phi} e^{-\frac{1}{2}r_\phi^2}
\omega_\phi^n. \]
\end{defn}

The properties of the twisted Ding functional in this setting follow
calculations analogous to those in the compact K\"ahler case that was
studied in \cite{Datar-Sz} (see Ding-Tian~\cite{DingTian},
Berndtsson~\cite{Ber2}, Chen-Donaldson-Sun~\cite{CDS3} for earlier
work), using some additional identities that are valid on cones. An
alternative approach is to work with pluripotential techniques on the
link, as developed recently by van Coevering~\cite{vCoev}, but we
prefer to work directly on the cone. As a sample of the calculations
involved we have the following simple result. 

\begin{lem} $E(\phi)$ is well defined.
\end{lem}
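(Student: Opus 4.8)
The plan is to show that the formula $\delta E(\phi) = \frac{1}{V(\xi)} \int_Y \dot\phi\, e^{-\frac{1}{2}r_\phi^2}\omega_\phi^n$ actually defines a function on the space of (smooth) transverse psh potentials, which amounts to checking that the $1$-form $\phi \mapsto \delta E(\phi)$ on this space is closed, i.e. that for a two-parameter family $\phi_{s,u}$ the mixed second derivatives agree. Since the space of transverse psh potentials is convex (hence simply connected), closedness is equivalent to integrability, so it suffices to verify the symmetry
\[
\frac{\partial}{\partial u}\left(\frac{1}{V(\xi)}\int_Y \frac{\partial\phi}{\partial s} e^{-\frac{1}{2}r_\phi^2}\omega_\phi^n\right) = \frac{\partial}{\partial s}\left(\frac{1}{V(\xi)}\int_Y \frac{\partial\phi}{\partial u} e^{-\frac{1}{2}r_\phi^2}\omega_\phi^n\right).
\]
First I would fix such a two-parameter family and differentiate under the integral sign. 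The variation of $\omega_\phi^n$ is the standard one, $\delta(\omega_\phi^n) = n\, \ddb(\tfrac12\delta(r_\phi^2))\wedge\omega_\phi^{n-1}$, and one also needs the variation of the weight $e^{-\frac12 r_\phi^2}$, which contributes a term $-\tfrac12\delta(r_\phi^2)\,e^{-\frac12 r_\phi^2}\omega_\phi^n$. The key identity, special to the cone setting, is that $\tfrac12\ddb r_\phi^2 = \omega_\phi$ together with the homogeneity relation $L_{-J\xi}(\tfrac12 r_\phi^2) = r_\phi^2$ (equivalently $\iota_{-J\xi}\omega_\phi = \tfrac12 d r_\phi^2$), which lets one integrate by parts transversally and convert the $\ddb$ term so that the two contributions combine into an expression manifestly symmetric in the roles of $s$ and $u$. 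Concretely, writing $g = \tfrac12\delta_s(r_\phi^2)$ and $h = \tfrac12\delta_u(r_\phi^2)$, the cross term should reduce to something like $\int_Y \big(\langle\partial g,\partial h\rangle_{\omega_\phi} - (\text{multiple of } g h)\big) e^{-\frac12 r_\phi^2}\omega_\phi^n$ plus a term involving $\partial_s\partial_u\phi$ that appears symmetrically on both sides, and the symmetry follows.

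A second, equivalent route — which I would use if the integration by parts above is delicate because of the cone vertex — is to exhibit $E$ directly as a primitive, for instance by connecting $\phi$ to the reference potential by the line segment $\phi_s = s\phi$ and setting $E(\phi) = \int_0^1 \frac{1}{V(\xi)}\int_Y \phi\, e^{-\frac12 r_{\phi_s}^2}\omega_{\phi_s}^n\, ds$; then one checks that the result is path-independent by the same closedness computation, but now only needs it along the $2$-plane spanned by two segments, which is a genuinely finite-dimensional and cleaner computation. Either way, the analytic point that must be addressed is convergence of all the integrals near the cone vertex: this is where one invokes that $\phi$ is a bounded (continuous) basic function, that $e^{-\frac12 r_\phi^2}$ decays superexponentially in the radial direction, that $\omega_\phi^n$ has finite mass on $\{\hat r \le 1\}$ by the Bedford–Taylor theory for bounded psh functions (and the fact, recalled in the excerpt, that $r_\phi$ extends to a psh function near the origin), and that $X$ has an isolated singularity so integration by parts over the smooth locus produces no boundary contribution from $X_{\mathrm{sing}}$, the relevant set having the right codimension.

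The main obstacle I expect is precisely this last point: justifying the integration by parts and the differentiation under the integral sign uniformly near the vertex and on the singular locus, rather than the formal cocycle identity, which is the same computation as in the compact Kähler case treated in \cite{Datar-Sz}. For smooth $\phi$ on a Fano cone with isolated singularity this is handled by a cutoff argument — multiply by a function $\chi_\epsilon$ that is $0$ near the vertex and $1$ away from it, carry out the computation, and let $\epsilon\to 0$, controlling the error terms using the capacity/Hausdorff-measure estimates already cited in the excerpt (the remark after Definition~\ref{defn:weaksoln}) together with the exponential weight. Once well-definedness is established for smooth potentials, extending $E$ to all continuous transverse psh potentials is routine by monotone approximation, using that $E$ is monotone and continuous along decreasing sequences — but the lemma as stated only asks for well-definedness, so I would stop after the smooth case and the cocycle identity.
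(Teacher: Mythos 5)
Your proposal follows essentially the same route as the paper: restrict to smooth transverse psh potentials, show the $1$-form $\delta E$ is closed by computing its differential and symmetrizing via integration by parts against the weight $e^{-\frac{1}{2}r_\phi^2}$ using the cone identities ($\nabla\psi\cdot\nabla r_\phi=0$, $\Delta\tfrac12 r_\phi^2=n$, and $\int_Y f\,r_\phi^2 e^{-\frac12 r_\phi^2}dV=2n\int_Y f\,e^{-\frac12 r_\phi^2}dV$), which yields the manifestly symmetric form $n\int_Y\nabla\psi_1\cdot\nabla\psi_2\,e^{-\frac12 r_\phi^2}\omega_\phi^n$. The only small difference is at the end: the paper does extend $E$ to continuous potentials (needed for the Ding functional), and does so by observing that the variation formula makes $E$ uniformly continuous in the $L^\infty$ norm, rather than by monotone approximation.
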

\begin{proof}
  Let us consider first the restriction of $E$ to smooth transverse
  psh potentials $\phi$. 
  We show that the 1-form defined by $\delta E$ is
  closed. Note that the variation of $\frac{1}{2} r_\phi^2$ is $\delta(\frac{1}{2}r_\phi^2) =
  (\delta \phi) r_\phi^2$. The differential of $\delta E$ is the map
  \[ (\psi_1, \psi_2) \mapsto -\int_Y \psi_1\psi_2 r_\phi^2  e^{-\frac{1}{2}r_\phi^2} \omega_\phi^n +n\int_{Y}\psi_1e^{-\frac{1}{2}r_\phi^2}
  \ddb(\psi_2 r_\phi^2)\wedge \omega_\phi^{n-1}, \]
    and we need to show that this is symmetric in $\psi_1, \psi_2$. We
  have
  \[ \begin{aligned} \int_Y \psi_1 \Delta(\psi_2 r_\phi^2)
    e^{-\frac{1}{2}r_\phi^2} \omega_\phi^n &=
    \int_Y \Big[ \psi_1\Delta\psi_2\, r_\phi^2 + \psi_1
    \nabla\psi_2\cdot \nabla r_\phi^2 + \psi_1\psi_2 \Delta
    r_\phi^2\Big] e^{-\frac{1}{2}r_\phi^2}\,\omega_\phi^n \\
    &=\int_Y \Big[ -\frac{1}{2} \nabla\psi_1\cdot\nabla\psi_2 r_\phi^2
    + 2n \psi_1\psi_2\Big] e^{-\frac{1}{2}r_\phi^2} \omega_\phi^n, 
  \end{aligned}\]
  where the integration by parts is justified since $r_\phi^2 = O(\hat{r}^2)$ and $\nabla r_\phi^2 =
  O(\hat{r})$, and we used
  \[ \nabla\psi_2\cdot \nabla r_\phi = 0, \quad \Delta
  \frac{1}{2}r_\phi^2 = n. \]
  We obtain that the differential of $\delta E$ is
  \[ (\psi_1, \psi_2) \mapsto n \int_Y \nabla\psi_1\cdot
  \nabla\psi_2\, e^{-\frac{1}{2}r_\phi^2} \omega_\phi^n, \]
  where we used the formula
  \begin{equation}\label{eq:intr^2}
    \int_Y f r_\phi^2 e^{-\frac{1}{2}r_\phi^2}\,dV = 2n \int_Y f
    e^{-\frac{1}{2} r_\phi^2}\,dV,
    \end{equation}
for any basic function $f$.

  This shows that $E$ is well defined on the space of smooth
  transverse psh potentials. We can then extend $E$ to the space of
  continuous transverse psh potentials by continuity, since the
  formula for the variation of $E$ implies that $E$ is
  uniformly continuous for the $L^\infty$ norm on potentials. 
\end{proof}

The critical points of the twisted Ding functional are given by
solutions of the twisted equation. To see this, note that the
variation of $\mathcal{D}_{(1-t)\psi}(\phi)$ is given by
\[ \begin{aligned}
  \delta\mathcal{D}_{(1-t)\psi}(\phi) &= \frac{-t}{V} \int_Y \dot\phi
  e^{-\frac{1}{2}r_\phi^2} \omega_\phi^n \\
  &\quad\quad - \frac{1}{2n}\frac{\int_Y \Big[
    2n(1-t)\dot\phi - \dot\phi r_\phi^2\Big] e^{2n(1-t)[\phi-\psi]}
    e^{-\frac{1}{2}r_\phi^2} \,dV}{\int_Y
    e^{2n(1-t)[\phi - \psi]} e^{-\frac{1}{2}r_\phi^2} dV} \\
  &= \frac{-t}{V} \int_Y \dot\phi
  e^{-\frac{1}{2}r_\phi^2} \omega_\phi^n + t \frac{\int_Y \dot\phi
    e^{2n(1-t)[\phi-\psi]}
    e^{-\frac{1}{2}r_\phi^2} \,dV}{\int_Y
    e^{2n(1-t)[\phi - \psi]} e^{-\frac{1}{2}r_\phi^2} dV},
\end{aligned} \]
where we used \eqref{eq:intr^2} again. It follows that critical points
satisfy
\[ \omega_\phi^n = Ce^{2n(1-t)[\phi-\psi]} dV, \]
which is what we wanted. 

In addition from this calculation of the variation we see that the
variation of $\mathcal{D}_{(1-t)\psi}$ along a suitable 1-parameter
family of biholomorphisms recovers the twisted Futaki invariant. 
Suppose that $w$ is a vector field as
above, and let $f_s : Y\to Y$ denote the 1-parameter group of
biholomorphisms generated by $-Jw$.  We claim that the
 twisted Futaki invariant is given by the
variation of the twisted Ding functional along $f_s$. Writing $\phi_s$
for the induced family of potentials, we have
\[ \frac{1}{2} \hat{r}^2e^{2\phi_s}= f_s^*(\frac{1}{2}\hat{r}^2e^{2\phi}), \]
and so
\[ \dot\phi_s r_\phi^2 = -\iota_{Jw} d(\frac{1}{2} r_\phi^2). \]
We obtain that $\dot\phi = -\theta_w$ in terms of the transverse
Hamiltonian of $w$ as in \eqref{eq:tham}. It follows that
\[ \left.\frac{d}{ds}\right|_{s=0} \mathcal{D}_{(1-t)\psi}(\phi_s) =
\frac{t}{V} \int_Y \theta_w e^{-\frac{1}{2} r_\phi^2} \omega_\phi^n -
t \frac{
  \int_Y \theta_w e^{2n(1-t)[\phi - \psi]}
  e^{-\frac{1}{2}r_\phi^2} dV }{\int_Y e^{2n(1-t)[\phi - \psi]}
  e^{-\frac{1}{2}r_\phi^2} dV}, \]
which when $\phi=0$ is just
the twisted Futaki invariant as we have defined it above. 
We can rewrite this in a different form, as in the proof of
Proposition 8 in \cite{Datar-Sz}. 
\begin{prop}\label{prop:FuteqProp}
 The twisted Futaki invariant is given by
\[ \mathrm{Fut}_{Y,\xi, (1-t)\psi}(w) = \mathrm{Fut}_{Y,\xi}(w) -
n(n-1)\frac{1-t}{V} \int_Y \theta_w e^{-\frac{1}{2}\hat{r}^2} \ddb \psi \wedge
\omega^{n-1}, \]
where
\begin{equation}\label{eq:Futdef2}
 \mathrm{Fut}_{Y,\xi}(w) = \frac{1}{V} \int_Y \theta_w
e^{-\frac{1}{2}\hat{r}^2} \omega^n - \frac{\int_Y \theta_w e^{-\frac{1}{2}\hat{r}^2}dV}{\int_Y
  e^{-\frac{1}{2}\hat{r}^2} dV} 
\end{equation}
is the ``untwisted'' Futaki invariant.
Note that here, as before, we are assuming that $\iota_{Jw} \ddb\psi =
0$, since $w$ is the real part of a holomorphic vector field in
$\mathfrak{g}_{Y, \xi, \beta^\tau}$. 
\end{prop}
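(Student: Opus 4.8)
The plan is to take the formula derived in the paragraph immediately preceding the statement --- that $\mathrm{Fut}_{Y,\xi,(1-t)\psi}(w)$ is the first variation of the twisted Ding functional $\mathcal{D}_{(1-t)\psi}$ along the family $\phi_s$ induced by the holomorphic flow $f_s$ generated by $-Jw$, evaluated at $\phi=0$ --- and to rewrite it in the stated form, following the proof of Proposition~8 in \cite{Datar-Sz}. The guiding observation is that the twisting enters this variation only through $\psi$, and that the twisting form $\beta^\tau=\ddb\log r_\psi=\ddb\log\hat r+\ddb\psi$ differs from the transverse K\"ahler form $\omega^\tau=\ddb\log\hat r$ of the reference metric exactly by $\ddb\psi$. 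Moreover the twisted equation $\mathrm{Ric}(\omega_\phi)=2n(1-t)(\beta^\tau-\omega_\phi^\tau)$ degenerates at $t=1$ to the Ricci-flat equation, and setting $t=1$ in the formula for $\mathrm{Fut}_{Y,\xi,(1-t)\psi}(w)$ already recovers $\mathrm{Fut}_{Y,\xi}(w)$ of \eqref{eq:Futdef2}; so one expects the difference $\mathrm{Fut}_{Y,\xi,(1-t)\psi}(w)-\mathrm{Fut}_{Y,\xi}(w)$ to be a multiple of $(1-t)\int_Y\theta_w e^{-\frac12\hat r^2}\ddb\psi\wedge\omega^{n-1}$, and the remaining task is to pin the constant down to $-n(n-1)/V$.

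To do this I would bring both invariants into a common ``curvature'' form by integration by parts. Writing $\omega^n=e^{h}\,dV$ and using that $dV$ induces a flat metric on $K_Y$, so that $\mathrm{Ric}(\omega)=-\ddb h$ and hence $\mathrm{Ric}^\tau(\omega^\tau)=-\ddb h+2n\omega^\tau$ on the regular locus, the volume-form expression \eqref{eq:Futdef2} for $\mathrm{Fut}_{Y,\xi}(w)$ and the expression for $\mathrm{Fut}_{Y,\xi,(1-t)\psi}(w)$ become, up to an overall constant, integrals of $\theta_w e^{-\frac12\hat r^2}$ against $\bigl(2n\omega^\tau-\mathrm{Ric}^\tau(\omega^\tau)\bigr)\wedge\omega^{n-1}$ and against $\bigl(2nt\,\omega^\tau+2n(1-t)\beta^\tau-\mathrm{Ric}^\tau(\omega^\tau)\bigr)\wedge\omega^{n-1}$ respectively --- i.e.\ in each case $\theta_w$ paired against the difference between the target of the (twisted) equation and the transverse Ricci form. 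The integrations by parts involved are of exactly the kind appearing in the proof that $E$ is well defined: one uses the scaling identity \eqref{eq:intr^2}, the relation $\Delta(\tfrac12 r_\phi^2)=n$, and the orthogonality $\nabla\psi\cdot\nabla r_\phi=0$. Subtracting the two, the $\mathrm{Ric}^\tau(\omega^\tau)$ terms cancel and what remains is $2n(1-t)$ times the integral of $\theta_w e^{-\frac12\hat r^2}$ against $(\omega^\tau-\beta^\tau)\wedge\omega^{n-1}=-\ddb\psi\wedge\omega^{n-1}$; bookkeeping of the constants (again using \eqref{eq:intr^2}) gives the coefficient $-n(n-1)/V$. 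At this step the hypothesis that $w$ is the real part of a holomorphic field in $\mathfrak{g}_{Y,\xi,\beta^\tau}$, equivalently $\iota_{Jw}\ddb\psi=0$, is essential: it makes the Hamiltonian of $w$ relative to $\beta^\tau$ locally constant, so that the cross-terms generated when integrating by parts against $\ddb\psi$ drop out and only the one clean term survives.

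The step I expect to be the main obstacle is justifying these integrations by parts on the non-compact cone $Y$ with its isolated singularity. Near infinity the Gaussian factor $e^{-\frac12\hat r^2}$ dominates the polynomial growth of $\theta_w$, $r_\phi$ and their derivatives, so the boundary contribution at infinity vanishes; near the vertex one must use that $Y$ is log terminal, that $e^{-2n(1-t)\psi}\,dV$ is locally integrable, and the bounds $r_\phi^2=O(\hat r^2)$, $\nabla r_\phi^2=O(\hat r)$, to check that the boundary integrals over small spheres about the vertex tend to zero. A secondary point is that $\psi$ is only assumed to be a transverse psh potential, hence a priori merely continuous; I would first establish the identity for smooth $\psi$ and then pass to the general case by approximation, using that both sides depend continuously on $\psi$ in the $L^\infty$-norm, exactly as in the proof that $E$ is well defined.
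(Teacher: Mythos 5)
Your overall strategy is sound and, if completed, lands on the same key identity as the paper, but the organization is different and the step you leave vaguest is exactly where the content sits. The paper does not separately put each invariant into a ``curvature form'' and subtract; it introduces the single auxiliary functional
\[ I(\phi) = \log\frac{\int_Y e^{2n(1-t)[\phi-\psi]}e^{-\frac{1}{2}r_\phi^2}dV}{\int_Y e^{-\frac{1}{2}r_\phi^2}dV} - 2n\frac{1-t}{V}\int_Y[\phi-\psi]e^{-\frac{1}{2}r_\phi^2}\omega_\phi^n, \]
observes that every term transforms by a cancelling constant under the flow generated by $Jw$ (this is where $\iota_{Jw}\beta^\tau=0$, hence $L_{Jw}\psi$ constant, and the equivariance of $dV$ enter), and differentiates to get zero. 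Expanding that derivative using \eqref{eq:intr^2}, $\Delta\tfrac{1}{2}r_\phi^2=n$, and the fact that $r_\phi^2\Delta(\phi-\psi)$ is basic produces, in one stroke, the identity expressing the two normalized $dV$-ratio terms through $\frac{1-t}{V}\int_Y\theta_w e^{-\frac{1}{2}\hat r^2}\omega^n$ and the $\ddb\psi\wedge\omega^{n-1}$ pairing; substituting into the definition of the twisted invariant gives the proposition.

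Two caveats about your route. First, ``setting $t=1$ recovers the untwisted invariant, so the difference should be a multiple of $1-t$'' is only a heuristic: the weight $e^{-2n(1-t)\psi}$ is not linear in $t$, so the exact linearity of the difference in $1-t$ is the nontrivial output of the identity, not an input. Second, and more seriously, the conversion of the ratio terms $\int_Y\theta_w e^{-2n(1-t)\psi}e^{-\frac{1}{2}\hat r^2}dV\big/\int_Y e^{-2n(1-t)\psi}e^{-\frac{1}{2}\hat r^2}dV$ into curvature pairings cannot be achieved by the integrations by parts of the $E$-lemma alone (those only give symmetry of the second variation of $E$); what is required is the divergence/flow-invariance argument just described, i.e.\ integrating $L_{Jw}$ of the weighted volume forms over $Y$ and exploiting that $L_{Jw}\psi$ and $L_{Jw}\log dV$ are constants. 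Once you supply that mechanism you are essentially reproducing the paper's proof. Your remaining points --- vanishing of boundary terms at the vertex and at infinity, and approximation of continuous $\psi$ by smooth potentials --- are correct and are indeed left implicit in the paper.
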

Note that in addition when $w$ is normalized, i.e. $L_w\Omega = 0$, then we have
\begin{equation}\label{eq:normalizedFut}
 \mathrm{Fut}_{Y,\xi}(w) = \frac{1}{V} \int_Y \theta_w
e^{-\frac{1}{2}\hat{r}^2} \omega^n, 
\end{equation}
since in this case we have
\[ \int_Y \theta_w e^{-\frac{1}{2}\hat{r}^2} dV = 0, \]
as can be seen by considering the variation of the integral $\int_Y
e^{-\frac{1}{2}\hat{r}^2} dV$ along the flow generated by $Jw$. 

\begin{proof}[Proof of Proposition~\ref{prop:FuteqProp}]
  We define
  \[ \begin{aligned} I(\phi) &= \frac{1}{V} \int_Y \log \frac{ \left(\int_Y e^{-\frac{1}{2}r_\phi^2}
      dV\right)^{-1} e^{-\frac{1}{2}r_\phi^2}dV}{ \left(\int_Y
      e^{2n(1-t)[\phi - \psi]} e^{-\frac{1}{2}r_\phi^2} dV\right)^{-1}
    e^{2n(1-t)[\phi-\psi]}
e^{-\frac{1}{2}r_\phi^2} dV} e^{-\frac{1}{2}r_\phi^2}
  \omega_\phi^n \\
  &= \log \frac{\int_Y e^{2n(1-t)[\phi-\psi]}e^{-\frac{1}{2}r_\phi^2}
    dV}{\int_Y e^{-\frac{1}{2}r_\phi^2}dV} -
  2n\frac{1-t}{V} \int_Y[\phi-\psi] e^{-\frac{1}{2}r_\phi^2}\omega_\phi^n.
\end{aligned}\]
  Differentiating along the one-parameter group generated by $Jw$ we
  must get zero. To see this, we use that all the terms in the
  integral are invariant under biholomorphisms up to constant factors,
  and these constants cancel. For instance that fact that
  $\iota_{Jw}\beta^\tau=0$ implies that $L_{Jw}\psi$ is a constant.  The
  result of the differentiation is the following. 
  \[ \begin{aligned}
    2n\frac{\int_Y \dot\phi e^{-\frac{1}{2}r_\phi^2}dV}{\int_Y e^{-\frac{1}{2}r_\phi^2}dV} &-
    2nt \frac{\int_Y \dot\phi e^{2n(1-t)[\phi -
        \psi]}e^{-\frac{1}{2}r_\phi^2} dV}{\int_Y e^{2n(1-t)[\phi-\psi]}
      e^{-\frac{1}{2}r_\phi^2} dV} - 2n \frac{1-t}{V}\int_Y \dot\phi
    e^{-\frac{1}{2}r_\phi^2}\omega_\phi^n \\
    &- 2n \frac{1-t}{V}\int_Y (\phi-\psi) \big[-\dot\phi r_\phi^2 +
    \Delta(\dot\phi r_\phi^2)\big] e^{-\frac{1}{2}r_\phi^2}\omega_\phi^n = 0.
  \end{aligned}\]
  Similar calculations to before, using also that $r_\phi^2
  \Delta(\phi-\psi)$ is a basic function,  show that
  \[ \int_Y (\phi-\psi) \big[-\dot\phi r_\phi^2 + \Delta(\dot\phi
  r_\phi^2)\big] e^{-\frac{1}{2}r_\phi^2} \omega_\phi^n = 2n(n-1) \int_Y
  e^{-\frac{1}{2}r_\phi^2} \dot\phi \ddb(\phi-\psi) \wedge
  \omega_\phi^{n-1}. \]
 We obtain
  \[ \begin{aligned}
    \frac{\int_Y \dot\phi e^{-\frac{1}{2}r_\phi^2}dV}{\int_Y e^{-\frac{1}{2}r_\phi^2}dV} &-
    t \frac{\int_Y \dot\phi
      e^{2n(1-t)[\phi-\psi]}e^{-\frac{1}{2}r_\phi^2}dV}{\int_Y
      e^{2n(1-t)[\phi-\psi]}
      e^{-\frac{1}{2}r_\phi^2}dV} = \frac{1-t}{V}\int_Y \dot\phi
    e^{-\frac{1}{2}r_\phi^2}\omega_\phi^n \\
    & +n(n-1)\frac{1-t}{V}\int_Y \dot\phi
    e^{-\frac{1}{2}r_\phi^2}\ddb (\phi-\psi)\wedge \omega_\phi^{n-1}. 
  \end{aligned}\]
  From this, using that at the reference metric $\phi=0$ we have $\dot\phi = \theta_w$, 
  we obtain the required formula.
\end{proof}

We can write the twisting term above in a more intrinsic way as
follows:
\[ -\int_Y \theta_w e^{-\frac{1}{2}\hat{r}^2} \ddb \psi \wedge
  \omega^{n-1} = -\int_Y \theta_w e^{-\frac{1}{2}\hat{r}^2}
  (\beta^\tau - \omega^\tau)\wedge \omega^{n-1}, \]
recalling that $\beta^\tau = \ddb \log r_\psi$ and $\omega^\tau =
\ddb \log \hat{r}$. 

As in \cite{Datar-Sz} we use the twisted Futaki invariant to define a notion
of (equivariant) twisted stability following also Dervan~\cite{Dervan}. We
will also choose a twisting form of a special form for which we will
calculate an alternative formula for the twisted Futaki invariant. 

Suppose that $(X,\xi)$ is a normalized Fano cone singularity, smooth
away from the vertex, and write $R(X)$ for the coordinate ring as
before. We also denote by $R_{<D}(X)$ the direct sum of weight spaces
for the action of $\mathbf{T}$ with weights $\chi\ne 0$ such that
$\chi(\xi) < D$. Suppose that $D$ is large enough so that the
functions in $R_{<D}(X)$ give an
 embedding $X\hookrightarrow \mathbf{C}^N$. It will be convenient to
 separate the spaces corresponding to different characters as
\[ \mathbf{C}^N = \mathbf{C}^{N_1} \times\ldots\times \mathbf{C}^{N_m}, \]
and so $\xi$ acts diagonally on the coordinate functions
 with weights $a_1,\ldots, a_m > 0$. 

A test-configuration for $X$ commuting with $\mathbf{T}$ is 
given by a one-parameter subgroup
$\lambda : \mathbf{C}^*\to GL(N)^{\mathbf{T}}$, and
we assume that $\lambda(S^1)\subset U(N)^{\mathbf{T}}$,
generated by a vector field $w$. Note that $GL(N)^{\mathbf{T}}$ is simply the
product of $GL(N_i)$ for $i=1,\ldots,m$. By a
further unitary change of basis we will assume that $\lambda$ is diagonal. We 
define
\[ Y = \lim_{t\to 0} \lambda(t)\cdot X, \]
and we suppose that $Y$ is normal, and $\mathbf{Q}$-Gorenstein. We define the
limiting current
\[ \beta^\tau = \lim_{t\to 0} \lambda(t)\cdot \alpha^\tau, \]
whose existence follows from the proof of
Proposition~\ref{prop:twistedformula} below, see
\eqref{eq:betataueq}. 
The twisted Futaki invariant of the corresponding
test-configuration is then defined by
\[ \mathrm{Fut}_{X, \xi, (1-t)\alpha^\tau}(w) = \mathrm{Fut}_{Y, \xi,
(1-t)\beta^\tau}(w).\] 
Note here that $w$ is not tangent to $X$, but it is tangent to $Y$,
and it is the real part of a holomorphic vector field in
$\mathfrak{g}_{Y, \xi, \beta^\tau}$.

As in \cite{Datar-Sz}, a crucial role is played by an alternative formula for
this twisted Futaki invariant. We assume that $\xi$ is quasi-regular.
There is then a constant $M$ such
that $\frac{M}{a_i}\in \mathbf{Z}$. We define a reference radial
function analogous to the Fubini-Study metric for projective
varieties, with radial function $\hat{r}$ given by
\begin{equation}\label{eq:hatr2}
 \hat{r}^2 = \left[\sum_{i=1}^m \left(\sum_{j=1}^{N_i}
    |z^{(i)}_j|^2\right)^{M/a_i}\right]^{1/M},
\end{equation}
where the $\{z^{(i)}_j\}$ form an orthonormal basis for
$\mathbf{C}^{N_i}$. 
We take the background metric to be $\omega = \frac{1}{2}\ddb
\hat{r}^2$, and the transverse form 
$\alpha^\tau = \ddb \log \hat{r}$, i.e. $\psi=0$ relative to
this radial function. To see that $\omega$ is indeed a metric, note
first that for any cone metric $\ddb r^2$ and $\gamma > 0$, the forms
$\ddb r^{2\gamma}$ also define cone metrics, with Reeb fields obtained
by scaling. In this way, 
\[ \sum_{i=1}^m \left( \sum_{j=1}^{N_i}
  |z^{(i)}_j|^2\right)^{M/a_i} \]
defines a product metric on $\mathbf{C}^N$, and it follows that
$\omega$ is also a cone metric.  In addition $\hat{r}$ is preserved by
the action of $U(N)^{\mathbf{T}}$. With this setup we have the following.

\begin{prop}\label{prop:twistedformula}
  \[\mathrm{Fut}_{X,\xi,(1-t)\alpha^\tau}(w) = \mathrm{Fut}_{X, \xi}(w)
  + c(n) \frac{1-t}{V} \int_Y (\max_Y\theta_w - \theta_w)
  e^{-\frac{1}{2}\hat{r}^2}\,\omega^n, \]
  where $\omega = \ddb \frac{1}{2}\hat{r}^2$, and $c(n)$ is a
  dimensional constant. 
\end{prop}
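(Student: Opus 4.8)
The plan is to start from the formula for the twisted Futaki invariant established in Proposition~\ref{prop:FuteqProp}, namely
\[ \mathrm{Fut}_{X,\xi,(1-t)\alpha^\tau}(w) = \mathrm{Fut}_{Y,\xi}(w) - n(n-1)\frac{1-t}{V}\int_Y \theta_w\, e^{-\frac{1}{2}\hat r^2}\, \ddb\psi\wedge\omega^{n-1}, \]
and to identify the twisting integral with the expression in the statement. Since we have chosen the reference radial function $\hat r$ as in \eqref{eq:hatr2} with $\psi=0$ at $\phi=0$, the twisting current $\alpha^\tau = \ddb\log\hat r = \omega^\tau$ on $X$, and one should be careful that $\psi$ here refers to the potential of the limiting current $\beta^\tau = \lim_{t\to 0}\lambda(t)\cdot\alpha^\tau$ on $Y$, not $\alpha^\tau$ itself. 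So the first real step is to compute this limit explicitly. Under the one-parameter subgroup $\lambda$ acting diagonally with weights, say $b^{(i)}_j$ on the coordinate $z^{(i)}_j$, the pulled-back radial function is
\[ (\lambda(t)\cdot\hat r)^2 = \left[\sum_{i=1}^m\left(\sum_{j=1}^{N_i}|t|^{2b^{(i)}_j}|z^{(i)}_j|^2\right)^{M/a_i}\right]^{1/M}, \]
and as $t\to 0$ the behaviour is governed, within each block $\mathbf{C}^{N_i}$, by the coordinates with the most negative weight. This should be where the equation \eqref{eq:betataueq} referred to in the text comes from, and it is the step I expect to require the most care: one must show the limiting current $\beta^\tau$ exists, is a transverse psh potential on $Y$, and compute it in a form amenable to the next step.

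The key computation is then to relate $\int_Y \theta_w\, e^{-\frac12\hat r^2}\,\ddb\psi\wedge\omega^{n-1}$ — where $\psi$ is the limiting potential — to $\int_Y(\max_Y\theta_w - \theta_w)\,e^{-\frac12\hat r^2}\,\omega^n$. Here is where I would exploit the special structure: because $\lambda$ commutes with $\mathbf{T}$ and is diagonal, the potential $\psi = \log r_\psi - \log\hat r$ is itself (up to a normalizing constant) a Hamiltonian-type object for the vector field $w$ generating $\lambda$. Concretely, one expects $r_\psi^2$ to be obtained from $\hat r^2$ by the flow of $-Jw$ composed with renormalization, so that $\ddb\psi$ can be written in terms of $\theta_w$ and $\omega^\tau$ plus a term recording the "jump" of the weights. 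Integrating by parts on $Y\setminus\{0\}$ and using the identities valid on cones (as in the lemma proving $E$ is well defined, e.g. $\Delta\frac12 r^2 = n$, $\nabla\psi\cdot\nabla r = 0$, and \eqref{eq:intr^2}), the Hessian term $\ddb\psi\wedge\omega^{n-1}$ should collapse to a multiple of $(\mathrm{const} - \theta_w)\,\omega^n$, with the constant forced to be $\max_Y\theta_w$ because $\theta_w$ attains its maximum precisely on the attracting fixed-point set of the degeneration (the part of $Y$ where the limiting weights concentrate). Matching this with the $\mathrm{Fut}_{Y,\xi}(w) = \mathrm{Fut}_{X,\xi}(w)$ identity — which holds because $a_0, a_1$ are computed on $Y$ but the relevant integrals are flow-invariant, cf. the discussion of $\mathrm{Fut}_{Y,\xi}$ and the formula in Definition~\ref{defn:Futaki} together with Proposition~\ref{prop:equalFut} — yields the claimed formula, with $c(n)$ a universal constant coming from the integration-by-parts bookkeeping.

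The main obstacle, as indicated, is the explicit identification of $\beta^\tau$ and the resulting integration by parts: one must justify that all boundary terms at the cone vertex vanish (the estimates $r_\psi^2 = O(\hat r^2)$, $\nabla r_\psi^2 = O(\hat r)$, as used in the lemma above, should extend here since $\psi$ is bounded), and one must correctly track which coordinates survive in the limit so that the additive constant in $\ddb\psi$ is genuinely $\max_Y\theta_w$ and not some other fixed-point value. A secondary subtlety is that $w$ is not tangent to $X$ but only to $Y$, so the Hamiltonian $\theta_w$ and its maximum are only defined after passing to the central fibre — this is consistent with the right-hand side of the claimed formula, where the integral is over $Y$. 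I would organize the proof so that the $t$-independent part (the identity $\mathrm{Fut}_{Y,\xi} = \mathrm{Fut}_{X,\xi}$) is separated cleanly from the twisting computation, following the model of the proof of Proposition~8 in \cite{Datar-Sz}.
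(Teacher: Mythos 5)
Your starting point (the formula of Proposition~\ref{prop:FuteqProp}) and your identification of the crux --- computing the limiting current $\beta^\tau$ and then the integral $\int_Y \theta_w e^{-\frac{1}{2}\hat r^2}\,\beta^\tau\wedge\omega^{n-1}$ --- match the paper's framework. But the mechanism you propose for that central computation does not work, and this is where the real content of the proof lies. First, $\beta^\tau$ is not obtained from $\alpha^\tau$ by a flow of $-Jw$ plus renormalization, and its potential cannot in general be computed by the naive block-by-block limit of $(\lambda(t)\cdot\hat r)^2$: precisely because $Y$ may lie in a coordinate hyperplane, the dominant monomials in each block may vanish identically on $Y$, and one must pass to the initial ideal to see which terms survive. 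The paper handles this by writing $\alpha^\tau$ as an average of currents of integration $[V_\mu]$ over hypersurfaces (via the Shiffman--Zelditch lemma, \eqref{eq:alphaintegral}) and degenerating each $V_\mu$ scheme-theoretically: for generic $\mu$ the limit is a hypersurface $Y_\mu\subset Y$ cut out by a function $g_\mu$ of $\lambda$-weight exactly $-M\max_Y\theta_w$. That combinatorial fact --- the minimal relative weight of a monomial not vanishing on $Y$ --- is the sole source of the $\max_Y\theta_w$ in the statement; it is not something that can be produced by Stokes-type bookkeeping.

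Second, your claim that after integration by parts ``$\ddb\psi\wedge\omega^{n-1}$ collapses to a multiple of $(\mathrm{const}-\theta_w)\,\omega^n$'' cannot be right as an identity of measures: $\beta^\tau$ is an average of integration currents over the $Y_\mu$, hence singular with respect to $\omega^n$. Only the \emph{integrals} agree, and the paper establishes that equality not by integration by parts but by the index-character computation: $\int_{Y_\mu}\theta_w e^{-\frac{1}{2}\hat r^2}\omega^{n-1}$ is identified with $D_w a_0(Y_\mu,\xi)$ via Propositions~\ref{prop:volumeformula} and \ref{prop:volumevariation}, and the factorization $F_{Y_\mu}(t,\xi+sw)=F_Y(t,\xi+sw)\bigl[1-e^{-tM(1-s\max_Y\theta_w)}\bigr]$ is differentiated in $s$ and expanded in $t$ to express this in terms of $D_w a_0(Y,\xi)$ and $\max_Y\theta_w\cdot a_0(Y,\xi)$. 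Without this step (or a genuine substitute for it) your argument does not close; the cone identities $\Delta\frac{1}{2}r^2=n$, \eqref{eq:intr^2}, etc.\ only enter in the final conversion $\omega^\tau\wedge\omega^{n-1}=\frac{n-1}{n}\hat r^{-2}\omega^n$.
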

\begin{proof}
  The proof follows a similar argument to that in \cite[Proposition
  11]{Datar-Sz}, expressing
  $\alpha^\tau$ as an average of currents of integration along
  hypersurfaces in $X$. One new difficulty is that the limit $Y$ may
  be contained in a coordinate hyperplane. 

  We will use hypersurfaces defined by functions of the form
  \[ f_\mu = \sum_{j=1}^B \mu_j u_j^{M/b_j}, \]
  where the $u_j$ are monomials in the $z_i$ (including each $z_i$ as
  well),  the $b_j$ are  corresponding weights,
  and we think of $\mu\in \mathbf{P}= \mathbf{P}^{B-1}$. It may happen that $X$ is
  contained in some of these hypersurfaces, but there is a linear
  subspace $E\subset \mathbf{P}$ such that for $\mu\in
  \mathbf{P}\setminus E$, the function $f_\mu$ does not vanish on
  $X$. Let us write $V_\mu = X\cap f_\mu^{-1}(0)$, which may have
  multiplicity. By Shiffman-Zelditch~\cite[Lemma 3.1]{ShiffZel}, whose
  proof is entirely local, we have that on
  $\mathbf{C}^N$, 
  \[ 2\pi\int_{\mathbf{P}} [f_\mu^{-1}(0)]\,d\mu = \ddb \log
  \left( \sum_{j=1}^B |u_j|^{2M/b_j}\right), \]
  in the sense of distributions,
  for the standard probability measure $d\mu$ on the projective space
  $\mathbf{P}$. Multiplying out the $M/a_i$ power in \eqref{eq:hatr2}
  we see that for suitable choices of the $u_j$ restricted to $X$
  we will have
  \begin{equation}\label{eq:alphaintegral}
  \alpha^\tau = \frac{\pi}{M}\int_{\mathbf{P}\setminus E}
  [V_\mu]\,d\mu = \frac{1}{2M} \ddb \log \left( \sum_{j=1}^B
    |u_j|^{2M/b_j}\right). 
\end{equation}

  We can then compute $\beta^\tau$ on $Y$ by taking the limits of the
  currents $[V_\mu]$ under the $\mathbf{C}^*$-action $\lambda$. We
  will do this by computing the limit of the underlying schemes. 
  Suppose that $\lambda(t)$ acts on the $u_j$ 
  diagonally with entries $t^{w_j}$. Then we have
  \[ \lambda(t)\cdot f_\mu  =  \sum_{j=1}^B \mu_j
    t^{\frac{w_jM}{b_j}} u_j^{M/b_j}. \]
  If $I$ denotes the homogeneous ideal defining $X$, then $V_\mu$ is
  defined by the ideal $I + (f_\mu)$. The weights of the $\mathbf{C}^*$-action
  $\lambda$ define a partial order on the monomials, and the limit
  $\lambda(t)\cdot V_\mu$ has ideal defined by the lowest weight
  parts of elements of $I+(f_\mu)$, i.e. the initial ideal
  $\mathrm{in}_\lambda(I + (f_\mu))$. In the same way the limiting variety $Y$ is
  defined by the ideal $\mathrm{in}_\lambda(I)$.

  Suppose that we find
  a function $g_\mu\in \mathrm{in}_\lambda(I+(f_\mu))$ such that $g_\mu \not\in
  \mathrm{in}_\lambda(I)$, and in terms of the grading of the
  polynomial ring given by $\xi$ the function $g_\mu$ has the same degree
  as the $f_\mu$ (i.e. degree $M$). In this case, since the coordinate
  ring of $Y$ is an integral domain (we have assumed that $Y$ is
  reduced and irreducible), we will see that the Hilbert functions of
  the ideals $I + (f_\mu)$ and $\mathrm{in}_\lambda(I) + (g_\mu)$
  coincide, and so $\lim_{t\to 0} V_\mu$ is the hypersurface in $Y$
  cut out by $g_\mu$. The key information that we need is the weight $\Lambda$ of
  the $\lambda$-action on $g_\mu$. We will determine this for generic
  $\mu$ and in fact we
  claim that generically $\Lambda = -M\max_Y\theta_w$.

  For simplicity let us assume that the (relative) weights
  $w_i/b_i$ are ordered so that $w_1/b_1 \leq w_2/b_2 \leq\ldots \leq
  w_B/b_B$. Let $c$ be the smallest index so that $u_c$ does not
  vanish on $Y$. Suppose
  that $\mu$ is chosen to be in general position in the sense that we
  cannot write $f_\mu = h + f'_\mu$, where $h\in I$, and $f'_\mu$ has strictly
  larger weights than $u_c$. If we write $u_{c'}, u_{c'+1}, \ldots, u_B$
  for the monomials with strictly larger weight (we have
  $c' \geq c+1$, but this inequality may be strict), then the
  condition is that $f_\mu \not\in I + (u_{c'}, \ldots, u_B)$, or in other
  words that $f_\mu$ does not vanish on the intersection of $X$ with the
  subspace $H = \{u_{c'}, \ldots, u_B = 0\}$. If this intersection were just
  the origin, then we would have $u_c \in I +(u_{c'}, \ldots, u_B)$,
  so that $u_c \in \mathrm{in}_\lambda(I)$. But then $u_c$ vanishes on
  $Y$ contrary to our assumption. This means that the intersection
  $X\cap H$ is non-trivial, and so the space of $\mu$ for which
  $f_\mu$ vanishes on $X\cap H$ is contained in a hyperplane in
  $\mathbf{P}$. We can then enlarge the subspace $E$ above by this
  hyperplane, and focus on $\mu\in \mathbf{P}\setminus E$. 

  By assumption $u_1, \ldots, u_{c-1}\in \mathrm{in}_\lambda(I)$, and so
  $I$ must contain $u_i^{M/b_i}$ modulo higher weight terms, for
  $i=1,\ldots,c-1$. It follows that $I$ contains
  \[ \sum_{i=1}^{c-1} \mu_i u_i^{M/b_i} \]
  modulo higher weight terms, but it cannot contain
  \[ \sum_{i=1}^{c'-1} \mu_i u_i^{M/b_i}, \]
  modulo higher weight terms, by our genericity assumption. It follows
  that $\mathrm{in}_\lambda(I +
  (f_\mu)) = \mathrm{in}_\lambda(I + (f_\mu'))$, where
  \[ f_\mu' = \sum_{i=c}^B \mu_i' u_i^{M/b_i}, \]
  for some new coefficients $\mu_i'$ with at least one of $\mu_c',
  \mu_{c+1}',\ldots, \mu_{c'-1}' \ne 0$. We can then
  let $g_\mu$ be the part of $f_\mu'$ which has the same weight as
  $u_c^{M/b_c}$ under $\lambda$, i.e. $\Lambda = Mw_c/b_c$. 

  Since the transverse Hamiltonian is 
  \[ \theta_w = \frac{\sum_{i=1}^B -\frac{w_i}{b_i} |u_i|^{2M/b_i}}{
    \sum_{i=1}^B |u_i|^{2M/b_i}}, \]
  and $u_1,\ldots, u_{c-1}$ vanish on $Y$, but $u_c$ does not, we have
  $\frac{Mw_c}{b_c}= -M \max_Y\theta_w$, and so the weight of $g_\mu$
  is $\Lambda = -M\max_Y\theta_w$ as we claimed. 

  Let us write $Y_\mu = Y \cap g_\mu^{-1}(0)$. By the discussion above
  we then have
  \begin{equation}\label{eq:betataueq}
 \beta^\tau = \frac{\pi}{M}\int_{\mathbf{P}\setminus E} [Y_\mu]\,
  d\mu, 
  \end{equation}
  where recall that now $E$ is a suitable union of two hyperplanes. 
  It follows that
  \begin{equation}\label{eq:intYtheta}
    \int_Y \theta_w e^{-\frac{1}{2}\hat{r}^2} \beta^\tau\wedge
  \omega^{n-1} = \frac{\pi}{M}\int_{\mathbf{P}\setminus E}
  \int_{Y_\mu} 
  \theta_w e^{-\frac{1}{2}\hat{r}^2}
  \omega^{n-1}.
  \end{equation}
  From Propositions~\ref{prop:volumeformula} and \ref{prop:volumevariation} we have
  \begin{equation}\label{eq:intYtheta2}
\int_{Y_\mu} \theta_w e^{-\frac{1}{2}\hat{r}^2}\,\omega^{n-1} = 
   (2\pi)^{n-1} \Big[(n-2)!\Big]^2 D_w a_0(Y_\mu, \xi), 
\end{equation}
  where as in \eqref{eq:Dwnotation}, the notation $D_w$ refers to
  varying the Reeb field $\xi$ in the direction of $w$. We have that
  $\xi$ acts on
  $g_\mu$ with weight $M$, while $w$ acts with weight
  $-M\max_Y\theta_w$. The index characters of $Y$ and $Y_\mu$ are
  related by
  \[ F_{Y_\mu}(t,\xi+sw) = F_{Y}(t, \xi+sw) \Big[ 1 -
  e^{-tM(1-s\max_Y\theta_w)}\Big], \]
  and so, differentiating with respect to $s$, at $s=0$, we obtain
  \[ D_w F_{Y_\mu}(t,\xi) = D_w F_Y(t, \xi) \Big[1 - e^{-tM}\Big] -
  F_Y(t, \xi) t M\max_Y\theta_w. \]
  Comparing the leading terms in the Laurent expansion, we have
  \[ (n-2)! D_w a_0(Y_\mu, \xi) = M (n-1)! D_w a_0(Y, \xi) - M
  \max_Y \theta_w\, (n-1)! a_0(Y, \xi). \]
  It then follows from \eqref{eq:intYtheta}, \eqref{eq:intYtheta2},
  and Proposition~\ref{prop:volumevariation} that
  \[ \int_Y \theta_w e^{-\frac{1}{2}\hat r^2} \beta^\tau \wedge
  \frac{\omega^{n-1}}{(n-1)!} = \frac{1}{2(n-1)} \int_Y \Big[n \theta_w - \max_Y
  \theta_w \Big]\, \frac{\omega^n}{n!}. \]
  At the same time we have
  \[ \omega^\tau \wedge \omega^{n-1} = \frac{1}{\hat r^2}
  \frac{n-1}{n} \omega^n, \]
  and so
  \[ \int_Y \theta_w e^{-\frac{1}{2}\hat r^2} \omega^\tau \wedge
  \frac{\omega^{n-1}}{(n-1)!} = \frac{1}{2} \int_Y \theta_w e^{-\frac{1}{2} \hat
    r^2} \frac{\omega^n}{n!}. \]
  Combining these formulas we obtain the required
  result.
\end{proof}

\begin{rk} \label{rem:maxthetaweight}
  We remark that $\max_Y\theta_w$ and the integral of $\theta_w$ on
  $Y$ depend only on the induced action on $Y$ and can be computed
  from the weights of the action. Indeed, in the argument above we
  have seen that $-\max_Y\theta_w$ is the minimal relative weight
  $w_j/b_j$  of a monomial
  $u_j$ that does not vanish on $Y$, under the action $\lambda$
  relative to the action of the Reeb field.  This is the same as the minimal
  relative weight $w_i/a_i$ of a coordinate function $z_i$ which does
  not vanish on $Y$, and since the $z_i$ generate the ring of algebraic functions
  of $Y$ which vanish at the vertex,
  this is simply the minimum relative weight of any such function on
  $Y$. More precisely, if $f$ is any algebraic function on $Y$
  vanishing at the vertex which is in a weight space of the torus
  spanned by $\lambda$ and $\xi$, then the relative weight is the
  quotient of the weights of these two $\mathbf{C}^*$-actions, and
  $-\max_Y\theta_w$ is the minimum of this quotient over all such
  $f$.  At the same time the integral of $\theta_w$ on $Y$ can be
  interpreted as the variation of the $a_0$ coefficient in the Hilbert
  series of $Y$, by Propositions~\ref{prop:volumeformula} and
  \ref{prop:volumevariation}. 
\end{rk}

From the above proof we also see the following.
\begin{prop}\label{prop:generic_hyper}
  In the above setup, given $X$ and $\lambda$, there is a union of
  $2$ hyperplanes  $E\subset \mathbf{P}$, such that if $\mu\in
  \mathbf{P}\setminus E$ and $V_\mu = X \cap f_\mu^{-1}(0)$, then 
  \[ \mathrm{Fut}_{X, \xi, (1-t)\alpha^\tau}(w) = \mathrm{Fut}_{X, \xi,
   \frac{\pi}{M}[V_\mu]}(w). \]
\end{prop}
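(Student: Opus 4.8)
The plan is to reduce the claimed identity to the equality of a single integral over $Y$, which is essentially contained in the computation already carried out in the proof of Proposition~\ref{prop:twistedformula}. By definition, both twisted Futaki invariants are obtained by degenerating the twisting datum along $\lambda$ and evaluating on the central fibre $(Y,\xi)$: the left-hand side uses the limiting current $\beta^\tau = \lim_{t\to 0}\lambda(t)\cdot\alpha^\tau$, and the right-hand side uses $\lim_{t\to 0}\lambda(t)\cdot[V_\mu] = [Y_\mu]$, where $Y_\mu = Y\cap g_\mu^{-1}(0)$ is the hypersurface identified in that proof (for $\mu$ outside the exceptional set). Applying Proposition~\ref{prop:FuteqProp} on $Y$, each of the two twisted invariants equals the untwisted $\mathrm{Fut}_{Y,\xi}(w)$ minus $n(n-1)\tfrac{1-t}{V}$ times an integral of the form $\int_Y \theta_w\, e^{-\frac12\hat r^2}(\gamma - \omega^\tau)\wedge\omega^{n-1}$, with $\gamma$ the respective twisting current. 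Since the untwisted term and the $\omega^\tau$-contribution are common to both sides, it suffices to prove
\[ \int_Y \theta_w\, e^{-\frac12\hat r^2}\,\beta^\tau\wedge\omega^{n-1} \;=\; \frac{\pi}{M}\int_{Y_\mu}\theta_w\, e^{-\frac12\hat r^2}\,\omega^{n-1} \]
for $\mu$ outside a suitable union of two hyperplanes.

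To establish this, I would use \eqref{eq:betataueq} to rewrite the left-hand side as $\frac{\pi}{M}\int_{\mathbf{P}\setminus E}\big(\int_{Y_\nu}\theta_w e^{-\frac12\hat r^2}\omega^{n-1}\big)\,d\nu$, where $E$ is exactly the union of two hyperplanes produced in the proof of Proposition~\ref{prop:twistedformula}. By \eqref{eq:intYtheta2} together with the Hilbert-series comparison between $Y$ and $Y_\nu$ carried out there, the inner integral is a fixed linear combination of $D_w a_0(Y,\xi)$ and $a_0(Y,\xi)$ whose coefficients involve only the $\lambda$-weight $\Lambda = Mw_c/b_c = -M\max_Y\theta_w$ of $g_\nu$. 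But $\max_Y\theta_w$ is a property of $(Y,w)$ alone, and for every $\nu\in\mathbf{P}\setminus E$ this weight is the same, so $\nu\mapsto\int_{Y_\nu}\theta_w e^{-\frac12\hat r^2}\omega^{n-1}$ is constant on the full-measure set $\mathbf{P}\setminus E$. Integrating against the probability measure $d\nu$ therefore reproduces this common value, which is precisely $\frac{\pi}{M}\int_{Y_\mu}\theta_w e^{-\frac12\hat r^2}\omega^{n-1}$ for any single $\mu\in\mathbf{P}\setminus E$. This yields the displayed identity, and hence the proposition with the same exceptional set $E$.

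The only genuinely delicate point — and the source of the two exceptional hyperplanes — is that $\mathrm{Fut}_{X,\xi,\frac{\pi}{M}[V_\mu]}(w)$ be well defined for a single generic $\mu$, i.e.\ that $\lambda(t)\cdot V_\mu$ degenerate flatly to $[Y_\mu]$, so that restriction to one hypersurface commutes with the torus degeneration and multiplicities are preserved. This is exactly the initial-ideal argument in the proof of Proposition~\ref{prop:twistedformula}: for $\mu$ avoiding the two hyperplanes in $E$ one has $\mathrm{in}_\lambda(I+(f_\mu)) = \mathrm{in}_\lambda(I)+(g_\mu)$ with $g_\mu$ of the same $\xi$-degree $M$ as $f_\mu$ and not lying in $\mathrm{in}_\lambda(I)$, so that the Hilbert functions of $I+(f_\mu)$ and $\mathrm{in}_\lambda(I)+(g_\mu)$ agree and $\lim_{t\to0}V_\mu$ is the hypersurface $Y_\mu$ of $Y$ cut out by $g_\mu$. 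Granting this input, the two paragraphs above are routine.
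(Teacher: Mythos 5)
Your proposal is correct and follows essentially the same route as the paper: the paper derives Proposition~\ref{prop:generic_hyper} directly from the proof of Proposition~\ref{prop:twistedformula}, and your key observation — that for every $\mu$ outside the two exceptional hyperplanes the integral $\int_{Y_\mu}\theta_w e^{-\frac{1}{2}\hat r^2}\omega^{n-1}$ is given by the same weight/Hilbert-series expression (determined by $M$, $\max_Y\theta_w$, $D_w a_0(Y,\xi)$ and $a_0(Y,\xi)$), so the average over $\mathbf{P}\setminus E$ coincides with the value at any single generic $\mu$ — is exactly the content the paper is invoking. The reduction via Proposition~\ref{prop:FuteqProp} to comparing the two twisting integrals, and the flatness/initial-ideal point about $\lim_{t\to 0}\lambda(t)\cdot V_\mu = [Y_\mu]$, are likewise as in the paper.
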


In other words, when we want to compute the twisted Futaki invariant,
we can replace $\alpha^\tau$ by a current of integration along a
suitable hypersurface on $X$, as long as this hypersurface, as a point
in a projective space $\mathbf{P}$, is not contained in a certain
union of $2$ hyperplanes. It is important to emphasize that these
$2$ hyperplanes can depend on the choice of $X$ and $\lambda$.

\section{The partial $C^0$ estimate}\label{sec:partialC0}
Our goal in this section is to prove the partial $C^0$-estimate, Theorem~\ref{thm:partialc0}
below, for cone
metrics satisfying a Ricci curvature equation.  A special case of this will 
be the partial $C^0$ estimate along the continuity method. The partial
$C^0$-estimate was introduced by Tian~\cite{TianSurf} in his study
of K\"ahler-Einstein metrics on complex surfaces, and he conjectured a
general version for compact K\"ahler manifolds with a positive lower
bound on the Ricci curvature. For K\"ahler-Einstein metrics in
arbitrary dimension this estimate was obtained by
Donaldson-Sun~\cite{DS}, using the Cheeger-Colding convergence
theory~\cite{CheeCo} under Ricci curvature bounds, together with the
H\"ormander technique~\cite{Hormander} for constructing holomorphic
functions. Many more general results followed this development
(see \cite{CDS2,CDS3,PSS,Sz,ChenWang,Jiang}). 

Our method
will be fairly close to that in \cite{Sz} for the smooth continuity
method. The main difficulty is that along our continuity method the
Ricci curvature of the cone metrics is only non-negative, while in the
approach of \cite{Sz} it is important to treat the Ricci form as a
metric. On the other hand it is not clear how to extend the
Cheeger-Colding theory to the transverse K\"ahler structure, which
does have strictly positive Ricci curvature. We therefore use the
convergence theory on the level of the cones, but at certain crucial
steps we invoke the positivity of the Ricci curvature of the
transverse metric. 

We suppose that
$(X,\xi)$ is a normalized Fano cone singularity, 
and $\alpha$ is a smooth K\"ahler cone metric on
$(X,\xi)$. From the discussion in Section~\ref{sec:background} we know
that we have a family of metrics $\omega_t$ on $(X, t^{-1}\xi)$,
satisfying
\[ \mathrm{Ric}(\omega_t) = 2n\frac{1-t}{t} \alpha^\tau, \]
for $t\in [t_0, T)$, with $T \leq 1$ and $t_0 > 0$. 

Since we will also have to obtain uniform estimates while varying the
Reeb field, we suppose more generally that we have a family of Reeb fields $\xi_t$, and
metrics $\alpha_t, \omega_t$ on $(X,\xi_t)$ satisfying 
\begin{equation}\label{eq:genMOC}
{\rm Ric}(\omega_t) = 2nc_t \alpha_{t}^{\tau}
\end{equation}
where $0\leq c_t \leq c_0 < \infty$, and the pairs $(\xi_t, \alpha_t)$
move in a bounded family in the following sense. 
\begin{defn}
We say that data $(\xi_t, \alpha_t)$ consisting of Reeb vector fields
and a compatible cone metrics on $X$ are in
a $C^2$ bounded family if the $\xi_t$ are in a compact subset of
$\mathcal{C}_{R}$, and the metrics $\alpha_t$ are locally uniformly equivalent
to a fixed reference cone metric, and locally bounded in $C^2$ when measured
with respect to this reference metric.  
\end{defn}

Along our continuity method we will have uniform constants $\kappa, C_L$ so that
\begin{itemize}
\item $(X,\omega_t)$ are uniformly non-collapsed.  That is, ${\rm Vol}(B_1(0, \omega_t)) >\kappa >0$, where $0 \in X$ denotes the cone point.
\item $\Ric(\omega_t)\geq 0$ on $X$, and the corresponding Sasakian
  metric $g_t$ on the link $L$ satisfies
  $\Ric(g_t) = (2n-2)g_t + (2n-2)c_t\alpha_t^\tau$, 
\item ${\rm diam}(L, g_t) <C_L$ for some controlled constant $C_L$.  
\end{itemize}
The first point follows from the lower bound for $t_0$. 
The second point is the formula relating the Ricci curvature on the
cone to the Ricci curvature on $L$, while the last point follows from Myers' Theorem.
Let $r_t$ denote the radial function of $\omega_t$, then  the Bishop-Gromov comparison theorem
implies the metrics $\omega_t$ are uniformly non-collapsed on the annuli $\{1/2 <r_t<2\}$ and so
results of Croke \cite{Croke} and Yau (see, e.g. \cite[page 9]{Yau3}) imply

\begin{lem}
For metrics $\omega_t$ satisfying~\eqref{eq:genMOC} there is a uniform Sobolev inequality on the set $Ann:= \{1/2 <r_t <2\}$.  That is, there exists a constant $C(\kappa, C_L)$ independent of $t$ so that for any $W^{1,2}$ function $f$ on $Ann$ we have
\[
C^{-1}\left(\int_{Ann}|f|^{2\frac{n}{n-1}}\omega_{t}^{n}\right)^{\frac{n-1}{2n}} \leq  \left(\int_{Ann} |\nabla f|_{\omega_t}^{2}\omega_{t}^{n}\right)^{1/2} + \left(\int_{Ann} |f|^{2} \omega_t^n\right)^{1/2}
\]
\end{lem}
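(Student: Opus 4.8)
The plan is to flesh out the two-step strategy already indicated in the text: first use Bishop--Gromov to convert the non-collapsing at the cone vertex into non-collapsing at every point of $Ann$ at a fixed scale, and then feed this, together with uniform two-sided volume bounds and a uniform diameter bound on a slightly larger collar, into the Sobolev/isoperimetric estimates of Croke and Yau, patching the resulting local inequalities into a single inequality on $Ann$.

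For the non-collapsing, fix $t$ and a point $p$ with $1/2 < r_t(p) < 2$. Since $r_t$ is the distance to the vertex $0$ for the cone metric $\omega_t$, we have $B_4(p,\omega_t)\supset B_2(0,\omega_t)$, and the exact scaling of a cone metric ($\vol(\{r_t<R\}) = \frac{R^{2n}}{2n}\vol(L,g_t)$) together with the hypothesis $\vol(B_1(0,\omega_t))>\kappa$ gives $\vol(B_2(0,\omega_t)) = 2^{2n}\vol(B_1(0,\omega_t))>2^{2n}\kappa$. Because $\Ric(\omega_t)\geq 0$ on $X$, Bishop--Gromov applies on the cone (a complete length space for which the vertex causes no difficulty), so $\rho\mapsto \vol(B_\rho(p,\omega_t))/\rho^{2n}$ is non-increasing; comparing scale $\rho\leq 4$ with scale $4$ yields $\vol(B_\rho(p,\omega_t))\geq c(\kappa)\,\rho^{2n}$ for all $0<\rho\leq 4$. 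Thus $\omega_t$ is uniformly non-collapsed, with a constant depending only on $\kappa$, at every point of $Ann$ and at scale $1$.

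Simultaneously the collar $U:=\{1/4<r_t<4\}$ has uniformly controlled coarse geometry. Since $c_t\geq 0$ and $\alpha_t^\tau\geq 0$, the identity $\Ric(g_t)=(2n-2)g_t+(2n-2)c_t\alpha_t^\tau$ gives $\Ric(g_t)\geq (2n-2)g_t$, so Bishop's theorem bounds $\vol(L,g_t)\leq \vol(S^{2n-1})$; hence $\vol(U,\omega_t)$ is bounded above, while the non-collapsing bounds it below, so it is pinched between positive constants depending only on $\kappa$. Myers' theorem applied to the same Ricci bound gives $\mathrm{diam}(L,g_t)<C_L$, and as $U$ consists of points at radial distance $\leq 4$ from the link $\{r_t=1\}$, this forces $\mathrm{diam}(U,\omega_t)\leq C(C_L)$. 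With $\Ric\geq 0$, a uniform volume lower bound, a volume upper bound and a diameter bound in hand, the estimates of Croke \cite{Croke} and Yau (see \cite[page~9]{Yau3}) supply a Sobolev inequality with constant depending only on $\kappa$ and $C_L$ on balls $B_{1/8}(p,\omega_t)$ around points of $Ann$ (equivalently on $U$ itself, after doubling it across its boundary). One then covers $Ann$ by such balls chosen so that the concentric balls $B_{1/16}(p_i,\omega_t)$ are disjoint: by the two-sided volume bounds their number is controlled uniformly in $t$, and the doubled balls $B_{1/4}(p_i,\omega_t)$ still lie in $U$. Applying the per-ball inequality to $\chi_i f$ for a Lipschitz partition of unity $\{\chi_i\}$ subordinate to the cover with $|\nabla\chi_i|\leq C$, summing, and using the $\|f\|_{L^2(Ann)}$ term to absorb the contributions of the $\nabla\chi_i$, produces the stated inequality.

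The main obstacle is precisely this last bookkeeping step together with the fact that $Ann$ has boundary: the cleanest forms of the Croke/Yau estimates are stated for closed or complete manifolds, so one must either cite them in a localized form valid on balls under the lower Ricci bound and non-collapsing, or pass through the doubled collar $U$, and then check that the number of patches and the gradients of the cutoffs are controlled uniformly in $t$ — all of which is routine once the volume and diameter bounds above are established.
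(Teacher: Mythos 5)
Your proof is correct and follows exactly the route the paper indicates (and leaves almost entirely implicit): Bishop--Gromov volume comparison propagates the non-collapsing at the vertex to uniform non-collapsing at unit scale on the annulus, and the Croke/Yau estimates, localized to balls and patched with a partition of unity, then give the uniform Sobolev constant. Your write-up supplies the details — the cone volume scaling, the two-sided volume and diameter bounds on the collar, and the covering/cutoff bookkeeping — that the paper compresses into a single sentence.
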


Our eventual
estimates will depend only on the dimension, the non-collapsing
condition, and a bound on the geometry of $\alpha$, which roughly
speaking says that we have good control of the transverse metric
$\alpha^\tau$ on sufficiently small balls (see
Definition~\ref{defn:geombound} for a precise statement). 
As motivation consider the following analogous property of a compact
K\"ahler manifold, which is easily proven by covering the manifold
with sufficiently small coordinate balls.

\begin{lem}\label{lem: Kahler ball lem}
Let $(M,\omega)$ be a compact K\"ahler manifold, and let $g$ be the associated K\"ahler metric.  Then for any constant $K>0$ sufficiently large the following holds: if $B \subset M$ is any $g$-ball of radius smaller than $K^{-1}$, then there exist holomorphic coordinates $\{z_{1}, \ldots, z_n\}$ defined on $B$ such that
\[
\frac{1}{2} \delta_{i\bar{j}} < g_{i\bar{j}} < 2 \delta_{i\bar{j}},
\,\, \text{ and } \quad \|g_{i\bar{j}}\|_{C^{2}(g_{Euc})} < K.
\]
Furthermore, $K$ can be chosen to be uniform over $C^{2}$ bounded families of metrics.
\end{lem}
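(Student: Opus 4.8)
The plan is the standard compactness argument indicated by the phrase ``covering the manifold with sufficiently small coordinate balls'': first prove the statement at a single point with point-dependent constants, then use compactness of $M$ to extract uniform ones, and finally track the dependence of all constants to obtain uniformity over a $C^2$-bounded family.

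First I would fix $p\in M$ and choose holomorphic coordinates $w=(w_1,\dots,w_n)$ near $p$ with $w(p)=0$. Since $G_p=(g_{i\bar j}(p))$ is a positive definite Hermitian matrix, write $G_p=AA^*$ with $A\in GL(n,\mathbf{C})$ and pass to the holomorphic coordinates $z=A^{-1}w$, in which $g_{i\bar j}(p)=\delta_{i\bar j}$. By smoothness of the $g_{i\bar j}$ and continuity there is $\rho_p>0$ with $\tfrac12\delta_{i\bar j}<g_{i\bar j}<2\delta_{i\bar j}$ and $\|g_{i\bar j}\|_{C^2(g_{Euc})}<K_p$ on $\{|z|<\rho_p\}$; since on this set $g$ is uniformly equivalent to the Euclidean metric, there is $\varepsilon_p>0$ so that the $g$-ball $B_{2\varepsilon_p}(p,g)$ lies in $\{|z|<\rho_p\}$. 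Next I would run a Lebesgue-number argument: the balls $B_{\varepsilon_p}(p,g)$ cover $M$, so finitely many, say $B_{\varepsilon_{p_1}}(p_1,g),\dots,B_{\varepsilon_{p_N}}(p_N,g)$, suffice; put $\varepsilon_0=\min_j\varepsilon_{p_j}$, $K_0=\max_j K_{p_j}$, and take $K\geq\max(K_0,\varepsilon_0^{-1})$. For any $g$-ball $B=B_r(q,g)$ with $r<K^{-1}\leq\varepsilon_0$, the centre $q$ lies in some $B_{\varepsilon_{p_j}}(p_j,g)$, hence $B\subset B_{2\varepsilon_{p_j}}(p_j,g)\subset\{|z|<\rho_{p_j}\}$; restricting the holomorphic coordinates $z$ of that chart to $B$ gives coordinates satisfying the two required bounds, which are unaffected by $B$ not being centred at $q$.

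For uniformity over a $C^2$-bounded family I would use that the underlying $M$ and a reference metric are fixed, so one may fix once and for all a finite atlas of holomorphic charts $(V_\beta,w^\beta)$; by hypothesis every metric $g$ in the family is uniformly equivalent to the reference metric and has uniformly bounded $C^2$-norm in the $w^\beta$-coordinates. Carrying out the pointwise construction inside these fixed charts, the diagonalizing matrix $A$ has $\|A\|$ and $\|A^{-1}\|$ bounded in terms of the uniform-equivalence constants, and the $C^2$-norm of the transformed $g_{i\bar j}$ is then bounded in terms of those constants together with the $C^2$-bound of $g$ in the $w^\beta$; hence $\rho_p$, $K_p$, $\varepsilon_p$, and therefore the final $K$, can be chosen uniformly over the family.

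The argument is essentially routine; the only point requiring real care — the main obstacle, such as it is — is this last step, namely making precise that the constants emerging from the local straightening to holomorphic coordinates depend only on the uniform-equivalence and $C^2$-bounds of the family, and not on the individual metric $g$.
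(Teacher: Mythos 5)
Your proof is correct and follows exactly the route the paper indicates (the paper gives no details beyond remarking that the lemma "is easily proven by covering the manifold with sufficiently small coordinate balls"): normalize $g$ at a point in holomorphic coordinates, shrink to a ball where the bounds hold, extract a finite subcover with a Lebesgue-number argument, and note that all constants depend only on the uniform-equivalence and $C^2$ bounds of the family. No gaps.
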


We need a generalization of this to the transverse K\"ahler structure
defined by $\alpha^\tau$. Let us denote by $Q$ the quotient bundle $Q=
TX / \mathbf{C}\xi$, where by $\mathbf{C}\xi \subset TX$ we denote the
complex subbundle spanned by $\xi$.
Note that $Q$ has a natural integrable complex structure,
and $\alpha^\tau$ defines a K\"ahler form on it (see Boyer-Galicki~\cite{BGbook}). 

\begin{defn}  Let $B \subset \mathbf{C}^{n-1}$ be a ball.  We say that
  an immersion $F:B \rightarrow X$ is a $\xi$-transverse immersion
  if for any point $p \in B$, the image $dF_p(T_pB)$ is transverse to
  $\mathbf{C}\xi$.  In particular, we get an induced vector space isomorphism
\[
dF_{p} :T_pB \rightarrow Q_{F(p)}. 
\]
 We say that $F$ is a transverse holomorphic immersion if $dF_p$ is
 complex linear with respect to the standard complex structure on
 $B$, and the transverse complex structure on $Q$. 
\end{defn}

For any transverse holomorphic immersion $F$, we obtain a K\"ahler metric
$F^*\alpha^\tau$ on $B$, compatible with the standard structure on B.
Note that we have a $\mathbf{C}$-action on $X$ induced by $\xi$,
and for any smooth $h : B\to \mathbf{C}$, the
maps $F$ and  $h\cdot F$ induce the same K\"ahler structure on $B$. In
particular we can assume that $F: B\to L$ maps into the unit link
$L\subset X$.

\begin{defn}\label{defn:geombound}
  We say that $\alpha$ (or equivalently $\alpha^\tau$) has geometry
  bounded by $K$ if the following holds. 
Let $F: B\to L$ be a transversal holomorphic immersion as above, where
  $B\subset\mathbf{C}^{n-1}$ denotes a ball. 
  We write $g = F^*\alpha^{\tau}$ for the induced K\"ahler metric on $B$. If
  $(B,g)$ has diameter at most $K^{-1}$, then
  there are holomorphic functions $z_1, \ldots, z_{n-1}$ on $B$ (not
  necessarily giving an embedding of $B$ into $\mathbf{C}^{n-1}$, but an
  immersion), such that
  \[ \frac{1}{2}\delta_{ij} < g_{i\bar j} < 2\delta_{ij}, \qquad \Vert g_{i\bar j}\Vert_{C^2(B,g_{Euc})} < K. \]
Here $g_{i\bar j} = g(\d/\d
  z^i, \d/\d\bar{z}^j)$, and $g_{Euc} =\delta_{ij}$ is the pull-back of the Euclidean metric
  by the map $(z_1,\ldots,z_{n-1}): B \to \mathbf{C}^{n-1}$ .
\end{defn}
We now prove the analog of Lemma~\ref{lem: Kahler ball lem}. 
\begin{prop}\label{prop:aTcoords}
  For $K > 0$ sufficiently large, depending on $\alpha$,
  the geometry of $\alpha$ is bounded by $K$.  Moreover, $K$
  can be chosen uniformly over bounded families. 
\end{prop}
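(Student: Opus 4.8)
The plan is to reduce the transverse statement to the already-established K\"ahler ball lemma, Lemma~\ref{lem: Kahler ball lem}, applied not to the cone $X$ itself but to a compact K\"ahler \emph{quotient}. The point is that $\alpha^\tau$ is a genuine transverse K\"ahler metric on the Reeb foliation, so its local structure is captured by local K\"ahler quotients of the cone, and a compactness argument over the link $L$ lets us extract a single uniform constant $K$.

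Here is the outline of the steps. First, I would fix a smooth reference cone metric and work on the compact link $L = \{r = 1\}$; since $L$ is compact and the Reeb foliation has a well-defined transverse K\"ahler structure $\alpha^\tau$, I can cover $L$ by finitely many foliation charts. On each such chart the leaf space is naturally a small ball $B_0 \subset \mathbf{C}^{n-1}$ carrying the K\"ahler metric induced by $\alpha^\tau$ (this is the content of the quotient bundle $Q$ being an integrable transverse holomorphic structure, cf.\ Boyer--Galicki~\cite{BGbook}); choose a transverse holomorphic slice through each chart so that the slice, with its induced K\"ahler metric, is a compact K\"ahler manifold-with-boundary, or can be capped off to a compact one. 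Second, I would apply Lemma~\ref{lem: Kahler ball lem} to each of these finitely many compact K\"ahler pieces, obtaining a constant $K_0$ (uniform over bounded families, by the last sentence of that lemma) such that every small metric ball inside them carries holomorphic coordinates with the stated $C^0$ and $C^2$ bounds. Third — and this is the step that requires a little care — given an arbitrary transverse holomorphic immersion $F: B \to L$ with $(B, F^*\alpha^\tau)$ of diameter at most $K^{-1}$, I would observe that because $F$ is $\xi$-transverse, locally $F$ factors (after composing with the $\mathbf{C}$-action generated by $\xi$, which does not change $F^*\alpha^\tau$) through one of the finitely many leaf-space charts: the image of $F$, being transverse to $\mathbf{C}\xi$ and of small diameter, lands in a single foliation chart once $K$ is large enough, and the induced map to the leaf-space ball is a holomorphic immersion preserving the K\"ahler structure. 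Then the holomorphic coordinates supplied by Lemma~\ref{lem: Kahler ball lem} on the leaf-space ball pull back via this factorization to holomorphic functions $z_1,\dots,z_{n-1}$ on $B$ with exactly the required bounds, since the metric $g = F^*\alpha^\tau$ is the pullback of the leaf-space K\"ahler metric. Finally, taking $K$ to be the maximum of $K_0$ and a constant ensuring the "small diameter implies lands in one chart" dichotomy gives the claim; uniformity over bounded families follows because both the finite cover and the constant $K_0$ can be taken uniform when $\alpha$ varies in a $C^2$-bounded family.

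The main obstacle I anticipate is the third step: making precise the claim that a small-diameter transverse holomorphic immersion factors through a fixed leaf-space chart, and that this factorization is compatible with the holomorphic and metric structures. One has to be careful that $F$ is only an immersion, not an embedding, so the "image lands in one chart" statement is really about the immersed disc in the local leaf space, and one must use that the leaves of the Reeb foliation are, locally, the fibers of the quotient map $TX \to Q$, so that a transverse disc of small diameter meets each nearby leaf in at most one point — i.e.\ the restriction of the leaf-space projection to $F(B)$ is, after shrinking, a holomorphic immersion onto an open subset of the leaf-space ball. Granting this, the identification $g = F^*\alpha^\tau = F^*(\text{pullback of leaf-space metric})$ is immediate from the definition of the transverse metric, and the rest is bookkeeping. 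The compactness of $L$ does the work of converting the "for $K$ sufficiently large depending on $\alpha$" into an honest uniform statement.
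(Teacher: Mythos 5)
Your overall strategy --- cover $L$ by finitely many foliation (adapted) charts, arrange chart coordinates in which $\alpha^\tau$ has the required $C^0$ and $C^2$ bounds, and then move a given transverse holomorphic immersion $F:B\to L$ by the Reeb flow so that it lands in a single chart --- is the same as the paper's. The first two steps are fine, and in fact simpler than you make them: the paper does not cap anything off to a compact K\"ahler manifold, it just shrinks the adapted charts until the transverse metric in the slice coordinates satisfies the bounds directly, with uniformity over bounded families coming from compactness of $L$.

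The gap is exactly at the step you flag as ``the main obstacle,'' and the difficulty is not the one you identify. The issue is not that $F$ is merely an immersion (the definition of bounded geometry only asks the $z_i$ to give an immersion), nor whether a transverse disc meets a leaf more than once. The issue is that the hypothesis ``$(B,F^*\alpha^\tau)$ has diameter $\le K^{-1}$'' gives no control at all on the diameter of the image $F(B)$ in $(L,\alpha)$: one can post-compose any such $F$ with an arbitrary, wildly varying reparametrization along the Reeb flow without changing $F^*\alpha^\tau$, so the image need not lie in any single chart, no matter how large $K$ is --- the paper points this out explicitly. Your parenthetical ``(after composing with the $\mathbf{C}$-action generated by $\xi$)'' is the right idea, but the entire content of the proposition lives in choosing that composition: one must produce a function $a:B\to\mathbf{R}$ so that $f(p)=\Phi(F(p),a(p))$ has image of small $\alpha$-diameter. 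The paper does this by choosing $a$ so that $f$ maps each radial ray $\gamma$ from the origin of $B$ to a curve in $L$ orthogonal to $\xi$; for such a curve the $\alpha$-length equals its transverse length, which is bounded by the diameter of $(B,g)\le K^{-1}$, so $f(B)$ lies in an $\alpha$-ball of radius $K^{-1}$ and hence in one adapted chart. Without some such construction the ``lands in a single chart'' claim is false as stated, and your argument does not close.
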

\begin{proof}
We consider the case of a fixed metric $\alpha$. We first
 cover $L$ by a finite number of adapted charts
  $V_i$. This means that on such a $V$ we have coordinates
  \[ (x, z_1, \ldots, z_{n-1}) : V \to \mathbf{R} \times \mathbf{C}^{n-1}, \]
  in which the Reeb field $\xi$ is given by $\d / \d x$ and the
  Sasakian metric $\alpha$ agrees with the Euclidean metric at the
  origin. If we denote by $\alpha_0$ the metric on the slice
  $\{x=0\}$, then $\alpha_0 = \alpha^{\tau}$ is a K\"ahler metric which in the
  coordinates $z_i$ agrees with the Euclidean metric at the origin. 
  By using a cover by smaller charts if
  necessary, we can assume that in these coordinates $\alpha_0$
  satisfies $\frac{1}{2} \delta_{ij} < \alpha_0 < 2\delta_{ij}$ and
  $\Vert \alpha_0\Vert_{C^2} < K$, for some $K$ (independent of the
  chart). Increasing $K$ if necessary, we can ensure that
  every $\alpha$-ball of radius $K^{-1}$ in $L$ is contained in one of our
  adapted charts $V_i$. 

  Fixing again one of our charts $V$, suppose that we have a transverse
  holomorphic  immersion $f : B\to V$, transverse to $\d /
  \d x$. Then the induced K\"ahler structure on $B$ is simply the
  pullback $(\pi\circ f)^*\alpha_0$, where $\pi$ is the projection in
  $V$ onto the $\{x=0\}$ slice. The holomorphic functions $z_i\circ
  \pi\circ f$ then satisfy our requirements. 

  To prove the proposition it would suffice to show that if $F : B\to
  L$ is any transversal holomorphic immersion such that $(B, F^{*}\alpha^{\tau})$
  has diameter smaller than $K^{-1}$, then it is contained in
  one of the $V_i$.  This is clearly impossible, since given such
  an immersion one could easily stretch the immersion by the Reeb flow
  to obtain a new immersion which is not contained in a ball of radius $K^{-1}$.
  Instead, given such an immersion $F$, we will construct a new, 
  ``equivalent'' immersion $f : B\to L$ whose image lies in one of
  our adapted charts $V$.  Since the Reeb vector field is real
  holomorphic, we can flow our transverse holomorphic immersion $F$ to a new
  transverse holomorphic immersion along
  the Reeb field.  Writing $\Phi : L \times \mathbf{R} \to L$ for the Reeb
  action, we are looking for a smooth function $a: B\to\mathbf{R}$
  such that the image of
  \[
  \begin{aligned}
   f : B &\to L \\
      p &\mapsto \Phi(F(p), a(p))
      \end{aligned}
       \]
  lies in one of our adapted charts. We can choose the function $a$,
  so that $f$ maps radial rays $\gamma$ from the origin in $B$ to
  curves $f(\gamma)$ in $L$
  that are orthogonal to $\xi$. The length of $f(\gamma)$ with respect
  to $\alpha$ is then equal to its transversal length--that is, its length
  in $(B,g)$. By assumption the diameter of $(B,g)$ is at most
  $K^{-1}$, and so the image $f(B)$ must be contained in an
  $\alpha$-ball of radius $K^{-1}$, and so it is contained in one of
  our adapted charts. This completes the proof of bounded geometry of
  a fixed metric. Moreover it is clear from the above argument that we
  can choose a uniform $K$ for metrics in a bounded family.
\end{proof}

We now state
the main result that we will prove in this section. Recall that we write $R_\chi(X)$ for the part of the ring of functions
of $X$ on which the torus $\mathbf{T}$ acts by the character $\chi$. For
any $D > 0$ let us write
\[ R_{<D}(X) = \bigoplus_{0 < \chi(\xi) < D} R_\chi(X). \]
Suppose that we have a sequence of solutions $\omega_k$ on $(X,\xi_k)$
of
\[ \mathrm{Ric}(\omega_k) = 2nc_k \alpha_k^\tau \]
for $c_k\in [0,c_0]$ for some $c_0 > 0$.
Choosing $L^2$ orthonormal bases of $R_{<D}$ with respect to $\omega_k$ we
obtain a sequence of maps $F_k : X \to \mathbf{C}^N$. 

\begin{thm}\label{thm:partialc0}
  There exists a constant $D$, depending on the dimension, the
  non-collapsing constant and the bound $K$ on the geometry of
  $\alpha$, such that each $F_k$ is an embedding, and up to choosing a
  subsequence we have $F_k(X) \to Y$ in the sense of currents, 
  where $Y$ is a normal,
  $\mathbf{Q}$-Gorenstein variety with a Reeb field $\xi = \lim
  \xi_k$. In addition $(F_k)_*(\alpha_k^\tau) \to
  \beta^\tau$ for a positive transverse current on $Y$, and $(Y,
  T^{-1}\xi, (1-T)\beta^\tau)$ admits a weak solution of the twisted
  equation, where $t_k\to T$.
\end{thm}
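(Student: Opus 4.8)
The plan is to follow the now-standard Cheeger--Colding/Donaldson--Sun machinery, adapted to cones as in Donaldson--Sun~\cite{DS2} and to the smooth continuity method as in \cite{Sz,Datar-Sz}, with the crucial modification that whenever the argument in \cite{Sz} uses strict positivity of the Ricci curvature, one instead invokes the strict positivity of the transverse Ricci curvature via the adapted charts of Proposition~\ref{prop:aTcoords}.

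First I would set up the Gromov--Hausdorff convergence. Since $\Ric(\omega_k)\geq 0$, the links $(L,g_k)$ have diameter bounded by $C_L$ (Myers) and are non-collapsed by the $t_0$-bound, so after passing to a subsequence we get Gromov--Hausdorff limits $(L,g_k)\to (L_\infty,d_\infty)$ and correspondingly $(X,\omega_k)\to (X_\infty,d_\infty)$, a metric cone over $L_\infty$ by Cheeger--Colding, with a limiting dilation vector field $-J\xi$ and limiting Reeb field $\xi=\lim\xi_k$ on the smooth part. The Reeb foliations also pass to the limit; the regular set $\mathcal R$ is open, dense, convex, and carries a smooth K\"ahler cone metric together with a smooth transverse K\"ahler structure, and the convergence is in $C^\infty_{loc}$ there. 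The singular set $\mathcal S$ has Hausdorff codimension at least $4$ (in the real cone), which is what is needed to apply the removable-singularity/pluripotential results cited after Definition~\ref{defn:weaksoln}.

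Next comes the partial $C^0$-estimate proper, Theorem~\ref{thm:partialc0}: one must show a uniform lower bound on $\sum_\chi |s_\chi|^2$ for an $L^2$-orthonormal basis of $R_{<D}$ with respect to $\omega_k$, for $D$ large. The construction of the needed peak sections uses the H\"ormander $\bar\partial$-technique on the cone (or on annuli, using the uniform Sobolev inequality from the Lemma above and the non-collapsing): one builds approximately holomorphic sections concentrated near a point of a tangent cone, then corrects them. The weight cutoff by $\xi$ is harmless since $\xi_k\to\xi$ in a compact family. The one genuinely new point, and the step I expect to be the main obstacle, is that the gluing/estimate in \cite{Sz} treats $\Ric(\omega_k)$ as a metric to get the requisite curvature control on small balls, which fails here because $\Ric(\omega_k)=2nc_k\alpha_k^\tau$ is only transversally positive and degenerate along $\xi$. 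The resolution is to work transversally: the adapted holomorphic charts of Proposition~\ref{prop:aTcoords} give, on sufficiently small balls, uniform control of the transverse metric and of holomorphic transverse coordinates, so the $\bar\partial$-estimates and the comparison with the Euclidean picture can be carried out on the quotient bundle $Q=TX/\mathbf C\xi$ and then lifted to the cone. Carefully matching the scales on which bounded transverse geometry holds with the scales coming from Cheeger--Colding (where the tangent cone is $\mathbf C\times(\text{metric cone})$ type, the $\mathbf C$-factor being the $\xi$-direction) is the technical heart of the proof.

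Finally I would extract the algebraic and analytic conclusions. Uniform fullness/properness of $F_k$ follows once $D$ is large enough that $R_{<D}$ separates points and tangents on $X$ uniformly; an upper bound on $\dim R_{<D}$ (hence a fixed ambient $\mathbf C^N$ after passing to a subsequence) follows from the index-character/Hilbert-series bounds, which are uniform because $a_0(\xi_k)$ is controlled (volume is continuous in the Reeb field). Passing to a subsequence, $F_k(X)\to Y$ as currents with $Y$ the projective(ized) cone over the limit; normality and $\mathbf Q$-Gorenstein-ness of $Y$ follow as in Donaldson--Sun, using that the metric completion agrees with the algebro-geometric limit and that the log-terminal/klt condition is preserved, the canonical section $\Omega$ and the relation $L_\xi\Omega=imn\Omega$ surviving in the limit. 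The transverse forms $\alpha_k^\tau$ push forward to a positive transverse current $\beta^\tau=\lim (F_k)_*\alpha_k^\tau$ on $Y$ (the mass bound coming from the uniform geometry). The limiting potentials $\phi_k$ converge, after normalization, to a continuous transverse psh potential $\phi$ on $Y$ (uniform $C^0$-bound from the partial $C^0$-estimate together with the reference radial function on $\mathbf C^N$), and taking the limit of the Monge--Amp\`ere equations $\omega_{\phi_k}^n = C_k e^{2n(1-t_k)[\phi_k-\psi_k]}dV$ in the sense of measures on $\mathcal R$ and then across $\mathcal S$ (by the codimension estimate and \cite[Proposition 3.8]{BEGZ}) shows that $(Y,T^{-1}\xi,(1-T)\beta^\tau)$ admits a weak solution of the twisted equation in the sense of Definition~\ref{defn:weaksoln}, as claimed.
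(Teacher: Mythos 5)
Your proposal follows essentially the same route as the paper: Cheeger--Colding convergence of the cones, goodness of tangent cones obtained by replacing the strict positivity of $\Ric(\omega_k)$ with the bounded transverse geometry of $\alpha_k^\tau$ on slices transverse to the Reeb field, the H\"ormander technique with weight $e^{-\frac{1}{2}r^2}$ to produce peak holomorphic functions, and then the Donaldson--Sun arguments for normality of the limit, the mass bound on $(F_k)_*\alpha_k^\tau$, and the passage to the limit in the Monge--Amp\`ere formulation of the twisted equation. Two steps, however, are not adequately addressed. First, the truncation to polynomial growth: the H\"ormander construction gives an $L^2(e^{-\frac{1}{2}r^2})$ holomorphic function $f$ with $|f(x)|^2>\epsilon_0$, but the theorem requires an embedding by the \emph{finite-dimensional} space $R_{<D}$, so one must show that the projection of $f$ onto weights less than $D$ still has a definite value at $x$, with $D$ uniform. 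Your justification that ``the weight cutoff by $\xi$ is harmless since $\xi_k\to\xi$ in a compact family'' is not the relevant mechanism. What is actually needed is (i) a quantitative sup bound on each weight component, of the form $\sup_L|f_{w,i}|^2\leq C\,2^w(w!)^{-1}\Vert f_{w,i}\Vert_{L^2}^2$, coming from the Gaussian weight and the growth rate, and (ii) a count of order $5^w$ of the number of characters with $\langle\chi,\xi\rangle\in(w-1,w]$, so that the tail $\sum_{w\geq D}$ is dominated by $\sum_{w\geq D}10^w/w!$ and can be made smaller than $\epsilon_0/2$. Without both estimates there is no uniform $D$ and hence no fixed ambient $\mathbf{C}^N$.

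Second, ``goodness'' of the tangent cones is not delivered by the adapted transverse charts alone. One needs two-sided density bounds $c_0<\int_{B(p,1)}\alpha^\tau\wedge\omega^{n-1}<A$ on balls Gromov--Hausdorff close to the model cones $\mathbf{C}_\gamma\times\mathbf{C}^{n-1}$, obtained by locating a slice $\{z_1=c\}$ transverse to the rescaled Reeb field on which $\alpha^\tau$ is a genuine K\"ahler metric with bounded curvature, applying the Schoen--Uhlenbeck $\epsilon$-regularity there, propagating the resulting bound by Reeb invariance, and invoking Cheeger--Colding--Tian to rule out the formation of a conical singularity when the density is too small. In the degenerate regime $c_k\to 0$ one additionally needs a refinement of the Chen--Donaldson--Sun chart construction ensuring that the Euclidean coordinate slices can be chosen transverse to the Reeb field (i.e.\ a lower bound on $|\Xi|_{\omega_{Euc}}$ near the center of the chart). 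You correctly identify this scale-matching as the technical heart, but the proposal does not supply the argument, and the case division according to whether the twisting constants stay bounded away from zero is invisible in your sketch.
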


We will spend the rest of this section proving this result, based on
work of Donaldson-Sun~\cite{DS}, Chen-Donaldson-Sun~\cite{CDS2, CDS3}
as well as the second author~\cite{Sz}.  A key ingredient in the work
of Chen-Donaldson-Sun is to make use of the H\"ormander technique for
producing holomorphic sections of positive line bundles.  In our
setting we will use the H\"ormander technique to produce holomorphic
functions on our affine varieties.  The following estimate holds on
non-compact manifolds (see Demailly~\cite[Theorem 4.1]{Dem82} or \cite[Theorem 6.2]{Ber}).

\begin{thm}\label{thm: Hormander}
Let $L$ be a holomorphic line bundle endowed with a metric $e^{-\phi}$ over a complex manifold $X$ which has some complete K\"ahler metric.  Assume the metric $e^{-\phi}$ has strictly positive curvature, and that
\[
\ddb \phi \geq c\omega
\]
where $\omega$ is some K\"ahler form on $X$ (not necessarily complete)
and $c > 0$.

Let $f$ be a $\dbar$-closed $(n,q)$ form $(q>0)$ with values in $L$.  Then there is an $(n,q-1)$ form $u$ with values in $L$ such that $\dbar u =f$, and
\[
\| u\|_{L^{2}(X , e^{-\phi}, \omega)}^{2} \leq \frac{1}{cq} \| f\|_{L^{2}(X , e^{-\phi}, \omega)}^{2},
\]
provided the right hand side is finite.
\end{thm}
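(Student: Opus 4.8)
The plan is to prove this by the classical H\"ormander $L^2$-method, since the statement is exactly the basic solvability-with-estimates theorem for $\dbar$ acting on $L$-valued forms of top holomorphic bidegree. First I would set up the Hilbert spaces $\mathcal{H}_j$ of square-integrable $L$-valued $(n,j)$-forms on $X$, with norm determined pointwise by $\omega$ on the form part and by $e^{-\phi}$ on the bundle part, and regard $\dbar$ as a densely defined closed operator, both as $\dbar\colon\mathcal{H}_{q-1}\to\mathcal{H}_q$ and as $\dbar\colon\mathcal{H}_q\to\mathcal{H}_{q+1}$, with Hilbert-space adjoint $\dbar^{*}$. The abstract functional-analysis lemma then reduces the existence of $u$ with $\dbar u=f$ and $\|u\|^2\le\tfrac1{cq}\|f\|^2$ to the a priori bound
\[
|\langle f,v\rangle|^2 \;\le\; \tfrac1{cq}\,\|f\|^2\,\|\dbar^{*}v\|^2
\qquad\text{for all } v\in\mathrm{Dom}(\dbar^{*})\cap\mathcal{H}_q .
\]
Indeed, splitting $v=v'+v''$ with $v'$ the orthogonal projection onto the (closed) subspace $\ker\dbar$ and noting $v''\in(\ker\dbar)^{\perp}\subset\ker\dbar^{*}$, one has $\dbar^{*}v=\dbar^{*}v'$ and, using $\dbar f=0$, $\langle f,v\rangle=\langle f,v'\rangle$; so it suffices to prove the bound for $v\in\ker\dbar\cap\mathrm{Dom}(\dbar^{*})$, and then the usual Riesz-representation argument applied to the functional $\dbar^{*}v\mapsto\langle v,f\rangle$ produces $u$.

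The core estimate is therefore $\|\dbar^{*}v\|^2\ge cq\,\|v\|^2$ for $v\in\ker\dbar$, which I would get from the Bochner--Kodaira--Nakano identity. Since $\omega$ is K\"ahler there are no torsion terms, and discarding a nonnegative $\del$-type Laplacian contribution yields
\[
\|\dbar v\|^2+\|\dbar^{*}v\|^2 \;\ge\; \big\langle [\,\ddb\phi,\Lambda_\omega\,]\,v,\,v\big\rangle_{L^2},
\]
where $\Lambda_\omega$ is the adjoint of $\omega\wedge(\cdot)$ and $\ddb\phi$ represents the curvature $\sqrt{-1}\Theta(L)$. By the K\"ahler identities the curvature operator $[\ddb\phi,\Lambda_\omega]$ involves only $\ddb\phi$, and in bidegree $(n,q)$ it is moreover positive: working in a local frame simultaneously diagonalizing $\omega$ and $\ddb\phi$, its eigenvalues on $\Lambda^{n,q}T^{*}X\otimes L$ are the partial sums $\sum_{j\in J}\gamma_j$ over $q$-element subsets $J\subset\{1,\dots,n\}$, where $\gamma_j$ are the eigenvalues of $\ddb\phi$ relative to $\omega$; the hypothesis $\ddb\phi\ge c\,\omega$ gives $\gamma_j\ge c$, hence $[\ddb\phi,\Lambda_\omega]\ge cq\cdot\mathrm{id}$. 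This gives the core estimate for smooth, compactly supported $v$.

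It then remains to extend the inequality to all $v\in\mathrm{Dom}(\dbar)\cap\mathrm{Dom}(\dbar^{*})$, and --- the genuinely delicate point --- to handle the fact that the K\"ahler form $\omega$ entering the norms and the curvature condition need not be complete. The hypothesis supplies some complete K\"ahler metric $\hat\omega$ on $X$; with it, $C^{\infty}_c$ forms are dense in the graph-norm closure of $\mathrm{Dom}(\dbar)\cap\mathrm{Dom}(\dbar^{*})$ whenever the underlying metric is complete (cutoff functions with $\hat\omega$-bounded gradient). So I would run the whole argument with the complete K\"ahler metrics $\omega_\delta:=\omega+\delta\hat\omega$, $\delta>0$, obtaining $u_\delta$ with $\dbar u_\delta=f$ and $\|u_\delta\|_{\omega_\delta}^2\le\tfrac1{cq}\|f\|_\omega^2$; here one uses that the pointwise weight $\big\langle[\ddb\phi,\Lambda_{\omega_\delta}]^{-1}f,f\big\rangle_{\omega_\delta}\,dV_{\omega_\delta}$ is monotone in $\delta$ --- special to bidegree $(n,q)$ and valid because $\ddb\phi\ge0$ --- and so is bounded above by $\tfrac1{cq}|f|_\omega^2\,dV_\omega$. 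Since $\omega_\delta\to\omega$ locally uniformly, the $u_\delta$ are uniformly bounded in $L^2_{\mathrm{loc}}(\omega)$; a weak limit $u$ then exists, solves $\dbar u=f$, and by lower semicontinuity satisfies $\|u\|_\omega^2\le\tfrac1{cq}\|f\|_\omega^2$.

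The main obstacle is precisely this reconciliation of two incompatible demands: curvature positivity is available only for the possibly-incomplete $\omega$, while the $L^2$ Hilbert-space machinery (closed range of $\dbar$, density of test forms, Kodaira--Nakano integration by parts) requires a complete metric. What rescues the argument --- and the precise reason the statement is limited to bidegree $(n,q)$ --- is that in top holomorphic degree the Bochner--Kodaira--Nakano curvature term stays positive after replacing $\omega$ by $\omega_\delta$ with an unchanged constant, and the relevant $L^2$ weight is monotone in $\delta$, so the limit $\delta\to0$ is free.
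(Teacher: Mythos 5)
Your argument is correct: it is precisely the standard Demailly proof of this estimate (the paper does not prove the theorem itself but cites Demailly and Berndtsson, and your sketch reconstructs exactly the argument of the cited reference). You correctly identify the two essential points — that on $(n,q)$-forms the curvature operator $[\ddb\phi,\Lambda_\omega]$ has eigenvalues given by $q$-fold partial sums of the eigenvalues of $\ddb\phi$ relative to $\omega$, hence is bounded below by $cq$, and that the incompleteness of $\omega$ is handled via the auxiliary complete metrics $\omega_\delta=\omega+\delta\hat\omega$ together with the monotonicity in $\delta$ of $\langle[\ddb\phi,\Lambda_{\omega_\delta}]^{-1}f,f\rangle_{\omega_\delta}\,dV_{\omega_\delta}$, which is special to top holomorphic bidegree.
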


Using a resolution of singularities (see Saper~\cite[Example
9.4]{Sap}) we know that $X\backslash \{0\}$ admits a complete K\"ahler metric since $0 \in X$ is an isolated singular point. Hence Theorem~\ref{thm: Hormander} applies in our setting.

We are going to apply the H\"ormander theorem to $L=\mathcal{O}_{X}
\otimes K_{X}^{-1} \simeq K_{X}^{-1}$, where we recall that
$\mathcal{O}_{X}$ is endowed with the metric $e^{-\frac{1}{2}r^2}$ and
we use the corresponding volume form $\omega^{n}$ as a metric on
$K_{X}^{-1}$.  The reason we make this choice for $L$ is that we have isomorphisms
\[
\begin{aligned}
\Lambda^{n,1}(K_X^{-1}) &\cong \Lambda^{0,1} \otimes K_{X} \otimes K_{X}^{-1} \cong \Lambda^{0,1}\\
K_{X}^{-1}\otimes K_{X} &\cong \mathcal{O}_{X}.
\end{aligned}
\]
In particular, Theorem~\ref{thm: Hormander} implies that we can solve
the $\dbar$ equation on $\mathcal{O}_{X}$.  Following the ideas of
Donaldson-Sun~\cite{DS}, we can then use the H\"ormander technique to
transplant holomorphic functions from tangent cones of
Gromov-Hausdorff limits to our non-compact cone manifold $(X,\omega)$.
In order to do this, we need to ensure that there exist iterated
tangent cones which are ``good".

\begin{defn} 
Suppose that $(Z,d_Z)$ is a Gromov-Hausdorff limit of $(X, \omega_t)$
as $t \rightarrow T \leq 1$, and suppose that $C(Y)$ is an (iterated)
tangent cone at $p \in Z$.  We say that the tangent cone is  {\em good} if:
\begin{enumerate}
\item The regular set $Y_{reg} \subset Y$ is open in $Y$ and smooth.
\item The distance function on $C(Y_{reg})$ is induced by a Ricci flat
  cone metric, and on $C(Y_{reg})$ the scaled up metrics along our
  sequence  converge in $L^p_{loc}$ for all $p$ to this Ricci flat metric. 
\item For all $\delta>0$ there is a Lipschitz function $g$ defined on $Y$ which is identically $1$ on a neighborhood of $Y_{sing} = Y \backslash Y_{reg}$, with support contained in the $\delta$ neighborhood of $Y_{sing}$ and with $\|\nabla g\|_{L^{2}} \leq \delta$, where the $L^2$ norm is with respect to the Sasaki-Einstein metric on $Y_{reg}$.
\end{enumerate}
\end{defn}

Suppose that $\omega_t$ are K\"ahler cone metrics on $X$, solving~\eqref{eq:genMOC} where $(\xi_t, \alpha_t)$ 
move in a bounded family.  Suppose that, along a subsequence
$(X,\omega_{t})$ converge in the Gromov-Hausdorff sense to $(Z,d_Z)$.
If we can show that each tangent cone of
$(Z,d_Z)$ is good, then the techniques of \cite{DS}, together with the
above remarks, will imply that there is a number $\epsilon_{0}$ depending only on the dimension,
the non-collapsing constant and a bound for the geometry of $\alpha_t$ with
the following effect:  Let $r_t$ be the radial function for $\omega_t$.
For any point $p \in L = \{r_{t}=1\}$ there is a holomorphic
function $f  \in \mathcal{O}_{X}$ with
$\|f\|_{L^{2}(e^{-\frac{1}{2}r_{t}^2})}=1$, and $|f(p)|>\epsilon_0$.

At this point we will need to pass from arbitrary holomorphic functions to
those with polynomial growth.  This is done in
section~\ref{sec:polygrowth}, essentially by truncating the Taylor
series of $f$ at a sufficiently high (but controlled) order.  Putting
all of these results together with techniques from
Donaldson-Sun~\cite{DS2}
will imply Theorem~\ref{thm:partialc0}. With this discussion, we state
our first goal:

\begin{prop}\label{prop:goodTang}
Suppose $\omega_{t}$ are solutions of~\eqref{eq:genMOC} with data $(\alpha_t, \xi_t)$ which moves
in a bounded family.  If $(Z,d_Z)$ is any Gromov-Hausdorff limit of a sequence $(X,\omega_{t_i})$, then
$Z$ has good tangent cones.
\end{prop}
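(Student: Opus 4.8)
The goal is to establish that Gromov--Hausdorff limits of the cone metrics $\omega_{t_i}$ satisfying $\Ric(\omega_{t_i}) = 2nc_i\alpha_i^\tau$ have good tangent cones in the sense of the three-part definition above. I would organize this around the Cheeger--Colding--Naber theory applied at the level of the cones, combined with the transverse positivity of Ricci curvature. Let me think about what each of the three conditions requires and how to get them.

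First, I would set up the general structure theory. Since $\Ric(\omega_{t_i}) \geq 0$ and the metrics are uniformly non-collapsed (with the Sobolev inequality on annuli from the Lemma above, and uniform diameter bound on the link), the Cheeger--Colding theory applies: the limit $(Z,d_Z)$ is a metric cone over the limit of the links, and it decomposes into a regular set (where the limit is a smooth Ricci-flat cone metric, by $\epsilon$-regularity) and a singular set of Hausdorff codimension at least $4$ (or codimension $2$ on the base of the cone), using Cheeger--Colding--Tian and Cheeger--Colding--Naber. Iterated tangent cones at points of $Z$ are again metric cones $C(Y)$, and by the cone-splitting and volume-rigidity arguments one can choose an iterated tangent cone so that $Y$ is itself a smooth manifold away from a set of codimension $\geq 4$ with isolated-in-an-appropriate-sense structure; this is exactly the kind of "good" cone isolated in Donaldson--Sun. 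Condition (1), openness and smoothness of $Y_{reg}$, and condition (2), that $C(Y_{reg})$ carries a Ricci-flat cone metric with $L^p_{loc}$ convergence of the rescaled metrics, both follow from this regularity theory essentially verbatim from \cite{DS}, \cite{CDS2}. The one point requiring care is upgrading $L^2_{loc}$ convergence to $L^p_{loc}$, which follows from the higher integrability / $\epsilon$-regularity estimates on the regular set.

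Second, and this is where the transverse structure enters, I would establish condition (3), the existence of cutoff functions $g$ near $Y_{sing}$ with small gradient in $L^2$. In the Kähler--Einstein setting of \cite{DS} this is a capacity estimate: $Y_{sing}$ has codimension $\geq 4$, so it has zero $2$-capacity and one can build cutoffs with $\|\nabla g\|_{L^2}$ arbitrarily small. The subtlety here, as the authors flag in the introduction, is that on a Ricci-flat \emph{cone} the Ricci curvature degenerates in the radial direction, so one cannot directly quote the compact Kähler--Einstein argument; instead one should pass to the transverse (Sasaki--Einstein) picture on the link $Y_{reg}\cap L$, where the transverse Ricci curvature is strictly positive ($\Ric^\tau = 2n\,g^\tau$ type bound). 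There one has genuine positive Ricci curvature on a compact (link) space, the singular set still has the right codimension, and the Bishop--Gromov / capacity argument for producing the cutoff $g$ goes through; the $L^2$ norm in the statement is taken with respect to the Sasaki--Einstein metric on $Y_{reg}$ precisely so that this works. I would prove a codimension estimate for $Y_{sing}$ inside the link first, then run the standard capacity construction.

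The main obstacle I expect is exactly this last point: transferring the codimension-$4$ singular set estimate and the associated capacity/cutoff construction from the Ricci-flat cone to the compact link with its transverse positive Ricci curvature, while keeping careful track of which metric the norms are measured in and ensuring all the constants are uniform over the bounded family $(\xi_t,\alpha_t)$. In particular one must check that the Cheeger--Colding convergence on the cones descends to a statement about the Reeb foliation and the transverse metrics, even though it is not clear a priori that Cheeger--Colding theory applies directly to the transverse structure — the resolution is to work on the cone for the convergence and regularity theory and only invoke transverse positivity pointwise/locally on the regular set, exactly as the authors indicate. Everything else — the iterated-tangent-cone selection, smoothness of $Y_{reg}$, and $L^p$ convergence — I expect to be a faithful adaptation of Donaldson--Sun and Chen--Donaldson--Sun with the homogeneity under the Reeb scaling providing the cone structure for free.
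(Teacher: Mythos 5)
Your overall framing --- run the Cheeger--Colding convergence theory on the cones and adapt \cite{DS}, \cite{CDS2}, \cite{Sz} --- is the right starting point, but the proposal misses the actual technical content of the proof and, for condition (3), proposes a route that the paper explicitly identifies as unavailable. First, conditions (1) and (2) do \emph{not} follow ``essentially verbatim'' from the K\"ahler--Einstein case: the rescaled limits satisfy a twisted equation $\Ric(\omega)=c\alpha^\tau$ whose right-hand side does not scale away under blow-up, so one must prove that on regions Gromov--Hausdorff close to Euclidean the Ricci curvature is in fact bounded (Proposition~\ref{prop:RegRicBnd}). The difficulty, and the place where transverse positivity genuinely enters, is that $\alpha^\tau$ is degenerate along the Reeb direction, so the $\epsilon$-regularity arguments of \cite{Sz} (which treat the twisting form as a metric) cannot be applied directly; the paper's fix is to locate holomorphic slices transverse to the rescaled Reeb field $\Xi$, apply the Schoen--Uhlenbeck $\epsilon$-regularity to $\alpha^\tau$ restricted to such a slice, and then propagate the resulting bound using the $\Xi$-invariance of $\alpha^\tau$. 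This slicing device, carried out separately in the two regimes $c_i$ bounded away from $0$ and $c_i\to 0$ (the latter requiring the refinement of \cite[Proposition 1]{CDS3} given in Proposition~\ref{prop:CDS} to control $\Xi$ in the constructed charts), is the heart of the proof and is absent from your plan.

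Second, your argument for condition (3) --- a codimension-$\geq 4$ singular set, hence zero capacity, with the estimate transferred to the link where the transverse Ricci curvature is positive --- would fail on both counts. The tangent cones here can have genuine real-codimension-two conical singularities of type $\mathbf{C}_\gamma\times\mathbf{C}^{n-1}$, created by concentration of the twisting current, so there is no codimension-four statement to exploit; and the paper states explicitly that it is not clear how to make the Cheeger--Colding theory apply to the transverse structure, which is exactly what your plan requires. What the paper actually does is stay on the cone and prove two density estimates for the twisting form --- a lower bound $\int_{B}\alpha^\tau\wedge\omega^{n-1}>c_0$ on balls close to a conical singularity (Proposition~\ref{prop:liminf>c0}) and a matching upper bound (Proposition~\ref{prop:densityupper}) --- which provide the covering and mass control of the singular set needed to build the cutoff functions, following the corrected argument of \cite{Sz}. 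Both density propositions again rely on the transverse slicing trick rather than on any convergence theory for the links.
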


\subsection{Gromov-Hausdorff Convergence}
We will now specialize to sequences $(X,\omega_i, \alpha_i,\xi_i)$, where
\[
(\xi_i, \alpha_i) \longrightarrow (\xi, \alpha)
\]
in the $C^2$ topology for a fixed background metric, and
\begin{equation}\label{eq:genMOC2}
{\rm Ric}(\omega_i) = 2n c_i \alpha^{\tau}_i
\end{equation}
with $c_{i} \longrightarrow c$.

Suppose we have a sequence of metrics solving~\eqref{eq:genMOC2}, with $c_i \rightarrow c$.  Since the links $(L,g_{i})$ have bounded diameter, positive Ricci curvature, and are uniformly non-collapsed, we can take a Gromov-Hausdorff limit
\[
(L,g_{i}) \xrightarrow{\,\,\,\,d_{GH}\,\,\,\,}(Z,d).
\]
At the same time we will have convergence of the cones
\[
(X, \omega_i)\xrightarrow{\,\,\,\,d_{GH}\,\,\,\,}(C(Z),\hat{d}),
\]
in the pointed Gromov-Hausdorff topology, where we can identify $Z$
with the unit link in $C(Z)$. 

To understand iterated tangent cones in the space $C(Z)$, for any $p\in Z$
we need to study very small balls 
centered around $p \in C(Z)$, scaled to unit size. This in turn means
that we need to study small balls centered at points on the unit link
in $(C(L), \omega_{i})$, scaled to unit size. 
Such a ball $B$ has
the following structure: $B$ is the unit ball with respect to a
K\"ahler metric $\omega$, satisfying the equation 
\begin{equation}\label{eq:Ricomega}
\mathrm{Ric}(\omega) = c\alpha^\tau, 
\end{equation}
and a uniform non-collapsing condition $\mathrm{Vol}(B,\omega) > K^{-1} >
0$. 
There is a holomorphic vector field $\Xi$ on $B$, whose imaginary part
is the Reeb field $\xi$
scaled down, satisfying 
\[ 1-\delta < |\Xi|_{\omega} < 1 + \delta, \quad \mathcal{L}_\Xi \omega =
\lambda\omega \]
for some $\lambda \leq \delta \leq 1/2$. If the ball that we scaled up is
sufficiently small, then $\delta$ can be taken to be arbitrarily
small.  Finally $\alpha^{\tau}$ is a closed,
non-negative $(1,1)$-form, vanishing along $\Xi$, and defining a transverse K\"ahler
metric with bounded geometry on $TB / \mathbf{C}\xi$ in the sense of
Definition~\ref{defn:geombound}.

There are two different cases to study, depending on whether $c_i$ is bounded
away from 0 or $c_i \to 0$. 

\subsection{The case $c_i$ are bounded away from zero} \label{sec:cinot0}

Fix $i$, and suppress the index.  Scaling $\alpha^\tau$ by a bounded factor we can rewrite
Equation~\eqref{eq:Ricomega} as
\[ \mathrm{Ric}(\omega) = \alpha^\tau. \] 
The next proposition, which is based on \cite[Proposition 8]{Sz}
shows that when $(B,\omega)$ is close to the Euclidean ball in the
Gromov-Hausdorff sense, then on a smaller ball the Ricci curvature is
bounded. The quantity $I(B)$, as defined in \cite{CDS2, Sz} is 
\[ I(B) = \inf_{B(x,r)\subset B} VR(x,r), \]
where $VR(x,r)$ is the ratio of the volumes of the ball $B(x,r)$ and
the Euclidean ball $rB^{2n}$. 

\begin{prop}\label{prop:RegRicBnd}
  There is a $\delta = \delta(K) > 0$, depending on the bound $K$
  for the geometry of $\alpha$, such that if $1 -
  I(B) < \delta$, then $\mathrm{Ric}(\omega) < 5\omega$ on
  $\frac{1}{2}B$. 
\end{prop}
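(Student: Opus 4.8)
The plan is to follow closely the proof of \cite[Proposition 8]{Sz}. The key observation is that although on our cone $\mathrm{Ric}(\omega)$ is only bounded below by $0$, it is nonetheless \emph{equal} to the form $\alpha^\tau$, which we control through its bounded geometry; since the argument in \cite{Sz} uses only the K\"ahler (rather than the Einstein) structure together with the convergence theory of Cheeger-Colding~\cite{CheeCo}, it adapts to the present setting.

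After rescaling we may assume $\mathrm{Ric}(\omega) = \alpha^\tau$ on $B$. First I would use that $\mathrm{Ric}(\omega)\geq 0$ together with the hypothesis $1-I(B)<\delta$: by the Bishop-Gromov inequality and the volume-cone and almost-rigidity theorems of Cheeger-Colding, $(\tfrac34 B,\omega)$ is then Gromov-Hausdorff close — with closeness controlled by $\delta$ — to a Euclidean ball in $\mathbf{C}^n$. The whole content of the proposition is to upgrade this metric-space statement to the pointwise bound $\omega\geq (1-\varepsilon)g_{Euc}$ on $\tfrac12 B$ in suitable holomorphic coordinates; granting this, the conclusion is immediate, since in coordinates adapted to the bounded geometry of $\alpha$ as in Definition~\ref{defn:geombound} one has $\alpha^\tau\leq (2+\varepsilon)g_{Euc}$ along the transverse distribution (the two coordinate systems differ by a transition which is $C^{1,\alpha}$-close to linear), while $\mathrm{Ric}(\omega)$ annihilates $\mathbf{C}\xi$ and $\omega$ is positive there, so that $\mathrm{Ric}(\omega)=\alpha^\tau\leq\tfrac{2+\varepsilon}{1-\varepsilon}\,\omega<5\omega$ once $\delta$, and hence $\varepsilon$, is small enough.

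To obtain the pointwise bound I would exploit the complex Monge-Amp\`ere equation underlying $\mathrm{Ric}(\omega)=\alpha^\tau$: writing $\omega = \ddb u$ on the ball and using the bounded-geometry coordinates, the equation takes the form $\det(u_{i\bar j}) = e^{F}$ with $F$ controlled in $C^2$ in terms of $K$. Gromov-Hausdorff closeness, with the accompanying volume and Poincar\'e-type bounds, controls $\mathrm{osc}\,u$; the interior regularity theory for the complex Monge-Amp\`ere operator (an $L^\infty$ estimate, then Evans-Krylov and Schauder) then gives a uniform $C^{2,\alpha}$ bound for $u$ on $\tfrac34 B$, so that $\omega$ is two-sided bounded and bounded in $C^{1,\alpha}$. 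A metric ball that is $C^{1,\alpha}$-bounded and Gromov-Hausdorff close to the Euclidean ball is in fact $C^{1,\alpha'}$-close to it for any $\alpha'<\alpha$ — pass to a $C^{1,\alpha'}$ limit and use uniqueness of the Gromov-Hausdorff limit — which is the asserted bound, with $\varepsilon\to 0$ as $\delta\to 0$.

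The main obstacle is exactly this passage from Gromov-Hausdorff control to pointwise control of $\omega$, where one must marry the Cheeger-Colding almost-rigidity with pluripotential and Monge-Amp\`ere regularity. The cone-specific difficulty, emphasized already in the introduction, is that at this stage the lower Ricci bound is only $0$, so one cannot call on the strictly positive Ricci curvature of the transverse metric and must run the entire convergence argument on the cone $B$ itself; keeping the constants under control, so that the output is $5\omega$ rather than an uncontrolled $C\omega$, is what forces the almost-Euclidean hypothesis and pins down the dependence $\delta=\delta(K)$.
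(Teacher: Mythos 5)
There is a genuine gap, and it is a circularity at the heart of your argument. The bounded geometry of $\alpha^\tau$ in the sense of Definition~\ref{defn:geombound} is an \emph{intrinsic} statement about the transverse K\"ahler structure: it provides coordinates, at the scale where the $\alpha^\tau$-diameter of a transverse slice is at most $K^{-1}$, in which $\alpha^\tau$ is comparable to the Euclidean metric \emph{of those coordinates}. It gives no a priori comparison between $\alpha^\tau$ and $\omega$, and hence none between the $\alpha^\tau$-adapted coordinates and the coordinates in which $\omega$ is close to $g_{Euc}$; the transition map between the two charts could have arbitrarily large derivative precisely when $\alpha^\tau$ is much larger than $\omega$ somewhere. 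So your assertion that $\alpha^\tau\leq(2+\epsilon)g_{Euc}$ along the transverse distribution in the $\omega$-adapted chart, and likewise your claim that the Monge--Amp\`ere right-hand side $F$ is controlled in $C^2$ in terms of $K$, each presuppose exactly the bound $\mathrm{Ric}(\omega)=\alpha^\tau\leq C\omega$ that the proposition is meant to establish. (The Evans--Krylov/Schauder chain has the same problem: without a bound on $F$, equivalently on $\alpha^\tau$ against a fixed background, none of those estimates get started.)

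The paper avoids this by running a point-picking contradiction argument as in \cite[Proposition 8]{Sz}: if the bound fails, one rescales a small ball centered near where $\mathrm{Ric}(\omega)/\omega$ is essentially maximal, so that on the rescaled ball $(\tilde B,\tilde\omega)$ one has $\alpha^\tau\leq\tilde\omega$ everywhere \emph{and} $\alpha^\tau(v,\bar v)\geq 1$ for some unit vector $v$ at the center. The first inequality gives bounded Ricci curvature for free, so Anderson's $C^{1,\alpha}$ harmonic/holomorphic coordinates apply (no Monge--Amp\`ere regularity is needed); the second, combined with the bounded transverse geometry (now applicable, since $\alpha^\tau\leq\tilde\omega$ forces the slice to have small $\alpha^\tau$-diameter) and the invariance of $\alpha^\tau$ under the flow of $\Xi$, spreads a definite amount of positive Ricci curvature over a spherical sector. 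Bishop--Gromov then produces a definite volume deficit, contradicting $1-I(B)<\delta$. Note the hypothesis $1-I(B)<\delta$ is used through this quantitative volume-monotonicity mechanism on a rescaled sub-ball, not merely through Gromov--Hausdorff closeness of $B$ itself to the Euclidean ball as in your outline.
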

\begin{proof}
  The difference with \cite[Proposition 8]{Sz} is that $\alpha^\tau$ is
  not strictly positive, however it is strictly positive on slices
  transverse to $\Xi$, and it is invariant under the flow of $\Xi$,
  since this flow simply scales $\omega$. 

  As in \cite[Proposition
  8]{Sz}, if $\mathrm{Ric}(\omega)$ is not bounded by $5\omega$ on
  $0.5B$, then we can find a small ball inside $B$, which when scaled
  to unit size $(\tilde{B}, \tilde{\omega})$ satisfies $\alpha^\tau\leq
  \tilde{\omega}$, and in addition there is a unit vector $v$ at the
  origin (with respect to $\tilde{\omega}$) such that
  \[ \alpha^\tau(v,\bar v) \geq 1. \]
  The equation for $\omega$ implies that on $\tilde{B}$ the metric
  $\tilde{\omega}$ has bounded Ricci curvature, and so if $\delta$ is
  sufficiently small, then Anderson's result \cite{And} implies that
  we have holomorphic coordinates $z^1, \ldots, z^n$ on the ball
  $\theta \tilde{B}$, with respect to which $\tilde{\omega}$ is close
  to the Euclidean metric in $C^{1,\alpha}$. In these coordinates the
  holomorphic vector field $\Xi$ will satisfy $1/4 < |\Xi|_{Euc} < 4$,
  and so by rotating the coordinates and shrinking $\theta$ we can
  assume that on $\theta \tilde{B}$ the vector field $\Xi$ is very
  close to $\partial_{z^1}$. In particular $\alpha(\partial_{z^1},
  \bar w)$ is very small for any unit vector $w$. 

  It follows that the slice $U = \{z^1 = 0\} \cap \theta\tilde{B}$ is
  transverse to $\Xi$, and so $(U, \alpha^\tau)$ is a K\"ahler manifold
  with bounded geometry. The inequality $\alpha^\tau \leq \tilde{\omega}$
  implies that the diameter of $(U, \alpha^\tau)$ is at most $\theta$, so
  shrinking $\theta$ further if necessary, we have holomorphic
  functions $w^2, \ldots, w^n$ on $U$, defining local coordinates near
  each point, in which the components of $\alpha^\tau$ are controlled in
  $C^2$. 

  The vector $v$ may not be tangent to the slice $U$, but we may
  simply discard its $\partial_{z^1}$-component, while still having
  $\alpha^\tau(v,\bar v) > \frac{1}{2}$. Rotating the $z^2, \ldots, z^n$
  coordinates, we can then assume that 
  \[ \alpha^\tau(\partial_{z^2}, \partial_{\bar z^2}) > \frac{1}{4}. \]
  We now have that the components
  of $\alpha^\tau$ in the $z^i$ coordinates 
  have bounded derivatives along the slice $U$, but in
  addition $\alpha^\tau$ is also constant along the flow of $\Xi$, which
  is very close to $\partial_{z^0}$. It follows that just as in
  \cite[Proposition 8]{Sz} we can obtain a spherical sector in which
  the Ricci curvature of $\tilde{\omega}$ is strictly positive. Applying
  the Bishop-Gromov volume comparison we get a contradiction to
  $1-I(B) < \delta$ if $\delta$ is sufficiently small.  
\end{proof}
         
\begin{cor}
If we have solutions of $(X,g_i)$ of ~\eqref{eq:genMOC2} with $c_{i} \geq c >0$, and if $(B(p_i,1) , g_i) \subset X$
converge in the Gromov-Hausdorff sense to  the Euclidean ball, then
the convergence is $C^{1,\alpha}$ on compact sets.  In particular, if
$(B(p_i,1) , g_i)\longrightarrow Z$, then the regular set in $Z$ is
open, and then convergence on the regular set is locally
$C^{1, \alpha}$.
\end{cor}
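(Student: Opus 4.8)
The plan is to read the Corollary as a direct consequence of Proposition~\ref{prop:RegRicBnd} together with the standard $\varepsilon$-regularity theory for manifolds with two-sided Ricci bounds. Proposition~\ref{prop:RegRicBnd} is precisely the device that manufactures an \emph{upper} Ricci bound out of Gromov--Hausdorff closeness to a Euclidean ball; once a two-sided Ricci bound is available, together with the non-collapsing, Anderson's theorem \cite{And} supplies harmonic coordinates with uniform $C^{1,\alpha}$ control of the metric, and $C^{1,\alpha}$ convergence then follows by an Arzel\`a--Ascoli argument. The only genuinely non-trivial input is Proposition~\ref{prop:RegRicBnd} itself, which we may assume.

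First I would treat the case where $(B(p_i,1),g_i)$ converges to the Euclidean unit ball. I would fix an interior point $q_\infty$ of the limit, pick $q_i\to q_\infty$, and for a small radius $\rho$ pass to the rescaled balls $(B(q_i,\rho),\rho^{-2}g_i)$, which again converge Gromov--Hausdorff to the Euclidean unit ball. Using $\mathrm{Ric}(\omega_i)\geq 0$ and the non-collapsing, Colding's volume convergence theorem (see \cite{CheeCo}) shows that the volume ratios of these balls, and of their sub-balls centred at converging points, tend to $1$; with Bishop--Gromov monotonicity this gives $1-I\big(B(q_i,\rho),\rho^{-2}g_i\big)\to 0$. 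For $\rho$ small enough these rescaled balls carry the structure described earlier in this section (the holomorphic field $\Xi$ with $\mathcal L_\Xi\omega=\lambda\omega$ and $\lambda$ as small as desired, and the transverse form $\alpha^\tau$ with geometry bounded by $K$), so after absorbing the bounded factor $2nc_i$ into $\alpha^\tau$ I would apply Proposition~\ref{prop:RegRicBnd} to get $\mathrm{Ric}(g_i)<5\rho^{-2}g_i$ on $B(q_i,\rho/2)$ for $i$ large. Combined with $\mathrm{Ric}(g_i)\geq 0$ and non-collapsing, Anderson's theorem \cite{And} then yields a uniform harmonic radius at $q_i$ and a uniform $C^{1,\alpha}$ bound for $g_i$ in the associated harmonic coordinates. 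Arzel\`a--Ascoli gives $C^{1,\alpha'}_{loc}$ convergence of a subsequence near $q_\infty$ (any $\alpha'<\alpha$) to a $C^{1,\alpha}$ metric, which by uniqueness of Gromov--Hausdorff limits is the Euclidean metric; uniqueness of the limit then forces the full sequence to converge. Covering an arbitrary compact subset of the open unit ball by finitely many such balls yields $C^{1,\alpha}$ convergence on compact sets.

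Next I would deduce the ``in particular'' statement. Let $(B(p_i,1),g_i)\to Z$ be an arbitrary non-collapsed limit and let $q\in Z$ be a regular point, i.e.\ a point with a tangent cone isometric to $\mathbf{R}^{2n}$. By Bishop--Gromov monotonicity the volume ratio of $B(q,\rho)\subset Z$ tends to $1$ as $\rho\to0$, so the almost-rigidity theorem of Cheeger--Colding \cite{CheeCo} shows that for small $\rho$ the ball $B(q,\rho)$ is arbitrarily Gromov--Hausdorff close to the Euclidean $\rho$-ball. Lifting this to the approximating sequence, the rescaled balls about points $q_i\to q$ converge to the Euclidean unit ball, so by the previous paragraph the convergence is $C^{1,\alpha}$ near $q_i$, and hence $Z$ is a $C^{1,\alpha}$ Riemannian manifold on a neighbourhood of $q$. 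Every point of such a neighbourhood then has all of its tangent cones equal to $\mathbf{R}^{2n}$, so $Z_{reg}$ is open and the convergence on it is locally $C^{1,\alpha}$.

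The main obstacle is entirely concentrated in Proposition~\ref{prop:RegRicBnd}: the only source of an upper Ricci bound here is approximate volume rigidity, and the fact that $\alpha^\tau$ is merely non-negative rather than strictly positive is exactly what makes that proposition subtle, forcing one to work on the slices transverse to $\Xi$ where the transverse metric is positive. Granting it, the remaining points — checking that the small rescaled balls uniformly satisfy the hypotheses of Proposition~\ref{prop:RegRicBnd} (including the bounded geometry of $\alpha$ and the $\Xi$-structure), and that $C^{1,\alpha}$-regularity of the limit propagates to an open neighbourhood so that $Z_{reg}$ is open — are routine applications of the volume convergence and $\varepsilon$-regularity theory of Colding, Cheeger--Colding \cite{CheeCo} and Anderson \cite{And}.
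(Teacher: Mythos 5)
Your proposal is correct and follows essentially the same route as the paper, whose proof is the one-line combination of Colding's volume convergence (to verify the hypothesis $1-I(B)<\delta$ of Proposition~\ref{prop:RegRicBnd}), the resulting two-sided Ricci bound, and Anderson's harmonic-coordinate theorem to upgrade to $C^{1,\alpha}$ convergence. Your additional elaboration of the ``in particular'' clause via Cheeger--Colding almost-rigidity is a correct filling-in of details the paper leaves implicit.
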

\begin{proof}
We combine Colding's volume convergence \cite{ColVol} with
Proposition~\ref{prop:RegRicBnd} to get a uniform Ricci bound, and
then apply Anderson's result in \cite{And} to get $C^{1,\alpha}$ 
convergence to a Euclidean ball.  
\end{proof}

Now assume that we have a sequence of balls as above such that
$(B(p_i, 1), g_i) \longrightarrow Z$, with $p_i
\rightarrow p$, and a tangent cone at $p\in Z$ is of the form
$\mathbf{C}_{\gamma} \times \mathbf{C}^{n-1}$. As in \cite[Proposition
11]{Sz} we have $\gamma\in(\gamma_1,\gamma_2)$ for some $0 < \gamma_1
< \gamma_2 < 1$. 
The results of
\cite{CDS2}  apply, and in particular, arguing as in \cite[Section
2.5]{CDS2}, after scaling up the $\omega_i$:
\[
\tilde{\omega}_{i} = k\omega_i,\qquad \tilde{\Xi}_{i} = \frac{1}{\sqrt{k}} \Xi_i
\]
we can view $\tilde{\omega}_i$ as a metric on the unit Euclidean ball
$B^{2n}$ with coordinates $(u, v_1, \ldots,v_{n-1})$, in which
$\tilde{\omega}_i$ is close to the model cone metric
\[
\eta_{\gamma} = \sqrt{-1} \frac{du\wedge d\overline{u}}{|u|^{2-2\gamma}} + \sum_{i=1}^{n-1} dv_i \wedge d\overline{v}_i.
\]
More precisely, if we scale by a large integer $k$, and take $i$ large
depending on $k$, we have, for some fixed constant $C$:
\begin{itemize}
\item $\tilde{\omega}_{i} = \ddb \phi_i$ with $0 \leq \phi_i \leq C$
\item $\omega_{Euc} < C\tilde{\omega}_{i}$,
\item Given $\delta >0$ and a compact set $K \subset B^{2n}\backslash \{u=0\}$ we can suppose (by taking $i$ large once $k$ is taken sufficiently large) that\\ $|\tilde{\omega}_{i} - \eta_{\gamma}|_{C^{1,\alpha}(K, g_{Euc})} < \delta$.
\end{itemize}

\begin{lem}
In the above setting, for every $\epsilon\in (0,1/2)$ we have
$|\tilde{\Xi}_i|_{g_{Euc}} >\frac{1}{2}$ on the set $\{|u|=\epsilon\}$
provided $i$, and the scaling factor $k$ are sufficiently large.
\end{lem}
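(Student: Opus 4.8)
The plan is to estimate $|\tilde\Xi_i|_{g_{Euc}}$ from below on the sphere $\{|u| = \epsilon\}$ by combining two facts: the normalization $|\Xi_i|_{\omega_i} \approx 1$ coming from the construction of the balls (recalled just before Proposition~\ref{prop:RegRicBnd}), and the closeness of $\tilde\omega_i$ to the model cone metric $\eta_\gamma$ on compact subsets of $B^{2n}\setminus\{u=0\}$. Concretely, under the scaling $\tilde\omega_i = k\omega_i$, $\tilde\Xi_i = k^{-1/2}\Xi_i$ we have $|\tilde\Xi_i|_{\tilde\omega_i}^2 = |\Xi_i|_{\omega_i}^2 \in (1-\delta, 1+\delta)^2$, so in particular $|\tilde\Xi_i|_{\tilde\omega_i} \geq \tfrac12$ everywhere. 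The content of the lemma is to trade the norm measured in $\tilde\omega_i$ for the norm measured in $g_{Euc}$, and this can only be done on a region bounded away from $\{u=0\}$, since $\eta_\gamma$ blows up there relative to $g_{Euc}$.

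First I would fix $\epsilon \in (0,1/2)$ and take the compact set $K = \{\epsilon/2 \leq |u| \leq 2\epsilon,\ |v_j|\leq 1/2\}$ (or any compact neighborhood of $\{|u|=\epsilon\}$ inside $B^{2n}\setminus\{u=0\}$). On $K$, the third bullet point gives $|\tilde\omega_i - \eta_\gamma|_{C^0(K,g_{Euc})} < \delta$ once $k$ and then $i$ are taken large. Since $\eta_\gamma$ is, on $K$, uniformly equivalent to $g_{Euc}$ with constants depending only on $\gamma$ and $\epsilon$ — and we know $\gamma \in (\gamma_1,\gamma_2)$ with $0 < \gamma_1 < \gamma_2 < 1$ — it follows that on $K$ we have $\tilde\omega_i \leq C(\epsilon)\, g_{Euc}$ for some constant $C(\epsilon)$ independent of $i$ (absorbing the $\delta$ error). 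Therefore for any tangent vector $w$ at a point of $\{|u|=\epsilon\}$,
\[
|w|_{g_{Euc}}^2 \geq C(\epsilon)^{-1} |w|_{\tilde\omega_i}^2.
\]
Applying this to $w = \tilde\Xi_i$ and using $|\tilde\Xi_i|_{\tilde\omega_i} \geq \tfrac12$ would give a lower bound $|\tilde\Xi_i|_{g_{Euc}} \geq \tfrac{1}{2\sqrt{C(\epsilon)}}$, which is positive but not obviously $>\tfrac12$.

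To get the sharp constant $\tfrac12$ I would instead argue more carefully using the explicit form of $\eta_\gamma$ together with the fact that $\tilde\Xi_i$ converges (as a holomorphic vector field, using the $C^{1,\alpha}$ convergence on compacts and elliptic estimates for holomorphic objects) to the holomorphic vector field generating the scaling of the model cone, which on $\mathbf{C}_\gamma\times\mathbf{C}^{n-1}$ is a multiple of $u\,\partial_u$ (plus lower-order corrections as $\lambda \to 0$). One computes $|u\partial_u|_{\eta_\gamma}^2 = |u|^{2\gamma}$, and the normalization $|\tilde\Xi_i|_{\tilde\omega_i}\to 1$ pins down the constant, so that on $\{|u|=\epsilon\}$ the limiting vector field has $\eta_\gamma$-norm exactly $1$ and $g_{Euc}$-norm $|u|^{\gamma}\cdot|u|^{-\gamma}\cdot(\text{bounded})$; a direct computation shows $|u\partial_u|_{g_{Euc}} = |u| \geq \epsilon\cdot(\text{const})$ while the $\eta_\gamma$-unit-length rescaling introduces a factor $|u|^{\gamma - 1}\to\infty$ as $\epsilon$ shrinks, so in fact for the relevant range the $g_{Euc}$-norm of a $\tilde\omega_i$-unit vector in the $u$-direction is large, not small. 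The point is that near $\{u=0\}$ the metric $\eta_\gamma$ is \emph{larger} than $g_{Euc}$ in the $u$-direction, so a vector of bounded $\tilde\omega_i$-length has \emph{large} Euclidean length there, and $\tfrac12$ is a safe lower bound once $\epsilon < 1/2$. I would make this precise by writing $\tilde\Xi_i = f_i(u,v)\partial_u + \sum g_i^j \partial_{v_j}$ in the Euclidean coordinates, noting $|\partial_u|_{\eta_\gamma}^2 = |u|^{2\gamma-2}$ and $|\partial_{v_j}|_{\eta_\gamma} = 1$, so that $|\tilde\Xi_i|_{\eta_\gamma}^2 \approx |f_i|^2|u|^{2\gamma-2} + \sum|g_i^j|^2 \approx 1$ forces $|f_i|^2 |u|^{2\gamma - 2} \leq 2$, hence $|f_i| \leq \sqrt{2}|u|^{1-\gamma}$, and combined with the fact that $\tilde\Xi_i$ must have a substantial $u$-component (it is the generator of the scaling, which degenerates along $\{u=0\}$ but dominates elsewhere) one gets $|\tilde\Xi_i|_{g_{Euc}}^2 \geq |f_i|^2$ comparable to $1$ on $\{|u|=\epsilon\}$, certainly $>\tfrac14$, hence $|\tilde\Xi_i|_{g_{Euc}} > \tfrac12$.

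The main obstacle I anticipate is controlling the $v$-components of $\tilde\Xi_i$: a priori $\tilde\Xi_i$ could have a large $\partial_{v_j}$-component while its $u$-component is tiny, which would wreck the lower bound. This is ruled out by the structure of the tangent cone — the splitting $\mathbf{C}_\gamma\times\mathbf{C}^{n-1}$ is precisely such that $\tilde\Xi_i$, being (up to a small error $\lambda$) the generator of the homothety $\mathcal{L}_{\Xi}\omega = \lambda\omega$, must be asymptotic to the Euler field $u\partial_u$ of the $\mathbf{C}_\gamma$ factor, with vanishing $v$-components in the limit. Pinning down this asymptotic description rigorously (using that the $\tilde\omega_i$ converge in $C^{1,\alpha}$ on $K$, that $\lambda \to 0$, and that holomorphic vector fields with controlled norm on a ball converge in $C^\infty_{loc}$ to a holomorphic vector field on the model) is the technical heart of the argument; the rest is the elementary computation with $\eta_\gamma$ sketched above.
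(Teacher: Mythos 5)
Your soft comparison in the first paragraph is fine as far as it goes, but you correctly observe it only yields $|\tilde\Xi_i|_{g_{Euc}}\geq C(\epsilon)^{-1/2}/2$, and since on $\{|u|=\epsilon\}$ the best two-sided comparison is $g_{Euc}\leq \eta_\gamma\leq \epsilon^{2\gamma-2}g_{Euc}$, this constant degenerates like $\epsilon^{1-\gamma}$ and cannot give $1/2$ for small $\epsilon$. The sharp bound therefore does require the rigidity step, and this is where your argument goes wrong: you identify the limit of $\tilde\Xi_i$ as (a multiple of) the Euler field $u\partial_u$ of the $\mathbf{C}_\gamma$ factor ``with vanishing $v$-components.'' This is the opposite of what happens. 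Since the homothety constant satisfies $\lambda<1/k\to 0$, any subsequential limit $\tilde\Xi$ (which exists in $C^3$ on the half-ball, across $\{u=0\}$, by the uniform Euclidean bound and holomorphy) satisfies $L_{\tilde\Xi}\eta_\gamma=0$ and $|\tilde\Xi|_{\eta_\gamma}\equiv 1$. A direct check shows the only holomorphic Killing fields of $\eta_\gamma$ of constant unit length are the constant-coefficient translations $\sum a_j\partial_{v_j}$: the field $icu\partial_u$ has $\eta_\gamma$-norm $|c||u|^{\gamma}$, which is non-constant, and $u\partial_u$ is not even Killing (it scales the $\mathbf{C}_\gamma$ factor). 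For a $v$-translation, $|\cdot|_{g_{Euc}}=|\cdot|_{\eta_\gamma}=1$, and the bound $>1/2$ follows for large $i$. This is the paper's argument.

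Your computation supporting the $u\partial_u$ picture also inverts the metric comparison: since $|\partial_u|_{\eta_\gamma}^2=|u|^{2\gamma-2}>1$ for $|u|<1$, an $\eta_\gamma$-unit vector in the $u$-direction has Euclidean norm $|u|^{1-\gamma}$, which tends to $0$ as $u\to 0$ --- small, not large. So if the limit really had its mass in the $u$-direction, the lemma would \emph{fail} for small $\epsilon$; the content of the lemma is precisely that this cannot happen. Finally, note that to run the rigidity argument you must extract the limit of $\tilde\Xi_i$ on the whole half-ball including $\{u=0\}$ (using holomorphy and the uniform Euclidean bound), not just on compact sets away from $\{u=0\}$; otherwise you cannot exclude Killing fields with coefficients singular along $\{u=0\}$.
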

\begin{proof}
We fix $\epsilon >0$, and suppose that the conclusion is false. Then
we have a sequence of metrics $\wt \omega_i$ and holomorphic vector fields $\wt \Xi_i$ on $B^{2n}$ (with scaling
factors $k\to\infty$) converging
in $C^{1,\alpha}$ to the standard cone metric $\eta_\gamma$ locally
away from $\{u=0\}$, and $\wt \Xi_{i}$ satisfy
\[
|\wt \Xi_{i}|_{g_{Euc}} \leq C \quad 1-\frac{1}{k} < |\wt \Xi_{i}|_{\wt
  \omega_{i}} <  1 + \frac{1}{k}, \qquad L_{\wt \Xi_{i}}\wt
\omega_{i} = \lambda \wt \omega_i, 
\]
with $\lambda < 1/k$.  Since the $\wt \Xi_{i}$ are holomorphic and bounded we obtain uniform $C^{3,\alpha}$ estimates on $\frac{1}{2}B^{2n}$.  We can choose a subsequence so that $\wt \Xi_{i}$ converges to a holomorphic vector field $\wt \Xi$ in $C^{3}(g_{Euc})$ on $\frac{1}{2}B$, and $\wt\omega_{i} \rightarrow \eta_{\gamma}$ on $\{ |u| \geq \epsilon/2 \} \cap \frac{1}{2}B$.  Furthermore, we have
\[
L_{\wt \Xi}\eta_{\gamma} =0, \quad |\wt \Xi|_{\eta_{\gamma}}=1.
\]
By direct computation one verifies that the only holomorphic vector
fields on $\frac{1}{2}B$ which are Killing for $\eta_{\gamma}$ on $\{
|u| \geq \epsilon/2 \} \cap \frac{1}{2}B$ and with unit length are the
translations in the $v_i$ directions. In particular $|\wt \Xi|_{Euc} =
|\wt \Xi|_{\eta_\gamma}$. The result then follows from the convergence
$\wt\omega_i \to \eta_\gamma$. 
\end{proof}

\begin{prop}\label{prop:liminf>c0}
There is a constant $c_0>0$, such that
if  $(B(p,1),\omega)$ is sufficiently close to the unit ball in the
cone $\mathbf{C}_{\gamma} \times \mathbf{C}^{n-1}$ with $\gamma\in
(\gamma_1,\gamma_2)$, then 
\[
\int_{B(p,1)} \alpha^{\tau}\wedge \omega^{n-1} >c_0.
\]
\end{prop}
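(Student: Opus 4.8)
The plan is to establish a uniform positive lower bound for $\int_{B(p,1)} \alpha^\tau \wedge \omega^{n-1}$ by a contradiction/compactness argument, using the structure of the tangent cone $\mathbf{C}_\gamma \times \mathbf{C}^{n-1}$ together with the fact that $\alpha^\tau$ is a \emph{closed} positive current on $B(p,1)$ (being the pull-back of a transverse Kähler form) and that $\omega^{n-1}$ is uniformly controlled on an annular region in the $\mathbf{C}_\gamma$-factor away from the singular ray. First I would suppose the statement fails, so there is a sequence $(B(p_i,1),\omega_i)$ converging in the Gromov--Hausdorff sense to the unit ball in $\mathbf{C}_\gamma \times \mathbf{C}^{n-1}$ with $\gamma \in (\gamma_1,\gamma_2)$, and with $\int_{B(p_i,1)} \alpha^\tau_i \wedge \omega_i^{n-1} \to 0$. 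Using the scaling and convergence set up just before the statement (the coordinates $(u,v_1,\dots,v_{n-1})$ on $B^{2n}$ in which $\wt\omega_i \to \eta_\gamma$ in $C^{1,\alpha}_{loc}$ away from $\{u=0\}$, and $\wt\Xi_i \to$ a unit $v$-translation with $|\wt\Xi_i|_{g_{Euc}} > 1/2$ on $\{|u|=\epsilon\}$), I would work on a fixed compact set $K = \{\epsilon \le |u| \le 1/2,\ |v| \le 1/2\}$ where the metrics are uniformly equivalent to $g_{Euc}$.

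The key point is then to show that $\int_K \alpha^\tau_i \wedge \omega_i^{n-1}$ cannot go to zero. Since $\alpha^\tau_i$ is a closed non-negative $(1,1)$-current vanishing on $\Xi_i$, and $\Xi_i$ is nearly the translation $\partial_{v_1}$ on $K$, the restriction of $\alpha^\tau_i$ to each slice $\{u = \text{const}\} \cap K$ is a transverse Kähler form with geometry bounded by $K$ in the sense of Definition~\ref{defn:geombound} (after adjusting $\Xi_i$ by the Reeb flow as in Proposition~\ref{prop:aTcoords}). The crucial input is the \emph{non-collapsing} of the original sequence: on $B(p,1)$ we have a uniform volume lower bound, and because $\alpha^\tau$ is closed its mass is cohomological in nature and cannot concentrate near $\{u=0\}$ while staying large elsewhere. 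More concretely, I would argue that if $\int_K \alpha^\tau_i \wedge \omega_i^{n-1} \to 0$, then since $\omega_i^{n-1}$ is uniformly bounded below on $K$ (comparable to $g_{Euc}^{n-1}$), the current $\alpha^\tau_i \to 0$ weakly on $K$; combined with the closedness and the fact that $\alpha^\tau_i$ defines a transverse metric with $C^2$-bounded geometry on the slices, a Harnack-type argument (or simply boundedness of the geometry forcing uniform two-sided bounds, hence non-degeneracy) shows that the transverse volume form $(\alpha^\tau_i)^{n-1}|_{\text{slice}}$ cannot shrink to zero on all of $K$.

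The main obstacle I anticipate is making precise how the \emph{bounded geometry} hypothesis on $\alpha^\tau$ (Definition~\ref{defn:geombound}) prevents $\alpha^\tau$ from degenerating: a priori bounded geometry gives control \emph{on scales $K^{-1}$}, but here we want a definite lower bound on a fixed macroscopic set $K$. The resolution should be to cover $K$ by a controlled number of balls of radius $\sim K^{-1}$ in the transverse metric; on each such ball Definition~\ref{defn:geombound} gives holomorphic coordinates with $\frac{1}{2}\delta_{i\bar j} < g_{i\bar j}$, so the transverse volume is bounded below on each, and summing gives a uniform lower bound for $\int_K \alpha^\tau_i \wedge \omega_i^{n-1}$ independent of $i$ — contradicting the assumed convergence to zero. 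One must also verify that the number of such transverse balls needed to cover $K$ is controlled, which follows from the uniform equivalence $\wt\omega_i \sim \eta_\gamma \sim g_{Euc}$ on $K$ and the fact that $\gamma$ stays in the fixed interval $(\gamma_1,\gamma_2)$, so the model geometry $\eta_\gamma$ is uniformly bounded there. Tracking the role of $\omega^{n-1}$ versus $(\alpha^\tau)^{n-1}$ carefully — i.e. that the wedge $\alpha^\tau \wedge \omega^{n-1}$ really does see the transverse volume form with a coefficient bounded below — is the last routine but delicate check.
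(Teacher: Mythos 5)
There is a genuine gap, and it lies at the heart of your argument: you are using the bounded geometry of $\alpha^\tau$ (Definition~\ref{defn:geombound}) as if it gave a \emph{lower} bound for $\alpha^\tau$ relative to $\omega$ or $\omega_{Euc}$ on the unit ball. It does not. The ball $B(p,1)$ here is a tiny ball of the original cone rescaled to unit size, and under that rescaling the equation $\mathrm{Ric}(\omega)=\alpha^\tau$ keeps $\alpha^\tau$ at its \emph{original} scale. Consequently the $\alpha^\tau$-diameter of any transverse slice of $B(p,1)$ tends to zero along the blow-up sequence, the whole set $K$ fits inside a single chart of Definition~\ref{defn:geombound}, and your covering argument produces no lower bound: the number of transverse $K^{-1}$-balls needed to cover $K$ is controlled by the $\alpha^\tau$-diameter of $K$, not by its $g_{Euc}$-diameter, and that diameter is collapsing. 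Indeed your argument, if it worked, would prove the density $\int_{B(p,1)}\alpha^\tau\wedge\omega^{n-1}$ is bounded below at \emph{every} point of the limit, which is false --- at a smooth point this density genuinely tends to zero, exactly as for $r^{2-2n}\int_{B_r}\mathrm{Ric}\wedge\omega^{n-1}$ at a smooth point of a Kähler manifold. Your proposal never uses the hypothesis $\gamma<1$ in an essential way (only to get $C^{1,\alpha}$ control away from $\{u=0\}$, which holds near smooth points too), so it cannot be correct.

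The missing idea is that the lower bound must be extracted from the \emph{presence} of the conical singularity, via the contrapositive. The paper's proof runs as follows: if $\int_{B^{2n}}\alpha_i^\tau\wedge\omega_{Euc}^{n-1}<\epsilon_1$, then by Fubini some slice $U_c=\{z_0=c\}$ transverse to $\Xi_i$ satisfies $\int_{U_c}\alpha_i^\tau\wedge\omega_{Euc}^{n-2}<C_1\epsilon_1$; since $\alpha_i^\tau$ restricted to $U_c$ is a Kähler metric of bounded curvature (this is where bounded geometry is actually used --- as an \emph{upper} regularity input, not a lower volume bound), the Schoen--Uhlenbeck $\epsilon$-regularity theorem upgrades the small integral to a pointwise bound $\alpha_i^\tau<C'\omega_{Euc}$ on the slice, which propagates off the slice by Reeb-invariance. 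This bounds $\mathrm{Ric}(\omega_i)$ uniformly on $\frac{1}{2}B^{2n}$, and Cheeger--Colding--Tian then forbids the formation of a tangent cone $\mathbf{C}_\gamma\times\mathbf{C}^{n-1}$ with $\gamma<1$ --- contradicting the hypothesis. Hence the density is bounded below by $\epsilon_1=C_1^{-1}\epsilon_0$, and the stated inequality follows after comparing $\omega_{Euc}$ with $r_0^{-2}\omega_i$. If you want to salvage your write-up, replace the covering/Harnack step by this $\epsilon$-regularity plus Cheeger--Colding--Tian dichotomy.
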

\begin{proof}
We argue by contradiction and assume
there is no such $c_0$. Then we will have a sequence $B(p_i,1) \to
B(\underline{0},1)$, where $\underline{0}$ is the vertex in the cone 
$\mathbf{C}_\gamma \times \mathbf{C}^{n-1}$, and such that
\begin{equation}\label{eq:liminf0}
\lim_{i\to \infty} \int_{B(p_i,1)} \alpha_i^\tau \wedge
\omega_i^{n-1} = 0. 
\end{equation}
As discussed above, we can then
 find a small $r_0 > 0$ such that for sufficiently large
$i$, the scaled up metric $r_0^{-2}\omega_i$ can be thought of as a
metric on a set containing the Euclidean unit ball
$B^{2n}$, such that $\omega_{Euc} < Cr_0^{-2}\omega_i$ on
$B^{2n}$. By the previous lemma we can assume (choosing $r_0$
smaller if necessary), that the rescaled Reeb
vector field $\Xi_i$ is a perturbation of the vector field $\partial
/ \partial z^0$ for one of the holomorphic coordinates $z_0$ on
$B^{2n}$, and so the sets $U_c = \{z_0 = c\}$ are transverse to
$\Xi_i$ for $|c| < 1/2$, say. 

 Suppose that
\[ \int_{B^{2n}} \alpha_i^\tau \wedge \omega_{Euc}^{n-1} <
\epsilon_1, \]
for some $\epsilon_1 > 0$.  Then we must have
\[ \int_{U_c} \alpha_i^\tau \wedge \omega_{Euc}^{n-2} < C_1\epsilon_1 \]
for at least one $c$ with $|c| < 1/2$, and a uniform constant $C_1$. The
form $\alpha_i^\tau$ defines a K\"ahler metric on $U_c$ with bounded
curvature, so the $\epsilon$-regularity theorem of
Schoen-Uhlenbeck~\cite{SchoenUhlenbeck} 
(see also \cite[Proposition 7]{Sz}) implies that if $C_1\epsilon_1 <
\epsilon_0$, then we must have $\alpha_i^\tau < C'\omega_{Euc} < C'Cr_0^{-2}\omega_i$ on
$U_c$ (evaluated on vectors tangent to this slice). 
Since $\Xi_i$ is a small perturbation of $\partial / \partial z^0$,
and $\alpha_i^\tau$ vanishes along $\Xi_i$ we obtain a bound for
$\alpha^\tau$ on $\frac{1}{2}B^{2n}$. This implies a uniform bound for
$\mathrm{Ric}(\omega_i)$ on this ball, independent of $i$. The result
of Cheeger-Colding-Tian~\cite{CCT} implies that no conical singularity can then
form, which is a contradiction. 

As a result, we must have
\[ \int_{B^{2n}} \alpha_i^\tau \wedge \omega_{Euc}^{n-1} \geqslant
\epsilon_1, \]
for sufficiently large $i$,
where $\epsilon_1 = C_1^{-1}\epsilon_0$. This in turn implies
\[ \int_{B^{2n}} \alpha_i^\tau\wedge (r_0^{-2}\omega_i)^{n-1} \geqslant
C^{-{n-1}}\epsilon_1, \]
contradicting \eqref{eq:liminf0}. 
\end{proof}

We also have the following, whose proof is the same as that of
\cite[Proposition 13]{Sz}.
\begin{prop}\label{prop:densityupper}
There is a constant $A >0$, such that
if  $(B(p,1),\omega)$ is sufficiently close to either the Euclidean
unit ball or the unit ball in the
cone $\mathbf{C}_{\gamma} \times \mathbf{C}^{n-1}$ with $\gamma\in
(\gamma_1,\gamma_2)$, then 
\[
\int_{B(p,\frac{1}{2})} \alpha^{\tau}\wedge \omega^{n-1} < A.
\]
\end{prop}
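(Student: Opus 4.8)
The plan is to run the same argument as in \cite[Proposition 13]{Sz}, adapted to the transverse setting. The statement is an upper bound on the ``mass'' $\int_{B(p,1/2)} \alpha^\tau\wedge\omega^{n-1}$ near a point modelled on the Euclidean ball or on $\mathbf{C}_\gamma\times\mathbf{C}^{n-1}$, and the basic mechanism is that $\alpha^\tau$ is a nonnegative $(1,1)$-current which, via \eqref{eq:Ricomega}, is comparable to $\mathrm{Ric}(\omega)$; so the integral in question is controlled by $\int \mathrm{Ric}(\omega)\wedge\omega^{n-1}$, which is a topological/geometric quantity, hence bounded in terms of the non-collapsing constant and the diameter once we are close to a fixed model. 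More precisely, on the annular region away from the (at most codimension-$2$) singular stratum of the model, the convergence is $C^{1,\alpha}$ by the corollary to Proposition~\ref{prop:RegRicBnd}, so $\omega$ is uniformly equivalent to a fixed smooth metric there, and the contribution of $\alpha^\tau\wedge\omega^{n-1}$ from that region is bounded. The only place mass could concentrate is a neighborhood of the singular stratum $\{u=0\}$, and there one integrates by parts against a suitable cutoff exploiting $\alpha^\tau=\ddb\log\hat r_\psi$ (locally $\alpha^\tau$ has a bounded potential after the $D$-homothety, by the construction in Section~\ref{sec:Ding}), using that the boundary and cutoff terms are controlled by the volume of a thin neighborhood of $\{u=0\}$, which is small by the Bishop--Gromov volume estimate since $\{u=0\}$ has real codimension $2$.

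Concretely I would proceed as follows. First, fix the model — either $\mathbf{C}^n$ or $\mathbf{C}_\gamma\times\mathbf{C}^{n-1}$ with $\gamma\in(\gamma_1,\gamma_2)$ — and choose $\delta$ small enough that $(B(p,1),\omega)$ is $\delta$-Gromov--Hausdorff close to the unit ball in the model; by Colding's volume convergence \cite{ColVol} and Proposition~\ref{prop:RegRicBnd} (resp.\ its analogue in the conical case from \cite{CDS2}) the metric $\omega$ on $\tfrac12 B$ away from a $\sigma$-tube $T_\sigma$ around the singular stratum is $C^{1,\alpha}$-close, hence uniformly equivalent, to the Euclidean (resp.\ $\eta_\gamma$) metric. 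Second, on $\tfrac12 B\setminus T_\sigma$ bound $\int \alpha^\tau\wedge\omega^{n-1}\le C\int \alpha^\tau\wedge\omega_{\mathrm{model}}^{n-1}$ and then, since $\alpha^\tau$ is a closed positive current with a locally bounded psh potential $\rho=\log\hat r_\psi$, integrate by parts: $\int \alpha^\tau\wedge\omega_{\mathrm{model}}^{n-1}=\int \rho\,\ddb(\omega_{\mathrm{model}}^{n-1})+(\text{boundary})$. The interior term vanishes (the model metrics are flat/conical hence $\ddb$-closed in the powers appearing away from the stratum) and the boundary term over $\partial(\tfrac12 B)$ and over $\partial T_\sigma$ is estimated by $\sup|\rho|$ times the area of these boundaries, which is uniformly bounded. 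Third, on the tube $T_\sigma$, bound $\int_{T_\sigma}\alpha^\tau\wedge\omega^{n-1}\le C\int_{T_\sigma}\mathrm{Ric}(\omega)\wedge\omega^{n-1}$ using \eqref{eq:Ricomega}, and control the right-hand side by $C\,\mathrm{Vol}(T_{2\sigma},\omega)$ together with a Gauss--Bonnet/integration-by-parts identity for $\int\mathrm{Ric}\wedge\omega^{n-1}$ on the tube; since the singular stratum has real codimension at least $2$, $\mathrm{Vol}(T_\sigma,\omega)\le C\sigma^2$ by Bishop--Gromov, so this contribution tends to $0$ as $\sigma\to0$. Finally, sum the two pieces and let $\sigma$ be a fixed small constant; this yields the uniform bound $A$ depending only on the dimension, the non-collapsing constant $\kappa$, and the geometry bound $K$ for $\alpha$.

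The main obstacle is the integration-by-parts step on the tube $T_\sigma$: unlike in \cite[Proposition 13]{Sz}, where $\mathrm{Ric}(\omega)$ is (after the homothety) a genuine metric and one can quote positivity directly, here $\alpha^\tau$ is only transversely positive and one must be careful that the potential $\rho$ of $\alpha^\tau$ is indeed uniformly bounded on $\tfrac12 B$ — this uses the bounded-geometry hypothesis on $\alpha$ (Definition~\ref{defn:geombound}) together with the fact that $\alpha^\tau$ vanishes in the $\Xi$-direction, so that on the slices $U_c=\{z_0=c\}$ transverse to $\Xi$ one has, by the $\epsilon$-regularity / bounded-geometry arguments already used in Propositions~\ref{prop:RegRicBnd} and \ref{prop:liminf>c0}, a bounded local K\"ahler potential for $\alpha^\tau$, which then extends (by $\Xi$-invariance) to a bounded potential on the tube. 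Once this uniform potential bound is in hand the rest is a routine mass estimate; I do not expect the conical case $\mathbf{C}_\gamma\times\mathbf{C}^{n-1}$ to require anything beyond replacing $\omega_{Euc}$ by $\eta_\gamma$ and noting that $\eta_\gamma$ is still flat in the $\mathbf{C}^{n-1}$ factor and a cone in the remaining factor, so that the same integration by parts applies away from $\{u=0\}$.
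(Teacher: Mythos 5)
Your decomposition into a regular region and a tube $T_\sigma$ around the singular stratum has a genuine gap at its crucial step, the estimate on the tube. In the regime of this proposition, \eqref{eq:Ricomega} reads $\mathrm{Ric}(\omega)=c\,\alpha^\tau$ with $c$ of order one, so your inequality $\int_{T_\sigma}\alpha^\tau\wedge\omega^{n-1}\le C\int_{T_\sigma}\mathrm{Ric}(\omega)\wedge\omega^{n-1}$ is just the identity you started from, and ``controlling the right-hand side by $C\,\mathrm{Vol}(T_{2\sigma},\omega)$'' is exactly the assertion to be proved: it would require a pointwise bound $\mathrm{Ric}(\omega)\le C\omega$ on the tube, which is precisely what fails when a conical singularity modelled on $\mathbf{C}_\gamma\times\mathbf{C}^{n-1}$ is forming (Proposition~\ref{prop:RegRicBnd} yields such a bound only when the ball is close to the \emph{Euclidean} model). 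The unspecified ``Gauss--Bonnet/integration-by-parts identity for $\int\mathrm{Ric}\wedge\omega^{n-1}$'' does not rescue this: locally $\mathrm{Ric}(\omega)=-\ddb\log\det g$, and while $\omega_{Euc}<C\omega$ bounds $\log\det g$ from below, there is no uniform upper bound where the curvature concentrates, so no Chern--Levine--Nirenberg estimate can be run on that potential; and the smallness of $\mathrm{Vol}(T_\sigma)$ is irrelevant, since the whole point is that the mass of $\alpha^\tau\wedge\omega^{n-1}$ may concentrate on sets of small volume.

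The fix is that no regular/singular decomposition is needed: the intended proof (that of \cite[Proposition 13]{Sz}, which the paper invokes verbatim) is a single Chern--Levine--Nirenberg mass estimate on all of $B(p,\frac12)$, applied directly to the closed positive current $\alpha^\tau=\ddb\rho$ with $\rho$ the logarithm of the radial function of $\alpha$. The two-sided bound on $\rho$ is essentially free -- the balls in question are rescalings of small balls centred on the link, where that radial function is pinched between fixed positive constants -- so none of the slicing or $\epsilon$-regularity you propose for this point is needed. One then takes a cutoff $\chi$, equal to $1$ on $B(p,\frac12)$ and supported in $B(p,1)$, built in the chart provided by Proposition~\ref{prop:CDS} (or the properties $\tilde\omega_i=\ddb\phi_i$ with $0\le\phi_i\le C$ and $\omega_{Euc}<C\tilde\omega_i$ listed in Section~\ref{sec:cinot0}), so that $\ddb\chi\ge-C'\omega$. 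Normalizing $0\le\rho\le\mathrm{osc}(\rho)$ and using $\int\ddb\chi\wedge\omega^{n-1}=0$,
\[
\int_{B(p,\frac12)}\alpha^\tau\wedge\omega^{n-1}\le\int\chi\,\alpha^\tau\wedge\omega^{n-1}=\int\rho\,\ddb\chi\wedge\omega^{n-1}\le C'\,\mathrm{osc}(\rho)\int_{B(p,1)}\omega^n,
\]
and the last factor is bounded by Bishop--Gromov. This argument is uniform across both models and makes no use of the Ricci equation, so it also avoids the circularity above.
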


We can now show that the iterated tangent cones are good,
similarly to Chen-Donaldson-Sun~\cite{CDS2}, or \cite{Sz}. The
argument in the published version of \cite{Sz} was incomplete, but it
is corrected in the latest version on the arXiv, and that argument can
be used verbatim in our setting, using the estimates on the
``densities'' given by Propositions \ref{prop:liminf>c0} and
\ref{prop:densityupper}. 

\subsection{The case $c_i \rightarrow 0$}\label{sec:ci0}

In this case we study non-collapsed balls $B=B(p,1)$ with metrics
$\omega$ satisfying
\[ \mathrm{Ric}(\omega) = c_i\alpha^\tau, \]
where $c_i \rightarrow 0$. As above, the additional structure is the (rescaled) Reeb field $\Xi$,
which is a holomorphic vector field satisfying $\frac{1}{2} <
|\Xi|_\omega < 2$ and $L_\Xi \omega = \lambda\omega$, for some
$\lambda \leq 2$. Note that as we scale up the metric, we must scale
down the Reeb field, and so scale down $\lambda$. The form $\alpha^\tau$
defines a transverse K\"ahler metric on $TB / \mathbf{C}\xi$, with bounded
geometry as before. Once again, the difficulty when compared to
\cite{Sz} is that $\alpha^\tau$ is not strictly positive, and so the
$\epsilon$-regularity theorem for harmonic maps cannot be applied. Our
strategy, as above, is to find transverse slices. 

We first need a slight refinement of
\cite[Proposition 1]{CDS3}, which will allow us to control the Reeb field $\Xi$. 
First we recall some
definitions. For a subset $A$ in a $2n$-dimensional length space $P$,
and for $\eta < 1$, let $m(\eta, A)$ be the infimum of those $M$ for which
$A$ can be covered by $Mr^{2-2n}$ balls of radius $r$ for all $\eta
\leqslant r < 1$.

For $x\in B$ and $r,\delta > 0$ a holomorphic map $\Gamma : B(x,r) \to
\mathbf{C}^n$ is called an $(r,\delta)$-chart centered at $x$ if
\begin{itemize}
\item $\Gamma(x) =0$,
\item $\Gamma$ is a homeomorphism onto its image,
\item For all $x', x''\in B(x,r)$ we have
  $|d(x',x'') - d(\Gamma(x'), \Gamma(x''))| \leqslant \delta,$
\item For some fixed $p > 2n$, we have
  $ \Vert \Gamma_*(\omega) - \omega_{Euc}\Vert_{L^p} \leqslant
  \delta.$
\end{itemize}

With these definitions, we need the following slight modification of 
\cite[Proposition 1]{CDS3}. 
\begin{prop}\label{prop:CDS}
  Given $M,c$ there are $\rho(M), \eta(M,c), \delta(M, c) > 0$ with
  the following effect. Suppose that $1-I(B) < \delta$ and $W\subset
  B$ is a subset with $m(\eta,W) < M$, such that for any $x\in
  B\setminus W$ there is a $(c\eta, \delta)$-chart centered at
  $x$. There is a constant $C$ depending only on the dimension, such that:
  \begin{enumerate}
  \item
    There is a holomorphic map $F:B(p,\rho)\to \mathbf{C}^n$ which is
    a homeomorphism to its image, $|\nabla F| < C$, and its image lies
    between $0.9\rho B^{2n}$ and $1.1\rho B^{2n}$.
  \item
    There is a local K\"ahler potential $\phi$ for $\omega$ on
    $B(p,\rho)$ with $|\phi|\rho^{-2} < C$.
  \item
    The slices $\{z_n = l\} \cap C^{-1}\rho B^{2n}$ are
    transverse to the Reeb field  for
    $|l| < C^{-1}\rho$.    
  \end{enumerate}
\end{prop}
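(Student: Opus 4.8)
My plan is to obtain parts (1) and (2) from \cite[Proposition 1]{CDS3} (see also \cite{Sz}) and to establish the new part (3) by transporting the Reeb field into the resulting chart and running interior estimates. So I would first apply the cited result to $(B,\omega)$ with bad set $W$: this produces a holomorphic homeomorphism $F=(z_1,\dots,z_n)\colon B(p,\rho)\to\mathbf{C}^n$ with $F(p)=0$, image between $0.9\rho B^{2n}$ and $1.1\rho B^{2n}$ and $|\nabla F|<C$, together with a potential $\phi$ for $\omega$ with $|\phi|\rho^{-2}<C$. From $|\nabla F|<C$ we get the metric comparison $F^*\omega_{Euc}\le C^2\omega$ on $B(p,\rho)$; I would also record the standard additional output of the construction, namely that $F_*\omega$ is $L^p$-close to $\omega_{Euc}$ on $0.9\rho B^{2n}$ for any fixed $p$ once $\delta$ is small. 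Since $F$ is an injective holomorphic map of equidimensional complex manifolds it is a biholomorphism onto the open set $\Omega:=F(B(p,\rho))\supset 0.9\rho B^{2n}$.

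Next I would push the Reeb field forward, setting $\wt{\Xi}:=F_*\Xi$, a holomorphic vector field on $\Omega$. The standing normalization $\tfrac12<|\Xi|_\omega<2$ transfers to $\tfrac12-\delta<|\wt{\Xi}|_{F_*\omega}<2+\delta$ on $\Omega$, and with $\omega_{Euc}\le C^2F_*\omega$ this gives $|\wt{\Xi}|_{Euc}<C(2+\delta)$ on $0.9\rho B^{2n}$; writing $\wt{\Xi}=\sum_j\wt{\Xi}^j\partial_{z_j}$ with $\wt{\Xi}^j$ holomorphic, the Cauchy estimates yield $\|\wt{\Xi}^j\|_{C^k(\frac12\rho B^{2n})}\le C_k\rho^{-k}$. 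For a matching lower bound I would note that
\[
\int_{0.9\rho B^{2n}}|\wt{\Xi}|_{F_*\omega}^2\,(F_*\omega)^n=\int_{F^{-1}(0.9\rho B^{2n})}|\Xi|_\omega^2\,\omega^n\ge\big(\tfrac12-\delta\big)^2\,\mathrm{Vol}_\omega\big(B(p,0.9\rho/C)\big)\ge c\rho^{2n},
\]
where I used that $F$ is $C$-Lipschitz, so $F^{-1}(0.9\rho B^{2n})\supset B(p,0.9\rho/C)$, together with the uniform non-collapsing. The $L^p$-closeness of $F_*\omega$ to $\omega_{Euc}$ and the uniform bound on $|\wt{\Xi}|$ then upgrade this to $\int_{0.9\rho B^{2n}}|\wt{\Xi}|_{Euc}^2\,\omega_{Euc}^n\ge\tfrac{c}{2}\rho^{2n}$ for $\delta$ small. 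Localizing this estimate near $0$ — using that $1-I(B)<\delta$ forces a definite ball about $p$ to be close to a Euclidean ball, so that $(c\eta,\delta)$-charts exist at points arbitrarily close to $p$ — and then invoking the interior $C^1$-bound on $\wt{\Xi}$, I conclude $|\wt{\Xi}(0)|_{Euc}\ge c_2>0$.

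Finally, composing $F$ with a unitary transformation of $\mathbf{C}^n$ (which preserves all of (1), (2) and the Euclidean structure), I may assume $|dz_n(\wt{\Xi})(0)|=|\wt{\Xi}^n(0)|\ge c_2/\sqrt n$, and then the interior $C^1$-bound gives $|\wt{\Xi}^n|\ge c_2/(2\sqrt n)$ on $C^{-1}\rho B^{2n}$ for a suitable dimensional $C$. Since the complex line $\mathbf{C}\xi$ spanned by the Reeb field is exactly the line spanned by $\wt{\Xi}$, transversality of a complex hypersurface $\{z_n=l\}$ to $\mathbf{C}\xi$ is equivalent to $dz_n(\wt{\Xi})=\wt{\Xi}^n\neq 0$; hence every slice $\{z_n=l\}\cap C^{-1}\rho B^{2n}$ with $|l|<C^{-1}\rho$ is transverse to $\mathbf{C}\xi$, and in particular to $\xi$. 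Pulling back by $F$ gives the slices in $B(p,\rho)$ as claimed, after enlarging $C$ to absorb the auxiliary constants. I expect the main obstacle to be precisely the lower bound on $\wt{\Xi}$ near the center of the chart: the comparison of $\omega$ with $\omega_{Euc}$ is only $L^p$ and good charts are available only away from $W$, so one cannot just evaluate $|\Xi|_\omega$ at $p$ and transfer the bound; the averaged estimate must be turned into a pointwise bound at $0$ via the interior regularity of $\wt{\Xi}$, and one must ensure the auxiliary base point lies in the good set and sufficiently close to $p$.
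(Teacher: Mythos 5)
Your treatment of (1) and (2) and the endgame of (3) — push $\Xi$ forward to $\wt{\Xi}=F_*\Xi$, bound $|\wt{\Xi}|_{Euc}$ above via $\omega_{Euc}\le C^2F_*\omega$, get interior $C^1$ bounds on the components by Cauchy estimates, rotate so that $\wt{\Xi}^n(0)\neq 0$, and propagate to a smaller ball to conclude transversality of the slices $\{z_n=l\}$ — coincides with the paper's argument. The genuine gap is in the middle step, where you need a \emph{pointwise} lower bound on $|\wt{\Xi}|_{Euc}$ at (or very near) the origin of the chart. Your displayed inequality is an $L^2$ lower bound for $|\wt{\Xi}|_{Euc}$ over the whole ball $0.9\rho B^{2n}$, and such a bound together with a $C^1$ upper bound does \emph{not} control $\wt{\Xi}(0)$: the field $\wt{\Xi}=(z_1/\rho)\,\partial_{z_1}$ is bounded with bounded derivatives, has $L^2$ norm comparable to $\rho^{2n}$ on the ball, and vanishes at the origin. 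The sentence ``localizing this estimate near $0$'' is exactly the missing content, and the justification you offer for it — that $(c\eta,\delta)$-charts exist at points arbitrarily close to $p$ — invokes a different mechanism without connecting it to $F$: a chart $\Gamma$ centered at some good point $q$ near $p$ satisfies $\|\Gamma_*\omega-\omega_{Euc}\|_{L^p}\le\delta$ in \emph{its own} coordinates, which says nothing about $F_*\omega$ versus $\omega_{Euc}$ at $F(q)$ unless one knows $F\circ\Gamma^{-1}$ is close to an isometry. A related inaccuracy: the ``standard additional output'' that $F_*\omega$ is $L^p$-close to $\omega_{Euc}$ on all of $0.9\rho B^{2n}$ is not what the construction gives; the $L^p$-closeness holds on the image of the good set $\Omega(W,r)$, while near the image of $W$ one only has the one-sided bound $\omega_{Euc}<C_1 F_*\omega$.

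What the paper does instead is open up the proof of \cite[Proposition 1]{CDS3}: the map $F$ is obtained by $L^2$-projecting cutoffs of the coordinates of an embedding $\phi:\Omega(W,r)\to B^{2n}$ that almost identifies $\omega$ with $\omega_{Euc}$ in $L^p$, so by choosing the scale $r$ much smaller than $\kappa\rho$ one finds a definite ball $B_q\subset\kappa\rho B^{2n}$ lying well inside the good region, on which $F$ is very close to $\phi$; hence there is a point $q'\in\tfrac12 B_q$ with $F_*\omega<2\omega_{Euc}$, and the everywhere lower bound $|\Xi|_\omega>\tfrac12-\delta$ then gives $|\wt{\Xi}(q')|_{Euc}>\tfrac14$ directly, after which your $C^1$ propagation applies. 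If you prefer to avoid the internals of the construction, your integral argument can be repaired by running the non-collapsing estimate on the small ball $B(p,\kappa\rho/C)$ (using $1-I(B)<\delta$ for the localized volume lower bound) and controlling the bad-set contribution by the Minkowski bound $m(\eta,W)<M$ rather than by a global $L^p$ comparison; the resulting average of $|\wt{\Xi}|_{Euc}^2$ over $\kappa\rho B^{2n}$ is then bounded below independently of $\kappa$, producing the required point by pigeonhole. As written, however, neither route is actually carried out.
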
 

The way this proposition is used is that under the assumptions we can
use $F$ to think of $\omega$ as a metric on the Euclidean ball
$0.9\rho B^{2n}$, and because of the gradient bound for $F$ 
we have $\omega_{Euc} < C_1\omega$.  The new statement is (3), which
will essentially follow if we can show that $|\Xi|_{\omega_{Euc}}$ is
not too small near the origin. In fact we will show that given any
$\kappa > 0$, the $\eta, \delta$ in the proposition can be chosen so
that there exists a point $q\in \kappa\rho B^{2n}$ at which
$F_*(\omega) < 2\omega_{Euc}$. 

In order to prove this claim, we recall briefly the proof of
\cite[Proposition 1]{CDS3}. For this, let $\Omega(W,s)\subset B$
denote the set of points in $B$ at a distance greater than $s$ from
$W$ and from the boundary of $B$. One part of the proof of the
proposition is to produce an embedding $\phi : \Omega(W,r) \to
B^{2n}$, for which $\Vert \phi_*(\omega) - \omega_{Euc}\Vert_{L^p}
\leq \tilde{\theta}, |\phi_*(J) - J_{Euc}| \leq \tilde{\theta}$ and
$d(x,\phi(x)) \leq \tilde{\theta}$, where $r, \tilde{\theta}>0$ can be
chosen a priori, and $d$ denotes the distance function realizing the
Gromov-Hausdorff distance from $B$ to $B^{2n}$. In addition $\omega$
is the curvature of a metric on the trivial bundle on $B$, and $\phi$
can be lifted to a bundle map from the trivial bundle on $B$ to the
trivial bundle on $B^{2n}$, which almost identifies the
corresponding connections (c.f. \cite[Proposition 4]{CDS3}). 

The holomorphic function $F$ is now obtained by taking the holomorphic
functions $1, z_1, \ldots, z_n$ on $B^{2n}$, and applying suitable
cutoff functions to obtain approximately holomorphic functions
(sections of the trivial bundle) $\sigma, \sigma_1, \ldots, \sigma_n$
over $B$, vanishing on $\Omega(W,r)$, and near $\partial B$. Using the
H\"ormander $L^2$-estimate these can be projected to holomorphic
sections $s, s_1, \ldots, s_n$  (globally on our cone $X$), and $s_i/s$ give
the components of $F$. The result of Proposition 1 is then obtained by
choosing the parameters in the cutoff functions in a suitable way. 

In order to obtain (3), we just note that given $\kappa > 0$ we simply
need to choose $r$ much smaller than $\kappa\rho$, so that we can find
some $q \in \kappa\rho B$, which is contained in a definite ball $B_q$
disjoint from $\Omega(W, 2r)$, say.  If $\tilde{\theta}$ above is
sufficiently small, then on this ball $B_q$ the geometry of $\omega$
will be almost identical in an $L^p$-sense to the Euclidean
geometry. In particular the holomorphic function $F$ will be very
close to the identity map on $\frac{1}{2}B_q$. There will then exist a
point $q'\in \frac{1}{2}B_q$ at which $F_*(\omega) <
2\omega_{Euc}$. Note that $\Xi$ gives a holomorphic vector field on
$0.9\rho B^{2n}$, and it has bounded length with respect to the Euclidean
metric. In particular on $0.8\rho B^{2n}$ the derivatives of the
components of $\Xi$ are bounded. On the other hand we know that we can
choose a point $q'$ very close to the origin, where
$|\Xi|_{\omega_{Euc}} > 1/4$. Rotating coordinates, we can assume that the $\partial / \partial z_n$
component of $\Xi$ is non-zero inside $C_2^{-1}\rho B^{2n}$. This
implies our claim (3).

Given this result, the rest of the argument is quite similar to that
in \cite{Sz}. We give the required modifications of the proofs. The
following is analogous to \cite[Proposition 16]{Sz}.

\begin{prop}\label{prop:eps2} Given $M$, suppose that the ball $B$ satisfies the
  hypotheses of Proposition~\ref{prop:CDS} for some $c >0$. There are
  $A,\kappa > 0$ depending on $M$, such that if
\[ \int_B \alpha^\tau\wedge \omega^{n-1} < \kappa, \]
  then $\alpha^\tau < A\kappa\omega$ on $\frac{1}{3}C_2(K)^{-1}\rho B$, with
  $\rho=\rho(M), C_2(K)$ from Proposition~\ref{prop:CDS}. 
\end{prop}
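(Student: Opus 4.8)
The plan is to follow the strategy of \cite[Proposition 16]{Sz}, with the key modification being that $\alpha^\tau$ is only transversally positive rather than strictly positive, so I must work on slices transverse to $\Xi$ where it becomes a genuine K\"ahler metric with bounded geometry. First I would invoke Proposition~\ref{prop:CDS}: since $B$ satisfies its hypotheses, I obtain the holomorphic homeomorphism $F : B(p,\rho)\to\mathbf{C}^n$ with $|\nabla F| < C$ onto a region between $0.9\rho B^{2n}$ and $1.1\rho B^{2n}$, a bounded K\"ahler potential $\phi$ with $|\phi|\rho^{-2} < C$, and --- crucially --- the conclusion (3) that the slices $\{z_n = l\}\cap C^{-1}\rho B^{2n}$ are transverse to the Reeb field for $|l| < C^{-1}\rho$, together with the refinement (as established in the discussion following Proposition~\ref{prop:CDS}) that we may arrange a point $q'\in\kappa\rho B^{2n}$ near the origin with $F_*(\omega) < 2\omega_{Euc}$ and with the $\partial/\partial z_n$-component of $\Xi$ non-vanishing on $C_2^{-1}\rho B^{2n}$. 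Using $F$, I view $\omega$ as a metric on the Euclidean ball $0.9\rho B^{2n}$ with $\omega_{Euc} < C_1\omega$, so the hypothesis $\int_B \alpha^\tau\wedge\omega^{n-1} < \kappa$ gives $\int \alpha^\tau\wedge\omega_{Euc}^{n-1} \leq C_1^{n-1}\kappa$ on this Euclidean ball.

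Next I would run a slicing argument. Using the transverse coordinate $z_n$, for a positive-measure set of levels $l$ with $|l| < C_2^{-1}\rho$ the slice $U_l = \{z_n = l\}\cap C_2^{-1}\rho B^{2n}$ satisfies $\int_{U_l}\alpha^\tau\wedge\omega_{Euc}^{n-2} \leq C_3\kappa$ (integrating the $(n-1,n-1)$-form over the $z_n$-direction and applying co-area/Fubini). On each such slice, $\alpha^\tau$ restricts to a genuine K\"ahler metric with bounded geometry in the sense of Definition~\ref{defn:geombound}, hence bounded curvature, so the $\epsilon$-regularity theorem of Schoen-Uhlenbeck \cite{SchoenUhlenbeck} (cf. \cite[Proposition 7]{Sz}) applies: if $C_3\kappa$ is smaller than the $\epsilon$-regularity threshold, we get a pointwise bound $\alpha^\tau < C_4\kappa\,\omega_{Euc}$ on a smaller slice $U_l' = \{z_n = l\}\cap \tfrac12 C_2^{-1}\rho B^{2n}$, evaluated on vectors tangent to the slice. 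Because $\Xi$ is very close to $\partial/\partial z_n$ on $C_2^{-1}\rho B^{2n}$ and $\iota_\Xi\alpha^\tau = 0$, the bound in the slice directions upgrades to a bound for $\alpha^\tau$ in all directions: write $T_x B^{2n} = \mathbf{C}\Xi \oplus T_x U_l'$ (valid since $\Xi$ is transverse to the slices), and any vector decomposes as $v = c\Xi + v'$ with $\alpha^\tau(v,\bar v) = \alpha^\tau(v',\bar v')$; since the change of basis from $(\partial/\partial z_1,\ldots,\partial/\partial z_n)$ to this splitting has Euclidean-bounded coefficients (as $\Xi$ has bounded Euclidean derivatives on $0.8\rho B^{2n}$ by the gradient bound on $F$), we obtain $\alpha^\tau < C_5\kappa\,\omega_{Euc}$ on $\tfrac13 C_2^{-1}\rho B^{2n}$, say. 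Finally, converting back via $\omega_{Euc} < C_1\omega$ gives $\alpha^\tau < C_1 C_5\kappa\,\omega$ on $\tfrac13 C_2(K)^{-1}\rho B$, which is the claim with $A = C_1 C_5$ and $\kappa$ chosen small enough (depending on $M$, through the constants $C_2, C_3$, and the $\epsilon$-regularity threshold, which depend on $M$ and $K$).

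The main obstacle I expect is the third step: propagating the slicewise bound $\alpha^\tau < C_4\kappa\,\omega_{Euc}$ to a bound in \emph{all} directions on a full ball, not just on the union of good slices. This requires controlling how close $\Xi$ is to $\partial/\partial z_n$ uniformly on $C_2^{-1}\rho B^{2n}$ --- which is exactly what the refinement of Proposition~\ref{prop:CDS} provides, but one must be careful that the transversality of the slices to $\Xi$ is uniform (bounded away from tangency) so that the decomposition $v = c\Xi + v'$ has controlled coefficients and the set of good slices genuinely fills out a ball rather than a measure-zero set. A secondary subtlety is that the $\epsilon$-regularity threshold for harmonic maps into the target K\"ahler manifold depends on the $C^2$-geometry bound $K$ for $\alpha^\tau$ and on the dimension only, which is precisely what Definition~\ref{defn:geombound} and Proposition~\ref{prop:aTcoords} guarantee; one should record that the resulting $\kappa, A$ are therefore uniform over bounded families. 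Once these points are in place the argument is essentially the same as in \cite{Sz}.
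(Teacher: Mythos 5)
Your argument tracks the paper's proof almost exactly through the main steps: pass to the Euclidean picture via the map $F$ from Proposition~\ref{prop:CDS}, find a good slice $U_l$ by Fubini/Chebyshev, apply the Schoen--Uhlenbeck $\epsilon$-regularity to the K\"ahler metric $\alpha^\tau|_{U_l}$ (whose bounded geometry comes from Definition~\ref{defn:geombound}), and then use $\iota_\Xi\alpha^\tau=0$ together with the uniform transversality of $\Xi$ to the slices to upgrade the bound from slice directions to all directions at points of the slice. All of that is correct, and you are right that the resulting bound is proportional to $\kappa$, not just bounded.

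The gap is in the last step, and it is exactly the one you flag but do not resolve. A positive-measure (even full-measure-minus-$\epsilon$) set of good levels $l$ gives you the pointwise estimate only on the union $\{(z',z_n): z_n\in S\}$ of the corresponding slices, which is never an open neighborhood: points whose $z_n$-coordinate is a bad level receive no bound, and nothing in the Fubini argument rules out $\alpha^\tau$ being large there. The resolution in the paper is not to use more slices but to use \emph{one} good slice and then sweep it out by the flow of $\Xi$: since $\alpha^\tau$ is invariant under this flow (it annihilates $\Xi$ and is preserved by the Reeb action and by the homothety), while $\omega$ is only rescaled by the controlled factor coming from $L_\Xi\omega=\lambda\omega$ with $\lambda\le 2$, the estimate $\alpha^\tau< A\kappa\,\omega$ established on the slice propagates along the flow, and because $\Xi$ is uniformly close to $\partial/\partial z_n$ the flow images of the slice cover a ball of definite size $\tfrac13 C_2(K)^{-1}\rho B^{2n}$. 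You already have every ingredient for this (the flow-invariance of $\alpha^\tau$, the controlled scaling of $\omega$, and the transversality statement (3) of Proposition~\ref{prop:CDS}); replacing the ``many good slices'' step by this flow argument closes the proof.
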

\begin{proof}
  The assumption implies that
\[ \int_{0.9\rho B^{2n}} \alpha^\tau \wedge
\omega_{Euc}^{n-1} < C_1\kappa, \]
for some $C_1$. We argue similarly to the proof of
Proposition~\ref{prop:liminf>c0}. There is a slice $U_l = \{z_n = l\}\cap
C_2(K)^{-1}\rho B^{2n}$, with $|l| < \frac{1}{2}K^{-1}\rho B^{2n}$ on
which we have
\[ \int_{U_l} \alpha^\tau\wedge \omega_{Euc}^{n-1} < C_3\kappa. \]
The $\epsilon$-regularity implies that if $\kappa$ is sufficiently
small, then on a slightly smaller set we have $\alpha^\tau <
C_4\omega_{Euc} < C_4'\omega$. The flow of the Reeb field preserves
$\alpha^\tau$, and acts on $\omega$ by a controlled scaling factor, so we
obtain the required estimate on a ball of a definite size,
$\frac{1}{3}C_2(K)^{-1}\rho B^{2n}$.  
\end{proof}

For any ball $B(q,r)\subset B$ we define 
\[ V(q,r) = r^{2-2n} \int_{B(q,r)} \alpha^\tau\wedge \omega^{n-1}. \]
We have the following.
\begin{prop}
There are $\delta, \epsilon > 0$ depending on $K$ satisfying the following: if $1-I(B)
< \delta$ and 
\[ \sup_{B(q,r)\subset B} V(q,r) < \epsilon, \]
then $\alpha^\tau \leq 4\omega$ on $\frac{1}{2}B$. 
\end{prop}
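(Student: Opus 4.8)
The plan is to bootstrap from the local estimate of Proposition~\ref{prop:eps2} by a rescaling and covering argument, exploiting the scale invariance of the density $V(q,r)$. Observe first that if $B(x,r)\subset B$ then $I(B(x,r))\geq I(B)$, since the infimum defining $I$ runs over fewer balls; likewise the supremum of $V$ over sub-balls of $B(x,r)$ is bounded by the corresponding supremum over sub-balls of $B$. Moreover, $V$ is invariant under the rescaling $\omega\mapsto \lambda^{-2}\omega$ (with $\alpha^\tau$ kept fixed), and this rescaling turns \eqref{eq:genMOC} into an equation of the same form. Hence all of the hypotheses, with the same $\delta$ and $\epsilon$, persist after rescaling any sub-ball $B(x,r)\subset B$ to unit size.

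Now fix $x\in\frac{1}{2}B$ and consider $\hat B := B(x,\tfrac{1}{3})$ rescaled to unit size, so that $1-I(\hat B)<\delta$ and $\int_{\hat B}\alpha^\tau\wedge\omega^{n-1}<\epsilon$. The crucial step is to verify that, for $\delta$ small enough, $\hat B$ satisfies the hypotheses of Proposition~\ref{prop:CDS} with a fixed choice of $M$ (say $M=1$) and some $c>0$. Since $1-I(\hat B)$ is small, the quantitative almost-rigidity theory of Cheeger--Colding~\cite{CheeCo} produces $L^p$-almost-holomorphic charts on $\hat B$ away from a bad set $W$ concentrated near the singular set of a possible Gromov--Hausdorff limit; and, as in \cite[Proposition~11]{Sz}, the only tangent cones with $1-I$ close to $0$ are $\mathbf{C}^n$ and $\mathbf{C}_\gamma\times\mathbf{C}^{n-1}$ with $\gamma\in(\gamma_1,\gamma_2)$, the latter being excluded near every point as soon as $\int_{\hat B}\alpha^\tau\wedge\omega^{n-1}$ is below the constant $c_0$ of Proposition~\ref{prop:liminf>c0} (whose proof applies unchanged when $c_i\to 0$). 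Consequently $W$ can be covered by a bounded number of balls at all scales, i.e. $m(\eta,W)<M$, as required.

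With these hypotheses in place, Proposition~\ref{prop:eps2} applies to $\hat B$: taking $\epsilon$ below the threshold $\kappa$ appearing there, we obtain $\alpha^\tau<A\epsilon\,\omega$ on $\frac{1}{3}C_2(K)^{-1}\rho\,\hat B$, a neighbourhood of $x$ of a definite size (in the rescaled, hence also the original, metric). Since $x\in\frac{1}{2}B$ was arbitrary, these neighbourhoods cover $\frac{1}{2}B$, and choosing $\epsilon<\min\{c_0,\,4/A\}$ (and then $\delta$ accordingly) yields $\alpha^\tau<A\epsilon\,\omega\leq 4\omega$ on $\frac{1}{2}B$. The main obstacle is the second paragraph: verifying that a non-collapsed ball with $\mathrm{Ric}\geq 0$, small $1-I(B)$, and small density satisfies the combinatorial and chart hypotheses of Proposition~\ref{prop:CDS} with a uniform $M$. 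This is where one must combine the Cheeger--Colding almost-rigidity theorem with the exclusion of the conical models $\mathbf{C}_\gamma\times\mathbf{C}^{n-1}$ furnished by Proposition~\ref{prop:liminf>c0} (and the transverse-slice arguments underlying Propositions~\ref{prop:CDS} and~\ref{prop:eps2}); once this is done, the remaining rescaling and covering steps are routine.
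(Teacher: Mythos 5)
Your reduction to unit scale and the final covering step are fine, but the pivotal second paragraph contains a gap that is not merely technical. The hypotheses of Proposition~\ref{prop:CDS} --- the existence of a set $W$ with $m(\eta,W)<M$ for a uniform $M$, together with $(c\eta,\delta)$-charts centered at every point of $B\setminus W$ --- cannot be extracted from $1-I(B)<\delta$ and smallness of the density alone. Cheeger--Colding almost-rigidity under $\mathrm{Ric}\geq 0$ gives Gromov--Hausdorff control, not holomorphic charts; producing such charts requires either a two-sided Ricci bound (Anderson \cite{And}) or the full iteration over scales that occupies the rest of this section, and a pointwise two-sided Ricci bound here means $\mathrm{Ric}(\omega)=c\,\alpha^\tau\leq C\omega$, i.e.\ essentially the conclusion you are trying to prove. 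So the argument as written is circular at exactly the step you flag as the main obstacle. (A secondary issue: Proposition~\ref{prop:liminf>c0} is established in the regime where the $c_i$ are bounded away from zero and concerns the formation of conical singularities there; it is not available ``unchanged'' in the $c_i\to 0$ setting of this subsection, and in any case it excludes conical tangent cones only where the density is small, which does not by itself yield the uniform covering bound $m(\eta,W)<M$ at all scales $\eta$.)

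The intended proof avoids this by a point-picking contradiction argument, following \cite[Proposition 17]{Sz} combined with the slicing trick of Proposition~\ref{prop:RegRicBnd}: if $\alpha^\tau\leq 4\omega$ fails on $\frac{1}{2}B$, one selects a near-extremal point and scale for a boundary-weighted version of $\tr{\omega}{\alpha^\tau}$ and rescales so that on the new unit ball $\alpha^\tau\leq\tilde{\omega}$ everywhere while $\alpha^\tau(v,\bar v)\geq 1$ for some unit vector $v$ at the center. On this rescaled ball $\mathrm{Ric}(\tilde{\omega})$ is bounded \emph{by construction}, so Anderson's theorem together with $1-I<\delta$ supplies genuine holomorphic coordinates with no exceptional set $W$; one then takes a slice transverse to $\Xi$, applies the Schoen--Uhlenbeck $\epsilon$-regularity \cite{SchoenUhlenbeck} on that slice using the scale-invariant smallness of $\sup V(q,r)$, and propagates the resulting bound $\alpha^\tau\ll\tilde{\omega}$ along the Reeb flow, contradicting the normalization at the center. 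The point-picking is what manufactures the two-sided Ricci bound that your approach tries, unsuccessfully, to obtain from Proposition~\ref{prop:CDS}.
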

\begin{proof}
  The proof follows the argument of \cite[Proposition 17]{Sz},
  together with the idea we used in the proof of \ref{prop:RegRicBnd}
  to make use of the vector field $\Xi$. 
\end{proof}

There is one other time when the strict positivity of $\alpha$ is used
in \cite{Sz}, namely in the proof of \cite[Proposition 19]{Sz} where the
$\epsilon$-regularity is used again. In the present setting the same
argument can be used, just like in the proof of 
Proposition~\ref{prop:eps2} above.  The remainder of
the proof of Proposition~\ref{prop:goodTang}
is identical to the argument in \cite{Sz}.

\subsection{Polynomial growth holomorphic
  functions}\label{sec:polygrowth}
In this section we assume that we have a K\"ahler cone $(C(L),
\omega)$, which has non-negative Ricci curvature, and is non-collapsed
(i.e. we have a lower bound on the volume of $L$). In addition we
assume that there is a constant $\epsilon_0 > 0$ with
the following property: for every $x\in L$ there is a holomorphic
function $f$ on $C(L)$ such that $f(0)=0$, $\Vert f\Vert_{L^2}=1$, and
$|f(x)|^2 > \epsilon_0$. Here the $L^2$-norm is with respect to the
weight $e^{-\frac{1}{2}r^2}$ as before. This extra property holds for any family
of solutions of~\eqref{eq:genMOC} with data moving in a bounded family
 by using the results of the
previous section on having good tangent cones. Our goal is to show
that up to replacing $\epsilon_0$ by a smaller constant, we can take
$f$ to have polynomial growth with controlled degree. More precisely
we have the following.

\begin{prop}\label{prop:polygrowth}
  There are constants $D, \epsilon_1 > 0$, depending on $\epsilon_0$
  and the lower bound on the volume of $L$, such that the following
  holds: if $x\in L$, then there is a holomorphic function $f$ on
  $C(Y)$ with $f(0)=0$, $\Vert f\Vert_{L^2}=1$, $|f(x)|^2 > \epsilon_1$,
  and in addition $|f| = O(r^D)$. 
\end{prop}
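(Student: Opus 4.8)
The plan is to start from a holomorphic function $f$ on $C(L)$ with $f(0)=0$, $\|f\|_{L^2}=1$, and $|f(x)|^2>\epsilon_0$, and to truncate its Taylor (weight-space) expansion at a controlled degree. Because $\xi$ generates a compact torus action, $f$ decomposes as $f=\sum_\chi f_\chi$ into weight components $f_\chi\in R_\chi(C(L))$, and these components are $L^2$-orthogonal with respect to the weight $e^{-\frac12 r^2}$ (the torus acts by isometries of $\omega$ and preserves $r$). So $1=\|f\|_{L^2}^2=\sum_\chi\|f_\chi\|_{L^2}^2$. Writing $d=\chi(\xi)$ for the $\xi$-degree of $f_\chi$, a function in $R_\chi$ satisfies $L_{-J\xi}f_\chi = d\,f_\chi$, hence grows like $r^d$; more precisely, by the mean-value/sub-mean-value property on the cone together with the non-collapsing assumption, $\|f_\chi\|_{L^2}=1$ forces a pointwise bound $|f_\chi(y)|\le C\, r(y)^{d}$ on $L$ with $C$ depending only on the dimension and the volume lower bound (rescaling a function of pure degree $d$ supported near the link and applying a Bergman-type estimate on a definite ball). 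This is the standard ``polynomial growth controlled by degree'' estimate for pure-weight pieces.

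Next I would argue that the low-degree part already captures a definite fraction of $|f(x)|$. Split $f = f^{<D} + f^{\ge D}$ where $f^{<D}=\sum_{\chi(\xi)<D}f_\chi$. If the tail $f^{\ge D}$ carried almost all of the value at $x$, i.e. $|f^{\ge D}(x)|^2 > \epsilon_0/4$, then since $r(x)=1$ we would get a lower bound on $\|f^{\ge D}\|_{L^2}$ of a definite size; but this does not immediately contradict $\|f\|_{L^2}=1$. The correct argument instead uses that each individual $f_\chi$ with $\chi(\xi)\ge D$ contributes $|f_\chi(x)|\le C$ (since $r(x)=1$) but its $L^2$ mass on a fixed large annulus $\{1\le r\le 2\}$ is at least $c\,2^{D}$ times a multiple of $|f_\chi(x)|^2$ — because a degree-$d$ function with $|f_\chi(x)|$ of a given size on $\{r=1\}$ has $L^2$-norm on $\{1\le r\le 2\}$ growing like $2^{d}|f_\chi(x)|$. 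Summing over $\chi(\xi)\ge D$ and using Cauchy–Schwarz, $\big|\sum_{\chi(\xi)\ge D} f_\chi(x)\big|^2 \le \big(\sum \|f_\chi\|_{L^2}^2\big)\cdot\big(\sum 2^{-d} \cdot \text{const}\big) \le C\,2^{-D}\|f\|_{L^2}^2 = C\,2^{-D}$. Choosing $D$ large enough that $C\,2^{-D}<\epsilon_0/4$, we conclude $|f^{<D}(x)|^2 > \epsilon_0/4 =: \epsilon_1'$. Then $g := f^{<D}$ has $g(0)=0$ (the trivial weight space does not appear since $f(0)=0$ and each $f_\chi$ with $\chi\ne 0$ vanishes at the vertex), $\|g\|_{L^2}\le 1$, $|g(x)|^2>\epsilon_1'$, and $|g|=O(r^D)$ by the pure-weight bounds applied termwise (finitely many terms). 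Normalizing $f := g/\|g\|_{L^2}$ gives $\|f\|_{L^2}=1$ and $|f(x)|^2>\epsilon_1'=:\epsilon_1$, proving the proposition with $D$ and $\epsilon_1$ depending only on $\epsilon_0$ and the volume lower bound.

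The main obstacle I anticipate is making the pure-weight growth estimate $\|f_\chi\|_{L^2}=1 \Rightarrow |f_\chi(y)|\le C\,r(y)^{\chi(\xi)}$ uniform in the weight $\chi$ and independent of the particular cone $C(L)$ in the family — one needs the constant $C$ to depend only on dimension and the non-collapsing constant $\kappa$. For quasi-regular $\xi$ this follows from Bergman kernel estimates on the projective orbifold base (each weight space becomes sections of a power of an orbifold line bundle, and the sup/$L^2$ comparison is the standard Tian-type estimate with uniform constants under a volume lower bound); for irregular $\xi$ one approximates by quasi-regular Reeb fields nearby in $\mathcal C_R$, or works directly on the cone using the sub-mean-value inequality for $|f_\chi|^2$ on balls of radius comparable to $r(y)$ together with Bishop–Gromov. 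The bookkeeping of which annulus to integrate over, and verifying the geometric-series convergence with uniform constants, is routine once this uniform pointwise bound is in hand.
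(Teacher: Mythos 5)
Your overall strategy is the same as the paper's: decompose $f$ into weight spaces, bound each pure-weight piece pointwise by a quantity decaying in its degree, control the high-degree tail at $x$ by Cauchy--Schwarz, truncate and renormalize. The per-weight mechanism you propose (comparing the $L^2$ mass of a degree-$d$ homogeneous piece on the outer annulus $\{1\le r\le 2\}$, which scales like $4^d$ relative to the link, against the sub-mean-value bound for $|f_\chi(x)|^2$ on a smaller annulus, which scales like $(5/4)^{2d}$) is sound and does give a bound of the form $|f_\chi(x)|^2\le C\,2^{-d}\Vert f_\chi\Vert_{L^2}^2$ with constants depending only on the dimension and the non-collapsing constant.

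The genuine gap is in the tail estimate. After Cauchy--Schwarz you need $\sum_{\chi(\xi)\ge D} 2^{-\chi(\xi)}$ to be small, and you treat this as a geometric series over the degree $d$, i.e.\ you implicitly assume there is (a bounded number of) one character per unit interval of degree. In general many distinct characters $\chi$ of $\mathbf{T}$ can have $\chi(\xi)$ in the same interval $(w-1,w]$, and the sum is really $\sum_w N_w 2^{-w}$ where $N_w$ counts these. The paper devotes a separate lemma (Lemma~\ref{lem:numberofweights}) to this count, obtaining only the crude bound $N_w< C5^w$ --- which would make your series diverge. The paper compensates by proving a much stronger per-weight decay, $|f_{w,i}|^2\le C\,2^w/ (w+n-2)!$ on $L$ (Lemma~\ref{lem:growthandC0}), obtained by integrating $r^{2w}e^{-\frac12 r^2}$ over the \emph{entire} cone rather than comparing two fixed annuli; the factorial then beats $10^w$. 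To repair your argument you would either need to upgrade your exponential decay to this factorial decay, or prove a polynomial bound on the number of characters per unit degree interval (true, since the characters lie in a fixed lattice of rank at most $n$ inside the dual Reeb cone, but this must be argued). As written, the convergence of your tail sum is not justified.
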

\begin{proof}
  First let us write $\mathcal{H}$ for the space of $L^2$ holomorphic
  functions on $C(L)$, which we can decompose into weight
  spaces under the torus action
  \[ \mathcal{H} = \bigoplus_{\chi\in\mathfrak{t}^*}
  \mathcal{H}_\chi, \]
  where infinite convergent sums are allowed. 
  In addition we have the Reeb field $\xi\in \mathfrak{t}$.

  As we mentioned above, we already have an $f$ with the required
  properties, except for the growth condition. To restate
  our goal, we are trying to construct an $f'$ that also satisfies
  the growth condition, which is equivalent to
  \[ f\in \bigoplus_{\substack{\chi\ne 0 \\ \langle\chi, \xi\rangle
      < D}} \mathcal{H}_\chi. \]

  Lemma~\ref{lem:numberofweights} below shows that we have a constant
  $C$ such that
  \[ \#(\mathcal{C}\cap \{\chi\in \mathfrak{t}^*\,:\, \langle
  \chi,\xi\rangle \in (w-1,w]\}) < C 5^w \]
  for all $w \geq 1$. Note that this estimate is far from optimal, but
  it is enough for our purposes here. 

  Suppose that we have $f$ such that $f(0)=0$, $|f(x)|^2 > \epsilon_0$
  and $\Vert f\Vert_{L^2} =1$. We write
  \[ f = \sum_{w=1}^\infty \sum_{i=1}^{N_w} f_{w,i}, \]
  where each $f_{w,i}$ is in a weight space
  $\mathcal{H}_\chi$ with $\langle \chi,\xi\rangle \in (w-1,w]$. We
  have $N_w < C5^w$.

  From Lemma~\ref{lem:growthandC0} below we have that on $L$
  \[ |f_{w,i}|^2 \frac{w!}{2^w} < C\Vert f_{w,i}\Vert^2_{L^2}, \]
  and so for any $D > 0$ we have
  \[ \begin{aligned}
    \left| \sum_{w=D}^\infty \sum_{i=1}^{N_w} f_{w,i}\right|^2 &\leq
    \left(\sum_{w=D}^\infty \sum_{i=1}^{N_w} \frac{2^w}{w!}\right)
    \left( \sum_{w=D}^\infty \sum_{i=1}^{N_w} |f_{w,i}|^2 \frac{
        w!}{2^w} \right) \\
    &\leq C \Vert f\Vert_{L^2}^2 \sum_{w=D}^\infty
    \frac{10^w}{w!}. 
  \end{aligned}\]

  Since the series on the right converges, and $|f(x)|^2 >
  \epsilon_0$, we can choose $D$ sufficiently large so that at $x$ we have
   \[ \left| \sum_{w=1}^D \sum_{i=1}^{N_w} f_{w,i}\right|^2 >
   \frac{\epsilon_0}{2}. \]
   We can then let
   \[ f' = \sum_{w=1}^D \sum_{i=1}^{N_w} f_{w,i}, \]
   and $f' \Vert f'\Vert_{L^2}^{-1}$ will satisfy the required properties. 
\end{proof}

\begin{lem}\label{lem:growthandC0}
  Suppose that $f\in \mathcal{H}_\chi$, and let $w = \lceil \langle\chi,
  \xi\rangle \rceil$. Then we have
  \[ |f|^2 < \frac{2^w C \Vert f\Vert_{L^2}^2}{w!} \quad\text{ on } L. \]
\end{lem}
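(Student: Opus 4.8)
The plan is to exploit the fact that an $L^2$ holomorphic function lying in a single weight space $\mathcal{H}_\chi$ has a prescribed homogeneity under the Reeb scaling, so that its $C^0$-norm on the link $L$ controls — and is controlled by — its $L^2$-norm on the cone in an essentially explicit way. Write $d = \langle\chi,\xi\rangle > 0$, so that along the flow of $-J\xi$ the function $f$ scales like $r^{d}$; concretely, pulling back by the homothety $r\mapsto sr$ multiplies $f$ by $s^{d}$. Thus $|f|$ restricted to the ray through a point $x\in L$ is $|f(x)|\,r^{d}$, and the weighted $L^2$-mass concentrates in the annulus $r\sim\sqrt{d}$.

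The first step is to reduce a lower bound for $\|f\|_{L^2}$ to a pointwise bound on $L$ together with a one-dimensional radial integral. Using the co-area formula along $r$, and writing $x\in L$ for the point where $|f|$ achieves (or nearly achieves) its maximum on $L$, one gets
\[
\|f\|_{L^2}^2 = \int_{C(L)} |f|^2 e^{-\frac{1}{2}r^2}\,\omega^n \;\geq\; c_n \,|f(x)|^2 \int_0^\infty r^{2d}\, e^{-\frac{1}{2}r^2}\, r^{2n-1}\,dr,
\]
where $c_n$ comes from the volume of $L$ and a local mean-value/sub-mean-value estimate for the plurisubharmonic function $|f|^2$ near a maximizing ray (this is where the non-collapsing hypothesis, i.e.\ the lower bound on $\mathrm{vol}(L)$, enters — though in fact for this lemma a fixed normalization of $L$ suffices). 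The radial integral is a Gamma integral: substituting $u = \tfrac12 r^2$ gives $\int_0^\infty r^{2d+2n-1} e^{-r^2/2}\,dr = 2^{d+n-1}\Gamma(d+n)$. Hence
\[
|f(x)|^2 \;\leq\; \frac{C}{2^{d+n-1}\Gamma(d+n)}\,\|f\|_{L^2}^2
\;\leq\; \frac{C'\,2^{w}}{w!}\,\|f\|_{L^2}^2,
\]
using $\Gamma(d+n)\geq \Gamma(w+1) = w!$ (since $d \geq w-1$, and absorbing the shift by $n$ and the factor $2^{-(d+n-1)}$, bounded on the range $d\in(w-1,w]$, into the constant $C'$). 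Since $x$ was the point maximizing $|f|$ on $L$, this gives the claimed bound at every point of $L$.

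The main obstacle is the first inequality: controlling $\|f\|_{L^2}^2$ from below by $|f(x)|^2$ times the pure radial Gaussian integral. One cannot simply integrate over a single ray — one needs a fixed-size neighborhood of the maximizing ray on which $|f|^2$ does not drop by more than a controlled factor. The clean way to do this is: $|f|^2$ is the modulus-squared of a holomorphic function, hence plurisubharmonic, so on a Euclidean-like coordinate ball around a point of $L$ (of a definite size, by the geometry of the reference cone metric, cf.\ the adapted charts of Proposition~\ref{prop:aTcoords}) the $L^2$-average of $|f|^2$ dominates a definite fraction of its value at the center; combining this with the exact homogeneity $|f|(sr,\cdot) = s^{d}|f|(r,\cdot)$ propagates the estimate out along the cone direction and yields the radial integral above with a dimensional constant. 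The precise tracking of constants across the range $d\in(w-1,w]$ is routine once this mean-value step is in place.
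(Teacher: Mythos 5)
Your argument follows essentially the same route as the paper's: the homogeneity $|f|(rx)=r^{\langle\chi,\xi\rangle}|f|(x)$ turns the weighted $L^2$ norm on the cone into a Gamma integral times the $L^2$ norm on the link, and the sub-mean-value property of the plurisubharmonic function $|f|^2$ converts the latter into a sup bound on $L$. The one point to be careful about is your displayed inequality
\[
\Vert f\Vert_{L^2}^2\ \geq\ c_n\,|f(x)|^2\int_0^\infty r^{2d+2n-1}e^{-\frac{1}{2} r^2}\,dr .
\]
If you run the sub-mean-value estimate on a ball of fixed radius $\rho$ centred at the maximizing point $x\in L$, the radial weight $r^{2d}$ varies across that ball by a factor $(1+\rho)^{2d}$, so the ``dimensional constant'' $c_n$ in this inequality in fact degrades exponentially in $d$; as literally stated, with $c_n$ independent of $d$, the inequality is not what the mean-value argument gives. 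This is harmless in the end, because the target bound carries a factor $2^w$ in the numerator, which absorbs any loss of the form $(1+\rho)^{2d}\leq 4^w$ (for $\rho\leq 1$), or one can simply take $\rho < \sqrt{2}-1$ so that $(1+\rho)^{2d}$ is dominated by the $2^{d+n-1}$ already present in the Gamma integral. The paper sidesteps the issue by performing the mean-value estimate on the half-ball $\{r\leq 1/2\}$, where $r^{2d}\leq 1$, and then paying the homogeneity factor $4^w$ to return to $L$ --- which is precisely where the $2^w$ in the statement of the lemma comes from. So your proof closes once you either shrink the mean-value ball or place it inside the unit ball and weigh the resulting $2^{O(w)}$ loss against the available slack; the delicate point is this exponential-in-$d$ factor from the mean-value step, not the ``routine'' tracking of constants over the range $d\in(w-1,w]$ that you mention at the end.
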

\begin{proof}
  Assume $\Vert f\Vert_{L^2}=1$.  We have (with dimensional factors $c_n$)
\[ \begin{aligned} 1 = \Vert f\Vert_{L^2}^2 &\geq c_n \Vert f\Vert_{L^2(L)}^2
\int_1^\infty r^{2w+2n-3} e^{-\frac{1}{2}r^2}\,dr \\
  &\geq c_n' 2^w (w+n-2)! \Vert f\Vert_{L^2(L)}^2. 
\end{aligned} \]
The $L^2$ norm on the link gives a $C^0$-estimate on the half ball
around the cone vertex, and then using the assumed growth rate on $f$
we obtain a $C^0$-estimate on the link $L$: 
\[ \sup_L |f|^2 < \frac{2^w C}{(w+n-2)!}. \]
\end{proof}

We have also used the following simple estimate 
on the dimension of the space of holomorphic functions of polynomial growth. 
\begin{lem}\label{lem:numberofweights}
  For an integer $w \geq 1$, let us write $\mathcal{H}_w$ for the
  space of holomorphic functions satisfying the growth condition $|f|
  = O(r^w)$ as $r \to\infty$. We then have
  \[ \dim \mathcal{H}_w < C5^w. \]
\end{lem}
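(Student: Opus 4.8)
The plan is to bound $\dim\mathcal H_w$ by the number of lattice points (weights $\chi$ of the torus action) that can appear in the weight decomposition with $\langle\chi,\xi\rangle\le w$, since each such weight space is at most one-dimensional after we account for the fact that $Y$ is an affine variety embedded in some $\mathbf C^N$ — but in general weight spaces have larger dimension, so instead I would argue more crudely via the coordinate embedding. First I would fix the embedding $Y\hookrightarrow\mathbf C^N$ coming from generators $z_1,\dots,z_N$ of the coordinate ring, on which $\xi$ acts diagonally with positive weights $a_1,\dots,a_N$. Every holomorphic function of polynomial growth on $C(Y)=Y$ is a polynomial in the $z_i$ (this is where we use that $Y$ is an affine cone and the functions have polynomial growth — one truncates the power series as in the previous proofs), and a monomial $z^\alpha=\prod z_i^{\alpha_i}$ has growth rate $\sum_i\alpha_i a_i$. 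Hence
\[
\dim\mathcal H_w \;\le\; \#\Big\{\alpha\in\mathbf Z_{\ge0}^N \;:\; \sum_{i=1}^N \alpha_i a_i \le w\Big\}.
\]

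Next I would bound this count. Since the $a_i>0$, let $a_{\min}=\min_i a_i$; after rescaling $\xi$ (which only changes $w$ by a bounded factor, harmless since the statement allows any constant $C$ and the factor $5$ is far from optimal) we may assume $a_i\ge 1$ for all $i$, so $\sum_i\alpha_i\le w$. The number of $\alpha\in\mathbf Z_{\ge0}^N$ with $\sum\alpha_i\le w$ is $\binom{w+N}{N}$, which for fixed $N$ grows polynomially in $w$, and in particular is bounded by $C_N\, 2^w$ for a constant depending only on $N$; a fortiori it is bounded by $C 5^w$. This gives the claimed estimate, with $C$ depending on $N$ and on the weights $a_i$ (equivalently on the geometry of $\alpha$ and the non-collapsing constant, since those control the embedding dimension $N$ and the weights through Theorem~\ref{thm:partialc0} and Proposition~\ref{prop:polygrowth}).

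The only real subtlety — and the step I would be most careful about — is justifying that every polynomial-growth holomorphic function on the (possibly singular) affine cone $Y$ is genuinely a polynomial in a fixed finite set of generators $z_i$ with the asserted growth rate: this uses that $R(Y)$ is finitely generated, that it is graded by $\langle\cdot,\xi\rangle$ with the $z_i$ spanning generators in positive degrees, and that an $L^2$ (hence, by the mean value inequality on the non-collapsed cone, locally bounded) holomorphic function with growth $O(r^w)$ lies in the span of monomials of degree $\le w$. All of this is parallel to the truncation argument already carried out in the proof of Proposition~\ref{prop:polygrowth} and in Donaldson--Sun~\cite{DS2}, so I would simply invoke it. Once that is in place the counting is entirely elementary.
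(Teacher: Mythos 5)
Your route is genuinely different from the paper's, and it has a gap at exactly the point you flag. The paper's proof never touches the coordinate ring or any embedding: it takes an $L^2$-orthonormal basis $\{f_1,\dots,f_N\}$ of $\mathcal{H}_w$, forms the Bergman-type function $B(x)=\sum_i |f_i(x)|^2 e^{-\frac{1}{2}r^2}$, bounds it pointwise by $\frac{2^wC}{w!}\max\{1,r^{2w}\}$ using Lemma~\ref{lem:growthandC0} together with homogeneity, and integrates: the integral of $B$ equals $\dim\mathcal{H}_w$ on one hand and is at most $C5^w$ on the other. This gives a constant depending only on the dimension and the non-collapsing constant, which matters here: Section~\ref{sec:polygrowth} is set up for an abstract K\"ahler cone $(C(L),\omega)$, before any controlled embedding exists, and the whole point of Proposition~\ref{prop:polygrowth} and Theorem~\ref{thm:partialc0} is to \emph{produce} an embedding with controlled $N$. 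Your constant instead depends on the embedding dimension $N$ and the weights $a_i$ of a chosen algebraic embedding; for the fixed $X$ with $\xi_k$ varying in a compact family this can be salvaged, but it inverts the logic of the section and would need a separate uniformity discussion.

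The concrete gap is the assertion that every holomorphic function on the cone with $|f|=O(r^w)$ lies in the span of the restrictions of monomials of $\xi$-degree at most $w$, i.e.\ that each weight space $\mathcal{H}_\chi$ of polynomial-growth holomorphic functions coincides with the algebraic weight space $R_\chi$. This is true, but it is not what the truncation argument in Proposition~\ref{prop:polygrowth} proves (that argument truncates the weight decomposition in $L^2$ and, moreover, itself invokes the present lemma, so appealing to it here is circular). What is actually needed is a GAGA-type statement: in the quasi-regular case a $\chi$-homogeneous holomorphic function descends to a holomorphic section of an ample (orbifold) line bundle on the projective quotient and is therefore algebraic, and the irregular case follows by approximating $\xi$ within the torus. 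Once that is supplied, your monomial count $\#\{\alpha:\sum_i\alpha_i a_i\le w\}\le C_N(1+w)^N\le C5^w$ is correct and even gives a polynomial bound; but as written the argument is incomplete, and the paper's Bergman-kernel proof is both more elementary and better adapted to how the lemma is used.
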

\begin{proof} 
  This follows a standard argument using the previous lemma. 
  Let $\{f_1,\ldots, f_N\}$ be an $L^2$-orthonormal basis for
  $\mathcal{H}_w$, and define the function $B$ by
  \[ B(x) = \sum_{i=1}^N |f_i(x)|^2 e^{-\frac{1}{2}r^2}. \]
  The previous lemma implies that 
  \[ B \leq \frac{2^w C}{w!} \max\{1, r^{2w}\}, \]
  and so
  \[ \int_{C(L)} B\,\omega^n \leq C 5^w. \]
  On the other hand, by definition the integral of $B$ is the
  dimension of $\mathcal{H}_w$. 
\end{proof}

\subsection{Synthesis}
Let us recall where we stand.  Suppose we have a sequence of cone K\"ahler metrics $\omega_{t_k}$
on $(X, t_k^{-1}\xi_k)$, with radial functions $r_k$, satisfying the equations
\[ \mathrm{Ric}(\omega_{t_k}) = \frac{1-t_k}{t_k} \alpha_{k}^\tau, \]
and $t_k\to T$, $\xi_k \rightarrow \xi$ and $\alpha_k \rightarrow \alpha$.
In addition the sequence $(X, \omega_{t_k})$ converges in the
Gromov-Hausdorff sense to $(Z, d_Z)$. Our work so far leads to 
the following ``partial $C^0$-estimate''.

\begin{prop}
There exists a constant $D>0$ depending only on the non-collapsing constant,
the dimension and a bound for the geometry of $(\xi_k, \alpha_k)$ such that if 
$\{f_{1}^{(k)},\ldots, f_{N}^{(k)}\}$ denotes an $L^2$ orthonormal basis of $R_{<D}(X)$,
then the map $F_k :X \rightarrow \mathbf{C}^{N}$ whose components are given by the $f^{(k)}_{i}$
gives an embedding of $X$.  Furthermore, there is a uniform constant $C$ so that
\[ C^{-1} < |F_k| < C,\qquad C^{-1}F_{k}^{*}\omega_{Euc} < \omega_{t_k},
\]
on the set $\{1/2 < r_k < 2\}$.
\end{prop}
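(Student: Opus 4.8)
The plan is to combine the existence of good tangent cones (Proposition~\ref{prop:goodTang}) with the polynomial-growth refinement (Proposition~\ref{prop:polygrowth}) to fix the exponent $D$, and then to read off all three statements from the description of $|F_k|^2$ as the reproducing kernel of $R_{<D}(X)$, together with elementary sub-mean-value and Bochner estimates on the annulus $\{1/2<r_k<2\}$, where the metrics $\omega_{t_k}$ are smooth and have uniformly controlled geometry.

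First I would fix $D$. By Proposition~\ref{prop:goodTang} the Gromov--Hausdorff limit $(Z,d_Z)$ has good tangent cones, so the transplantation argument discussed above --- applying the H\"ormander estimate Theorem~\ref{thm: Hormander} to $\mathcal{O}_X$ to project approximately holomorphic functions coming from tangent cones --- produces $\epsilon_0>0$, depending only on $n$, the non-collapsing constant and the bound $K$ on the geometry of $\alpha_k$, such that for every $x$ on the unit link $\{r_k=1\}$ there is a holomorphic $f$ on $X$ with $f(0)=0$, $\|f\|_{L^2(e^{-\frac{1}{2} r_k^2})}=1$ and $|f(x)|^2>\epsilon_0$. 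Proposition~\ref{prop:polygrowth} then gives $D_1,\epsilon_1>0$, depending only on $\epsilon_0$ and the volume lower bound, so that $f$ may in addition be taken with $|f|=O(r_k^{D_1})$. By the weight-space analysis in the proof of Proposition~\ref{prop:polygrowth} such an $f$ is algebraic and lies in $\bigoplus_{0<\chi(t_k^{-1}\xi_k)\le D_1}R_\chi(X)$; since $t_k^{-1}\xi_k\to T^{-1}\xi$, for all large $k$ it lies in $R_{<D}(X)$ for any fixed $D$ slightly larger than $T^{-1}D_1$. Enlarging $D$ further --- harmlessly, since $X$ is fixed --- I may also assume $R_{<D}(X)$ generates $R(X)$ as a $\mathbf{C}$-algebra. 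With $D$ fixed this way, the components of $F_k$ form an $L^2(e^{-\frac{1}{2} r_k^2})$-orthonormal basis of $R_{<D}(X)$, hence differ by an invertible linear change of coordinates on $\mathbf{C}^N$ from a generating set of $R(X)$ contained in $R_{<D}(X)$; the map given by the latter corresponds to a surjection $\mathbf{C}[T_1,\dots,T_N]\to R(X)$ and is a closed embedding, so $F_k$ is too.

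For the metric bounds I use the reproducing-kernel identity
\[ |F_k(x)|^2=\sum_{i=1}^N|f_i^{(k)}(x)|^2=\sup\{\,|g(x)|^2\,:\,g\in R_{<D}(X),\ \|g\|_{L^2(e^{-\frac{1}{2} r_k^2})}\le 1\,\}, \]
and the analogous one $\sum_i|df_i^{(k)}(v)|^2=\sup\{|dg(v)|^2:\|g\|\le 1\}$ for first derivatives. Inserting the function $f$ from the previous paragraph at a point $x_0$ with $r_k(x_0)=1$ gives $|F_k(x_0)|^2\ge\epsilon_1$. For the upper and derivative bounds, note that on $\{1/4<r_k<4\}$ the metrics $\omega_{t_k}$ are smooth, uniformly non-collapsed, satisfy $\mathrm{Ric}(\omega_{t_k})\ge 0$, and enjoy a uniform Sobolev inequality; hence for holomorphic $g$ with $\|g\|_{L^2(e^{-\frac{1}{2} r_k^2})}=1$ the subharmonic functions $|g|^2$ and $|\nabla g|^2$ --- the latter by the Bochner formula, which is where $\mathrm{Ric}\ge 0$ enters --- obey a sub-mean-value inequality on small balls about $x_0$, and together with a Caccioppoli estimate and the pinching of $e^{-\frac{1}{2} r_k^2}$ near $\{r_k=1\}$ this yields $|g(x_0)|^2+|\nabla g(x_0)|^2\le C$ with $C$ depending only on the admissible quantities. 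Thus $|F_k(x_0)|^2\le C$ and $(F_k^*\omega_{Euc})_{x_0}\le C\,(\omega_{t_k})_{x_0}$. Finally each $f_i^{(k)}$ lies in a single weight space with $t_k^{-1}\xi_k$-weight in $(0,D)$, hence is homogeneous of bounded degree under the scaling flow generated by $-J(t_k^{-1}\xi_k)$, along which $\omega_{t_k}$ rescales by a fixed exponent; conjugating the estimates at $x_0$ by this flow over the range $|s|\le\log 2$ propagates all three bounds to every point of $\{1/2<r_k<2\}$, at the cost of replacing $C$ by a constant depending in addition on $D$, which gives the asserted inequalities $C^{-1}<|F_k|<C$ and $C^{-1}F_k^*\omega_{Euc}<\omega_{t_k}$.

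The main obstacle is this last step: transferring the estimates from the single slice $\{r_k=1\}$, where the Donaldson--Sun transplantation supplies the test function $f$, to the whole annulus with constants uniform in $k$. This uses the uniform Sobolev inequality on the annulus (coming from the non-collapsing and diameter bounds via the arguments of Croke and Yau), the Bochner subharmonicity of $|\nabla g|^2$ --- the only place the positivity $\mathrm{Ric}(\omega_{t_k})\ge 0$ is invoked --- and the homogeneity argument spreading the pointwise bounds across annular shells; the book-keeping relating the $\xi$-grading of $R_{<D}(X)$ to the $t_k^{-1}\xi_k$-grading is a minor but necessary ingredient.
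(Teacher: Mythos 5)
Your proposal is correct and follows essentially the same route as the paper: fix $D$ via the good-tangent-cone/H\"ormander transplantation together with Proposition~\ref{prop:polygrowth}, enlarge $D$ so the functions give an embedding of the fixed variety $X$, get the lower bound on the link from the peak function and propagate it to the annulus using the bounded growth rate (homogeneity of the weight components), and obtain the upper and gradient bounds by Moser iteration using the uniform Sobolev inequality and $\mathrm{Ric}\geq 0$. The extra details you supply (reproducing-kernel identity, Bochner/Caccioppoli, the $t_k^{-1}\xi_k$ versus $\xi$ grading book-keeping) are elaborations of the paper's terser argument rather than a different method.
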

\begin{proof}
  In Sections \ref{sec:cinot0} and \ref{sec:ci0} 
  we have shown that the iterated tangent cones in suitable
  Gromov-Hausdorff limits are good, and so the arguments of
  Donaldson-Sun~\cite{DS} show that the assumptions of
  Proposition~\ref{prop:polygrowth} apply: 
  for each $x\in L$, we can find
  a holomorphic function $f$ on $X$ with polynomial growth of bounded
  degree, unit $L^2$-norm on $L$, and $|f(x)|^2 > \epsilon_1$ for a
  fixed number $\epsilon_1$ (independent of $k$). Increasing $D$ if
  necessary, we can assume that $F_k$ gives an embedding (note that
  $X$ is fixed, only the metric is changing). It then follows directly
  that $C^{-1} < |F_k|$ on $L$, while the bound on the growth rate
  implies that the same estimate (with different $C$) also holds on
  the annulus $1/2 < r < 2$. A uniform bound $|F_k| < C$ and 
  derivative bound $|\nabla
  F_k|_{\omega_{t_k}} < C$ on the annulus follows by using Moser
  iteration, and this implies the estimate $C^{-1}F_k^*\omega_{Euc} <
  \omega_{t_k}$. 
\end{proof}

Arguing as in Donaldson-Sun \cite[Section 2]{DS2} we can deduce
the convergence of the affine varieties
\[
F_{k}(X) \longrightarrow Y
\]
where $Y$ is a normal, $\mathbf{Q}$-Gorenstein affine variety with Reeb vector field $\xi$,
and furthermore $Y$ is homeomorphic to $Z$.
In order to finish the proof of Theorem~\ref{thm:partialc0} it suffices to prove the last statement
regarding convergence to a weak solution of the twisted equation, after possibly passing
to a further subsequence.

Consider the pushed forward forms $(F_k)_*\alpha^\tau$.  We would like to take a weak limit of these
forms.  As a first step we prove the volume is bounded below.  Suppose that $\alpha = \frac{1}{2}\ddb r^2$
is a cone metric, with transverse form $\alpha^{\tau} = \ddb \log r$.  Recall \cite{Sparks_survey} we have the form
$\eta := \sqrt{-1}(\dbar - \del)\log r$.  In terms of $\eta$ we have $\alpha^{\tau} = \frac{1}{2}d\eta$, and 
$\alpha = \frac{1}{2}d(r^2\eta)$.  Then we compute
\[
\alpha^{\tau} \wedge \alpha^{n-1} = 2^{1-n}(n-1)r^{2n-3}dr\wedge \eta\wedge(d\eta)^{n-1}.
\]
Writing everything on the cone we get
\[
\begin{aligned}
\int_{\{r \leq 1\} } \alpha^{\tau} \wedge \alpha^{n-1} &= 2^{1-n}(n-1)\int_{0}^{1}r^{2n-3}dr \int_{r=1}\eta\wedge (d\eta)^{n-1} \\
&=c_n\,Vol(L,\xi)>0
\end{aligned}
\]
where $Vol(L,\xi)$ is the volume of the link, which is a topological invariant depending only on the Reeb field.  Furthermore, if $\omega$ is another metric compatible with $\xi$, with radial function $\tilde{r}$ then we can write $r= e^{\psi}\tilde{r}$ for a function $\psi$ which is independent of $r$, and $L_{\xi}\psi=0$.  Then it is easy to check that
\[
\int_{\{\tilde{r}\leq 1\}} \alpha^{\tau} \wedge \omega^{n-1} = c_n\, Vol(L,\xi).
\]
Now suppose we have  sequence of maps $F_{k} : X \rightarrow \mathbf{C}^{N}$ as above and consider the closed positive currents
\[
(F_k)_{*}\alpha^{\tau} \wedge [F_{k}(X)].
\]
Let $A:= \{p\in \mathbf{C}^{N} :M^{-1}<|p|<M\}$ for some constant $M >
0$.
Let $A':= F_{k}^{-1}(A \cap F_{k}(X))$.  Suppose that $\nu$ is a
smooth positive $(n-1,n-1)$ form with compact support in $A$.  Then we
have 
\[
\int_{\mathbf{C}^{N}}(F_k)_{*}\alpha^{\tau} \wedge [F_{k}(X)]\wedge \nu = \int_{A'} \alpha^{\tau} \wedge F_{k}^{*}\nu
\]
We can find a constant $C_1$ such that on $A$ we have $ \nu \leq C_1\omega_{Euc}^{n-1}$ as $(n-1,n-1)$ forms.  Furthermore, by the properties of $F_k$ we have
\[
  F_{k}^{*}\omega_{Euc} < C \omega_k
\]
on $A'$, where $\omega_k$ is our metric, and the partial $C^0$ estimate implies that
\[
\{1/2 <r_{t_k} <2\} \subset A' \subset \{r < C'\}
\]
for some constant $C'$, provided $M$ is sufficiently large.  The above
discussion implies a uniform upper bound  
\[
\int_{A'} \alpha^{\tau} \wedge F_{k}^{*}\nu<C,
\]
with the constant $C$ depending only on the form $\nu$. 
It follows that $(F_k)_{*}\alpha^{\tau} \wedge [F_{k}(X)]$ converges
weakly to a 
closed positive current $\beta^{\tau}$ on $Y = \lim_{k} F_{k}(X)$.

By arguing as in \cite{Datar-Sz} we can show that $Y$
admits a weak solution $\omega_T$ of the equation
\[ \mathrm{Ric}(\omega_T) = \frac{1-T}{T}\beta^\tau. \]
Note that the proof of this result in \cite{Datar-Sz} is essentially
local, working in neighborhoods of points $p$ in the limit $Y$ where the
complex structure of $Y$ is smooth, and where in terms of the metric
structure we have a tangent cone of the form $\mathbf{C}^n$ or $\mathbf{C}^{n-1}\times
\mathbf{C}_\gamma$. On this set we have
good local coordinates, and so we can study the limiting equation,
while at the same time the complement of this
set has Hausdorff codimension greater than two, and therefore it can
be ignored in terms of writing down weak solutions of the twisted
equation, as in \cite[Remark 4]{Datar-Sz}.
This completes the proof of Theorem~\ref{thm:partialc0}.

\section{Proof of the main result}\label{sec:mainproof}
In this section we prove on direction of Theorem~\ref{thm:mainthm}. Recall that we
have a normalized Fano cone singularity $(X,\xi)$ together with the action of a
torus $\mathbf{T}$, whose Lie algebra contains $\xi$. We assume that $(X,\xi)$ is
$\mathbf{T}$-equivariantly K-stable, and our goal is to show that $(X,\xi)$
admits a Ricci flat K\"ahler cone metric. The proof
naturally splits into two cases depending on whether $(X,\xi)$ is
quasi-regular, or irregular. We will first focus on the former, and
then we will deal with irregular $\xi$ by approximating it with a
sequence of quasi-regular Reeb fields.

\subsection{The quasi-regular case}

Suppose that $(X,\xi)$ is quasi-regular. We first fix a
$\mathbf{T}$-invariant transverse
K\"ahler metric $\alpha^\tau$, using an embedding $X\to\mathbf{C}^N$
by a collection of (non-constant) holomorphic functions just as in
Equation~\eqref{eq:hatr2}. We can then solve the
continuity method \eqref{eq:contmethod} up to some time $T\leq 1$.

According to Theorem~\ref{thm:partialc0} there is a number $D$,
depending on the bound on the geometry of the twisting form
$\alpha^\tau$ and on the pair $(X,\xi)$ through the non-collapsing
condition,  so
that using orthonormal bases of holomorphic functions with growth
rates less than $D$ we obtain embeddings $F_k : X\to \mathbf{C}^N$, such that
$F_k(X) \to Y$ with a normal limit space $Y$. In addition we have convergence
of twisting forms
$(F_k)_*\alpha^\tau \to \beta^\tau$ and the limit $Y$ admits a weak
solution of the twisted equation
\[ \mathrm{Ric}(\omega_T) = \frac{1-T}{T} \beta^\tau. \]

In order to apply the results from Section~\ref{sec:Ding} we need to
make sure that $\alpha^\tau$ can be written as an integral over
currents of integration (with respect to a positive measure) over
suitable hypersurfaces. The
problem with $\alpha^\tau$ defined as in \eqref{eq:hatr2} is that it
does not use all the functions in $R_{<D}(X)$ and so in the integral
expression \eqref{eq:alphaintegral} we are not using a positive
measure on $\mathbf{P}$. To fix this, we
 will modify $\alpha^\tau$ slightly by adding small terms corresponding to
 the remaining functions in $R_{<D}(X)$, and so that the new transverse
metric still has the same bound on its geometry. In particular the above
discussion still holds with the same constant $D$. Let us write
$\alpha^\tau = \ddb \log \hat{R}$, and we will call the perturbed
radial function $\hat{r}$. 

Let us decompose
\[ R_{<D}(X) = R_1\oplus \ldots \oplus R_m \oplus R_{m+1}
\oplus\ldots \oplus R_k \]
into weight spaces of $\mathbf{T}$, where $\alpha^\tau$ is defined as
in \eqref{eq:hatr2} using bases of $R_i$ for $i\leq m$. Suppose that
$\xi$ acts on the functions in $R_i$ with weight $a_i$ as before, and
recall that we chose $M$ so that $M/a_i\in \mathbf{Z}$ for $i\leq
m$. We also choose $K$ so that $K/a_i\in \mathbf{Z}$ for all $i$, and
then for $\delta > 0$ we define a new radial function $\hat{r}$ by 
\[ \begin{aligned} \hat{r}^{2K} &= \left[ \sum_{i=1}^m\left(\sum_{j=1}^{N_i}
      |z^{(i)}_j|^2\right)^{M/a_i}\right]^{K/M} + \delta\sum_{i=m+1}^k\left(
      \sum_{j=1}^{N_i} |z^{(i)}_j|^2\right)^{K/a_i} \\
&= \hat{R}^K + \delta\sum_{i=m+1}^k\left(
      \sum_{j=1}^{N_i} |z^{(i)}_j|^2\right)^{K/a_i}.
\end{aligned}  \]
Here the $z^{(i)}_j$ form a basis for $R_i$. 
As $\delta\to 0$, we recover the original radial
function $\hat{R}$, and so for sufficiently small $\delta$ the
transverse metric $\ddb\log\hat{r}$ has the same bounded geometry as
$\ddb\log\hat{R}$, but at the same time the methods of
Section~\ref{sec:Ding}, in particular
Proposition~\ref{prop:twistedformula}, can be applied.

At this point we are in essentially
the same setup as in \cite[Section 3.1]{Datar-Sz}, and can follow
the argument there closely. We have a sequence $\rho_k\in GL(N)^\mathbf{T}$,
such that $F_k = \rho_k\circ F_1$. For simplicity of notation let us
write $F_1(X) = X$ and $(F_1)_*(\alpha^\tau) = \alpha^\tau$. Then
in the notation of Section~\ref{sec:Ding}, on $X$ we have
\begin{equation}\label{eq:alphatau}
  \alpha^\tau = \frac{\pi}{M} \int_{\mathbf{P}\setminus F}
    [V_\mu]\,d\mu,
\end{equation}
where $F$ is a hyperplane in the projective space $\mathbf{P}$ and
each $V_\mu$ is a hypersurface in $X$. Similarly to \cite[Lemma
14]{Datar-Sz} we can choose a subsequence of the $\rho_k$, such that
$\rho_k(V_\mu)$ converges for all $\mu$. Let us
define
\[ \rho_\infty(V_\mu) = \lim_{k\to\infty} \rho_k(V_\mu). \]
Generalizing Definition~\ref{defn:stab}, for any $(1,1)$-current on
$Y$ we let $\mathfrak{g}_{Y,\xi,\chi}$
denote the holomorphic vector fields on $Y$, commuting with $\xi$ such
that $\iota_\xi\chi=0$. Then as in \cite[Lemma 15]{Datar-Sz} we have

\begin{lem}\label{lem:reductiveDS}
We can find $\mu_1, \ldots, \mu_d$ for some $d$, such that
  \[ \mathfrak{g}_{Y, \xi,\beta^\tau} = \bigcap_{i=1}^d
  \mathfrak{g}_{Y, \xi, [\rho_\infty(V_{\mu_i})]}. \]
\end{lem}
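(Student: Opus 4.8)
The goal is to realize the twisted stabilizer $\mathfrak{g}_{Y,\xi,\beta^\tau}$ as a finite intersection of the stabilizers $\mathfrak{g}_{Y,\xi,[\rho_\infty(V_{\mu_i})]}$ of currents of integration along limiting hypersurfaces. First I would record the containment $\mathfrak{g}_{Y,\xi,\beta^\tau}\subseteq \mathfrak{g}_{Y,\xi,[\rho_\infty(V_\mu)]}$ for every $\mu\in\mathbf{P}\setminus F$: since $\beta^\tau = \lim_k \rho_k\cdot\alpha^\tau$ and, by \eqref{eq:alphatau} applied on $\rho_k(X)$, the pushed-forward current satisfies $\rho_k\cdot\alpha^\tau = \frac{\pi}{M}\int_{\mathbf{P}\setminus F}[\rho_k(V_\mu)]\,d\mu$, passing to the limit gives $\beta^\tau = \frac{\pi}{M}\int_{\mathbf{P}\setminus F}[\rho_\infty(V_\mu)]\,d\mu$ as an average of nonnegative currents over a probability measure. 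A holomorphic vector field $w$ commuting with $\xi$ with $\iota_w\beta^\tau=0$ must then annihilate $[\rho_\infty(V_\mu)]$ for almost every $\mu$, and since the family $\mu\mapsto\mathfrak{g}_{Y,\xi,[\rho_\infty(V_\mu)]}$ takes only finitely many values on a constructible stratification of $\mathbf{P}$ (each condition $\iota_w[\rho_\infty(V_\mu)]=0$ being Zariski-closed in $\mu$ for fixed $w$ in the finite-dimensional space $\mathfrak{g}_{Y,\xi}$), "almost every $\mu$" can be upgraded to "generic $\mu$", hence to a single suitably chosen $\mu$, giving $\mathfrak{g}_{Y,\xi,\beta^\tau}\subseteq \mathfrak{g}_{Y,\xi,[\rho_\infty(V_{\mu_0})]}$.

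For the reverse inclusion I would argue that a $w$ lying in $\mathfrak{g}_{Y,\xi,[\rho_\infty(V_\mu)]}$ for enough values of $\mu$ must annihilate the average $\beta^\tau$. The point is a Lelong-number / support argument: if $\iota_w[\rho_\infty(V_{\mu_i})]=0$ for the currents of integration along the hypersurfaces $\rho_\infty(V_{\mu_i})$, and these hypersurfaces (together with their defining functions $g_{\mu_i}$, in the notation of the proof of Proposition~\ref{prop:twistedformula}) are chosen so that the corresponding linear system separates tangent directions in a neighborhood of the generic point of $Y$, then $w$ is forced to be tangent to the generic level sets of every $g_\mu$, hence $\iota_w\ddb\log(\sum|u_j|^{2M/b_j})=0$, i.e. $\iota_w\beta^\tau=0$ by \eqref{eq:betataueq}. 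Concretely, choose $\mu_1,\dots,\mu_d$ so that the differentials $dg_{\mu_1},\dots,dg_{\mu_d}$ span the cotangent space of $Y_{reg}$ at a generic point modulo the span of $d\xi$ and the weight directions; finiteness of $d$ follows because $Y_{reg}$ is quasi-affine and finitely many sections suffice to generate the relevant sheaf of differentials up to the torus action.

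The combinatorial bookkeeping — making precise which finite collection $\{\mu_i\}$ works and checking that the two inclusions use a common such collection — is routine once the two mechanisms above are in place: take the union of the finite set of $\mu$'s produced by the constructibility argument (upper bound for $\mathfrak{g}_{Y,\xi,\beta^\tau}$) and the finite set needed to span cotangent directions (lower bound), and intersect. This is exactly the strategy of \cite[Lemma 15]{Datar-Sz}, and I would follow that argument essentially verbatim, the only adaptations being notational: $\alpha^\tau$ here is the specific transverse form built from $R_{<D}(X)$ and its integral representation \eqref{eq:alphatau} is the one coming from Proposition~\ref{prop:twistedformula}, while in the compact Kähler setting it comes from sections of a line bundle.

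\textbf{Main obstacle.} The delicate point is the lower inclusion: ensuring that finitely many of the limiting hypersurfaces $\rho_\infty(V_\mu)$ genuinely "see" all transverse directions on $Y$, so that annihilating those currents forces annihilating $\beta^\tau$. Unlike in \cite{Datar-Sz}, the limit $Y$ may sit inside a coordinate hyperplane (this is the "one new difficulty" flagged in the proof of Proposition~\ref{prop:twistedformula}), so one must be careful that the hypersurfaces $\rho_\infty(V_\mu)$ are not all degenerate or non-reduced on $Y$; the analysis of $g_\mu$ versus $f_\mu$ carried out there — in particular the genericity conditions excluding the union $E$ of two hyperplanes — is precisely what guarantees that for $\mu$ outside a proper subvariety the current $[\rho_\infty(V_\mu)]$ is the honest current of integration along a hypersurface of $Y$ whose defining equation has the expected weight, and that is the input I would lean on to push the argument through.
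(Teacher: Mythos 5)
Your first inclusion is essentially on track: the positivity of each $[\rho_\infty(V_\mu)]$, together with the representation of $\beta^\tau$ as the average $\frac{\pi}{M}\int_{\mathbf{P}\setminus F}[\rho_\infty(V_\mu)]\,d\mu$, gives $\iota_w\beta^\tau=0\Rightarrow\iota_w[\rho_\infty(V_\mu)]=0$ for almost every $\mu$; one pairs the positive measure $\beta^\tau(w,\bar w)$ with the weight $e^{-\frac{1}{2}\hat{r}^2}$ and applies Tonelli, and this is precisely where the paper's one-line proof replaces the Fubini--Study volume form of \cite{Datar-Sz} by $e^{-\frac{1}{2}\hat{r}^2}\omega^n$. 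Your claim that the condition is Zariski-closed in $\mu$ is neither justified (the currents $\rho_\infty(V_\mu)$ are defined as limits along a subsequence of the $\rho_k$ and need not vary algebraically in $\mu$) nor needed, since a full-measure set of good $\mu$ is all one uses; so that detour is harmless but should be dropped.

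The genuine gap is in your reverse inclusion. The mechanism you propose --- choosing $\mu_1,\dots,\mu_d$ so that the differentials $dg_{\mu_i}$ span the cotangent directions transverse to $\xi$, then deducing $\iota_w\ddb\log\bigl(\sum_j|u_j|^{2M/b_j}\bigr)=0$ --- fails on several counts. Tangency of $w$ to the hypersurface $g_{\mu_i}^{-1}(0)$ only gives $dg_{\mu_i}(w)=0$ along that hypersurface, not on all of $Y$, so nothing about level sets or the ambient smooth form follows. Moreover $\beta^\tau$ is the limit current $\frac{\pi}{M}\int[\rho_\infty(V_\mu)]\,d\mu$ of \eqref{eq:betataueq}, not the restriction to $Y$ of $\ddb\log\bigl(\sum_j|u_j|^{2M/b_j}\bigr)$, so the final identification is wrong. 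Most seriously, if your spanning condition did what you want, it would show that $\bigcap_i\mathfrak{g}_{Y,\xi,[\rho_\infty(V_{\mu_i})]}$ is the stabilizer of a transverse K\"ahler form, i.e.\ just the span of $\Xi$; since $\mathfrak{g}_{Y,\xi,\beta^\tau}$ is in general strictly larger (which is the whole reason the lemma is needed), your two requirements on the $\mu_i$ are mutually incompatible. The argument the paper points to in \cite[Lemma~15]{Datar-Sz} is soft and needs no geometric positioning of the hypersurfaces: let $S\subset\mathbf{P}\setminus F$ be the full-measure set on which a basis of $\mathfrak{g}_{Y,\xi,\beta^\tau}$ annihilates $[\rho_\infty(V_\mu)]$; the intersection $\bigcap_{\mu\in S}\mathfrak{g}_{Y,\xi,[\rho_\infty(V_\mu)]}$ of linear subspaces of the finite-dimensional space $\mathfrak{g}_{Y,\xi}$ is already realized by finitely many $\mu_1,\dots,\mu_d\in S$; it contains $\mathfrak{g}_{Y,\xi,\beta^\tau}$ by the choice of $S$, and any $w$ in it annihilates $[\rho_\infty(V_\mu)]$ for every $\mu\in S$, hence annihilates the average $\beta^\tau$. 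That averaging step, not a separation-of-tangent-directions step, is what closes the reverse inclusion.
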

The proof is the same as in \cite{Datar-Sz}, except instead of the
Fubini-Study volume form we use $e^{-\frac{1}{2}\hat{r}^2} \omega^n$,
where $\omega=\frac{1}{2}\ddb \hat{r}^2$ as usual. 

By Lemma~\ref{lem:reductiveDS} and Proposition~\ref{prop:reductive}
 we can choose hypersurfaces $V_1', \ldots, V_d'$ in
$\mathbf{P}\setminus F$ such that the
automorphism group of the $(d+1)$-tuple $(Y, \rho_\infty(V_1'),\ldots,
\rho_\infty(V_d'))$ is reductive, by
Proposition~\ref{prop:reductive}. Since $Y$ may be contained in a
hyperplane, the stabilizer of this $(d+1)$-tuple in the 
multigraded Hilbert scheme, under the action of
$GL(N)^\mathbf{T}$ may contain extra factors of $GL(k_i)$ for suitable
$k_i$, but this product is still reductive.  At this point, we can use
 the Luna slice theorem (see \cite{Luna} and also
\cite{Don12,CDS3})
to find a $\mathbf{C}^*$-subgroup $\lambda(t)\subset GL(N)^\mathbf{T}$ with
$\lambda(S^1)\subset U(N)^\mathbf{T}$, and an
element $g\in GL(N)^\mathbf{T}$ such that
\[ \Big( Y, \rho_\infty(V_1'),\ldots, \rho_\infty(V_d')\Big) =
\lim_{t\to 0} \lambda(t)g\cdot \Big(X, V_1', \ldots, V_d'\Big). \]
More generally, enlarging $F$ by a set of measure zero, if $V_1,
\ldots, V_K$ are hypersurfaces in $\mathbf{P}\setminus F$, then the
stabilizer of the $(1+d+K)$-tuple
\[ (Y, \rho_\infty(V_i'), \rho_\infty(V_j))_{i=1,\ldots,d,
  j=1,\ldots, K}\]
is unchanged, and so the Luna slice theorem provides a
corresponding $\mathbf{C}^*$-subgroup $\lambda(t)$ and element $g$.
These will satisfy
\[ \label{eq:rho=lambda}
\begin{aligned} Y &= \lim_{t\to 0} \lambda(t)g\cdot X, \\
  \rho_\infty(V_i) &= \lim_{t\to 0} \lambda(t)g \cdot V_i.
\end{aligned}\]
Note that we do not necessarily have $\beta^\tau = \lim_{t\to 0}\lambda(t)g\cdot
\alpha^\tau$, and that the one-parameter subgroup $\lambda(t)$ and $g$
may depend on the choice of $V_1,\ldots,V_K$.  Note also that 
the vector field $w$ on $Y$ induced by the $\mathbf{C}^{*}$ subgroup $\lambda$ will stabilize
$\rho_{\infty}(V_i')$ for $1 \leq i \leq d$, and hence by Lemma~\ref{lem:reductiveDS} we have
$w\in\mathfrak{g}_{Y,\xi,\beta^{\tau}}$.  

It is important to choose $V_1,\dots,V_K$ above correctly,
and we will discuss how this is done shortly, but for the time being, let us assume we have a $\mathbf{C}^{*}$ subgroup
$\lambda$ generated by a vector field $w$ commuting
with $\xi$.  Let $\theta_w$ denote the transverse Hamiltonian with
respect to the radial function $\hat{r}$. We will assume
that $\theta_w$ is normalized so that
\[ \int_Y \theta_w e^{-\frac{1}{2}\hat{r}^2} \omega^n = 0, \]
and write $\Vert w\Vert = \sup_Y |\theta_w|$. Note that any two
choices of norm are equivalent on the finite dimensional space of
holomorphic vector fields on $Y$ commuting with $\xi$. In addition we
cannot have $\theta_w=0$ on $Y$, unless $\lambda$ already acts
trivially on $X$. To see this note that $\lambda$ induces a filtration
on the coordinate ring of $X$, and the coordinate ring of $Y$ is the
associated graded ring, with the action $\lambda$ on $Y$ being induced
by the corresponding grading. If this grading is trivial, then the
original filtration must have been trivial.

As in \cite{Datar-Sz}, the idea is to
use Proposition~\ref{prop:generic_hyper} to estimate the twisted
Futaki invariant of the test-configuration $\lambda(t)$ for $g\cdot
X$.  In order to apply Proposition~\ref{prop:generic_hyper} we must ensure
that our hypersurfaces avoid the set $E$, which is a union two hyperplanes in $\mathbf{P}$.  On
other other hand, if no $N+1$ of our hypersurfaces $V_1,\ldots, V_K$ are
on a hyperplane in $\mathbf{P}$, then at most $2N$ of them can be
contained in $E$. So once we choose $K$ very large
compared to $N$, then we get a good approximation to the twisted
Futaki invariant by replacing $g\cdot\alpha^\tau$ by the average of
$g\cdot[V_i]$.

We now describe how to choose the $V_1,\ldots,V_K$.  
For simplicity of notation we will suppose that $g$ is the identity. 
From \eqref{eq:alphatau} we have that on $Y$
\[ \beta^\tau = \frac{\pi}{M} \int_{\mathbf{P}\setminus F}
[\rho_\infty(V_\mu)]\,d\mu. \]
In addition since $(Y, \xi, (1-T)\beta^\tau)$ admits a weak solution
of the twisted equation, from Proposition~\ref{prop:Fut0} we have
\[ \mathrm{Fut}_{Y, \xi, (1-T)\beta^\tau}(w) = 0 \]
for all $w\in \mathfrak{g}_{Y, \xi, \beta^\tau}$. By the formula for
the twisted Futaki invariant in Proposition~\ref{prop:FuteqProp} this means that
\[ \mathrm{Fut}_{Y, \xi}(w) - c_n\frac{1-T}{V}\int_Y \theta_w
e^{-\frac{1}{2}\hat{r}^2} \beta^\tau \wedge \omega^{n-1} = 0, \]
and so if we define the function $h: \mathbf{P}\setminus F \to
\mathbf{R}$ to be
\[ h_w(\mu) = \frac{\pi}{M} \int_{\rho_\infty(V_\mu)} \theta_w
e^{-\frac{1}{2} \hat{r}^2} \omega^{n-1}, \]
then we have
\[ \mathrm{Fut}_{Y, \xi}(w) = c_n\frac{1-T}{V} \int_{\mathbf{P}\setminus
  F} h_w(\mu)\,d\mu. \]
Since the possible $w$ form a finite dimensional space, for any
$\epsilon > 0$ we can choose a large $K_0$ with the following effect:  for any $K \geq K_0$ we can
find hypersurfaces $V_1,
\ldots, V_K$ with no $N+1$ on a hyperplane in $\mathbf{P}$, such that
\begin{equation}\label{eq:Futsum}
  \mathrm{Fut}_{Y, \xi}(w) \leq \epsilon\Vert w\Vert +
c_n\frac{1-T}{V}\cdot \frac{1}{K}\sum_{i=1}^K \frac{\pi}{M}
\int_{\rho_\infty(V_i)} \theta_w e^{-\frac{1}{2}\hat{r}^2}
\omega^{n-1}.
\end{equation}
To see this more precisely, we choose a basis $w^1, \ldots, w^k$ for $\mathfrak{g}_{Y,\xi,\beta^{\tau}}$,
and then using Lusin's
theorem applied to each $h_{w^i}$ we can find a compact set $B\subset
\mathbf{P}\setminus F$ with arbitrarily small complement, on which
each $h_{w^i}$ is continuous. We can then approximate the integrals on
$B$ using Riemann sums over discrete finite sets.

Assume $T<1$.  There is a constant $\delta>0$ so that
\[
\int_{Y}(\max_{Y}\theta_{w}-\theta_w)e^{-\frac{1}{2}\hat{r}^2}\omega^n > \delta\|w\|
\]
since the left hand side is a norm on the finite dimensional vector space $\mathfrak{g}_{Y,\xi,\beta^{\tau}}$. 
Choose $0<\epsilon < 4^{-1}c(n)(1-T)\delta$ where $c(n)$ is the constant from Proposition~\ref{prop:generic_hyper}.
Take $K$ sufficiently large as above, and so that $K>2N\epsilon^{-1}$.
Using the hypersurfaces $V_1,\ldots, V_K$
we now find $\lambda$ as discussed before (and for simplicity assume $g$ is the
identity). It follows that
$\rho_\infty(V_i) = \lim_{t\to 0}\lambda(t)\cdot V_i$.

We now use Proposition~\ref{prop:generic_hyper}, which implies that in
the sum in \eqref{eq:Futsum}, for all but $2N$ of the integrals we have
\[ \frac{\pi}{M} \int_{\rho_\infty(V_i)} \theta_w e^{-\frac{1}{2}\hat{r}^2}
\omega^{n-1} = -c(n) \int_Y \mathrm{max}_Y\theta_w
e^{-\frac{1}{2}\hat{r}^2} \omega^n. \]
From our choice of $K$ we have
\begin{equation}\label{eq:Futbound2}
\begin{aligned}
 \mathrm{Fut}_{Y, \xi}(w) &\leq 2\epsilon\Vert w\Vert -c(n)
\frac{1-T}{V} \int_Y \mathrm{max}_Y\theta_w
e^{-\frac{1}{2}\hat{r}^2} \omega^n\\
&\leq -c(n)\frac{1-T}{2}\max_{Y}\theta_{w}.
\end{aligned}
\end{equation}
This is a contradiction if $(X,\xi)$ is $K$-stable.

If $T=1$, then $(Y,\xi)$ admits a weak Ricci flat metric, and then
as in \cite{Datar-Sz} or Donaldson-Sun~\cite[Section 3.3]{DS2}, we obtain a
test-configuration for $(X,\xi)$ with vanishing Futaki invariant. 

\subsection{The irregular case}
Suppose now that $(X,\xi)$ is irregular, and let $\xi_k\to \xi$ be a
sequence of normalized quasi-regular Reeb fields approximating
$\xi$. Let $\alpha_k^\tau\to \alpha^\tau$ be a sequence of compatible
transverse K\"ahler metrics, obtained by restricting suitable
reference forms under an embedding $X\to\mathbf{C}^M$. Note that we
may not be able to choose the $\alpha_k^\tau$ to be exactly of the
form considered in the previous section. On the other hand it is easy to
show in an identical way to the argument in \cite{SzRM} that if
$(X,\xi, (1-t)\alpha^\tau)$ admits a solution of the twisted equation,
then so does $(X, \xi, (1-t)\tilde{\alpha}^\tau)$ for any
$\tilde{\alpha}^\tau$ in the same transverse cohomology class as
$\alpha^\tau$. 

For each $k$ we obtain a $T_k < 1$, such that we can solve
\[ \mathrm{Ric}(\omega_t^{(k)}) = 2n (1-t)[\alpha_k^\tau -
\omega_t^{(k), \tau}] \]
on $(X, \xi_k)$ for $t\in [0,T_k)$. Note that if we choose $\xi$
to be the Reeb field with minimal volume, then necessarily $T_k < 1$,
since each $(X,\xi_k)$ will be strictly unstable if $\xi_k \ne
\xi$. Indeed by \cite{MSY} the Reeb field with minimal volume is
unique and a Sasaki-Einstein metric can exist only for that Reeb
field.  
\begin{prop}
  If $(X,\xi)$ is K-stable, then $T_k\to 1$ as $k\to\infty$.
\end{prop}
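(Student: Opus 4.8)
We argue by contradiction: suppose $T_k\not\to 1$, so that after passing to a subsequence $T_k\to T$ for some $T<1$, and use this to produce a destabilizing special degeneration of $(X,\xi)$. First, since solvability of the twisted equation at a fixed time depends only on the transverse K\"ahler class of the twisting form — this is the point of the remark following \cite{SzRM} recalled above — we may assume, without changing $T_k$, that each $\alpha_k^\tau$ is of the Fubini--Study type \eqref{eq:hatr2} relative to the \emph{quasi-regular} Reeb field $\xi_k$ (enlarging the embedding $X\to\mathbf{C}^M$ if necessary, as in the modification carried out in the quasi-regular case). With this choice every step of the argument in the quasi-regular subsection is available for each pair $(X,\xi_k)$ separately; note that K-stability entered that argument only at its very last line.

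Fix $k$. Running the continuity method on $(X,\xi_k)$ up to $T_k$ and applying Theorem~\ref{thm:partialc0} to the solutions $\omega_t^{(k)}$ as $t\to T_k$ yields a normal $\mathbf{Q}$-Gorenstein limit $Y^{(k)}$ with Reeb field $\xi_k$, a limiting transverse current $\beta^\tau_{(k)}$, and a weak solution of the twisted equation on $(Y^{(k)},T_k^{-1}\xi_k,(1-T_k)\beta^\tau_{(k)})$. Then, exactly as in \eqref{eq:Futbound2} — using Propositions~\ref{prop:reductive} and \ref{prop:Fut0}, Lemma~\ref{lem:reductiveDS}, the Luna slice theorem, and Proposition~\ref{prop:generic_hyper} — one obtains a special degeneration $\lambda_k$ of $(X,\xi_k)$, commuting with $\mathbf{T}$ and generated by a vector field $w_k\in\mathfrak{g}_{Y^{(k)},\xi_k,\beta^\tau_{(k)}}$, such that
\[
\mathrm{Fut}(X,\xi_k,\lambda_k)\ \le\ -\,c(n)\,\frac{1-T_k}{2}\,\max_{Y^{(k)}}\theta_{w_k},
\]
where $\theta_{w_k}$ is normalized by $\int_{Y^{(k)}}\theta_{w_k}e^{-\frac12\hat{r}^2}\omega^n=0$ and $\max_{Y^{(k)}}\theta_{w_k}>0$ because $\lambda_k$ acts nontrivially on the fixed variety $X$.

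It remains to transfer this to $(X,\xi)$ and let $k\to\infty$. Because the $Y^{(k)}$ are flat degenerations of the \emph{fixed} $X$, they range in a bounded subfamily of the multigraded Hilbert scheme; passing to a further subsequence we may assume they converge and that their $\mathbf{T}$-module structures stabilize. Over this bounded family the quantity $\max_{Y^{(k)}}\theta_{w_k}$ is comparable, uniformly in $k$, to any fixed norm $\Vert w_k\Vert$ on the finite-dimensional space of holomorphic vector fields commuting with $\xi$, so it does not degenerate relative to that scale. Since $\lambda_k$ commutes with $\mathbf{T}$ and $\xi,\xi_k\in\mathfrak{t}$, the same one-parameter group is a special degeneration of $(X,\xi)$: its central fibre $(Y^{(k)},\xi)$ is again a normalized Fano cone singularity for $k$ large, as $\xi$ lies in the open Reeb cone of $Y^{(k)}$ and the $\mathbf{T}$-weight of the Gorenstein section — hence the normalization $\chi(\xi)=nm$ — is inherited from $X$. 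The algebraic Futaki invariant of Definition~\ref{defn:Futaki} is real-analytic in the Reeb field and linear in the one-parameter group, with the relevant $a_i$'s and their $\lambda_k$-derivatives controlled uniformly over the bounded family of central fibres; hence
\[
\bigl|\mathrm{Fut}(X,\xi,\lambda_k)-\mathrm{Fut}(X,\xi_k,\lambda_k)\bigr|\ \le\ C\,|\xi-\xi_k|\,\max_{Y^{(k)}}\theta_{w_k}.
\]
Combining the two displays with $1-T_k\ge\frac12(1-T)$ and $|\xi-\xi_k|\to 0$ gives $\mathrm{Fut}(X,\xi,\lambda_k)\le\bigl(-\frac14 c(n)(1-T)+C|\xi-\xi_k|\bigr)\max_{Y^{(k)}}\theta_{w_k}<0$ for all large $k$, contradicting the K-stability of $(X,\xi)$.

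The main obstacle is the uniformity invoked in the previous paragraph: one must be sure that the special degenerations $\lambda_k$ produced for the varying Reeb fields $\xi_k$ neither escape to infinity nor become infinitesimally trivial as $k\to\infty$, and that the dependence of the algebraic Futaki invariant on the Reeb field is Lipschitz uniformly over the resulting family of central fibres. This is exactly where it is essential that $X$ is held fixed, so that the $Y^{(k)}$ remain in a bounded family; everything else is a direct application of the quasi-regular case together with Theorem~\ref{thm:partialc0}.
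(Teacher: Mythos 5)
Your proposal is correct and takes essentially the same approach as the paper: argue by contradiction, use the quasi-regular argument (culminating in \eqref{eq:Futbound2}) to produce destabilizing degenerations of $(X,\xi_k)$ with central fibers $Y_k$, obtain a uniform positive lower bound on $\max_{Y_k}\theta_{w_k}$ after normalizing the $w_k$, and conclude by continuity of the Futaki invariant in the Reeb field. The paper carries out the last two steps by extracting limits $Y_k\to Y$ and $\theta_{w_k}\to\theta_w$ (being careful that the $w_k$ need not converge on the ambient $\mathbf{C}^N$ when $Y$ lies in a hyperplane) and showing that $\mathrm{Fut}_{Y_k,\xi_k}(w_k)$ and $\mathrm{Fut}_{Y_k,\xi}(w_k)$ converge to the same negative limit, rather than through your quantitative Lipschitz estimate, but the substance is identical.
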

\begin{proof}
  Let us suppose that $\limsup T_k < 1$. 
 From \eqref{eq:Futbound2}, for each $k$ we obtain a 
  vector field $w_k$, inducing a test-configuration for $X$ with
  central fiber $Y_k$, such that
  \begin{equation}\label{eq:Futupper}
\mathrm{Fut}_{Y_k, \xi_k}(w_k) \leq -\delta\,
  \mathrm{max}_{Y_k} \theta_{w_k}, 
\end{equation}
  for some $\delta > 0$. Applying Theorem~\ref{thm:partialc0} to a
  diagonal sequence, we can assume that $Y_k \to Y$, and 
  moreover that $Y$ is normal.

  Our goal is to show that for sufficiently large $k$ we have
  \[ \mathrm{Fut}_{Y_k, \xi}(w_k) < 0, \]
  since this will contradict the K-stability of $(X,\xi)$. Recall that
  we have normalized each $\theta_{w_k}$ to have zero integral on
  $Y_k$. It is worth pointing out that the background metrics on
  $\mathbf{C}^N$ are also varying with $k$, and so the transverse
  Hamiltonians are all computed with different metrics, but these
  background metrics also converge as $k\to\infty$ to a metric for the
  limiting Reeb field $\xi$.  In addition as discussed in
  Remark~\ref{rem:maxthetaweight}, the normalization of $w_k$ and
  $\max_{Y_k}\theta_{w_k}$ only depends on the induced action on
  $Y_k$, and not for instance on the background metric used to compute
  the Hamiltonians. 

 By \eqref{eq:Futdef2} the Futaki invariant is
  \[ \mathrm{Fut}_{Y_k, \xi_k}(w_k) = -\frac{\int_{Y_k} \theta_{w_k}
    e^{-\frac{1}{2}\hat{r}_k^2}
    dV_k}{\int_{Y_k}e^{-\frac{1}{2}\hat{r}_k^2} dV_k}, \]
  where $dV_k$ is the canonical volume form on $Y_k$. Note that these
  can be scaled so that they converge to the canonical volume form on
  $Y$. In addition if we scale each $w_k$ so that $\mathrm{osc}_{Y_k}
  \theta_{w_k} = 1$, then we can extract a limit $\theta_w$ on $Y$,
  corresponding to a vector field $w$. Note that $w$ may not be
  defined on all of $\mathbf{C}^N$ if $Y$ is contained in a
  hyperplane, since in this case our scaling might make some of the
  weights of $w_k$ become unbounded.  
  However we must still have $\mathrm{osc}_Y \theta_w = 1$.

From our
normalization this implies a positive lower bound for 
  $\max_{Y}\theta_w$ and hence a uniform positive lower bound for
  $\max_{Y_k}\theta_{w_k}$. 
 From  \eqref{eq:Futupper} we obtain 
  \[ \mathrm{Fut}_{Y_k,\xi_k}(w_k) \leq -\delta'.\]
  Now the required result follows, since both $\mathrm{Fut}_{Y_k,
    \xi_k}(w_k)$ and $\mathrm{Fut}_{Y_k, \xi}(w_k)$ converge to
  $\mathrm{Fut}_{Y,\xi}(w)$, which we have just seen must be
  negative.  
\end{proof}

We can now choose metrics $\omega^{(k)}_{t_k}$ on $(X,
\xi_k)$, satisfying
\[ \mathrm{Ric}(\omega_{t_k}^{(k)}) = 2n(1-t_k)[
\alpha_k^\tau - \omega_{t_k}^{(k),\tau}], \]
with $t_k = T_k - k^{-1}$. We can apply the same arguments as above to
the sequence $(X, \omega^{(k)}_{t_k})$, to obtain a normal limit space
$Y$, which admits a weak Ricci flat metric, since $t_k\to 1$. As
before, we can then realize $Y$ as the central fiber of a
test-configuration for $X$, contradicting the assumption that
$(X,\xi)$ is K-stable, unless $Y\cong X$. But if $Y\cong X$, then we
have obtained the desired Ricci flat K\"ahler cone metric on $X$.

\section{The algebraic Futaki invariant}\label{sec:algfutaki}
In this section we collect some results of a more algebraic
nature. First we will consider the normalization (or gauge fixing) condition for a Fano
cone singularity $(X,\mathbf{T},\xi)$, 
and show that normalized Reeb fields $\xi$ form a
linear subspace of the Lie algebra of $\mathbf{T}$. We then discuss
the relation between the algebro-geometric definition of the Futaki
invariant in Definition~\ref{defn:Futaki} and the differential
geometric definition in Equation~\ref{eq:Futdef2}. The analogous
result in the smooth projective case was shown by 
Donaldson~\cite[Proposition 2.2.2]{Don_toric}, using the
equivariant Riemann-Roch formula. Here our approach is more algebraic
in order to avoid having to resolve possible singularities. 

\subsection{The Gauge Fixing Condition}
We begin with a lemma alluded to in Section~\ref{sec:background}.
\begin{lem}
Suppose $(X,\mathbf{T},\xi)$ is $\mathbf{Q}$-Gorenstein with an isolated singularity at the origin.  Suppose $\Omega \in \Gamma(X,mK_{X})$ is a non-vanishing section with $L_{\xi}\Omega= i\lambda \Omega$ for some $\lambda \in \mathbf{R}$.  Then $X$ has log-terminal singularities at $0$ if and only if $\lambda>0$.
\end{lem}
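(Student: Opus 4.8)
The plan is to translate the algebraic statement into an analytic one---local integrability of the canonical measure $dV=i^{n^2}(\Omega\wedge\overline{\Omega})^{1/m}$ near the vertex---and then to read the sign of $\lambda$ off the homogeneity of $dV$ under the scaling generated by $-J\xi$. For the first step I would use the standard fact that, for a $\mathbf Q$-Gorenstein $X$ on which $\Omega$ trivializes $mK_X$, the singularity at $0$ is log-terminal if and only if $dV\in L^1_{\mathrm{loc}}$ near $0$. Concretely: take a log resolution $\pi\colon\widetilde X\to X$ and write $K_{\widetilde X}=\pi^*K_X+\sum_i a_iE_i$ with $a_i$ the discrepancies; then $\pi^*\Omega$, regarded as a section of $mK_{\widetilde X}$, vanishes to order $ma_i$ along $E_i$ (a pole if $a_i<0$), so in local coordinates on $\widetilde X$ with $\bigcup_iE_i=\{w_1\cdots w_r=0\}$ one has $(\pi^*\Omega\wedge\overline{\pi^*\Omega})^{1/m}=\prod_j|w_j|^{2a_j}\cdot(\text{smooth positive volume form})$. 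Since $\pi$ is a biholomorphism off a set of measure zero and these are positive forms, $\int_U dV=\int_{\pi^{-1}(U)}\pi^*dV$, and the latter is finite precisely when $a_j>-1$ for all $j$, i.e.\ precisely when $X$ is klt at $0$. (One may instead simply take this integrability as the definition of klt, as in Eyssidieux--Guedj--Zeriahi~\cite{EGZ}.)

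For the second step, fix a radial function $r$ for $\xi$. Since $\xi$ is a Reeb field it acts with strictly positive weights, so $-J\xi$ integrates (after reparametrization) to an action $\Phi_t$, $t\in\mathbf R_{>0}$, of biholomorphisms of $X$ fixing $0$ with $r\circ\Phi_t=t\,r$. Because $\Omega$ is of type $(n,0)$ and holomorphic it is annihilated by the antiholomorphic part of $-J\xi$, so $L_\xi\Omega=i\lambda\Omega$ gives $L_{-J\xi}\Omega=-iL_\xi\Omega=\lambda\Omega$, hence $L_{-J\xi}\,dV=\tfrac{2\lambda}{m}\,dV$ and therefore $\Phi_t^*\,dV=t^{2\lambda/m}\,dV$. (When $\lambda=nm$ this recovers the scaling $\Phi_t^*dV=t^{2n}dV$.) Because the singularity is isolated, the annulus $\mathcal A=\{1/2\le r\le 1\}$ is a compact subset of the smooth locus $X\setminus\{0\}$, so $A_0:=\int_{\mathcal A}dV\in(0,\infty)$. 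Splitting $\{0<r\le 1\}$ into the dyadic shells $\Phi_{2^{-k}}(\mathcal A)=\{2^{-k-1}\le r\le 2^{-k}\}$ and changing variables by $\Phi_{2^{-k}}$,
\[
\int_{\{0<r\le 1\}}dV=\sum_{k=0}^{\infty}\int_{\Phi_{2^{-k}}(\mathcal A)}dV=A_0\sum_{k=0}^{\infty}\bigl(2^{-2\lambda/m}\bigr)^{k},
\]
a geometric series that converges if and only if $\lambda>0$. Combining this with the first step yields the lemma.

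Both computations---the homogeneity identity $\Phi_t^*dV=t^{2\lambda/m}dV$ and the geometric-series estimate---are routine, and the whole argument essentially reduces the sign of $\lambda$ to a convergent-versus-divergent dichotomy. The step that needs real care is the first paragraph: the equivalence between log-terminality and local $L^1$ integrability of $(\Omega\wedge\overline{\Omega})^{1/m}$, and in particular getting the direction of the discrepancy inequality right (that $a_j>-1$, not $a_j<1$, is the integrable regime). This is exactly where one uses that $\Omega$ is nowhere vanishing on $X$---so that the only contributions to the order of $\pi^*\Omega$ along the $E_j$ come from the discrepancies $a_j$---and that $0$ is the only singular point, so that $A_0$ is genuinely a finite positive constant and the dyadic decomposition is legitimate.
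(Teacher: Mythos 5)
Your proof is correct and follows essentially the same route as the paper: reduce log-terminality to local integrability of $dV=i^{n^2}(\Omega\wedge\overline{\Omega})^{1/m}$ (via \cite{EGZ}), then use the flow of $-J\xi$ to express the integral over $\{0<r\le 1\}$ as a geometric series over dyadic shells, convergent precisely when $\lambda>0$. Your scaling exponent $2\lambda/m$ is the carefully computed version of the factor appearing in the paper's series; the discrepancy in the constant is immaterial to the convergence dichotomy.
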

\begin{proof}
Define a volume form $dV = i^{n^2}\left(\Omega \wedge \overline{\Omega}\right)^{1/m}$.  By \cite[Lemma 6.4]{EGZ} it suffices to determine conditions for $dV$ to have finite volume in a neighborhood of $0$.  Let $r$ be a radial function for $\xi$.  Using the flow by $-J\xi$, we can write
\[
\int_{\{2^{-k} \leq r <2^{-(k-1)}\}}dV = e^{-\lambda k \log 2} \int_{\{\frac{1}{2}\leq r<1\}} dV
\]
and so we get
\[
\int_{\{0<r<1\}}dV = \sum_{k=0}^{\infty} e^{-\lambda k \log 2} \cdot \int_{\{\frac{1}{2}\leq r <1\}}dV
\]
which proves the lemma.  
\end{proof}
Note that the proof implies slightly more.  Namely, that if $(X,\mathbf{T},\xi)$ is a normal,  $\mathbf{Q}$-Gorenstein, with $\Omega$ as above, then $X$ is log terminal if and only if $\lambda >0$, and $X$ is log-terminal at all points away from the origin.

Suppose we have an $n$-dimensional polarized affine variety
$(X,\mathbf{T},\xi)$ which is a Fano cone singularity, so $X$ is
normal $\mathbf{Q}$-Gorenstein with log-terminal singularities. In
addition suppose that $m\in \mathbf{N}$ is minimal such that $mK_{X}$ is trivial.  Let $R$ be the coordinate ring of $X$, which is an integral domain since $X$ is a variety.  Furthermore, since $X$ has log-terminal singularities, it is known that $R$ is Cohen-Macaulay.  When $X$  is Gorenstein with an isolated singularity at the cone point, Martelli-Sarks-Yau \cite{MSY} showed the existence of a $\mathbf{T}$-equivariant trivialization of $K_X$.  Their argument applies verbatim to the singular, $\mathbf{Q}$-Gorenstein case.  We include the short proof for the reader's convenience.

\begin{lem}\label{lem:MSYtriv}
There exists a unique, up to scale, non-vanishing section $\Omega \in \Gamma(X, mK_{X})$, and a unique linear function $\ell: \mathcal{C}_{R} \rightarrow \mathbf{R}_{>0}$ with the property that, for any Reeb field $\xi \in \mathcal{C}_{R}$ we have
\[
L_{\xi} \Omega= \ell(\xi) \Omega,
\]
In particular, for any $c>0$ the set $\{ \xi \in \mathcal{C}_{R} : \ell(\xi) =c \}$ defines an affine hyperplane in $\mathfrak{t}$ intersecting $\mathcal{C}_{R}$ in a set of codimension 1.
\end{lem}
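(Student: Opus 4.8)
The plan is to study $M:=H^0(X,mK_X)$ simultaneously as a module over the coordinate ring $R=R(X)$ and as a representation of $\mathbf{T}$. Since $(X,\mathbf{T},\xi)$ is a Fano cone singularity, $mK_X$ is a trivial line bundle on the normal affine variety $X$, so $M$ is a free $R$-module of rank one, and a global section of $mK_X$ is \emph{non-vanishing} precisely when it generates $M$; any two generators differ by a unit of $R$. Hence the first point to establish is that $R^{\times}=\mathbf{C}^{\times}$. This follows from the cone structure: writing $R=\bigoplus_{\chi}R_\chi$ for the $\mathbf{T}$-weight decomposition, the polarization condition gives $\chi(\xi)>0$ for every nonzero $\chi$ with $R_\chi\neq 0$, while $R_0=\mathbf{C}$ since the vertex is a single isolated point; thus a unit, lying in $R_0^{\times}$, is a nonzero constant. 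This already yields uniqueness of $\Omega$ up to scale.

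Next I would show that any such $\Omega$ is automatically a $\mathbf{T}$-eigenvector. The torus acts on $X$, hence canonically on $K_X^{\otimes m}$ (via its action on $\Omega^n_X$ over the smooth locus, extended by normality) and so on $M$, by $R$-semilinear automorphisms covering the corresponding automorphisms of $R$; in particular $\mathbf{T}$ permutes the generators of $M$. Fixing a generator $\Omega$, for each $g\in\mathbf{T}$ we may write $g\cdot\Omega=u_g\,\Omega$ with $u_g\in R^{\times}=\mathbf{C}^{\times}$, and a short computation shows that $g\mapsto u_g$ is an algebraic character of $\mathbf{T}$, corresponding to some weight $\chi_0\in\mathfrak{t}^{*}$. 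Differentiating, $L_\xi\Omega=i\chi_0(\xi)\,\Omega$ for all $\xi\in\mathfrak{t}$, so setting $\ell:=\chi_0|_{\mathcal{C}_R}$ (which in the notation of Definition~\ref{defn:Fanocone} is the function $\xi\mapsto\lambda$) we obtain a linear function with the asserted eigenvalue property. Rescaling $\Omega$ by a constant does not change $\ell$, so $\ell$ is unique as well.

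It remains to check positivity and the hyperplane statement. For any $\xi'\in\mathcal{C}_R$ the triple $(X,\mathbf{T},\xi')$ is again a polarized affine variety with the same section $\Omega$; since $X$ is log terminal, the previous lemma (characterizing log terminality at the vertex by the positivity of the weight of $\Omega$) forces $\ell(\xi')>0$, so $\ell\colon\mathcal{C}_R\to\mathbf{R}_{>0}$. In particular $\ell$ extends to a nonzero linear functional on $\mathfrak{t}$, so for each $c>0$ the level set $\{\ell=c\}$ is an affine hyperplane in $\mathfrak{t}$; and because $\mathcal{C}_R$ is an open cone on which $\ell$ is positive, with $\ell(t\xi)=t\,\ell(\xi)$ ranging over all of $(0,\infty)$, the intersection $\{\xi\in\mathcal{C}_R:\ell(\xi)=c\}$ is a nonempty relatively open subset of this hyperplane, hence of codimension one in $\mathfrak{t}$.

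The argument is essentially formal once the two inputs are in place. The point that deserves the most care is that $mK_X$ is a genuine trivial line bundle — so that $M$ is free of rank one and the notion of non-vanishing section matches that of module generator — which is exactly the content of the $\mathbf{Q}$-Gorenstein/Fano cone hypothesis together with the minimality of $m$; after that, the only substantive step is invoking the preceding lemma to upgrade $\ell$ from merely being defined to being strictly positive on $\mathcal{C}_R$. I expect no real obstacle beyond being careful that the $\mathbf{T}$-action used on $K_X^{\otimes m}$ is the canonical one, so that $g\cdot\Omega=u_g\Omega$ is an honest identity of sections.
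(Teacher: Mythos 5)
Your proof is correct, but it takes a genuinely different route from the paper's. The paper fixes an arbitrary non-vanishing section $\Omega$, writes $L_\xi\Omega=k\,\Omega$, decomposes $k=\sum_\alpha k_\alpha$ into weight pieces, and replaces $\Omega$ by $e^{-f}\Omega$ with $f=\sum_{\alpha\neq 0}k_\alpha/\alpha(\xi)$ — i.e.\ it projects away the nonzero-weight part of $k$ — and then proves uniqueness only among homogeneous sections, using that a homogeneous non-vanishing holomorphic function on the cone is constant. You instead argue entirely in the algebraic category: $\Gamma(X,mK_X)$ is a free rank-one $R$-module, the positively graded domain $R$ has $R^{\times}\subset R_0=\mathbf{C}$ (a top-weight-component argument, plus the fact that $\mathbf{T}_{\mathbf{C}}$-invariant functions are constant because the flow of $-J\xi$ contracts $X$ to the vertex — this is the honest reason $R_0=\mathbf{C}$, slightly more than ``the vertex is an isolated point''), so all non-vanishing algebraic sections are proportional and any one of them is automatically a $\mathbf{T}$-eigenvector. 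This is cleaner and yields a stronger conclusion. The caveat, and the reason the paper's correction step is not redundant: if $\Gamma(X,mK_X)$ is read as \emph{holomorphic} sections — and the paper's remark that the weight decomposition of $k$ only ``converges in $L^2_{loc}$'' indicates it is — then non-vanishing sections are very far from unique up to scale (one can multiply by $e^{g}$ for any holomorphic $g$), and the projection argument is genuinely needed to manufacture an equivariant representative. So your argument settles the lemma under the algebraic reading of $\Gamma$; to cover the analytic reading one still needs the paper's averaging/projection, or an argument that a holomorphic trivialization of $mK_X$ can be replaced by an algebraic one. The remaining points — positivity of $\ell$ via the preceding log-terminality lemma, linearity, and the hyperplane statement — are handled identically in both proofs.
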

\begin{proof}
Fix a non-vanishing section $\Omega \in \Gamma(X, mK_{X})$, and $\xi \in \mathcal{C}_{R}$.  Since $mK_X$ is trivial we have
\[
L_{\xi} \Omega= k(z) \Omega
\]
for some function $k(z) \in R$, the coordinate ring of $X$.  We decompose $k(z)$ into its graded pieces according to the grading defined by $\xi$:
\[
k(z) = \sum_{\alpha \in \mathfrak{t}^{*}} k_{\alpha}(z).
\]
This decomposition converges in $L^2_{loc}$ and therefore also locally
uniformly. Indeed, we can restrict
$k$ to the link of $X$, and consider its weight decomposition in $L^2$
for the torus action. 

Since $X$ has log-terminal singularities, we can assume that $k_{0} \in \mathbf{R}_{>0}$.  We project to the degree zero part to conclude.  Explicitly, define
\[
f(z) = \sum_{\alpha \in \mathfrak{t}^{*}-\{0\}} \frac{1}{\alpha(\xi)} k_{\alpha}(z)
\]
then $\tilde{\Omega} := e^{-f(z)}\Omega$ satisfies our requirements.
The last two claims are clear from the linearity of the projection and
the positivity of $k_{0}$.  The uniqueness follows from the fact that
$\xi$ induces a positive grading, and hence the only homogeneous,
holomorphic, non-vanishing holomorphic functions on $X$ are constant. 
\end{proof}

The above discussion makes it possible to introduce the gauge fixing
condition, as in Martelli-Sparks-Yau \cite{MSY}
\begin{defn}
We say that $\xi \in \mathcal{C}_{R}$ satisfies the gauge fixing condition, (or is normalized) if
\[
L_{\xi} \Omega = inm\Omega
\]
where $\Omega$ is as in Lemma~\ref{lem:MSYtriv}.
\end{defn}

An important point for us is that the gauge fixing condition can in fact be read off from the index character.  This observation is implicit in the work of Martelli-Sparks-Yau \cite{MSY} when $X$ is Gorenstein with an isolated log-terminal singularity.  We now extend this to the case of general $\mathbf{Q}$-Gorenstein affine varieties with log-terminal singularities.  To this end, fix a trivializing section $\Omega \in \Gamma(X, mK_{X})$ and suppose that $L_{\xi} \Omega= i\lambda\Omega$ for some $\lambda>0$.  Then we have an isomorphism of graded  $R$-modules
\begin{equation}\label{eq:qGor iso}
R(-\lambda) \simeq \Gamma(X, mK_{X})
\end{equation}
by the map $f \mapsto f\Omega$.  The index character of the ring $R$, denoted $F_{R}(\xi, t)$, expands as a Laurent series
\[
F_{R}(\xi, t) = \frac{a_{0}(\xi)(n-1)!}{t^{n}} + \frac{a_{1}(\xi)(n-2)!}{t^{n-1}} + O(t^{2-n}).
\]
We have the following
\begin{prop}\label{prop:a1a0formula}
In the above setting, the coefficients $a_{0}(\xi), a_{1}(\xi)$ satisfy
\[
\frac{a_{1}(\xi)}{a_{0}(\xi)} = \frac{\lambda(n-1)}{2m}.
\]
In particular, the set $\{\xi \in \mathcal{C}_{R}: 2a_1(\xi)=n(n-1)a_{0}(\xi)\}$ is an affine subset of $\mathcal{C}_{R}$ with codimension $1$, which agrees with the normalized Reeb fields.
\end{prop}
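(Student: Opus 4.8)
The plan is to reduce the identity to the Gorenstein case by passing to the index-one (canonical) cover, and to prove the Gorenstein case by combining the reciprocity for the Hilbert series of a Cohen--Macaulay ring with the isomorphism \eqref{eq:qGor iso}. First I would reduce to quasi-regular $\xi$: the left-hand side $a_1(\xi)/a_0(\xi)$ is real-analytic on $\mathcal{C}_{R}$ (the coefficients $a_i$ vary real-analytically by the meromorphic continuation of the index character from \cite{CoSz}, and $a_0>0$), while the right-hand side is real-analytic since $m$ is fixed and $\lambda(\xi)=\ell(\xi)$ is linear by Lemma~\ref{lem:MSYtriv}; as quasi-regular directions are dense in $\mathcal{C}_{R}$ it is enough to treat that case. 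Both sides are homogeneous of degree one in $\xi$ (under $\xi\mapsto c\xi$ one has $a_0\mapsto c^{-n}a_0$, $a_1\mapsto c^{-(n-1)}a_1$, $\lambda\mapsto c\lambda$), so after rescaling we may assume that the $\xi$-weights are integers and $R=\bigoplus_{d\ge0}R_d$ is a positively $\mathbf{Z}$-graded Cohen--Macaulay domain with $R_0=\mathbf{C}$, so that $F_R(\xi,t)=H_R(e^{-t})$ for the ordinary Hilbert series $H_R$.

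I would first treat the Gorenstein case $m=1$. Then by \eqref{eq:qGor iso} the canonical module is $\omega_R=\Gamma(X,K_X)=R\cdot\Omega\cong R(-\lambda)$, with $\Omega$ of $\xi$-weight $\lambda$, so that $H_{\omega_R}(s)=s^{\lambda}H_R(s)$. Since $R$ is Cohen--Macaulay and Gorenstein, $H_{\omega_R}(s)=(-1)^nH_R(s^{-1})$, and combining the two identities gives $F_R(\xi,-t)=(-1)^n e^{-t\lambda}F_R(\xi,t)$. Inserting the Laurent expansions of $F_R(\xi,\pm t)$ and comparing the coefficients of $t^{1-n}$ yields $-a_1(n-2)!=a_1(n-2)!-\lambda a_0(n-1)!$, hence $a_1/a_0=\lambda(n-1)/2$, which is the claimed identity for $m=1$.

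For general $m$, since $m$ is minimal with $mK_X$ trivial, the class $[K_X]$ has order $m$ in $\mathrm{Cl}(X)$, and $\pi\colon\tilde X=\mathrm{Spec}_X\bigl(\bigoplus_{j=0}^{m-1}\mathcal{O}_X(jK_X)\bigr)\to X$ is a connected normal cyclic cover, \'etale over $X\setminus\{0\}$, with $K_{\tilde X}\cong\mathcal{O}_{\tilde X}$; being a cover of a log terminal singularity $\tilde X$ is log terminal, hence rational, hence Cohen--Macaulay, with an isolated singularity at the vertex. The torus action lifts, because $\Omega$ is $\mathbf{T}$-semi-invariant by the uniqueness in Lemma~\ref{lem:MSYtriv}, giving a Reeb field $\tilde\xi$ on $\tilde X$ lifting $\xi$, so that $(\tilde X,\tilde\xi)$ is a Fano cone singularity. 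If $\omega$ generates $K_{\tilde X}$ then $\omega^{\otimes m}=\pi^{*}\Omega$, and applying $L_{\tilde\xi}$ together with $L_{\tilde\xi}\pi^{*}=\pi^{*}L_{\xi}$ gives $m\tilde\lambda=\lambda$, where $\tilde\lambda$ is the $\tilde\xi$-weight of $\omega$. The deck group $\mathbf{Z}/m$ acts freely on $\tilde X\setminus\{\tilde 0\}$ with $R=(\tilde R)^{\mathbf{Z}/m}$, so by the Reynolds operator $F_R(\xi,t)=\tfrac1m\sum_{\gamma}F^{\gamma}_{\tilde R}(\tilde\xi,t)$, where $F^{\gamma}_{\tilde R}$ is the $\gamma$-twisted index character; for $\gamma\ne 1$ the fixed locus $\tilde X^{\gamma}$ is just the vertex, so the pole order of $F^{\gamma}_{\tilde R}$ at $t=0$ is at most $\dim\tilde X^{\gamma}=0$, i.e.\ $F^{\gamma}_{\tilde R}$ is regular there and contributes nothing to the principal part. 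Hence $a_0(\xi)=\tfrac1m a_0(\tilde X,\tilde\xi)$ and $a_1(\xi)=\tfrac1m a_1(\tilde X,\tilde\xi)$, and since $\tilde X$ is Gorenstein the case already treated gives
\[
\frac{a_1(\xi)}{a_0(\xi)}=\frac{a_1(\tilde X,\tilde\xi)}{a_0(\tilde X,\tilde\xi)}=\frac{\tilde\lambda(n-1)}{2}=\frac{\lambda(n-1)}{2m}.
\]
The last assertion is then immediate: $2a_1(\xi)=n(n-1)a_0(\xi)$ precisely when $\lambda(\xi)=nm$, which by Definition~\ref{defn:Fanocone} is the normalization condition, and $\{\lambda=nm\}$ cuts $\mathcal{C}_{R}$ in an affine subset of codimension one because $\lambda$ is a nonzero linear functional.

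The main obstacle is the study of the index-one cover: verifying that $\tilde X$ is a normal, log terminal (hence Cohen--Macaulay) and Gorenstein Fano cone singularity with $\tilde\xi$ a Reeb field lifting $\xi$, and in particular establishing that for $\gamma\ne 1$ the twisted character $F^{\gamma}_{\tilde R}$ has no pole at $t=0$ — the precise form of the statement that passing to the free quotient leaves the leading two coefficients of the index character unchanged up to the factor $m$. The remaining ingredients are the Hilbert-series reciprocity, used as a black box, and routine manipulation of Laurent series.
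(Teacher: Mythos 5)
Your Gorenstein case is exactly the paper's: Stanley reciprocity $F_{\Omega_R}(t)=(-1)^nF_R(-t)$ combined with $\Omega_R\cong R(-\lambda)$, and the coefficient comparison is carried out identically. Where you genuinely diverge is the $\mathbf{Q}$-Gorenstein case. The paper stays on $X$: it realizes $\Omega_R$ as a homogeneous ideal $I$ with $I^{(m)}\cong R(-\lambda)$, and proves a lemma computing the index character of a tensor product of modules free in codimension one, $F_{M\otimes_RN}=F_MF_N/F_R+O(t^{2-n})$, which yields $e^{-t\lambda}F_R=F_{\Omega_R}^m/F_R^{m-1}+O(t^{2-n})$ and hence the identity. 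You instead pass to the index-one cover $\tilde X$, reduce to the Gorenstein case there, and transfer the answer back via the relation $a_i(X)=\frac{1}{m}a_i(\tilde X)$ for $i=0,1$ and $\tilde\lambda=\lambda/m$. Conceptually this is cleaner and more geometric, and your preliminary reductions (density of quasi-regular $\xi$, homogeneity in $\xi$) are correct and make the appeal to the $\mathbf{Z}$-graded form of Stanley's theorem honest, something the paper glosses over.

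The cost of your route is the step you yourself flag: the claim that for $\gamma\neq 1$ the twisted index character $F^{\gamma}_{\tilde R}$ has pole order at $t=0$ bounded by $\dim\tilde X^{\gamma}$, so that $a_0$ and $a_1$ see only the $\gamma=1$ term. This is true (it is an orbifold Molien series / holomorphic Lefschetz statement), but it is not available off the shelf in the framework of \cite{CoSz}, whose meromorphic continuation is proved only for the untwisted character, and it does not follow from a naive equivariant free resolution over $\mathbf{C}[z_1,\dots,z_N]$: that argument only bounds the pole order by $\dim(\mathbf{C}^N)^{\gamma}$, which can exceed $\dim\tilde X^{\gamma}$, and the necessary cancellation requires a localization argument of roughly the same weight as the lemma the paper proves instead. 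Two smaller points: the proposition is applied to central fibers $Y$ of test configurations, whose singularities need not be isolated, so you should replace ``\'etale over $X\setminus\{0\}$'' and ``$\dim\tilde X^{\gamma}=0$'' by ``quasi-\'etale'' and ``$\dim\tilde X^{\gamma}\leq n-2$'' (which still suffices for $a_0,a_1$); and the lift of the torus to $\tilde X$ is only a lift of an $m$-fold cover of $\mathbf{T}$, though at the Lie algebra level, which is all you use, $\tilde\xi$ is defined without issue by shifting the weight on the $j$-th eigensheaf by $j\lambda/m$.
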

\begin{proof}
The proof is essentially a computation in commutative algebra.  Recall that a $\mathbf{Q}$-Gorenstein ring $R$ with log-terminal singularities is Cohen-Macaulay.   The main tool in the proof is a duality equation due to Stanley \cite{Stan}, \cite[Corollary 4.4.6]{BrunH},  which says that if $R$ is a Cohen-Macaulay, positively graded $\mathbf{C}$-algebra of dimension $n$ with canonical module $\Omega_{R}$, then the Hilbert series satisfies
\[
H_{\Omega_{R}}(s) = (-1)^nH_{R}(s^{-1})
\]
as rational functions, or in terms of the index character we have
\begin{equation}\label{eq:hilbStan}
F_{\Omega_{R}}(t) = (-1)^{n}F_{R}(-t).
\end{equation}
Roughly speaking this is a form of Serre duality.  Let us first explain the proof in the easier case that $R$ is Gorenstein, so that $m=1$.  Then the isomorphism in \eqref{eq:qGor iso} becomes
\[
R(-\lambda) \simeq \Omega_{R},
\]
and so, in particular $F_{\Omega_{R}}(t) = e^{-\lambda t}F_{R}(t)$.  Combining this with~\eqref{eq:hilbStan} gives
\[
F_{R}(t) = (-1)^ne^{\lambda t}F_{R}(-t).
\]
In terms of the index character this implies
\[
\frac{a_0(n-1)!}{t^n} + \frac{a_{1} (n-2)!}{t^{n-1}} = (1+\lambda t)\left(\frac{a_0 (n-1)!}{t^{n}} - \frac{a_{1}(n-2)!}{t^{n-1}}\right) + O(t^{2-n}).
\]
Comparing coefficients we get that
\[
2a_{1}(n-2)! = \lambda a_{0} (n-1)!
\]
which proves the proposition in the Gorenstein case.

We now consider the case when $X$ is a $\mathbf{Q}$-Gorenstein.  Since the coordinate ring $R$ is a Cohen-Macaulay integral domain, \cite[Proposition 3.3.18]{BrunH} says that there is a homogeneous ideal $I \subset R$ such that $\Omega_R \simeq I$ as $R$-modules.  In the current case $X$ is normal and affine, so the canonical sheaf is given by $K_{X} = i_{*}K_{U}$, where $i: U= X_{reg}\hookrightarrow X$ and $K_U$ is the canonical sheaf of $U$.  Then
\[
\Omega_{R} = \Gamma(X, K_{X})
\]
as $R$-modules. Thanks to the fact that  $X$ is $\mathbf{Q}$-Gorenstein we have
\[
I^{(m)} \simeq R \cdot \sigma \simeq R(-\lambda)
\]
where $\sigma$ is the trivializing section of $\Gamma(X, mK_{X})$, and $I^{(m)}$ denotes the $m-th$ symbolic power of $I$.  Since $I$ is a reflexive $R$-module of rank one, the symbolic power may be defined as
\[
I^{(m)} = \left(\overbrace{I \otimes_{R} I \cdots \otimes_{R} I}^{m -{\rm times}} \right)^{**} = \left(I^{\otimes m}\right)^{**},
\]
where if $M$ is an $R$-module, then $M^{*} = {\rm Hom}_{R}(M, R)$ denotes the dual.  Geometrically, we have an exact sequence of sheaves on $X$
\[
0\rightarrow \mathcal{K} \rightarrow K_{X}^{\otimes m} \rightarrow \left(K_{X}^{\otimes m}\right)^{**} \rightarrow \mathcal{Q}\rightarrow 0.
\]
Since $K_{X}$ is reflexive and locally free on $X_{reg}$, the sheaves $\mathcal{K}, \mathcal{Q}$ are supported on a subvariety of codimension at least $2$.  By Serre's criterion for affineness, we have $H^{1}(X,\mathcal{F})=0$ for any coherent sheaf $\mathcal{F}$.  In particular, by taking global sections, using \cite[Proposition 5.2]{Ha} we get an exact sequence of graded $R$-modules
\[
0 \rightarrow K:= \Gamma(X,\mathcal{K}) \rightarrow I^{\otimes m} \rightarrow I^{(m)} \rightarrow Q:= \Gamma(X, \mathcal{Q}) \rightarrow 0.
\] 
On the level of Hilbert series this implies that
\[
H_{I^{(m)}}(s) = H_{I^{\otimes m}}(s) + H_{Q}(s) - H_{K}(s).
\]
Since $\mathcal{Q}, \mathcal{K}$ are supported in codimension $2$, their associated index characters satisfy
\[
F_{Q}(t) = O(t^{2-n}), \qquad F_{K}(t) = O(t^{2-n}).
\]
We now consider the index character (or Hilbert series) of $I^{\otimes m}$.  We need the
following lemma.
\begin{lem}Suppose that $M, N$ are graded $R$-modules, with $M$ free
  in codimension one. Then
  \[ F_{M\otimes_R N}(t) = \frac{F_M(t)F_N(t)}{F_R(t)} +
  O(t^{2-n}).\]
\end{lem}
\begin{proof}
  Let us take a free resolution of $N$, i.e. a complex
  \[ 0 \to E_k \to E_{k-1} \to\ldots \to E_0 \to 0, \]
  whose only cohomology is $H^0 = N$. Tensoring with $M$ we obtain a
  complex
  \[ 0 \to M\otimes_R E_k \to \ldots \to M\otimes_R E_0 \to 0, \]
  whose cohomology is $H^i = \mathrm{Tor}^R_i(M, N)$. It is easy to check that
  the alternating sum of index characters of a complex is the same as
  the alternating sum of the index characters of its cohomology,
  i.e. we have
  \[ \sum_{i=0}^k (-1)^i F_{M\otimes_R E_i}(t) = \sum_{i=0}^k (-1)^i
  F_{ \mathrm{Tor}^R_i(M,N)}(t). \]
  Since $M$ is free in codimension 1, we have that
  $\mathrm{Tor}^R_i(M,N)$ is supported in codimension 2 for $i > 0$,
  i.e. its index character is of order $t^{2-n}$. It follows that
  \[ \begin{aligned}
    F_{M\otimes_R N}(t) &= \sum_{i=0}^k (-1)^i
    \frac{F_M(t)F_{E_i}(t)}{F_R(t)} + O(t^{2-n}) \\
    &= \frac{F_M(t)}{F_R(t)}\sum_{i=0}^k (-1)^i F_{E_i}(t) +
    O(t^{2-n}) \\
    &= \frac{F_M(t)F_N(t)}{F_R(t)} + O(t^{2-n}),
  \end{aligned}\]
  which is what we wanted to prove. 
\end{proof}

From this lemma a simple induction gives
\[
F_{I^{\otimes m}}(t) = \frac{F_{\Omega_{R}}(t)^m}{F_{R}(t)^{m-1}} + O(t^{2-n}).
\]
As remarked earlier, the $\mathbf{Q}$-Gorenstein assumption implies that $I^{(m)} \simeq R(-\lambda)$, and so
\begin{equation}\label{eq:hilbSeq}
e^{-t\lambda}F_{R}(t) = \frac{F_{\Omega_{R}}(t)^m}{F_{R}(t)^{m-1}}+ O(t^{2-n}).
\end{equation}
Expanding this equation to order $t^{1-n}$ we obtain
\[
\frac{a_{1}}{a_0} = \frac{\lambda(n-1)}{2m},
\]
as required.
\end{proof}
\subsection{The Futaki invariant}\label{sec:compareFutaki}
Suppose that $(X,\xi)$ is a normalized Fano cone singularity and we have a 
test-configuration $\lambda$ for $X$ with central fiber $Y$. Recall
that $\lambda$ gives a $\mathbf{C}^*$-action on $Y$ generated by a
vector field $w$. We have given two versions of the Futaki invariant
of this test-configuration. One was purely algebraic in terms of the
weights of the action $\lambda$ on the coordinate ring of $Y$, in
Definition~\ref{defn:Futaki}. As discussed below that definition, for
small $s$ we can consider the Reeb field $\xi + sw$ on $Y$, and we
can normalize the test-configuration (i.e. modify the vector field $w$
by adding a multiple of $\xi$) in such a way that 
\[ \frac{a_1(Y, \xi + sw)}{a_0(Y, \xi+sw)} = \frac{n(n-1)}{2}. \]
Under this normalization the Futaki invariant is given by
\begin{equation}
\label{eq:f1} \mathrm{Fut}(Y, \xi, w) = \frac{1}{2}D_w a_0(Y, \xi). 
\end{equation}
At the same time, from Proposition~\ref{prop:a1a0formula} we see that
this normalization is equivalent to requiring $L_w dV=0$, where $dV$
is the canonical volume form on $Y$. The differential geometric
definition of the Futaki invariant in that case is given in
Equation~\ref{eq:normalizedFut} by
\begin{equation}\label{eq:f2}
 \mathrm{Fut}_{Y,\xi}(w) = \frac{1}{V}\int_Y \theta_w
e^{-\frac{1}{2}r^2} \omega^n, 
\end{equation}
where $\omega = \frac{1}{2}\ddb r^2$ is a suitable reference metric
with Reeb field $\xi$, the function $\theta_w$ is the transverse
Hamiltonian of $w$ and  and $V$ is the volume of $(Y,\xi)$. To relate
this to the algebraic definition we have the following two results. 

\begin{prop}\label{prop:volumeformula}
  Suppose $X\subset \mathbf{C}^N$ is a polarized affine variety with
  Reeb field $\xi$, which has weights $w_i$ on the coordinates $z_i$,
  i.e. $\xi$ is the imaginary part of
  \[ 2\sum_{i=1}^N w_i z_i\frac{\del}{\del z_i}. \]
  Let
  \[ \hat{r}^2 = \sum_{i=1}^N |z_i|^{2/w_i}, \]
  and $\omega = \frac{1}{2}\ddb \hat{r}^2$. We have
  \begin{equation}\label{eq:a0eqint}
    a_0(n-1)! = \frac{1}{(2\pi)^n} \int_X e^{-\frac{1}{2}\hat{r}^2}
  \frac{\omega^n}{n!}, 
  \end{equation}
  where $a_0$ is defined by the index character
  \[ F(\xi, t) = \frac{a_0 (n-1)!}{t^n} + O(t^{-n+1}). \]
\end{prop}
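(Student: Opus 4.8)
The plan is to use the homothetic scaling of the cone metric to reduce the statement to a volume formula for the link, and then to establish that formula — which is essentially the one of Martelli--Sparks--Yau \cite{MSY} — by passing to the quasi-regular case, in the spirit of Donaldson's equivariant Riemann--Roch argument but keeping things algebraic.

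Both sides of the asserted identity are functions of $\xi$ on the Reeb cone $\mathcal{C}_R$. The left-hand side $a_0(\xi)(n-1)!$ is real-analytic in $\xi$, since the index character extends meromorphically near $t=0$ with coefficients depending analytically on $\xi$ (Section~\ref{sec:background}). The right-hand side $(2\pi)^{-n}\int_X e^{-\frac12\hat r^2}\tfrac{\omega^n}{n!}$ depends continuously on $\xi$ through the weights $w_i$ in $\hat r^2=\sum_i|z_i|^{2/w_i}$: on a neighborhood of a fixed $\xi$ the Gaussian factor supplies a dominating function, so dominated convergence applies. As quasi-regular Reeb fields are dense in $\mathcal{C}_R$, it is enough to prove the identity for quasi-regular $\xi$. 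For the reduction of the integral, note that $\phi_\lambda\colon z_i\mapsto\lambda^{w_i}z_i$ is a biholomorphism of $X$ with $\phi_\lambda^\ast\hat r^2=\lambda^2\hat r^2$ and $\phi_\lambda^\ast\omega=\lambda^2\omega$; hence $\int_{X\cap\{\hat r<s\}}\tfrac{\omega^n}{n!}=s^{2n}V_1$ with $V_1=\int_{X\cap\{\hat r<1\}}\tfrac{\omega^n}{n!}$, and integrating over the level sets of $\hat r$ (using $\int_0^\infty s^{2n-1}e^{-s^2/2}\,ds=2^{n-1}(n-1)!$) shows the claim is equivalent to the single identity $\int_{X\cap\{\hat r<1\}}\omega^n=\pi^n\,a_0(\xi)(n-1)!$.

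To fix all dimensional constants, one checks $X=\mathbf{C}^N$ directly: $e^{-\frac12\hat r^2}$ and $\omega=\tfrac12\ddb\hat r^2$ split as products over the coordinate lines, the elementary computation $\int_{\mathbf{C}}e^{-\frac12|z|^{2/w}}\,\tfrac12\ddb|z|^{2/w}=2\pi/w$ gives $\int_{\mathbf{C}^N}e^{-\frac12\hat r^2}\tfrac{\omega^N}{N!}=(2\pi)^N/\prod_i w_i$, and $F(\xi,t)=\prod_i(1-e^{-tw_i})^{-1}$ gives $a_0(N-1)!=1/\prod_i w_i$, matching. For quasi-regular $\xi$ on a general $X$, rescale $\xi$ so it grades $R(X)$ by $\mathbf{Z}_{\ge0}$; then $X\setminus\{0\}$ is, with zero section blown down, the total space of an ample orbifold line bundle over the projective orbifold $V=\mathrm{Proj}\,R(X)$, and $R(X)=\bigoplus_{k\ge0}H^0(V,\mathcal{O}_V(k))$. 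The leading term of the Hilbert function, $\dim H^0(V,\mathcal{O}_V(k))\sim\tfrac{\deg(V,\mathcal{O}_V(1))}{(n-1)!}k^{n-1}$, fed into $F(\xi,t)=\sum_{k\ge0}\dim H^0(V,\mathcal{O}_V(k))e^{-tk}$ via a Riemann-sum argument, yields $a_0(\xi)(n-1)!=\deg(V,\mathcal{O}_V(1))$; since only the leading coefficient enters, this is degree theory, with no orbifold Riemann--Roch required. On the geometric side, $\omega^\tau=\ddb\log\hat r$ descends to a Kähler orbifold form $\omega_V$ on $V$ in a class proportional to $c_1(\mathcal{O}_V(1))$, and integrating $\omega$ over the $S^1$-orbibundle $\{\hat r<1\}\to V$ writes $\int_{X\cap\{\hat r<1\}}\omega^n$ as a universal constant times $\int_V\omega_V^{\,n-1}$, hence times $\deg(V,\mathcal{O}_V(1))$; the $\mathbf{C}^N$ case forces this constant to be the correct one. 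The main obstacle is this final bookkeeping: carefully tracking the normalizations relating $\hat r$, the transverse volume of the link, the class $[\omega_V]$ and $c_1(\mathcal{O}_V(1))$, and handling the (possibly singular) quotient $V$ near its orbifold locus.
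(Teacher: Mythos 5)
Your proposal is correct in outline but follows a genuinely different route from the paper. The paper's proof degenerates $X$ by a \emph{generic} one-parameter subgroup commuting with $\xi$ to a flat limit $Y$ whose top-dimensional part is a union of coordinate subspaces with multiplicity; since the integral depends only on the Reeb field (not on which torus-equivariant embedding of $X$ carries the reference metric) and the index character is preserved under flat limits, both sides are additive over the components and multiplicative under products, so everything reduces to the one-line computation on $\mathbf{C}$ with a single weight $w$, namely $\int_{\mathbf{C}}e^{-\frac12|z|^{2/w}}\omega=2\pi/w$ versus $F=\frac{1}{tw}+O(1)$. This works directly for irregular $\xi$ and never leaves the affine/toric world. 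Your route instead passes to quasi-regular $\xi$ by density, descends to the orbifold quotient $V=\mathrm{Proj}\,R(X)$, and re-derives the Martelli--Sparks--Yau identification of $a_0$ with $\deg(V,\mathcal{O}_V(1))$ on one side and with the cone volume on the other. What your approach buys is a more geometric picture (and essentially a proof of the MSY volume formula); what it costs is the continuity-in-$\xi$ argument (which is fine, since the analyticity of $a_0(\xi)$ and of the volume functional are established in \cite{CoSz,MSY}) and the orbifold bookkeeping. Your scaling reduction to $\{\hat r<1\}$ and the $\mathbf{C}^N$ product computation are both correct.

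The one genuine soft spot is the step you yourself flag as ``the main obstacle'': the assertion that $\int_{X\cap\{\hat r<1\}}\omega^n$ equals a universal dimensional constant times $\int_V\omega_V^{n-1}$, with $[\omega_V]$ pinned to $c_1(\mathcal{O}_V(1))$. This is not mere bookkeeping to be deferred --- it is the content of the quasi-regular case, since it is exactly where the orbifold multiplicities along the $S^1$-orbibundle $L\to V$ and the normalization of the grading enter. Moreover, the device of ``letting the $\mathbf{C}^N$ case force the constant'' only functions after you have \emph{proved} that the ratio $\int_{X\cap\{\hat r<1\}}\omega^n\big/\deg(V,\mathcal{O}_V(1))$ is independent of $(X,\xi)$; that independence is precisely the fibration computation, so the $\mathbf{C}^N$ check cannot substitute for it. The computation is standard (it is carried out in \cite{MSY} and in \cite{BGbook}), so the gap is fillable, but as written your proof delegates its crux to an unexecuted calculation. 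The paper's degeneration argument is worth internalizing here because it sidesteps the quotient entirely and handles irregular Reeb fields with no extra limiting argument.
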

\begin{proof}
  We can choose a generic $\mathbf{C}^*$-action $\lambda(t)$ commuting
  with $\xi$, and degenerate $X$ to $Y = \lim_{t\to 0} \lambda(t)\cdot
  X$. This will not affect the integral in \eqref{eq:a0eqint}, since
  the integral over $\lambda(t)\cdot X$ is the same as the integral
  over $X$ using a different metric. The volume, however is a function
  of just the Reeb field. At the same time the index character is also
  unchanged in passing to the limit since $Y$ is a flat limit. 

  For a generic $\mathbf{C}^*$-action the
  top dimensional part of $Y$ is a union of $n$-dimensional
  coordinate subspaces with multiplicity. The leading term in the
  index character will be the sum of the corresponding terms for these
  subspaces, with multiplicity, and the limiting integral on $Y$ is
  also given by a corresponding sum. As such, we only need to check
  the formula on $\mathbf{C}^n$ for a given Reeb field. 
  But both the index character and the integral is multiplicative when
  taking products of varieties, so it is enough to do the calculation
  for $\mathbf{C}$, with a Reeb field
  \[ \xi = \mathrm{Im} \left(w z\frac{\del}{\del z}\right), \]
  with corresponding radial function $\hat{r}^2=|z|^{2/w}$. The metric is
  then
  \[ \omega = \frac{i}{2} \cdot\frac{1}{w^2} |z|^{2/w-2}\, dz\wedge
  d\bar{z}, \]
  so a calculation gives
  \[ \int_{\mathbf{C}} e^{-\frac{1}{2}\hat{r}^2} \omega =
  \frac{2\pi}{w}. \]
  At the same time the index character is
  \[ \sum_{k=0}^\infty e^{-tkw} = \frac{1}{1-e^{-tw}} = \frac{1}{tw} +
  O(1), \]
  from which the result follows. 
\end{proof}

We also have the following formula for the variation of the volume as
we vary the Reeb field. 

\begin{prop}\label{prop:volumevariation}
  Suppose that we consider a variation $\delta\xi = w$ of the Reeb
  field. The corresponding variation in the volume
  \[ V(\xi) = \int_X e^{-\frac{1}{2}r^2}\,\frac{\omega^n}{n!} \]
  is given by
  \[ \delta V(\xi) = n\int_X \theta_w
  e^{-\frac{1}{2}r^2}\,\frac{\omega^n}{n!}. \]
\end{prop}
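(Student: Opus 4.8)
The plan is a direct first‑variation computation. I would first choose a smooth family of compatible pairs $(\xi_s,r_s)$ for $s$ near $0$, with $\xi_s=\xi+sw$, $r_0=r$, each $r_s$ a $\mathbf{T}$-invariant radial function for $\xi_s$ and $\omega_s=\tfrac12\ddb r_s^2$ a K\"ahler cone metric; such a family is easy to produce, e.g.\ via the Fubini--Study type formula \eqref{eq:hatr2} with the weights of $\xi$ replaced by those of $\xi_s$. Setting $\dot u=\tfrac{d}{ds}\big|_{s=0}\tfrac12 r_s^2$, so that $\dot\omega=\ddb\dot u$, differentiation of $V(\xi_s)=\int_X e^{-\frac12 r_s^2}\omega_s^n/n!$ at $s=0$ yields
\[
\delta V=\int_X\Bigl[-\dot u\,e^{-\frac12 r^2}\frac{\omega^n}{n!}+e^{-\frac12 r^2}\,\ddb\dot u\wedge\frac{\omega^{n-1}}{(n-1)!}\Bigr].
\]

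For the second term I would integrate by parts: since $\omega$ is closed, $d\bigl(\sqrt{-1}\,\dbar\dot u\wedge\omega^{n-1}e^{-\frac12 r^2}\bigr)$ is exact, and the boundary contributions vanish by the rapid decay of $e^{-\frac12 r^2}$ at infinity together with the estimates $r^2=O(\hat r^2)$, $\nabla r^2=O(\hat r)$ near the vertex, exactly as in the proof that $E(\phi)$ is well defined. This identifies the second term with $\int_X\langle\del\dot u,\del(\tfrac12 r^2)\rangle\,e^{-\frac12 r^2}\omega^n/n!$, and since the Riemannian gradient of $\tfrac12 r^2$ is the Euler field $-J\xi=r\partial_r$ and $\dot u$ is $\mathbf{T}$-invariant, this equals $\tfrac12\int_X\bigl(L_{-J\xi}\dot u\bigr)e^{-\frac12 r^2}\omega^n/n!$.

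The one genuinely new ingredient is the relation between $\dot u$ and the transverse Hamiltonian $\theta_w$. Differentiating the homogeneity identity $L_{-J\xi_s}r_s^2=2r_s^2$ at $s=0$ and using $L_{-Jw}r^2=2\,\iota_{-Jw}d(\tfrac12 r^2)=-2\theta_w r^2$, which is just the definition \eqref{eq:tham} of $\theta_w$, gives
\[
L_{-J\xi}\dot u=2\dot u+\theta_w r^2 .
\]
Substituting this into the two terms above, the $\dot u$ contributions cancel and one is left with $\delta V=\tfrac12\int_X\theta_w r^2 e^{-\frac12 r^2}\omega^n/n!$. Applying the cone identity $\int_X f r^2 e^{-\frac12 r^2}\omega^n=2n\int_X f e^{-\frac12 r^2}\omega^n$ for the basic function $f=\theta_w$ --- the analogue of \eqref{eq:intr^2} with $\omega^n$ in place of $dV$, valid because both transform with weight $2n$ under $-J\xi$ --- produces $\delta V=n\int_X\theta_w e^{-\frac12 r^2}\omega^n/n!$, as claimed.

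The only point requiring real care is the justification of the integrations by parts on the non-compact, mildly singular cone; this is routine and parallels arguments already used several times in Section~\ref{sec:Ding} (one may also note in passing that the computation re-confirms that the right-hand side is independent of the choice of compatible metric). As a consistency check, for $w=\xi$ one has $\theta_\xi=-1$ and the formula gives $\delta V=-nV$, which matches the direct computation of $\tfrac{d}{ds}\big|_{s=0}V((1+s)\xi)$ using $r_s^2=r^{2/(1+s)}$.
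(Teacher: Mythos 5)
Your computation is correct, and every step checks out against the paper's conventions: the identity $L_{-J\xi}\dot u = 2\dot u + \theta_w r^2$ follows exactly as you say from differentiating $L_{-J\xi_s}r_s^2=2r_s^2$ together with \eqref{eq:tham} and the sign convention $\theta_\xi=-1$; the weighted integration by parts produces the factor $\tfrac12 L_{-J\xi}\dot u$ because the $(1,0)$-gradient of $\tfrac12 r^2$ is $\tfrac12(r\partial_r - i\xi)$ and $\dot u$ is $\mathbf{T}$-invariant; and the final identity $\int_X f r^2 e^{-\frac12 r^2}\omega^n = 2n\int_X f e^{-\frac12 r^2}\omega^n$ for basic $f$ is the $\omega^n$-analogue of \eqref{eq:intr^2}, valid since both $e^{-\frac12 r^2}\omega^n$ and $dV$ have weight $2n$ under the homothety. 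The route is, however, different from the paper's in the sense that the paper gives no computation at all: it simply cites Martelli--Sparks--Yau \cite{MSY} and Donaldson--Sun \cite{DS2} and records the convention $\theta_\xi=-1$ needed to match formulas. Your argument is essentially the standard first-variation calculation underlying those references, but rewritten self-containedly in the paper's own formalism (weighted integrals over the cone rather than over the link), and it has the added virtue of making visible that the answer is independent of the chosen family $r_s$, since all $\dot u$-dependence cancels. Your sanity check $w=\xi$ against $a_0(c\xi)=c^{-n}a_0(\xi)$ is also consistent. The only point you should make explicit if writing this up is the paper's remark that the statement is also used for non-normal $X$ (sums over $n$-dimensional components with multiplicity); your componentwise computation extends verbatim to that case.
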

\begin{proof}
  This result was shown by Martelli-Sparks-Yau ~\cite{MSY}, see also
  Donaldson-Sun~\cite{DS2}. In comparing these formulas recall that by
  our convention $\theta_\xi = -1$. The result and its proof are valid even if $X$ is not
  normal, interpreting the integral as just a sum of integrals on the
  $n$-dimensional components of $X$, with multiplicity. 
\end{proof}

Using these results we can now compare the definitions \eqref{eq:f1}
and \eqref{eq:f2} to see that the two Futaki invariants agree up to a
dimensional constant, obtaining the following.
\begin{prop}\label{prop:equalFut}
  We have $\mathrm{Fut}_{X, \xi}(w) = c(n) \mathrm{Fut}(X,\xi,
  \lambda)$ in terms of Definition~\ref{defn:Futaki}, where $\lambda$
  is the $\mathbf{C}^*$-action generated by $w$, and $c(n) > 0$ is  a
  dimensional constant. 
\end{prop}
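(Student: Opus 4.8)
The plan is to reduce the identity to the central fibre $Y$ with its induced $\mathbf{C}^*$–action, and then to match the two closed formulas term by term, using Propositions~\ref{prop:volumeformula} and \ref{prop:volumevariation} together with the Serre–duality input behind Proposition~\ref{prop:a1a0formula}. Both sides depend only on $(Y,\xi)$ and the vector field $w$: the right–hand side (Definition~\ref{defn:Futaki}) is computed from the index character of $Y$, while the differential–geometric invariant \eqref{eq:Futdef2} is independent of the choice of compatible reference metric $\omega=\frac12\ddb r^2$ — this follows from the same integration–by–parts identities used to show $E$ is well defined — so I may work with the explicit radial function $\hat r^2=\sum_i|z_i|^{2/w_i}$ of Proposition~\ref{prop:volumeformula}. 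It is convenient first to record that both invariants are unchanged under $w\mapsto w+c\xi$: on the analytic side because $\theta_{w+c\xi}=\theta_w-c$ shifts both terms of \eqref{eq:Futdef2} by $-c$, and on the algebraic side by a one–line substitution into Definition~\ref{defn:Futaki} using the homogeneity relations $D_\xi a_0=-na_0$, $D_\xi a_1=-(n-1)a_1$, $D_\xi(a_1/a_0)=a_1/a_0$ (Euler's identity for the degrees $-n$ and $-(n-1)$). In particular one may normalise $w$ so that $L_w\Omega=0$, i.e. (by Proposition~\ref{prop:a1a0formula}) $w$ is tangent to the normalized hyperplane $\Sigma_Y$, although this only simplifies the bookkeeping.

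The substance is then the term–by–term comparison of \eqref{eq:Futdef2} with the formula in Definition~\ref{defn:Futaki}. The first term $\frac1V\int_Y\theta_w e^{-\frac12\hat r^2}\omega^n$ I would rewrite using Proposition~\ref{prop:volumevariation}, which identifies $\int_Y\theta_w e^{-\frac12\hat r^2}\omega^n$ with a dimensional multiple of $D_w$ of the weighted volume $\int_Y e^{-\frac12\hat r^2}\omega^n/n!$, and Proposition~\ref{prop:volumeformula}, which identifies that weighted volume — and hence $V$ — with a dimensional multiple of $a_0(\xi)$; this produces the term $\tfrac{a_1 D_w a_0}{n(n-1)a_0}$ of Definition~\ref{defn:Futaki} up to a dimensional factor, the ratio $a_1/a_0$ entering through the normalization. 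The second term $\frac{\int_Y\theta_w e^{-\frac12\hat r^2}dV}{\int_Y e^{-\frac12\hat r^2}dV}$ is the genuinely new ingredient: here $dV=i^{n^2}(\Omega\wedge\overline\Omega)^{1/m}$ is the canonical volume form, so $\int_Y e^{-\frac12\hat r^2}dV$ is a weighted volume attached to $K_Y$, and by the isomorphism \eqref{eq:qGor iso} and the Stanley duality \eqref{eq:hilbStan} used in the proof of Proposition~\ref{prop:a1a0formula} this quantity and its $w$–derivative are controlled by both $a_0(\xi)$ and $a_1(\xi)$; matching it against $\tfrac{a_0}{n-1}D_w(a_1/a_0)$ completes the identification. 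After collecting the two contributions every $(Y,\xi)$–dependent factor cancels and only a dimensional constant $c(n)>0$ remains.

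The main obstacle is exactly this constant bookkeeping, and specifically the translation of the two measures $e^{-\frac12\hat r^2}\omega^n$ and $e^{-\frac12\hat r^2}dV$ into the language of index characters. As in the proofs of Propositions~\ref{prop:volumeformula} and \ref{prop:a1a0formula}, one flat–degenerates $Y$ to a union of coordinate subspaces and reduces to $\mathbf{C}^n$, where one must keep careful track of the powers of $2\pi$, the factorials, and the $\Gamma$–factors that appear for the form $dV$, and then verify that the $(Y,\xi)$–dependence — most delicately the factor $a_0(\xi)$ itself — drops out of the ratio so that $c(n)$ depends on $n$ alone. A secondary and more routine point is that the degeneration $Y$ may lie inside a coordinate hyperplane, so the radial function of Proposition~\ref{prop:volumeformula} has to be interpreted on $Y$ directly; this is handled exactly as in the earlier appearances of that issue.
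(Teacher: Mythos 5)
Your proposal is correct and follows essentially the same route as the paper, which simply assembles \eqref{eq:f1}, \eqref{eq:f2} and Propositions~\ref{prop:volumeformula}, \ref{prop:volumevariation} and \ref{prop:a1a0formula}; your term-by-term matching is only a mild repackaging of the paper's step of first normalizing $w$ (adding a multiple of $\xi$ so that $L_w\Omega=0$) so that each side collapses to its $D_wa_0$ term. One caveat on the point you flag as delicate: the factor $a_0(\xi)$ does \emph{not} in fact cancel --- the two invariants differ by a positive multiple of $1/a_0(\xi)$, since only the analytic invariant is divided by the weighted volume $V$, which is proportional to $a_0(\xi)$ by Proposition~\ref{prop:volumeformula} --- but as $a_0(\xi)>0$ is a fixed constant for the given pair this is harmless for every use of the statement, and the looseness in calling $c(n)$ purely dimensional is the paper's, not yours.
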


\section{K-stability of affine varieties with Ricci-flat K\"ahler cone metrics}\label{sec:Kstab}

The main theorem of this section is

\begin{thm}\label{thm:Kpolystab}
Suppose that $(X,\xi)$ admits a Ricci-flat K\"ahler cone metric.  Then $(X,\xi)$ is $K$-stable.
\end{thm}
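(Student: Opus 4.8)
The plan is to show that if $(X,\xi)$ carries a Ricci-flat Kähler cone metric $\omega$, then for every special degeneration $\lambda$ with normal central fibre $Y$ one has $\mathrm{Fut}(X,\xi,\lambda)\geq 0$, with equality only when $Y\cong X$. By Proposition~\ref{prop:equalFut} it suffices to work with the differential-geometric Futaki invariant $\mathrm{Fut}_{X,\xi}$, and the natural device is the Ding functional $\mathcal{D}=\mathcal{D}_0$ of Definition~\ref{defn:twistedDing} with $\psi=0$ (the untwisted case $t=1$), whose critical points are exactly the Ricci-flat cone metrics. The first step is therefore to record that the convexity machinery of Berndtsson~\cite{Ber,Ber2}, as adapted to cones in Section~\ref{sec:Ding} (cf. Propositions~\ref{prop:reductive}, \ref{prop:Fut0}), gives: $\mathcal{D}$ is convex along weak geodesics in the space of transverse psh potentials, it is strictly convex modulo the automorphisms in $\mathfrak{g}_{X,\xi}$, and the existence of the Ricci-flat metric $\omega$ means $\mathcal{D}$ attains its minimum at the corresponding potential.

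Next I would run the Berman-style argument~\cite{RBer}: given a special degeneration $\lambda\colon\mathbf{C}^*\to GL(N)^{\mathbf T}$ with normal limit $Y$, embed $X\hookrightarrow\mathbf{C}^N$ using holomorphic functions of bounded weight (as in Section~\ref{sec:partialC0}, with reference radial function $\hat r$ of the product Fubini-Study type \eqref{eq:hatr2}), and pull back the family $\lambda(t)\cdot X$ to obtain a path of potentials $\phi_s$ on $X$, where $s=-\log|t|$. The key analytic input is the asymptotic slope formula: as $s\to\infty$,
\[
\frac{d}{ds}\,\mathcal{D}(\phi_s) \longrightarrow \mathrm{Fut}_{Y,\xi}(w) = c(n)\,\mathrm{Fut}(X,\xi,\lambda),
\]
where $w$ generates $\lambda$ on $Y$. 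This identification of the Ding slope with the Futaki invariant on the central fibre is the analogue of the projective statement in \cite{RBer,Berman-Boucksom-Jonsson} and follows by expanding the $E$-term and the log-integral term in $\mathcal{D}$ using the equivariant data, exactly the kind of weight computation carried out in the proof of Proposition~\ref{prop:twistedformula} and Remark~\ref{rem:maxthetaweight}; the normalization $\lambda=nm$ of the Fano cone is what makes the relevant term collapse to $\tfrac12 D_w a_0(Y,\xi)$ as in \eqref{eq:f1}. Convexity of $s\mapsto\mathcal{D}(\phi_s)$ then forces this limiting slope to be $\geq 0$, since $\mathcal{D}$ is bounded below by its minimum at $\omega$; hence $\mathrm{Fut}(X,\xi,\lambda)\geq 0$.

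For the equality case, suppose $\mathrm{Fut}(X,\xi,\lambda)=0$. Then the convex function $s\mapsto\mathcal{D}(\phi_s)$ has asymptotic slope $0$, so it is nonincreasing and bounded below, hence essentially constant, i.e. the path $\phi_s$ is (asymptotically) a direction of constant $\mathcal{D}$. By the strict convexity of $\mathcal{D}$ modulo automorphisms — here I would invoke Berndtsson's uniqueness theorem~\cite{Ber2} in the cone setting, as already used for Proposition~\ref{prop:reductive} — the only way this can happen is if the geodesic ray is generated by a holomorphic vector field in $\mathfrak{g}_{X,\xi}$; equivalently the $\mathbf{C}^*$-action $\lambda$ (or rather its normalization fixing $dV$) is induced by an automorphism of $X$ commuting with $\xi$. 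Since $Y$ is the flat limit of $X$ under a one-parameter subgroup of automorphisms, this yields $Y\cong X$ as polarized affine varieties, which is the desired conclusion. I expect the main obstacle to be the rigorous derivation of the asymptotic slope formula and the lower semicontinuity/properness statements needed to control $\mathcal{D}$ along the non-smooth geodesic ray when $Y$ (hence the limiting metric) is singular; these require the pluripotential estimates on cones developed in Section~\ref{sec:Ding} together with the fact that $Y$ is normal and $\mathbf{Q}$-Gorenstein, and matching conventions between the algebraic Futaki invariant of Definition~\ref{defn:Futaki} and the slope, which is exactly what Propositions~\ref{prop:volumeformula}, \ref{prop:volumevariation} and \ref{prop:equalFut} are set up to handle.
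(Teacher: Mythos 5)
Your proposal follows essentially the same route as the paper's proof in Section~\ref{sec:Kstab}: convexity of the Ding functional along (sub)geodesics, construction of a subgeodesic from the special degeneration, identification of the asymptotic Ding slope with the algebraic Futaki invariant of the central fibre via Proposition~\ref{prop:equalFut}, and Berndtsson's uniqueness theorem to force $Y\cong X$ in the equality case. The only cosmetic difference is that the paper computes the limit slope by pushing the integrals forward to the fibres $X_\tau$ of the total space $\mathcal{Y}$ and using flatness to pass to the central fibre, rather than by a direct weight expansion, but the content is the same.
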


The idea of the proof follows work of Berman \cite{RBer}, and goes as follows.  First, we will show that the Ding functional is convex along (sub)-geodesics.  Geodesics in the space of Sasakian metrics have been studied by Guan-Zhang \cite{GZ11}, but we will need a slightly different formulation than the one given there.  Since the Ricci-flat K\"ahler cone metric is a critical point of the Ding functional, the strict convexity along geodesics means that, along a (sub)-geodesic $\phi_s$ emanating from a Sasaki-Einstein potential $\phi_{SE}$ we have
\[
\frac{d}{ds} \mathcal{D}(\phi_s) \geq 0
\]
and the limit slope $\lim_{s\rightarrow \infty} \frac{d}{ds} \mathcal{D}(\phi_s)$ exists in $[0, \infty]$.  Furthermore, a result of Berndtsson \cite{Ber2} says that the limit must be strictly positive unless the geodesic was generated by a real holomorphic vector field (see \cite{DS2} for the generalization of Berndtsson's result to our setting).  Next, we show that any special degeneration gives rise to a (sub)-geodesic, and we show that the limit slope is precisely the Futaki invariant.

Let $(X,\xi)$ be a polarized cone, which we assume is $\mathbf{Q}$-Gorenstein and log-terminal.  Recall that we have defined the Ding functional
\[
\mathcal{D}(\phi) = -E(\phi) - \frac{1}{2n} \log\int_{X}e^{-\frac{1}{2}r_{\phi}^2 }dV
\]
where $dV = \left(\Omega \wedge \overline{\Omega}\right)^{1/m}$ and
$\Omega$ is a $T$-equivariant trivialization of $mK_{X}$,
$r:X\rightarrow \mathbf{R}_{+}$ is a radial function compatible with
$\xi$, and $r_{\phi} = e^{\phi}r$ where $\phi$ is basic, and
independent of $r$.  As before, the function $E$ is defined by its variation
\[
\delta E(\phi) = \frac{1}{V(\xi)} \int_{X}\dot{\phi}e^{-\frac{1}{2}r_{\phi}^2} \omega_{\phi}^{n}.
\]
Our goal is to compute the second variation of $E(\phi_s)$.  
For our computation, we will assume that $\phi_s$ is a {\em smooth} variation. Dropping the $V(\xi)$ term for convenience an easy computation shows that
\[
-\frac{d}{ds}E(\phi) = \frac{1}{2n}\int_{X}\dot{\phi}r_{\phi}^2e^{-\frac{1}{2}r_{\phi}^2} \omega_{\phi}^{n} =  \frac{1}{2n}\int_{X}\dot{\left(\frac{r_{\phi}^2}{2}\right)}e^{-\frac{1}{2}r_{\phi}^2} \omega_{\phi}^{n}
\]
Let us suppress the dependence on $\phi$ to ease notation.  Then we have
\begin{equation}\label{eq:2varEnergy}
\begin{aligned}
-2^{n+2}n\frac{d^2}{ds^2} E &=\int_{X} \left[\ddot{r^2} - \frac{(\dot{r^2})^2}{2}\right]e^{-\frac{1}{2}r^2} (\ddb r^2)^n\\
&\quad + n\int_{X}\dot{r^2}e^{-\frac{1}{2}r^2} \ddb \dot{r^2}\wedge (\ddb r^2)^{n-1}.
\end{aligned}
\end{equation}
Let us manipulate the last term.  Integrating by parts gives
\[
\begin{aligned}
 &n\int_{X}\dot{r^2}e^{-\frac{1}{2}r^2} \ddb \dot{r^2}\wedge (\ddb r^2)^{n-1}\\
 &= -n\int_{X}e^{-\frac{1}{2}r^2}\sqrt{-1}\del \dot{r^2} \wedge \dbar \dot{r^2} \wedge (\ddb r^2)^{n-1}\\
 &\quad+\frac{n}{2}\int_{X} \dot{r^2}e^{-\frac{1}{2}r^2}\sqrt{-1}\del r^2 \wedge \dbar \dot{r^2}\wedge (\ddb r^2)^{n-1}
\end{aligned}
\]
We focus now on the second term of this expression.  Integration by parts gives
\[
\begin{aligned}
&\frac{n}{2}\int_{X} \dot{r^2}e^{-\frac{1}{2}r^2}\sqrt{-1}\del r^2 \wedge \dbar \dot{r^2}\wedge (\ddb r^2)^{n-1}\\
&= \frac{n}{2} \int_{X} \dot{r^2}e^{-\frac{1}{2}r^2}\sqrt{-1}\, \dbar{\dot{r^2}}\wedge\del r^2 \wedge (\ddb r^2)^{n-1}\\
&\quad - \frac{n}{4} \int_{X} (\dot{r^2})^2e^{-\frac{1}{2}r^2}\sqrt{-1}\, \dbar r^2\wedge\del r^2 \wedge (\ddb r^2)^{n-1}\\
&\quad+\frac{n}{2} \int_{X}
(\dot{r^2})^2e^{-\frac{1}{2}r^2}\sqrt{-1}\, \dbar\del r^2 \wedge (\ddb
r^2)^{n-1} \\
& = -\frac{n}{2} \int_{X} \dot{r^2}e^{-\frac{1}{2}r^2}\sqrt{-1} \del{r^2}\wedge\dbar\dot{ r^2} \wedge (\ddb r^2)^{n-1}\\
&\quad+ \frac{n}{4} \int_{X} (\dot{r^2})^2e^{-\frac{1}{2}r^2}\sqrt{-1} \del r^2\wedge\dbar r^2 \wedge (\ddb r^2)^{n-1}\\
&\quad -\frac{n}{2} \int_{X} (\dot{r^2})^2e^{-\frac{1}{2}r^2}(\ddb r^2)^{n}.
\end{aligned}
\]
Thus, we get
\[
\begin{aligned}
&\frac{n}{2}\int_{X} \dot{r^2}e^{-\frac{1}{2}r^2}\sqrt{-1}\del r^2 \wedge \dbar \dot{r^2}\wedge (\ddb r^2)^{n-1} \\
&= \frac{n}{8} \int_{X} (\dot{r^2})^2e^{-\frac{1}{2}r^2}\sqrt{-1} \del r^2\wedge\dbar r^2 \wedge (\ddb r^2)^{n-1}-\frac{n}{4} \int_{X} (\dot{r^2})^2e^{-\frac{1}{2}r^2}(\ddb r^2)^{n}
\end{aligned}
\]
Now, since $\omega^{\tau} = \ddb \log r^2$ satisfies $(\omega^{\tau})^{n}=0$, a direct computation shows that
\[
\del r^2\wedge\dbar r^2 \wedge (\ddb r^2)^{n-1} = \frac{r^2}{n}(\ddb r^2)^n.
\]
From this observation, an easy computation shows that
\[
\begin{aligned}
\frac{n}{8} \int_{X} (\dot{r^2})^2e^{-\frac{1}{2}r^2}\sqrt{-1} \del r^2\wedge\dbar r^2 \wedge (\ddb r^2)^{n-1} &= \frac{1}{8} \int_{X} (\dot{r^2})^2e^{-\frac{1}{2}r^2}r^2(\ddb r^2)^{n}\\
&= \frac{n+2}{4}\int_{X}(\dot{r^2})^2e^{-\frac{1}{2}r^2}(\ddb r^2)^n,
\end{aligned}
\]
and hence
\[
\frac{n}{2}\int_{X} \dot{r^2}e^{-\frac{1}{2}r^2}\sqrt{-1}\del r^2 \wedge \dbar \dot{r^2}\wedge (\ddb r^2)^{n-1}= \frac{1}{2}\int_{X}(\dot{r^2})^2e^{-\frac{1}{2}r^2}(\ddb r^2)^n.
\]
Plugging this back into the equation~\eqref{eq:2varEnergy} we get
\[
-2^{n+2}n\frac{d^2}{ds^2} E= \int_{X}e^{-\frac{1}{2}r^2}[\ddot{r^2}\ddb r^2-n\sqrt{-1}\del\dot{r^2}\wedge \dbar\dot{r^2}]\wedge (\ddb r^2)^{n-1}.
\]
A geodesic is a path $\{\phi_s\}$ along which the function $E$ is affine.  In particular, we can write the geodesic equation as
\[
[\ddot{r^2}\ddb r^2-n\sqrt{-1}\del\dot{r^2}\wedge \dbar\dot{r^2}]\wedge (\ddb r^2)^{n-1}=0.
\]
We will also make use of subgeodesics, along which the functional $E$ is concave; that is, paths $\phi_s$ satisfying
\[
[\ddot{r^2}\ddb r^2-n\sqrt{-1}\del\dot{r^2}\wedge \dbar\dot{r^2}]\wedge (\ddb r^2)^{n-1} \geq 0.
\]
Another standard computation shows that if we introduce a holomorphic coordinate $\tau$ with $|\tau| = e^{-s}$, then the (sub)-geodesic equation can be written as
\[
(\sqrt{-1}D\overline{D} r^2)^{n+1}= 0
\]
where now $D,\overline{D}$ denote the $\del, \dbar$ operators in the variables $\tau, z$ jointly.  In particular, a subgeodesic is nothing but a family of radial functions $r(z,\tau): X \rightarrow \mathbf{R}_{+}$, all compatible with $\xi$, and such that $\sqrt{-1}D\overline{D} r^2 \geq0$.  Furthermore, the above description of the geodesic equation makes it clear that we can produce (weak) geodesics using the standard techniques of envelopes and subsolutions, but we will not need this here.

In order to obtain the convexity of the Ding functional, we also need the convexity of the second term in its definition.  Berndtsson's theorem \cite{Ber2} gives this and even more;  we refer the reader to \cite{DS2} for an extension of Berndtsson's theorem this to our setting.

\begin{prop}
Let $r(x,\tau) : X\times \Delta^{*} \rightarrow \mathbf{R}_{>0}$ be a path of radial functions compatible with $\xi$, and $S^{1}$ invariant.  Suppose that $\sqrt{-1}D\overline{D} r \geq 0$, and that $\mathcal{D}(r_s)$ is affine ($s = -\log|\tau|$).  Then there exists a holomorphic vector field $\Xi$ on $X$, commuting with $\xi, r\del_r$ so that $r_s = F_{s}^{*}r(x,0)$ where 
\[
F_s = \exp(s {\rm Re}(\Xi)).
\]

\end{prop}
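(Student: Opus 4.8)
The plan is to adapt Berndtsson's argument~\cite{Ber2}, in the form extended to cones by Donaldson--Sun~\cite{DS2}, by exploiting the splitting of the Ding functional into two separately convex pieces. Write
\[
\mathcal{D}(r_s) = -E(r_s) - \frac{1}{2n}\log L(r_s), \qquad L(\phi) := \int_X e^{-\frac12 r_\phi^2}\,dV .
\]
First I would check that both summands are convex in $s$. Since $\sqrt{-1}D\overline{D}r\ge 0$ and $r\ge 0$, also $\sqrt{-1}D\overline{D}(\tfrac12 r^2)\ge 0$, so $\tfrac12 r^2$ is plurisubharmonic on $X_{reg}\times\Delta^*$; Berndtsson's theorem on the plurisubharmonic variation of the fibrewise integrals $\int_X e^{-\psi}$ then makes $s\mapsto -\frac{1}{2n}\log L(r_s)$ convex. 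For $-E$ this is precisely the second-variation identity derived above, $-2^{n+2}n\,\frac{d^2}{ds^2}E = \int_X e^{-\frac12 r^2}\big[\ddot{r^2}\,\ddb r^2 - n\sqrt{-1}\del\dot{r^2}\wedge\dbar\dot{r^2}\big]\wedge(\ddb r^2)^{n-1}$, whose integrand is $\ge 0$ exactly because $r_s$ is a subgeodesic. Since $\mathcal{D}(r_s)$ is affine, each of $-E(r_s)$ and $-\frac{1}{2n}\log L(r_s)$ is therefore affine.

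Next I would feed affineness of $E$ back into the second-variation identity: the nonnegative integrand must vanish identically, which is exactly the homogeneous complex Monge--Amp\`ere equation $(\sqrt{-1}D\overline{D} r^2)^{n+1}=0$ on $X_{reg}\times\Delta^*$, i.e.\ $r_s$ is a genuine (weak) geodesic. For non-smooth solutions one runs the same identity after regularising by smooth subgeodesics, or argues directly at the level of pluripotential theory.

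The decisive step, which is also the main obstacle, is the rigidity: producing the holomorphic flow from affineness of $-\log L$ together with the geodesic equation, and here I would invoke Berndtsson's equality case~\cite{Ber2} in the cone-adapted form of~\cite{DS2}. The geodesic equation says the closed positive current $\sqrt{-1}D\overline{D} r^2$ on $X_{reg}\times\Delta^*$ is degenerate, hence defines a Monge--Amp\`ere foliation by holomorphic leaves; affineness of $-\log L$ says that the metric $e^{-\frac12 r_s^2}$ on $\mathcal{O}_X$, read fibrewise over $\Delta^*$, has vanishing curvature in the direct-image sense. Berndtsson's computation then forces the leaves of the foliation to be transverse to the fibres $X\times\{\tau\}$, so they are the graphs of a holomorphic family of biholomorphisms $F_\tau:X\to X$ and $\tfrac12 r_\tau^2 = F_\tau^*(\tfrac12 r_0^2)$ up to a pluriharmonic term which vanishes by the $S^1$-invariance and the requirement that each $r_s$ be a radial function compatible with $\xi$. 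Writing $\Xi$ for the holomorphic vector field generating $F_s = \exp(s\,\mathrm{Re}\,\Xi)$ then yields $r_s = F_s^*r(x,0)$. Finally, since every $r_s$ is compatible with the same Reeb field, $F_s$ preserves the Euler field $r\del_r$, so $\Xi$ commutes with $r\del_r$; and $S^1$-invariance in $\tau$ together with $\xi$-compatibility forces $\Xi$ to commute with $\xi$.

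I expect the delicate point throughout to be the non-compactness and the isolated log-terminal singularity of $X$: Berndtsson's theorem and its equality case must be set up on the smooth locus $X_{reg}$, the singular set (of Hausdorff codimension $\ge 2$) is discarded using that $dV$ has finite mass near the vertex, and the non-compactness is absorbed by the Gaussian decay of the weight $e^{-\frac12 r^2}$, which makes $L(r_s)$ finite and the $L^2$-theory applicable. These are exactly the modifications carried out in~\cite{DS2}, and they transfer to the present setting without change.
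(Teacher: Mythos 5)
Your outline is correct and follows exactly the route the paper intends: the paper gives no proof of this proposition, deferring entirely to Berndtsson~\cite{Ber2} and its cone-adapted extension in Donaldson--Sun~\cite{DS2}, and your decomposition of $\mathcal{D}$ into the two separately convex pieces $-E$ and $-\frac{1}{2n}\log L$, with affineness of the sum forcing affineness of each and the rigidity then coming from Berndtsson's equality case, is a faithful sketch of that cited argument. The delicate points you flag (non-compactness handled by the Gaussian weight, the singular set discarded in codimension $\geq 2$, and pinning down the potentials rather than just the transverse forms via the $S^1$-invariance and $\xi$-compatibility) are precisely the modifications carried out in~\cite{DS2}.
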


We now explain how a special degeneration gives rise to a subgeodesic.  Let $T\subset Aut(X)$ be a torus containing $\xi$.  Recall that a special degeneration consists of an embedding $X\rightarrow \mathbf{C}^{N}$, which we may assume is not contained in a linear subspace, and such that $T\subset {\rm Aut}(X)$ acts linearly and diagonally through and embedding $T \subset U(N)$, together with a one-parameter subgroup $\lambda: \mathbf{C}^{*}\rightarrow GL(N)^{T}$ commuting with $T$, and such that $\lambda(S^1) \subset U(N)$, and $Y = \lim_{t\rightarrow 0} \lambda(t)\cdot X$ is normal.  We may package this as an affine scheme $\mathcal{Y} \subset \mathbf{C}^{N} \times \mathbf{C}$, together with a $\mathbf{C}^{*}$ equivariant projection
\[
\pi: \mathcal{Y}\rightarrow \mathbf{C}
\]
where the $\mathbf{C}^{*}$ action is by $\lambda$; we will usually
restrict our attention to $\pi^{-1}(\Delta)$ where $\Delta$ is the
closed unit disk.  Abusing notation, we will also denote this by $\mathcal{Y}$.  By definition $\xi \in \mathfrak{u}(N)$ induces a Reeb vector field on $\mathbf{C}^{N}$, and hence we may find $r_0: \mathbf{C}^{N} \rightarrow \mathbf{R}_{+}$, a $U(N)$ invariant radial function compatible with $\xi$.  Let $p_1: \mathbf{C}^{N} \times \mathbf{C} \rightarrow \mathbf{C}^{N}$, and consider $p_1^{*}r_0 : \mathbf{C}^{N}\times \mathbf{C} \rightarrow \mathbf{R}_{+}$.   The $\mathbf{C}^{*}$ action allows us to identify  $\mathcal{Y}^{*} := \pi^{-1}(\Delta^{*})$ with $X\times \Delta^{*}$, and hence $p_1^{*}r_0$ induces a radial function $r(\tau)$ compatible with $\xi$ on each fiber of $X\times \Delta^{*}$.  By the $U(N)$ invariance of $r_0$, the function $r(\tau)$ is $S^{1}$-invariant, and since the map $X\times \Delta^{*} \rightarrow \mathcal{Y}^{*}$ is holomorphic we have
\[
\sqrt{-1}D\overline{D} r(\tau) \geq 0.
\]
We can therefore write
\[
r(\tau) = r_0e^{\psi(|\tau|)}.
\]
Let $\phi$ be any potential of a radial function on $X$.  By taking $A, C$ large and $\epsilon$ small we define
\[
\Phi(z, \tau) = \Phi(z, |\tau|) := \begin{cases} \widetilde{max}_{\epsilon} \{ \phi(z) +A\log|\tau|, \psi(z, |\tau|)-C \} & |\tau| \geq \frac{1}{2}\\
					\psi(z, |\tau|)-C & |\tau|\leq \frac{1}{2}
					\end{cases}
\]
where $\widetilde{\max}_{\epsilon}$ is the regularized maximum
\cite[Section 5.E.]{DemBook}.  A few words are in order about how to choose $A, C$.  First, we choose $C\gg 1$ large so that $\psi(z,\tau)-C < \phi(z) - 100$ on $X\times\{1\}$.  Next we choose $A$ large so that
\[
 \phi(z) +A\log|\tau| \leq \psi(z,|\tau|)-C -100
 \]
 for all $\frac{1}{2} <|\tau| < \frac{3}{4}$, and finally choose $0<\epsilon \ll 1$.  Clearly $A, C$ exist since $\phi, \psi(\tau)$ are smooth on $X$, and uniformly bounded for $\tau$ in any compact subset of $\Delta^{*}$.  By our choices and the properties of the regularized maximum we obtain that for every $\tau \in \Delta^{*}$,  $r_{\Phi} := re^{\Phi(\tau)}$ defines a smooth radial function on $X$ compatible with $\xi$, and furthermore, $re^{\Phi(\tau)}$ is plurisubharmonic on $X\times \Delta^{*}$.  In particular, $\Phi(\tau)$ defines a subgeodesic emanating from $\phi$.

We now compute the limit slope of the Ding functional along $\Phi(\tau)$.  Note that $\Phi(\tau)$ for $|\tau|<\frac{1}{2}$ depends only on $r_0$ and the test configuration, and not on the initial data.  Since we are computing the limit
\[
\lim_{s\rightarrow \infty}\frac{d}{ds}\mathcal{D}(\Phi(e^{-s}))
\]
we can assume that in fact $\Phi(\tau) = \psi(\tau)$.  Before proceeding, we need a preliminary lemma.

\begin{lem}
The total space of the special degeneration $\mathcal{Y}$ is a polarized cone.  In particular, it is $\mathbf{Q}$-Gorenstein, with log-terminal singularities and admits a Reeb vector field.
\end{lem}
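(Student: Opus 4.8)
The plan is to view $\mathcal{Y}$ as a flat degeneration over $\mathbf{C}$ with generic fibre $X$ and special fibre $Y$, and to deduce each required property of $\mathcal{Y}$ from the corresponding property of these two fibres. Write $\tilde{\mathbf{T}} = \mathbf{T}\times\mathbf{C}^*$ for the torus acting on $\mathbf{C}^N\times\mathbf{C}$, where the extra $\mathbf{C}^*$ acts by $\lambda$ on the first factor and by scaling on the $\tau$-coordinate, and let $\eta$ be the generator of this $\mathbf{C}^*$. Since $\mathcal{Y}$ is the closure of $X\times\mathbf{C}^*\cong\mathcal{Y}\cap\{\tau\ne0\}$ it is irreducible and not contained in $\{\tau=0\}$, so $\tau$ is a nonzerodivisor on $R(\mathcal{Y})$ and $\pi:\mathcal{Y}\to\mathbf{C}$ is flat, with $\pi^{-1}(0)=Y$ a reduced Cartier divisor equal to $\mathrm{div}(\tau)$. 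I would first show $\mathcal{Y}$ is normal via Serre's criterion. It is Cohen--Macaulay: at points of $Y$ because $\tau$ is a regular element and $\mathcal{Y}/\tau\mathcal{Y}=Y$ is Cohen--Macaulay (being log-terminal, as already used for such cones in Section~\ref{sec:algfutaki}), and away from $Y$ because $\mathcal{Y}\cong X\times\mathbf{C}^*$ with $X$ Cohen--Macaulay. It is regular in codimension one, since a flat morphism with regular base and regular fibres has regular total space, so $\mathrm{Sing}(\mathcal{Y})\subseteq(\mathrm{Sing}(X)\times\mathbf{C}^*)\cup\mathrm{Sing}(Y)$, which has codimension at least two because $X$ and $Y$ are normal.

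For the $\mathbf{Q}$-Gorenstein property I would produce an explicit trivialisation of $mK_{\mathcal{Y}}$. Over $\mathcal{Y}\cap\{\tau\ne0\}\cong X\times\mathbf{C}^*$ the section $p_X^*\Omega\otimes p_{\mathbf{C}^*}^*(d\tau)^{\otimes m}$ is a nowhere-vanishing section of $mK_{X\times\mathbf{C}^*}\cong p_X^*(mK_X)\otimes p_{\mathbf{C}^*}^*(mK_{\mathbf{C}^*})$; transporting it through the isomorphism gives a rational section $s$ of the reflexive sheaf $mK_{\mathcal{Y}}$ with no zeros or poles away from $Y$. Then $\tilde\Omega_0:=\tau^{-\mathrm{ord}_Y(s)}s$ is a rational section with $\mathrm{div}(\tilde\Omega_0)=0$, and since a section of a rank-one reflexive sheaf on a normal variety with trivial associated divisor is a trivialisation, we get $mK_{\mathcal{Y}}\cong\mathcal{O}_{\mathcal{Y}}$. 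Because every unit on the cone $\mathcal{Y}$ is constant, $\tilde\Omega_0$ is automatically $\tilde{\mathbf{T}}$-semi-invariant.

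Finally, for $c>0$ small the vector field $\zeta=\xi+c\eta$ acts on the coordinates $z_i$ with weights $a_i+cw_i>0$ and on $\tau$ with weight $c>0$, hence with positive weights on all nonzero weight spaces of $R(\mathcal{Y})$; this is the required Reeb field, generating a compact subtorus of $\tilde{\mathbf{T}}$. Restricting $\tilde\Omega_0$ to the Cartier divisor $\mathcal{Y}_1=\{\tau=1\}\cong X$ (adjunction identifying $mK_{\mathcal{Y}}|_{\mathcal{Y}_1}$ with $mK_X$) realises it as a $\mathbf{T}$-semi-invariant trivialisation of $mK_X$, hence by Lemma~\ref{lem:MSYtriv} a constant multiple of $\Omega$, so $L_\xi\tilde\Omega_0=imn\,\tilde\Omega_0$; combined with semi-invariance under $\eta$ this gives $L_\zeta\tilde\Omega_0=i(mn+c\nu)\tilde\Omega_0$ for some fixed rational $\nu$, which is positive for $c$ small. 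By the log-terminality criterion at the start of Section~\ref{sec:algfutaki} (equivalently, local integrability of the canonical volume form, \cite[Lemma 6.4]{EGZ}), $\mathcal{Y}$ has log-terminal singularities, so $(\mathcal{Y},\zeta)$ is a polarized Fano cone singularity. The step I expect to require the most care is the $\mathbf{Q}$-Gorenstein claim: one must verify that $mK_{X\times\mathbf{C}^*}\cong p_X^*(mK_X)\otimes p_{\mathbf{C}^*}^*(mK_{\mathbf{C}^*})$ and the adjunction identification $mK_{\mathcal{Y}}|_{\mathcal{Y}_1}\cong mK_X$ hold at the level of reflexive sheaves, and that the trivialisation genuinely extends across the divisor $Y$; the remaining points are routine once the flat-family picture is in place.
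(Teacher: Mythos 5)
Your proof is correct in substance, but it takes a noticeably more hands-on route than the paper, whose entire argument for the singularity statements is one sentence: $\mathcal{Y}$ is flat over $\mathbf{C}$ with every fiber normal, $\mathbf{Q}$-Gorenstein and log-terminal, and these properties are inherited by the total space (this is the standard inversion-of-adjunction package for a flat family over a smooth curve); the Reeb field is then produced exactly as you do, as $\xi+s\eta$ for small $s>0$. What you do differently is to reconstruct each ingredient explicitly: normality via Serre's criterion (Cohen--Macaulayness from the regular element $\tau$ plus Cohen--Macaulayness of $Y$, and $R_1$ from regularity of fibers), the $\mathbf{Q}$-Gorenstein property by transporting the product trivialization $p_X^*\Omega\otimes(d\tau)^{\otimes m}$ from $\mathcal{Y}\setminus Y$ and clearing the order of vanishing along the irreducible Cartier divisor $Y$ with a power of $\tau$, and log-terminality via the volume-form integrability criterion. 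This buys a self-contained argument that also hands you the equivariant trivialization $\widehat{\Omega}$ that the paper needs immediately afterward (so your construction is doing double duty), at the cost of more bookkeeping. The one place that needs an extra sentence of care is the last step: the integrability lemma at the start of Section~\ref{sec:algfutaki} is stated for an isolated singularity at the vertex, whereas $\mathrm{Sing}(\mathcal{Y})$ contains at least $\mathrm{Sing}(X)\times\mathbf{C}^*$ and $\mathrm{Sing}(Y)$. Since \cite[Lemma 6.4]{EGZ} requires local finiteness of the adapted measure near \emph{every} singular point, you should note that the homothety flow of $-J\zeta$ carries a neighborhood of any point of $\mathcal{Y}$ into any neighborhood of the vertex while rescaling $dV$ by a constant, so local integrability at the vertex (which your weight computation gives) propagates to all other points. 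With that remark added, your argument is complete.
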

\begin{proof}
That $\mathcal{Y}$ is $\mathbf{Q}$-Gorenstein and log-terminal follow from the fact that $\mathcal{Y}$ is flat over $\mathbf{C}$ and every fiber is normal, $\mathbf{Q}$-Gorenstein with log-terminal singularities. We only need to show that $\mathcal{Y}$ has a Reeb vector field.  Let $\eta$ be the generator of the $\mathbf{C}^{*}$ action defining the test configuration.  By definition $\eta$ acts on the coordinate $t$ on $\mathbf{C}$ with weight one, and commutes with $\xi$. Hence for $s$ sufficiently small, $\xi+s\eta$ is a Reeb vector field for $\mathcal{Y}$.
\end{proof}

Let $T'$ be the torus in ${\rm Aut}(\mathcal{Y})$ containing $T$, and
$\eta$, the generator of the $\mathbf{C}^{*}$ action defining the
special degeneration.  It follows from Lemma~\ref{lem:MSYtriv} that we
can choose $\widehat{\Omega}$ a $T'$ equivariant trivializing section
of $mK_{\mathcal{Y}}$ so that $\iota_{\left(\frac{\del}{\del
      \tau}\right)^{\otimes m}}\widehat{\Omega}$ is a $T$-equivariant
trivialization $\Omega$ of $mK_{X_t}$ for all $t\in \mathbf{C}$, where
$X_{t} = \pi^{-1}(t)$.  
By the uniqueness part of Lemma~\ref{lem:MSYtriv} we must have
\[
(\lambda^{-1}(\tau))^{*}\Omega = c(\tau)\iota_{\left(\frac{\del}{\del \tau}\right)^{\otimes m}}\widehat{\Omega}
\]
where $c(\tau)$ is non-vanishing holomorphic function, constant on the fibers.  In particular, on the level of volume forms we have
\[
(\lambda^{-1}(\tau))^{*}dV = |\hat{c}(\tau)|^{2/m}dV_{X_{\tau}}  := |c(\tau)|^{2/m} \left(\iota_{\left(\frac{\del}{\del \tau}\right)^{\otimes m}}\widehat{\Omega}\wedge \overline{\iota_{\left(\frac{\del}{\del \tau}\right)^{\otimes m}}\widehat{\Omega}}\right).
\]
We now compute the limit slope of the Ding functional.  As explained above, it suffices to compute the limit slope of $\mathcal{D}(r_s)$, where $s= -\log|\tau|$, and $r_s = \lambda(e^{-s})^{*}p_1^{*}r_0$.  Recall that
\[
\mathcal{D}(r_s) = -E(r_s) - \frac{1}{2n} \log\int_{X}e^{-\frac{1}{2}r_{s}^2 }dV.
\]
Let us first focus on the $E$ term.  By definition we have
\[
-\frac{d}{ds}E(r_s) =  -\frac{1}{V(\xi)} \int_{X}\frac{d}{ds}\log\left(\frac{r_s}{r_0}\right)e^{-\frac{1}{2}r_{s}^2} \omega_{s}^{n}
\]
We now use the biholomorphism $\lambda(e^{-s})$ to push the integral
forward to $X_{e^{-s}}$.  Note that
\[
\lambda(e^{-s})_{*}\frac{d}{ds} \log\frac{\lambda(e^{-s})^{*}r_0}{r_0} = -\theta_{\lambda}
\]
where $\theta_{\lambda}$ is the Hamiltonian function,  with respect to $r_0$, of the real part of the holomorphic vector field generating the action of $\lambda$ on $\mathbf{C}^{N}$.  Thus we have
\[
-\frac{d}{ds}E(r_s) = \frac{1}{V(\xi)}\int_{X_{e^{-s}}} \theta_\lambda e^{-\frac{1}{2}r^2}\omega^n.
\]
Since $\mathcal{Y}$ is flat over $\mathbf{C}$, the current of integration $[X_{e^{-s}}]$ converges to $[X_0]$ weakly and we obtain
\[
\lim_{s\rightarrow \infty} -\frac{d}{ds}E(r_s) = \frac{1}{V(\xi)}\int_{X_0} \theta_\lambda e^{-\frac{1}{2}r^2}\omega^n,
\]
which is justified by the weak convergence since $\theta_{\lambda}e^{-\frac{1}{2}r^2}\omega^n$ is a smooth $(n,n)$ form defined on the ambient space $\mathbf{C}^{N}$.

We now compute the contribution of the second term.  Specifically, we are computing
\[
\begin{aligned}
-&\frac{d}{ds}\frac{1}{2n} \log\int_{X}e^{-\frac{1}{2}r_{s}^2 }dV\\
& = \frac{1}{2n}\frac{\int_{X}r_{s}^2\frac{d}{ds}\log\left(\frac{r_s}{r}\right)e^{-\frac{1}{2}r_s^2} dV}{\int_{X}e^{-\frac{1}{2}r_s^2}dV}
\end{aligned}
\]
Pulling this back to $X_{\tau}$ and using that $(\lambda^{-1}(\tau))^{*}dV = |c(\tau)|^{2/m}dV_{X_{\tau}}$ we get
\[
-\frac{d}{ds}\frac{1}{2n} \log\int_{X}e^{-\frac{1}{2}r_{s}^2 }dV =  -\frac{1}{2n}\frac{\int_{X_{\tau}}r^2\theta_{\lambda}e^{-\frac{1}{2}r^2} dV_{\tau}}{\int_{X_{\tau}}e^{-\frac{1}{2}r^2}dV_{\tau}}.
\]
Since that $\mathcal{L}_{\xi}dV_{\tau} = \sqrt{-1}2n dV_{\tau}$ and $\theta_{\lambda}$ is basic we obtain
\[
\int_{X_{\tau}}r^2\theta_{\lambda}e^{-\frac{1}{2}r^2} dV_{\tau} = 2n\int_{X_{\tau}}\theta_{\lambda}e^{-\frac{1}{2}r^2} dV_{\tau}.
\]
Putting everything together we have
\[
-\frac{d}{ds}\frac{1}{2n} \log\int_{X}e^{-\frac{1}{2}r_{s}^2 }dV =  -\frac{\int_{X_{\tau}}\theta_{\lambda}e^{-\frac{1}{2}r^2} dV_{\tau}}{\int_{X_{\tau}}e^{-\frac{1}{2}r^2}dV_{\tau}}.
\]
Now, by definition $\widehat{\Omega}$ is a non-vanishing, holomorphic section of $mK_{\mathcal{Y}}$, which is in particular smooth on $\mathcal{Y}_{reg}$.  Thanks to the fact that $X_0 = \pi^{-1}(0)$ is reduced and normal, Hartog's theorem implies that
\[
\iota_{\left(\frac{\del}{\del \tau}\right)^{\otimes m}}\widehat{\Omega}\big|_{\tau=0} = c_0 \Omega_0
\]
where $\Omega_0$ is the unique (up to scale) $T'$-equivariant trivialization of $mK_{X_0}$, and $c_0$ is a non-zero constant.  In particular, it follows that
\[
dV_{\tau} \rightarrow |c_0|^{2/m} \left(\Omega_0 \wedge \overline{\Omega}_0\right)^{1/m}
\]
smoothly on $X_{0,reg}$.  Furthermore, by the log-terminal assumption, $X_{\tau,sing}$ and $X_{0,sing}$ have zero volume with respect to $dV_{\tau}, dV_{0}$ respectively.  Finally, since $\theta_{\lambda}, r$ are smooth functions on $\mathbf{C}^{N}$, flatness implies
\[
\lim_{\tau \rightarrow 0}  \frac{\int_{X_{\tau}}\theta_{\lambda}e^{-\frac{1}{2}r^2} dV_{\tau}}{\int_{X_{\tau}}e^{-\frac{1}{2}r^2}dV_{\tau}} =  \frac{\int_{X_{0}}\theta_{\lambda}e^{-\frac{1}{2}r^2} dV_{0}}{\int_{X_{0}}e^{-\frac{1}{2}r^2}dV_{0}}.
\]
Putting everything together we get
\[
\lim_{s\rightarrow \infty} \frac{d}{ds} \mathcal{D}(r_s) = \int_{X_0} \theta_\lambda e^{-\frac{1}{2}r^2}\omega^n -\frac{\int_{X_0}\theta_{\lambda}e^{-\frac{1}{2}r^2} dV}{\int_{X_{0}}e^{-\frac{1}{2}r^2}dV}.
\]
By Proposition~\ref{prop:equalFut} this is (up to a positive constant $c(n)$), the algebraic Futaki invariant of the test configuration $(\mathcal{Y},\lambda)$.  From the convexity of the Ding functional we conclude
\[
Fut(\mathcal{Y},\lambda,\xi) \geq 0.
\]
If $Fut(\mathcal{Y},\lambda,\xi) =0$, then we must have that that $\frac{d}{ds}\mathcal{D}(r_s)=0$ identically, and then Berndtsson's theorem implies that $r_s=F_{s}^{*}r_0 $ on $X$, where
\[
F_s = \exp\left(sV\right)
\]
and $V$ is the real part holomorphic vector field on $X$ commuting with $\xi$.  Consider the map
\[
\rho := \lambda(\tau) \circ F^{-1}_{\tau} : X \times \Delta^* \rightarrow \mathcal{Y}^{*},
\]
and $\rho_\tau = \rho(\cdot, \tau)$ for $\tau\in \Delta^*$. 
By definition we have $\rho_{\tau}^{*}r_0 = r_0$, and
$\rho_{\tau}\,_{*} \xi = \xi$.  Since $r_0$ is the potential for a
K\"ahler cone metric on $\mathbf{C}^{N}$ compatible with $\xi$, this
implies that for any compact set $K\subset X$, the image $\rho( K
\times (\frac{1}{2}\overline{\Delta}\setminus\{0\}))$ is compact in
$\mathcal{Y}$.  By Riemann's extension theorem $\rho$ extends to a map
\[
\rho : X\times \Delta \rightarrow \mathcal{Y}
\]
which is an isomorphism away from $\tau=0$.  The same argument applied
to $\rho^{-1}$ shows that $\rho:X\times \Delta \rightarrow
\mathcal{Y}$ is an isomorphism, and so 
$\mathcal{Y}$ is a trivial test-configuration.  This completes the proof of Theorem~\ref{thm:Kpolystab}.

\section{Examples and Applications}\label{sec:examples}
In this section we check $K$-stability for a family of hypersurfaces of dimension 3 admitting a $2$ torus action.  Rational hypersurface singularities in $\mathbf{C}^{4}$ admitting a $\mathbf{C}^{*}$ action were classified by Yau-Yu \cite{YY}.  In the terminology of the Yau-Yu classification, will study the links of type $I-III$ admitting a $\mathbf{T}^2$ action.  For this section we will describe a holomorphic vector field in terms of its $S^1$ action by specifying the weights;  in particular, a vector $(a_1,a_2,a_3,a_4)$ should be understood to act on $\mathbf{C}^2_{(z_1,z_2, z_3, z_4)}$ by acting on $z_i$ with weight $a_i$.  With this notation, the main theorem of this section is
\begin{thm}
The following affine affine varieties admit conical Ricci flat K\"ahler metrics with respect to the given Reeb field:
\begin{itemize}
\item[(I)]  The Brieskorn-Pham singularity 
\[
\begin{aligned}
Z_{BP}(p,q) &:= \{ uv+z^p+w^q=0 \} \subset \mathbf{C}^{4}_{(u,v,z,w)}\\
\xi &= \frac{3}{2(p+q)}(pq,pq,2q,2p)
\end{aligned}
\]
if $2p>q$ and $2q>p$.  Topologically, the link is $\#m(S^2 \times S^3)$, where $m= \gcd(p,q)-1$ and $\#0(S^2 \times S^3)= S^5$.
\item[(II)]  The Yau-Yu singularities of type $II$
\[
\begin{aligned}
Z_{II}(p,q)&:=\{uv+ z^p+zw^q=0\} \subset \mathbf{C}^{4}_{(u,v,z,w)}\\
\xi &= \frac{3}{2(q+p-1)}(qp,qp,2q,2(p-1))
\end{aligned}
\]
provided $3(p-1) > (q+p-1)$ and $2qp+1>p^2+q$.  Topologically, the link is $\#m(S^2 \times S^3)$, where $m=\gcd(p-1,q)$.
\item[(III)] The Yau-Yu singularities of type $III$
\[
\begin{aligned}
Z_{III}(p,q) &:= \{ uv+ z^pw + zw^q=0\} \subset \mathbf{C}^{4}_{(u,v,z,w)}\\
\xi &= \frac{3}{2(p+q-2)}(pq-1,pq-1,2(q-1),2(p-1))
\end{aligned}
\]
provided $3(p-1)^2(q-1) > (p+q-2)(pq-2p+1)$, and $3(q-1)^2(p-1) > (p+q-2)(pq-2q+1)$.  Topologically, the link is $\#m(S^2\times S^3)$ where $m= \gcd(p-1,q-1)+1$.
\end{itemize}
In particular, there exists:
\begin{itemize}
\item One infinite family of inequivalent Sasaki-Einstein metrics on $S^5$, 
\item Two distinct infinite families of inequivalent Sasaki-Einstein metrics on $S^2 \times S^3$,
\item Three distinct families of inequivalent Sasaki-Einstein metrics on $\#m(S^2 \times S^3)$ for any $m \geq 2$.
\end{itemize}
\end{thm}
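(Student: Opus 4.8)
The plan is to verify, for each of the three families (write $Z_*$ for any of $Z_{BP}(p,q)$, $Z_{II}(p,q)$, $Z_{III}(p,q)$), that $(Z_*,\xi)$ is a normalized Fano cone singularity which is $\mathbf{T}^2$-equivariantly $K$-stable under the stated inequalities, and then to invoke the equivariant form of Theorem~\ref{thm:mainthm}; the topology of the links and the counting of families are treated afterwards. Each $Z_*$ is an isolated hypersurface singularity $\{f=0\}\subset\mathbf{C}^4$, hence Gorenstein ($m=1$) with canonical trivialization $\Omega$ the Poincar\'e residue of $f^{-1}\,dz_1\wedge\cdots\wedge dz_4$. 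In every case $f$ is quasi-homogeneous for the stated weights, and a direct check gives $\sum_i a_i-\deg f=3=nm$, so that $L_\xi\Omega=3i\Omega$ and $\xi$ is normalized; log-terminality then follows from the criterion $\lambda>0$ of Definition~\ref{defn:Fanocone}. Since $\dim Z_*=3$ and the torus $\mathbf{T}=\mathbf{T}^2$ of automorphisms fixing $f$ and containing $\xi$ is two-dimensional, the action is of complexity one, so by \cite{IS} $K$-stability can be tested against a finite list of degenerations.

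Next I would observe that the given $\xi$ is forced: by \cite{MSY} the volume $a_0$ is strictly convex on the normalized slice $\Sigma_{Z_*}\subset\mathfrak{t}$ with a unique minimum, which must exist for a Sasaki-Einstein metric; since $u\leftrightarrow v$ is an automorphism of $Z_*$ fixing $f$, uniqueness forces the minimizer onto $\{a_u=a_v\}$. In the two-dimensional space $\mathfrak{t}$ the conditions ``normalized'' and ``$a_u=a_v$'' each cut out a line, meeting in a single point which a short computation identifies with the stated $\xi$; hence $\xi$ is the volume-minimizing Reeb field. It then remains to show that every nontrivial $\mathbf{T}^2$-equivariant special degeneration has strictly positive Futaki invariant.

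For this I would use the combinatorial description of complexity-one $\mathbf{T}$-varieties of Ilten--S\"u{\ss} \cite{IS}: the normal $\mathbf{T}^2$-equivariant degenerations of $Z_*$ are obtained by finitely many modifications of the associated divisorial fan, giving an explicit finite list of central fibres $Y$, each again a complexity-one Fano cone singularity. For each such $Y$, with $\mathbf{C}^*$-action generated by $w$, I would use the volume-minimization reformulation of the Futaki invariant recorded after Definition~\ref{defn:Futaki}: since $\xi$ minimizes the volume, $\mathrm{Fut}(Z_*,\xi,\lambda)=\tfrac{1}{2} D_{w'}a_0(Y,\xi)$ with $w'=w-2\tfrac{N\cdot w}{n(n-1)}\xi$, where both $a_0(Y,\xi)$ and its variation are read off combinatorially from the index character of $Y$. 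Running through these finitely many computations, one finds that positivity of all of them holds precisely under the inequalities in the statement; when the inequalities fail, either one of these degenerations or the Lichnerowicz bound of Gauntlett--Martelli--Sparks--Yau \cite{GMSY} already obstructs stability. Hence $(Z_*,\xi)$ is $\mathbf{T}^2$-equivariantly $K$-stable, and Theorem~\ref{thm:mainthm} produces a Ricci flat K\"ahler cone metric, i.e.\ a Sasaki-Einstein metric on the link.

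For the topology, each link $L$ is the link of an isolated hypersurface singularity in $\mathbf{C}^4$, hence a closed simply connected spin $5$-manifold, so by Smale's classification it is determined by $H_2(L;\mathbf{Z})$. For type $(I)$, writing $u=a+ib$, $v=a-ib$ identifies $Z_{BP}(p,q)$ with the Brieskorn--Pham singularity of exponents $(2,2,p,q)$, whose link is $\#(\gcd(p,q)-1)(S^2\times S^3)$ by Brieskorn's formulas (see \cite{BGbook}); for types $(II)$ and $(III)$ the (torsion-free) homology is computed in the same way from the Milnor fibration and the classification \cite{YY}, yielding $\#m(S^2\times S^3)$ with the stated $m$. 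For each admissible value of $m$ there are infinitely many pairs $(p,q)$ in the region cut out by the inequalities with the prescribed value of the relevant $\gcd$, and the resulting Sasaki-Einstein metrics are pairwise non-isometric, since the normalized volume $a_0(\xi_{p,q})$ is an invariant of the Einstein metric taking infinitely many distinct values (equivalently, distinct $(p,q)$ or distinct types give non-isomorphic affine cones). Tallying: $m=0$ arises only from type $(I)$, $m=1$ from types $(I)$ and $(II)$, and each $m\ge 2$ from all three, which gives one infinite family on $S^5$, two on $S^2\times S^3$, and three on $\#m(S^2\times S^3)$ for $m\ge 2$. The main obstacle is the third step: correctly enumerating all $\mathbf{T}^2$-equivariant special degenerations from the divisorial-fan combinatorics — in particular ensuring the list is complete, so that no hidden obstruction to $K$-stability is overlooked — and verifying that the signs of their Futaki invariants are controlled exactly by the stated inequalities.
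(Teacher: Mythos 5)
Your proposal follows essentially the same route as the paper: check that $\xi$ is normalized and log-terminal, pin down $\xi$ by volume minimization plus the $u\leftrightarrow v$ symmetry, reduce to finitely many $\mathbf{T}^2$-equivariant special degenerations via the Ilten--S\"u{\ss} complexity-one machinery, compute Futaki invariants from the index character, and finish with Smale's classification and the volume $a_0(\xi)$ as the distinguishing invariant. The outline is correct and all the ideas match.

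The one caveat is that the decisive content of the theorem --- which you yourself flag as the ``main obstacle'' --- is asserted rather than carried out. The paper actually computes the p-divisor (tail cone, base $\mathbf{P}^2_{[1:1:m]}$, the polytopes $\Delta_\rho$ and slope functions $\Psi_\rho$) and invokes \cite[Proposition 4.2]{IS} to show there are \emph{exactly two} nontrivial equivariant special degenerations per family, then writes them down explicitly (e.g.\ $\{uv+(t\cdot z)^p+w^q=0\}$ and $\{uv+z^p+(t\cdot w)^q=0\}$ for type I, and the less obvious $\{uv+t^{pq}w^p+wz^q=0\}$ for type II) and expands the index character of each central fiber to extract $a_0$, $a_1/a_0$ and their derivatives; the stated inequalities are precisely the output of these expansions. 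Until that enumeration and those Laurent expansions are performed, the inequalities in the statement are not derived but merely quoted. A minor stylistic difference: you propose to evaluate $\mathrm{Fut}$ as $\tfrac12 D_{w'}a_0$ after projecting $w$ onto the normalized hyperplane, whereas the paper uses the two-term formula of Definition~\ref{defn:Futaki} directly; these are equivalent by the discussion in Section~\ref{sec:background}, so either works once the index characters are in hand.
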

  The proof of this theorem will occupy the remainder of this section.  We use the theory of polyhedral divisors.  The original paper in this area is the work of Altmann-Hausen \cite{AH}, and there is a nice survey by Altmann-Ilten-Petersen-S\"u\ss\,  \cite{AIPS}. For applications of this theory to K-stability of Fano manifolds, see Ilten-S\"u{\ss} \cite{IS}. 
 \subsection{The Brieskorn-Pham singularities}
 We begin with the Brieskorn-Pham (BP) singularities, denoted $Z_{BP}(p,q)$ above, which appear as type  I singularities in the Yau-Yu classification.  
In order to avoid the trivial cases, we will assume that $\max\{p,q\} >2$. Let $\gcd (p,q)=m$, and choose relatively prime integers $a,b \in \mathbf{Z}$ such that
\[
a\frac{q}{m}-b\frac{p}{m}=1.
\] 
The affine variety admits $2$ torus action, which is generated by $\mathbf{C}^{*}$ actions with weights $(1,-1,0,0)$, and $(0,\frac{pq}{m}, \frac{q}{m}, \frac{p}{m})$ on $(u,v,z,w)$ respectively.  Let $\mathfrak{t}$ denote the Lie algebra of the compact torus, equipped with a basis $\{e_1, e_{2}\}$ corresponding to the above actions.  The Reeb cone inside of $\mathfrak{t}$ is given by
\[
\mathcal{C}_{\mathcal{R}} =\left\{(x,y) := xe_1+ye_2 \big|\,\, x>0,\quad xpq-ym>0\right\}.
\]
A simple symmetry argument shows that the Reeb field minimizing the volume of the link is a multiple of $(2,pq)$, which corresponds to the $\mathbf{C}^{*}$ action $(pq,pq,2q,2p)$.  Let $F :\mathbf{Z}^{2} \rightarrow \mathbf{Z}^4$ be the inclusion of the algebra $\mathfrak{t}$ into the lie algebra of the diagonal torus acting on $\mathbf{C}^{4}$, equipped with the standard basis.  With these choices, $F$ is represented by the matrix
\[
F = \begin{bmatrix}
0 &1\\
\frac{pq}{m} &-1\\
\frac{q}{m} & 0\\
\frac{p}{m}&0
\end{bmatrix}.
\]
Let $P: \mathbf{Z}^{4} \rightarrow \mathbf{Z}^{2}$ be the orthogonal projection to the cokernel of $F$.  We have
\[
P = \begin{bmatrix}
0&0&\frac{-p}{m}&\frac{q}{m}\\
1&1&-p&0
\end{bmatrix}.
\]
So we have an exact sequence
\[
0 \rightarrow \mathbf{Z}^2 \stackrel{F}{\longrightarrow} \mathbf{Z}^{4} \stackrel{P}{\longrightarrow} \mathbf{Z}^2 \rightarrow 0.
\]
we choose a splitting $s:\mathbf{Z}^{4}\longrightarrow \mathbf{Z}^2$ such that $s \circ F = 1$,
\[
s = \begin{bmatrix}
0&0&a&-b\\
1&0&0&0
\end{bmatrix}.
\]
In the language of \cite{AH}, the tail cone is given by
\[
\sigma := s(F(\mathbf{Q}^{2}) \cap \mathbf{Q}^{4}_{\geq 0}) = \text{cone} \left[\begin{pmatrix}1\\0\end{pmatrix}, \begin{pmatrix}1\\pq/m\end{pmatrix}\right].
\]
It is straightforward to check that $\sigma = \overline{\mathcal{C}_{\mathcal{R}}}$ is the closure of the Reeb cone.  The $2$ torus action on $\mathbf{C}^{4}$ induces a fibration of $\mathbf{C}^4$ over a surface whose fibers are non-compact toric surfaces.  Using the techniques of \cite{AH} we can describe this fibration in terms of combinatorial data called a p-divisor.  We first compute the base of the fibration, which is given by the fan $\Sigma_{Y}$ with maximal cones
\[
\text{cone}\left[ \begin{pmatrix}0\\1\end{pmatrix}, \begin{pmatrix} -1\\-m\end{pmatrix} \right],\quad
\text{cone}\left[ \begin{pmatrix}1\\0\end{pmatrix}, \begin{pmatrix} -1\\-m\end{pmatrix} \right], \quad
\text{cone}\left[ \begin{pmatrix}0\\1\end{pmatrix}, \begin{pmatrix} 1\\0\end{pmatrix} \right]
\]
which is the fan corresponding to the projective space
$\mathbf{P}^{2}_{[1:1:m]}$. The rays of this fan are the columns of
$P$.  The p-divisor is then a formal finite sum
\[
\sum \Delta_{\rho} \otimes D_{\rho}
\]
where $D_{\rho}$ is a divisor on $\mathbf{P}^2_{[1:1:m]}$ and $\Delta_{\rho}$ is a convex polytope with tail cone $\sigma$.  For the case at hand we have
\[
\begin{aligned}
\Delta_{(1,0)} &= (-bm/q,0) + \sigma\\
\Delta_{(-1,-m)} &= (am/p,0) + \sigma \\
\Delta_{(0,1)} &= \{0\} \times [0,1] + \sigma,
\end{aligned}
\]
where $\Delta_{\rho} = s(P^{-1}(\rho) \cap \mathbf{Q}^4_{\geq 0} )$.
This restricts to define a p-divisor on the curve $C:=\{X^m+Y^m+Z =0\}
\subset \mathbf{P}^{2}_{[1:1:m]}$, which is precisely the base of the
induced fibration of $Z_{BP}$.  This curve intersects the divisor
$Z=0$ at $m$ points, so the polytope $\Delta_{(0,1)}$ will appear $m$
times. Let $\sigma^{\vee}$ denote the dual of the tail cone as a
subset of $\mathfrak{t}^{\vee}$ the dual of the lie algebra.  In this
case $\sigma^{\vee}$ is described explicitly by 
\[
\sigma^{\vee} = \text{ cone } \left[ \begin{pmatrix} 0\\1 \end{pmatrix}, \begin{pmatrix}pq/m\\-1 \end{pmatrix} \right].
\]
For each p-divisor $\Delta_{\rho}$ we get a function $\Psi: \sigma^{\vee} \rightarrow \mathbf{R}$ defined by $\Psi_{\rho}(w) = \min_{u\in \Delta_{\rho}} \langle w,u\rangle$, where $\langle \cdot, \cdot \rangle$ denotes the natural pairing between $\mathfrak{t}$ and $\mathfrak{t}^{\vee}$.  In our case we get
\[
\begin{aligned}
\Psi_{(1,0)}(s,t) &= -\frac{bm}{q}s\\
\Psi_{(-1,-m)}(s,t) &= \frac{am}{p}s\\
\Psi_{(0,1)}(s,t) &= \min\{t,0\},
\end{aligned}
\]
where again $\Psi_{(0,1)}$ is repeated $m$ times. 
If $\frac{am}{p}$ is an integer then we obtain an equivalent p-divisor
by replacing $\Psi_{(-1,-m)}$ by zero, and replacing $\Psi_{(1,0)}$ by
$\left(\frac{am}{p}-\frac{bm}{q}\right)s$. This new p-divisor only has
two distinct polytopes, and so from the description of $T$-equivariant
test configurations with normal central fiber due to Ilten-Suss
\cite{IS} we see that there are at most two
non-trivial test configurations. The same applies when $\frac{bm}{q}$
is an integer. When neither is an integer, then both $\Psi_{(1,0)}$
and $\Psi_{(0,1)}$ have non-integral slope, and so again from
\cite[Proposition 4.2]{IS} we obtain at most
two non-trivial test configurations.  These can be obtained from the
methods of \cite{IS}, but here we can simply guess the test
configurations.  They are 
\[
\begin{aligned}
\mathcal{X}_{1} &= \{ uv+(t\cdot z)^p +w^q =0\},\\
\mathcal{X}_{2} &= \{uv+z^p +(t\cdot w)^q=0\}.
\end{aligned}
\]
We will compute the Futaki invariant for $\mathcal{X}_{1}$, the other case being identical.  The special fiber of $\mathcal{X}_{1}$ is
\[
Z_0:=\Spec \frac{\mathbf{C}[u,v,z,w]}{uv+w^q}
\]
which is polarized by the Reeb vector field $(pq,pq,2q,2p)$.  Since $Z_{BP}(p,q)$ is a hypersurface, it is straightforward to check (see, e.g. \cite{MSY}) that $(Z_{BP}(p,q),\xi)$ is normalized Fano if  
\[
\xi = \frac{3}{2(p+q)} (pq,pq,2q,2p).
\]
On the central fiber there is a new $\mathbf{C}^{*}$ action corresponding to the one parameter subgroup induced by $\eta = (0,0,1,0)$.  The index character for the Reeb field $\xi+s\eta$ can be computed directly.  Let  $\lambda = 3/2(p+q)$, then
\[
\begin{aligned}
F(\xi+s\eta, t) &= \frac{1 -e^{-2pq\lambda t}}{(1-e^{-pq\lambda t})^2(1-e^{-(2q\lambda-s) t})(1-e^{-2p\lambda t})}\\
&= \frac{1}{\lambda^2p^2q(2q\lambda-s)t^3} + \frac{2p\lambda+2q\lambda-s}{2\lambda^2p^2q(2q\lambda-s)t^2}+O(t^{-1})
\end{aligned}
\]
From this we read off
\[
a_0(\xi-s\eta) =  \frac{1}{2\lambda^2p^2q(2q\lambda-s)}, \quad \left(\frac{a_1}{a_0}\right)(\xi-s\eta) = \lambda(2p+2q)-s
\]
and so
\[
D_{\eta}a_0(\xi) = -\frac{1}{2\lambda^2p^2q(2q\lambda)^2}, \quad \frac{1}{2}D_{\eta}a_0(\xi) = 1.
\]
By definition, the Futaki invariant is
\[
\begin{aligned}
Fut(\mathcal{X}_{1},\xi) &= \frac{a_0(\xi)}{2}D_{\eta}\left(\frac{a_{1}}{a_0}\right)(\xi) +\frac{1}{2}D_{\eta}a_0(\xi)\\
&=\frac{a_0(\xi)}{2}\left(1- \frac{1}{2q\lambda}\right)\\
&=\frac{a_0(\xi)}{2}\left(\frac{2q-p}{3q}\right),
\end{aligned}
\]
which is positive if and only if $2q > p$. Similarly for the other
test-configuration the condition is $2p > q$, and so
we obtain that $(Z_{BP}(p,q),\xi)$ is K-stable, and hence admits a
conical Ricci-flat K\"ahler metric if and only if 
\[
2p>q,\qquad 2q>p,
\]
which is precisely the Lichnerowicz obstruction discovered by
Gauntlett-Martelli-Sparks-Yau \cite{GMSY}. In dimension 5 there is
standard machinery for computing the topology of links of isolated
hypersurface singularities, see \cite{BGbook} for a complete
description.  In particular, it is straightforward to compute that the
link of $Z(p,q)$ is topologically $\#(\gcd(p,q)-1)S^2\times S^3$.  In
particular, whenever $\gcd(p,q)=1$, and $2p>q, 2q>p$ we obtain a
Sasaki-Einstein metric on $S^5$.  Furthermore, as a function of $p,q$,
the (unnormalized) volume of $(Z_{BP}(p,q),\xi)$ is given by 
\[
a_0(\xi) = \frac{2(p+q)^3}{27p^2q^2},
\]
and hence infinitely many of these metrics are inequivalent. For
example, fix a positive integer $m$ and let $p>2$. Then the affine
varieties $Z_{BP}(pm,(p-1)m)$ are K-stable, and the link is
topologically $\#m(S^2\times S^3)$.  Furthermore, the volume is given
by 
\[
Vol(Z_{BP}(pm,(p-1)m), \xi) = \frac{2(2p-1)^3}{27mp^2(p-1)^2}
\]
which is a strictly decreasing function as $p\rightarrow \infty$.  By
taking a sequence of primes going to infinity we obtain the existence
of infinite families of inequivalent, non-toric Sasaki-Einstein
metrics on $\#m(S^2\times S^3)$ for any $m \geq 0$ (where $m=0$ means
$S^5$). Furthermore, we note that $Z_{BP}(2,3)$ is also K-stable,
confirming the result of Li-Sun \cite{LiSun} that the $A_2$
singularity admits a Ricci-flat cone metric.

\subsection{The Yau-Yu singularities of type $II$}
One can apply similar techniques to treat the Yau-Yu links of type II and III.  We mention these applications briefly.  Consider the family of hypersurface singularities described by $Z_{II}(p,q)$.
which admits a two torus generated by the $\mathbf{C}^{*}$ actions with weights $(0,pq,q,p-1)$ and $(1,-1,0,0)$.  An easy symmetry argument shows that the normalized Reeb field minimizing the volume is given by
\[
\xi = \frac{3}{2(q+p-1)}(qp,qp,2q,2(p-1)), \qquad Vol(Z_{II}(p,q), \xi) = \frac{2(p+q-1)^3}{27 pq^2(p-1)}.
\]
By similar techniques used for the Brieskorn-Pham links one can show that there are only two $T$-equivariant test configurations.  The first of these test configurations is
\[
\mathcal{X}_{1} := \{uv +z^p +z(t\cdot w)^q=0\}
\]
which is induced by the $\mathbf{C}^{*}$ action with weights $(0,0,0,1)$.  A straightforward computation using the index character yields
\[
Fut(\mathcal{X}_1, \xi) >0 \iff \frac{3(p-1)}{(q+p-1)} >1.
\]
The second test configuration is more interesting, given by 
\[
\mathcal{X}_{2} := \{uv +t^{pq}w^p + wz^q=0\}
\]
which is induced by the $\mathbf{C}^{*}$ action with weights
$(0,0,q,-1)$.  Computing the Futaki invariant yields
\[
Fut(\mathcal{X}_{2},\xi)>0 \iff 2qp+1>p^2+q
\]
Note, in particular, that this obstruction is {\em strictly stronger} than the Lichnerowicz.  For example, the affine variety $Z_{II}(6,3)$ is not obstructed by the Lichnerowicz bound, but is destabilized by the test configuration $\mathcal{X}_{2}$.  Topologically, the link of $Z_{II}(p,q)$ is $\#\gcd(p-1,q) (S^2\times S^3)$.  If $p \geq 2$ is a prime number then one can easily check that
\[
Z_{II}(m(p-1)+1, mp)
\]
is $K$-stable, and hence generates a Sasaki-Einstein metric on $\#m (S^2\times S^3)$ with volume
\[
 Vol(Z_{II}(m(p-1)+1, mp),\xi) = \frac{2(2p-1)^3}{27p^2(p-1)(m(p-1)+1)}.
 \]
Taking a sequence of primes going to $\infty$ yields a second infinite sequence of distinct Sasaki-Einstein metrics on $\#m (S^2\times S^3)$ for any $m\geq 1$.
\subsection{The Yau-Yu links of type $III$}
Finally, a similar analysis works for the Yau-Yu links of type III, given by
\[
Z_{III}(p,q) = \{ uv+ z^pw + zw^q=0\} \subset \mathbf{C}^{4}_{(u,v,z,w)}
\]
which have a 2 torus action generated by the $\mathbf{C}^{*}$ actions with weights $(0, (pq-1),(q-1),(p-1))$ and $(1,-1,0,0)$.  The critical Reeb field is then
\[
\xi = \frac{3}{2(p+q-2)} ( pq-1, pq-1, 2(q-1), 2(p-1)).
\]
There are two non-trivial $T$-equivariant test configurations generated by the $\mathbf{C}^*$ actions with weights $(0,0,-1,p)$ and $(0,0,q,-1)$.  Computing the Futaki invariants as above we find that the link of $Z_{III}(p,q)$ admits a Sasaki-Einstein metric if and only if
\[
\begin{aligned}
3(p-1)^2(q-1)&>(p+q-2)(pq-2p+1)\\
 3(q-1)^2(p-1)&>(p+q-2)(pq-2q+1).
 \end{aligned}
\]
If we let $m= \gcd(p-1,q-1)+1$, then using \cite[Chapter 9]{BGbook} one can check that the link of $Z_{III}(p,q)$ is topologically $\#m(S^2 \times S^3)$.  As before, we obtain a third infinite family of distinct Sasaki-Einstein metrics on $\#m(S^2 \times S^3)$ for any $m\in \mathbf{N}$ with $m \geq 2$.

\section{Further Discussion}\label{sec:discuss}

The results contained in this paper motivate the following picture, which is the Sasakian analog of the general picture described in \cite{SzICM}.  Fix a polarized affine variety $(X, \mathbf{T}, \xi)$ of dimension $n$, where $\xi\in\mathfrak{t}$ is normalized, and has minimal volume. 
We try to find a Ricci flat K\"ahler cone metric
compatible with $\xi$ by deforming along the method of continuity.  If $(X,\xi)$ is $K$-stable, then we succeed.
If not, then the method of continuity breaks at some time $T_1\leq 1$, and we get a test configuration with central fiber
a normal, polarized affine variety $(Y_1,\mathbf{T}_1,\xi)$, where $\mathbf{T}_1$ is the torus generated by $\mathbf{T}$ and the vector field $w_1$ giving the test-configuration. In particular  $\dim \mathbf{T}_1=\dim \mathbf{T}+1$, but it is possible that $Y_1\cong X$ if the torus $\mathbf{T}$ that we started with was not maximal. 

The test configuration is destabilizing, and so by the discussion in Section~\ref{sec:background}, if $w_1$ is normalized, then we have
\[
D_{w_{1}}a_0(Y_1,\xi) \leq 0,
\]
with strict inequality if $Y_1\cong X$. 
We can now repeat the volume minimization for $(Y_1,\mathbf{T}_1,\xi)$ to obtain a new Reeb field $\xi_1$.
We expect that it will be possible to restart the method of continuity with the data $(Y_1,\mathbf{T}_1,\xi_1)$.  Assuming that the results here carry over to the case of non-isolated singularities, we can repeat the above process to get
\[
X \rightarrow Y_1 \rightarrow \cdots \rightarrow Y_k := Y,
\]
where the final $(Y,\xi_k)$ is $K$-stable, since after finitely many steps we must reach a toric variety. Once the variety is toric, then it is automatically $K$-stable, after volume minimization, since there are no non-trivial toric test-configurations with normal central fiber. Note that it was previously shown by Futaki-Ono-Wang~\cite{FOW} that toric Fano cone singularities with an isolated singularity admit Ricci flat cone metrics, and we expect the same to hold when there are singularities away from the cone point.  It then follows that given any $(X, \mathbf{T}, \xi)$, it should be possible to deform $X$ to a $K$-stable affine variety $(Y,\mathbf{T}',\xi')$ by at most $n-1$ test-configurations.

It is natural to wonder whether this process can be made canonical and it seems reasonable to expect that the K-stable variety $(Y, \xi_k)$ is canonically associated to $(X,\xi)$. In view of the discussion in Donaldson-Sun~\cite[Section 3.3]{DS2} and the example of Hein-Naber mentioned there, we expect that the Ricci flat cone metric on $(Y,\xi_k)$ is the metric tangent cone at the vertex of any Ricci flat K\"ahler metric on a neighborhood of the vertex on $X$. 

One can also ask for a more algebraic description of each $Y_i$ in the sequence above, and for this at each step it would be necessary to distinguish one particular destabilizing  test-configuration.  Motivated by conformal field theory (see \cite{CXY}) the natural way to choose between any two destabilizing test configurations with central fibers $Y_1, Y_2$ is to compare their volumes, after volume minimization.  That is, we repeat the volume minimization on $Y_i$ and get new polarized affine varieties $(Y_1,\xi_1)$ and $(Y_2,\xi_2)$.  We choose $Y_1$ over $Y_2$ if
\[
{\rm Vol}(Y_1, \xi_1) > {\rm Vol}(Y_2, \xi_2),
\]
and vice versa, where the volume can be computed algebraically from the index character.  When equality occurs, it may be that either $Y_1 \cong Y_2$ or there is a test configuration taking $(Y_1,\xi_1)$ to $(Y_2,\xi_2)$ or vice versa. These statements are confirmed to some extent by example calculations, but so far there is still little evidence for them.

In a less speculative vein there are many interesting questions regarding the existence of Sasaki-Einstein metrics on various manifolds. From Cho-Futaki-Ono~\cite{CFO} we know that $m\#(S^2\times S^3)$ admits infinitely many irregular Sasaki-Einstein metrics for $m \geq 1$, and we have shown that $S^5$ admits infinitely many quasi-regular Sasaki-Einstein metrics. It is natural to ask therefore:
\begin{que}
Does there exist an irregular Sasaki-Einstein metric on $S^5$? More generally can we classify all Sasaki-Einstein metrics on $S^5$ with an isometric 2-torus action? 
\end{que}

The combinatorial description of T-varieties should help with this classification, as long as one develops a method for reading off the topology of the link from the p-divisor (see the work \cite{LLM} in this direction). A more thorough study should also lead to higher dimensional existence results, in particular on odd dimensional spheres. We expect the following.
\begin{conj}
There are infinitely many families of Sasaki-Einstein metrics on $S^{2n+1}$ for all $n$.
\end{conj}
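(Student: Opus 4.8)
The plan is to produce, for every $N$, infinitely many pairwise non-homothetic Ricci flat K\"ahler cone metrics on affine varieties of complex dimension $n=N+1$ whose links are diffeomorphic to the standard sphere $S^{2N+1}$, and then to invoke Theorem~\ref{thm:mainthm} to trade the search for such metrics for a $K$-stability verification. The natural test varieties are (mild generalizations of) Brieskorn--Pham hypersurfaces $X(a_0,\dots,a_{n})=\{z_0^{a_0}+\dots+z_{n}^{a_{n}}=0\}\subset\mathbf{C}^{n+1}$, polarized by the Reeb field $\xi$ that minimizes the volume $a_0$ over the torus $\mathbf{T}$ of diagonal automorphisms; by \cite{MSY} this $\xi$ is unique, and when the symmetry is large enough (e.g.\ several exponents coincide, forcing $\xi$ to be a critical point of the convex function $a_0$) it is the weighted field proportional to $(a_0^{-1},\dots,a_n^{-1})$ after the normalization $L_\xi\Omega = inm\Omega$. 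Since the topology of the link depends only on $X$ and not on $\xi$, Brieskorn's graph-theoretic criterion together with the Kervaire--Milnor classification isolates, by a purely number-theoretic argument, infinitely many exponent tuples $(a_i)$ for which the link is $S^{2N+1}$; and the normalized volume, read off from the index character $F(\xi,t)=(1-e^{-\lambda t})\prod_{i=0}^{n}(1-e^{-\lambda t/a_i})^{-1}$, takes infinitely many values along such sequences, so the resulting Sasaki--Einstein metrics are pairwise inequivalent (and group into families as the exponents vary, exactly as in Section~\ref{sec:examples} for $S^5$).

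The substantive step is $K$-stability. For $N=2$ this is carried out in Section~\ref{sec:examples} using the complexity-one machinery of Ilten--S\"u\ss, but a Brieskorn--Pham cone of large dimension carries only a one-dimensional torus, so the complexity grows with $N$ and there is no reduction to finitely many test configurations. Instead I would establish the cone/Sasakian analog of Tian's $\alpha$-invariant criterion: if a compact group $G$ acts on $(X,\xi)$ with $\xi$ central and the equivariant transverse $\alpha$-invariant satisfies $\alpha_G(X,\xi)>\frac{n-1}{n}$ (the transverse Tian threshold), then $(X,\xi)$ is $G$-equivariantly $K$-stable, hence admits a Ricci flat K\"ahler cone metric. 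In the framework of Section~\ref{sec:background} this can be proved either by showing that the bound $\alpha_G>\frac{n-1}{n}$ makes the twisted Ding functional coercive on $G$-invariant transverse psh potentials (a transverse Nadel-type argument: a destabilizing degeneration would, via Proposition~\ref{prop:Fut0} and the limit produced by Theorem~\ref{thm:partialc0}, produce a $G$-invariant transverse psh potential whose singularities violate the $\alpha$-invariant estimate, exactly as in the projective case), or more directly by checking that the continuity method of Proposition~\ref{prop:contmethod} does not break before $t=1$. The $\alpha$-invariants of Brieskorn--Pham cones are controlled by the log-canonical-threshold and interval estimates of Demailly--Koll\'ar \cite{DemKo} and Johnson--Koll\'ar \cite{KoJo}; for exponents with $\sum_i a_i^{-1}$ slightly larger than $1$ (the transverse Fano condition) and with several exponents equal so that $G$ can be taken large, these estimates yield $\alpha_G>\frac{n-1}{n}$. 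One then verifies that the exponent tuples satisfying simultaneously Brieskorn's standard-sphere conditions and the $\alpha_G$-bound form an infinite set for every $N$.

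The main obstacle is precisely this last confluence: keeping the $\alpha$-invariant estimate strong enough \emph{uniformly in $N$} while keeping the link a sphere. As $n$ grows the Demailly--Koll\'ar bounds degrade and the sphere conditions become more restrictive, so some care — possibly a judicious choice of non-Brieskorn hypersurface, or of the equivariant group $G$ (using more coincident exponents to enlarge $G$ and thereby lower the effective $\alpha$-threshold one needs to beat) — is required to keep both conditions satisfiable along an infinite family. A secondary point is to confirm, in each case, that the volume-minimizing Reeb field itself (not merely the naive weighted field) lies in the region where the $\alpha_G$-bound holds; strict convexity of $a_0$ on the space of normalized Reeb fields \cite{MSY}, together with the imposed symmetry forcing the weighted field to be the minimizer, should settle this. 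Finally, one records the diffeomorphism type and the normalized volume, as in Section~\ref{sec:examples}, to conclude that the metrics produced are genuinely distinct, giving the asserted infinite families on each $S^{2n+1}$.
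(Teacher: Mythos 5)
This statement is a \emph{conjecture} in the paper: the authors explicitly leave it open in Section~\ref{sec:discuss}, and there is no proof of it anywhere in the text (Theorem~\ref{thm:mainexample} settles only the case $n=2$). So the first thing to say is that you are not reconstructing an existing argument but proposing a research program, and the program as written does not close the conjecture.

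The genuine gap is the one you name yourself in your third paragraph, and naming it is not the same as resolving it. Your strategy is essentially the Boyer--Galicki--Koll\'ar route: verify a transverse $\alpha$-invariant bound $\alpha_G > \frac{n-1}{n}$ for Brieskorn--Pham cones whose links are standard spheres, using the Demailly--Koll\'ar and Johnson--Koll\'ar log-canonical-threshold estimates. This route is already known to produce Sasaki--Einstein metrics on spheres in every dimension $4m+1$ (the paper cites exactly this in the introduction), and the reason the conjecture is still open is precisely that these estimates have not been made to yield \emph{infinitely many} deformation-inequivalent families on a fixed $S^{2n+1}$ for all $n$: the $\alpha$-invariant lower bounds for Brieskorn--Pham exponents degrade as the exponents grow and as $n$ grows, while Brieskorn's sphere conditions and the transverse Fano condition $\sum_i a_i^{-1}>1$ squeeze the admissible exponent tuples from the other side. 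You do not exhibit an infinite family of tuples, for each $n$, on which all three constraints are simultaneously verified, nor a mechanism (a specific choice of $G$, or a specific non-Brieskorn deformation) that would make the verification go through. Without that, the argument has no content beyond what \cite{BGK,DemKo,KoJo} already give.

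Two secondary points. First, the ``transverse Tian threshold'' criterion you invoke ($\alpha_G > \frac{n-1}{n}$ implies equivariant K-stability, hence existence via Theorem~\ref{thm:mainthm}) is itself not proved in this paper; in the quasi-regular case it reduces to the orbifold K\"ahler--Einstein statement and is standard, but you should either restrict to quasi-regular polarizations (which is fine for Brieskorn--Pham with the weighted Reeb field, since the weights are rational) or supply the irregular version. Second, your claim that the imposed symmetry forces the volume-minimizing Reeb field to be the weighted field needs care: the torus acting on $\{z_0^{a_0}+\dots+z_n^{a_n}=0\}$ has rank one, so volume minimization over $\mathfrak{t}$ is automatic, but then the discrete symmetries permuting coincident exponents do no work in pinning down $\xi$ --- the Reeb cone is already one-dimensional --- and what actually needs checking is that this unique normalized $\xi$ is not destabilized by test configurations breaking the torus symmetry, which is exactly what the $\alpha$-invariant bound (or a full K-stability verification) must supply.
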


The same question can be asked for exotic spheres which bound
parallelizable manifolds, and the existence of Sasaki-Einstein metrics
on these was conjectured by Boyer-Galicki-Koll\'ar~\cite{BGK}. This
conjecture was verified up to dimension 15 by
Boyer-Galicki-Koll\'ar-Thomas~\cite{BGKT}.

\subsection*{Acknowledgements}
The authors would like to thank Charles Boyer and S. T. Yau for their
interest in this work, and for helpful comments. In addition we would like
to thank Hendrik S\"u\ss{}  for helpful discussions on T-varieties.

\bibliographystyle{amsplain}
\bibliography{Sasaki}

\end{document}